\newcommand{\CC}{{\rm\bf C}}
\newcommand{\RR}{{\rm\bf R}}
\newcommand{\QQ}{{\rm\bf Q}}
\newcommand{\ZZ}{{\rm\bf Z}}
\newcommand{\Adeles}{{\rm\bf A}}
\newcommand{\OO}{\mathcal {O}}
\DeclareMathOperator{\absNorm}{\mathfrak{N}}
\DeclareMathOperator{\Spec}{\mathrm{Spec}}
\DeclareMathOperator{\diag}{\mathrm{diag}}
\DeclareMathOperator{\GL}{\mathrm{GL}}
\DeclareMathOperator{\SL}{\mathrm {SL}}
\DeclareMathOperator{\End}{\mathrm {End}}
\DeclareMathOperator{\Hom}{\mathrm {Hom}}
\DeclareMathOperator{\Sym}{\mathrm{Sym}}
\DeclareMathOperator{\kernel}{\mathrm {ker}}
\DeclareMathOperator{\rank}{\mathrm {rk}}
\DeclareMathOperator{\Gal}{\mathrm {Gal}}
\DeclareMathOperator{\Ind}{\mathrm{Ind}}
\DeclareMathOperator{\res}{\mathrm{res}}
\DeclareMathOperator{\sgn}{\mathrm{sgn}}
\DeclareMathOperator{\coker}{\mathrm{coker}}
\newcommand{\lieb}{{\mathfrak {b}}}
\newcommand{\lieg}{{\mathfrak {g}}}
\newcommand{\liegk}{{\mathfrak {gk}}}
\newcommand{\lieh}{{\mathfrak {h}}}
\newcommand{\liek}{{\mathfrak {k}}}
\newcommand{\liel}{{\mathfrak {l}}}
\newcommand{\lieu}{{\mathfrak {u}}}
\newcommand{\liez}{{\mathfrak {z}}}
\DeclareMathOperator{\vol}{\mathrm {vol}}
\newcommand{\absnorm}[1]{\ensuremath{\left|{#1}\right|}}
\newcommand{\adots}{\ensuremath{
\put(-3,-6){\normalsize$\,$}
\put(-2.2,-3.5){\normalsize$\cdot$}
\cdot
\put(-1.0,3.5){\normalsize$\cdot$}
\put(0.0,6){\normalsize$\,$}
}}
\newcommand{\nocontentsline}[3]{}
\newcommand{\tocless}[2]{\bgroup\let\addcontentsline=\nocontentsline#1{#2}\egroup}
\theoremstyle{Theorem}
\newtheorem{introconjecture}{Conjecture}
\newtheorem{introtheorem}[introconjecture]{Theorem}
\theoremstyle{plain}
\newtheorem{theorem}{Theorem}[section]
\newtheorem{lemma}[theorem]{Lemma}
\newtheorem{corollary}[theorem]{Corollary}
\newtheorem{proposition}[theorem]{Proposition}
\newtheorem{conjecture}[theorem]{Conjecture}
\theoremstyle{remark}
\newtheorem{remark}[theorem]{Remark}
\newtheorem{definition}[theorem]{Definition}
\begin{document}

\title{Non-abelian $p$-adic Rankin-Selberg $L$-functions\\and\\non-vanishing of central $L$-values}
\author{Fabian Januszewski}

\address{Institut f\"ur Mathematik, Fakult\"at EIM, Universit\"at Paderborn, Germany}
\email{fabian.januszewski@upb.de}
\subjclass[2010]{Primary: 11F67; Secondary: 11S40, 11F55, 11F70}

\selectlanguage{english}
\maketitle

{\centering\footnotesize Dedicated to the memory of Claus-G\"unther Schmidt.\par}

  \begin{abstract}
\selectlanguage{english}
We construct $p$-adic $L$-functions for torsion classes for $\GL(n+1)\times\GL(n)$ and along the way prove new congruences between special values of Rankin-Selberg $L$-functions for $\GL(n+1)\times\GL(n)$ over arbitrary number fields. This allows us to control the behavior of $p$-adic $L$-functions under Tate twists and to prove the existence of non-abelian $p$-adic $L$-functions for Hida families on $\GL(n+1)\times\GL(n)$. As an application, we establish generic non-vanishing results for central $L$-values: We give sufficient local conditions for twisted central Rankin-Selberg $L$-values to be generically non-zero.
\end{abstract}

{\vspace*{-0.6em}\footnotesize\tableofcontents}

\markboth{Fabian Januszewski}{Non-abelian $p$-adic $L$-functions and non-vanishing of central $L$-values}

\section*{Introduction}\label{sec:introduction}

Fix a rational prime $p,$ a number field $F/\QQ$ and define the reductive group $$
G\;=\;\res_{F/\QQ}\GL(n+1)\times\GL(n),\quad n\geq 1.
$$
Consider the Rankin-Selberg $L$-function $L(s,\Pi\widehat{\otimes}\Sigma)$ attached to an irreducible cuspidal automorphic representation $\Pi\widehat{\otimes}\Sigma$ of $G(\Adeles)$ as studied by Jacquet, Piatetski-Shapiro and Shalika \citep{jpss1979a,jpss1979b,jpss1983}.

If $\Pi$ and $\Sigma$ are regular algebraic in the sense of Clozel \citep{clozel1990}, we expect $L(s,\Pi\widehat{\otimes}\Sigma)$ to agree (up to shift) with the $L$-function of the tensor product of the conjectural irreducible motives $M_\Pi$ and $M_\Sigma$ attached to $\Pi$ and $\Sigma$. In this context, we expect the special values of $L(s,\Pi\widehat{\otimes}\Sigma)$ to be intricately related to the arithmetic of $M_\Pi$ and $M_\Sigma$. In particular, when deforming $M_\Pi$ and $M_\Sigma$ in $p$-adic families, we expect these special values to vary $p$-adic analytically as well.

The aim of this paper is to establish this expected $p$-adic variation of $L$-values in the case when $\Pi$ and $\Sigma$ are nearly ordinary at $p$ in the sense of \citep{hida1995,hida1998}. In order to do so, we prove new congruences for the special values under consideration.\\\ \\{\bf Abelian $p$-adic interpolation.} 
Our results for abelian deformations are the following. Write $T\subseteq G$ for the standard diagonal maximal torus and $C_F(p^\infty)$ for the ray class group of level $p^\infty$ of $F$. Let $E/\QQ_p$ denote a finite extension which contains the fields of rationality of $\Pi$ and $\Sigma$.

Our first main result is (cf.\ Theorem \ref{thm:interpolation} in the text),

\begin{introtheorem}\label{introthm:firstpadicL}
  Let $\Pi\widehat{\otimes}\Sigma$ be an irreducible regular algebraic cuspidal automorphic representation of $G(\Adeles)$ of cohomological weight $\lambda$. Assume the following:
  \begin{itemize}
    \item[(i)] $\lambda$ is balanced (in the sense of eq.\ \eqref{eq:admissiblelambdacondition}).
    \item[(ii)] $\Pi\widehat{\otimes}\Sigma$ is nearly ordinary at a prime $p$ and $\vartheta:T(\QQ_p)\to\CC^\times$ the corresponding eigenvalue.
  \end{itemize}
  Then there are complex periods $\Omega_{\pm,j}\in\CC^\times$, indexed by the characters of $\pi_0(F\otimes\RR)^\times$ and $j\in\ZZ$ for which $s_0=\frac{1}{2}+j$ is critical for $L(s,\Pi\widehat{\otimes}\Sigma)$, and a unique $p$-adic measure $\mu_{\Pi\widehat{\otimes}\Sigma}\in\OO[[C_F(p^\infty)]]$ with the following property.
  For every $s_0=\frac{1}{2}+j$ critical for $L(s,\Pi\widehat{\otimes}\Sigma)$, 
  for all finite order Hecke characters $\chi$ of $F$ unramified outside $p\infty$ and such that $\chi_p\vartheta$ has fully supported constant conductor,
  \begin{align*}
    &
    \int\limits_{C_F(p^\infty)}\chi(x)\omega_F^j(x)\langle x\rangle_F^jd\mu_{\Pi\widehat{\otimes}\Sigma}(x)\;=\;\\
    &
    \absNorm(\mathfrak{f}_{\chi\vartheta})^{j\frac{(n+1)n}{2}-\frac{(n+1)n(n-1)}{6}}\cdot
    \prod_{\mu=1}^n\prod_{\nu=1}^\mu
      G(\chi\vartheta_{\mu,\nu})
    \cdot
    \frac{L(s_0,\Pi\widehat{\otimes}\Sigma\otimes\chi)}{\Omega_{(-1)^j\sgn\chi,j}}.
  \end{align*}
\end{introtheorem}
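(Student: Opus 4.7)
The approach is the cohomological route to $p$-adic $L$-functions on $\GL(n+1)\times\GL(n)$: realize critical values of Rankin-Selberg $L$-functions as pairings of cuspidal cohomology classes with modular symbols, then interpolate those pairings $p$-adically after applying a nearly ordinary projection. The essential new input is the collection of congruences among critical values for varying Tate twist $j$ over an arbitrary number field; granted these, the interpolation theorem is obtained in four stages.

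First, cohomological interpretation of the critical values. Under the balanced hypothesis on $\lambda$, a branching calculation for the restriction of the highest-weight representation of $G$ to the diagonally embedded subgroup $H=\res_{F/\QQ}\GL(n)\hookrightarrow G$ produces, for each critical integer $j$ and each character $\varepsilon$ of $\pi_0(F\otimes\RR)^\times$, a canonical $H$-invariant functional. Pairing the corresponding cuspidal cohomology class for $\Pi\widehat{\otimes}\Sigma$ with a modular symbol for $H$ twisted by $\chi$ recovers, via the Jacquet-Piatetski-Shapiro-Shalika integral representation, the critical value $L(1/2+j,\Pi\widehat{\otimes}\Sigma\otimes\chi)$. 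Comparison with a rational basis of the $\varepsilon$-component of cuspidal cohomology defines the periods $\Omega_{\varepsilon,j}$ and yields algebraicity of the normalized ratios.

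Second, assembling the distribution. Apply Hida's nearly ordinary projector at $p$ to obtain a $\vartheta$-eigenclass in the integral cohomology of an Iwahori-level locally symmetric space for $G$. Evaluating this class against the family of modular symbols indexed by finite-order characters $\chi$ of $F$ of $p$-power conductor produces an $\OO$-valued distribution on $C_F(p^\infty)$. Unravelling the local Rankin-Selberg integral at $p$ on the $p$-stabilized new vector, via Birch's lemma, generates the product of Gauss sums $\prod_{\mu=1}^n\prod_{\nu=1}^\mu G(\chi\vartheta_{\mu,\nu})$; the fully supported constant conductor hypothesis on $\chi_p\vartheta$ is what eliminates degenerate local factors and makes the local zeta integral invertible. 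The explicit power of $\absNorm(\mathfrak{f}_{\chi\vartheta})$ absorbs the discrepancy between the cohomological normalization of the modular symbols and the analytic normalization of the Rankin-Selberg integral across varying $j$.

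The decisive step, and the main obstacle, is to upgrade this distribution to a bounded $p$-adic measure. This reduces to Kummer-type congruences among the normalized critical values for varying $j$ and $\chi$ simultaneously. Over a general number field these are not formal consequences of the existence of a $\Lambda$-adic module: one must track an integral structure on cohomology compatible both with the twist-shift by Tate characters and with the non-trivial action of $\pi_0(F\otimes\RR)^\times$, and correctly align the $p$-integral test vectors across weights. The congruences proved in the body of the paper provide exactly this compatibility. Once boundedness is established, uniqueness of $\mu_{\Pi\widehat{\otimes}\Sigma}$ is immediate, since the test characters $\chi\omega_F^j\langle\cdot\rangle_F^j$ form a dense subset of the continuous character group of $C_F(p^\infty)$.
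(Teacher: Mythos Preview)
Your outline follows the same cohomological strategy as the paper, but two points are misaligned with how the argument actually runs.

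First, your ``decisive step'' misidentifies the role of the congruences. In the paper, boundedness is \emph{not} the issue: the modular symbol $\mathscr P_{\OO,x,\beta}^{\lambda,j}$ is $\OO$-valued by construction, because the lattice $L_{\lambda,\OO}^{x,\beta}$ sits inside $L_{\lambda,\OO}^{1,0}$ and the normalized functional $\eta_{j,\OO}$ is chosen $p$-optimally (Proposition~\ref{prop:generalmaninlemma}). So for each fixed $j$ one already has a measure $\mu_\OO^{\lambda,j}$ (Proposition~\ref{prop:distribution}). The congruences of Proposition~\ref{prop:generalmaninlemma} and Theorem~\ref{thm:tatetwists} are used for a different purpose: to show that the Tate-twisted measures $\omega_F^{-j}\langle\cdot\rangle_F^{-j}\mu_\OO^{\lambda,j}$ are \emph{independent of $j$}, so that a single $\mu_{\Pi\widehat{\otimes}\Sigma}$ interpolates all critical values simultaneously. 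Framing this as ``upgrading a distribution to a bounded measure'' obscures the logical structure.

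Second, your sketch omits two local non-vanishing inputs that are essential and not formal. At $p$, in the nearly ordinary (rather than Iwahori-spherical) setting one cannot use the essential vector: its ordinary projection may vanish. The paper instead produces the Hecke eigenvector $W_p$ via the Jacquet module and proves $W_p({\bf1})\neq 0$ directly (Proposition~\ref{prop:nonvanishingofeigenvectors}); calling this a ``$p$-stabilized new vector'' is misleading. At infinity, the assertion $\Omega_{\pm,j}\in\CC^\times$ relies on Sun's non-vanishing theorem for the archimedean cohomological test vector, which you do not mention. Finally, uniqueness in the paper is sharper than your density argument: by Hida's lemma the interpolation property at a \emph{single} critical $j$, over the $\chi$ with fully supported constant conductor, already determines the measure.
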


Hypothesis (i) is imminent also in previous and related work on modular symbols. A condition of this type is imposed upon us also for motivic reasons (cf.\ Deligne's Conjecture on special values of motivic $L$-functions and its integral refinements; for further details we refer to the discussion following Theorem \ref{thm:interpolation}). We refer the reader to recent work of Claus-G\"unter Schmidt \cite{schmidt2017} for a classification of the possible non-balanced cases whose treatment remains an important open problem.

The notion of {\em fully supported constant conductor} is formally defined in section \ref{sec:birchglobal} in the text. This technical condition is automatically satisfied whenever the ramification of $\chi$ is deep enough (deeper than the Nebentypus' ramification). For example, if $\Pi$ is spherical at all places $\mathfrak{p}\mid p$, then every $\chi$ ramified at all $\mathfrak{p}\mid p$ satisfies this condition.

The set of Hecke characters with fully supported constant conductor is Zariski dense in the $p$-adic rigid space of all $p$-adic characters. In particular, the interpolation property in Theorem \ref{introthm:firstpadicL} characterizes the $p$-adic measure $\mu_{\Pi\widehat{\otimes}\Sigma}(x)$ uniquely and confirms the Conjecture of Coates and Perrin-Riou \citep{coatesperrinriou1989,coates1989} in the cases where the interpolation property is known.

We expect an most identical interpolation property in the case of fully supported but not necessarily constant conductor to hold, whereas in the presence of unramified characters among $\chi\vartheta_{\mu,\nu}$ we should see a modified Euler factor of positive degree as a $p$-adic multiplier as predicted by Coates and Perrin-Riou. For $n=2$, i.e.\ $\GL(3)\times\GL(2)$, this was confirmed in the spherical case in unpublished work of D.\ Ungemach.

By the construction of $\mu_{\Pi\widehat{\otimes}\Sigma}(x)$, Theorem \ref{introthm:firstpadicL} has a straightforward generalization to finite slope forms and yields a unique locally analytic distribution on the cyclotomic line $\ZZ_p^\times$, provided the slope is bounded above by the number of critical values. However, to keep the exposition and notation simple, we emphasize the nearly ordinary case here.

Theorem \ref{introthm:firstpadicL} improves the main results of \citep{schmidt1993,kazhdanmazurschmidt2000,januszewski2011,januszewski2015,januszewski2016} in two ways. Firstly, we cover {\em nearly ordinary} representations and not only Iwahori spherical representations at places $v\mid p$. Secondly, we construct a {\em single} $p$-adic $L$-function interpolating {\em all} critical values at once. In the case $n=1,$ Theorem \ref{introthm:firstpadicL} recovers Namikawa's recent construction of abelian $p$-adic $L$-functions for $\GL(2)$ over number fields from \citep{namikawa2016}. In this case, the interpolation formula is known is all cases (cf.\ loc.\ cit.).

The non-vanishing of the complex periods $\Omega_{\pm,j}$ is an important recent result of Sun \citep{sunjams}. Their dependence on $j$ is studied in \citep{januszewski2018,grobnerlin2017,harderraghuram2017}. However, as of now there is no integral relation between the various $\Omega_{\pm,j}$ known for $n>1$, although our results suggest that such relations should indeed exist (cf.\ Remark \ref{rmk:omegajvariance} in the text).

Among all critical values, central $L$-values are expected to be arithmetically the most interesting ones. The Conjecture of Birch and Swinnerton-Dyer and its generalizations suggest that the central value of a motivic $L$-function should vanish only for specific arithmetic reasons.

From an automorphic perspective, non-vanishing of central values is believed to be equivalent to the existence of non-zero periods. For example, by Ginzburg-Jiang-Rallis' \citep{ginzburgjiangrallis2004}, non-vanishing of the central value of $L(s,\Pi\widehat{\otimes}\Sigma)$ for suitable $\Pi$ and $\Sigma$ is equivalent to non-vanishing of certain period integrals on related groups.

Since $\mu_{\Pi\widehat{\otimes}\Sigma}$ is determined uniquely by the interpolation property for a single critical $s_0=\frac{1}{2}+j$, we deduce from Theorem \ref{introthm:firstpadicL} the following non-vanishing result (Theorem \ref{thm:abeliannonvanishing} and Corollary \ref{cor:abeliannonvanishing} in the text).

\begin{introtheorem}\label{introthm:firstnonvanishing}
  Let $\Pi$ and $\Sigma$ be irreducible cuspidal regular algebraic automorphic representations of $\GL_{n+1}(\Adeles_F)$ and $\GL_n(\Adeles_F)$ of balanced weight. Assume that $\Pi$ and $\Sigma$ are {\em unitary} and that $\Pi$ and $\Sigma$ are nearly ordinary at all $\mathfrak{p}\mid p$ for some prime $p.$

If $s_0=\frac{1}{2}$ is critical for $L(s,\Pi\widehat{\otimes}\Sigma)$ and if there exists a second critical value $s_0\neq\frac{1}{2}$, then
\begin{equation}
  L(\frac{1}{2},\Pi\widehat{\otimes}\Sigma\otimes\chi)\neq 0,
  \label{eq:intrononvanishing}
\end{equation}
generically for $\chi$ varying over all finite order Hecke characters of $F$ unramified outside $p\infty$.

Moreover, the vanishing locus is transversal to the cyclotomic line, i.e.\ \eqref{eq:intrononvanishing} holds for {\em all but finitely many} characters of the form $\chi=\chi'\circ N_{F/\QQ}$ where $\chi'$ is a Dirichlet character of $p$-power conductor.
\end{introtheorem}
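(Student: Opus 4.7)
The plan is to leverage the uniqueness and rigid analytic structure of the $p$-adic measure $\mu:=\mu_{\Pi\widehat{\otimes}\Sigma}$ furnished by Theorem~\ref{introthm:firstpadicL}, combined with the observation that, for $\Pi$ and $\Sigma$ unitary, every non-central critical point for $L(s,\Pi\widehat{\otimes}\Sigma\otimes\chi)$ lies outside the critical strip $(0,1)$ and is therefore non-vanishing by elementary means.

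First I would show that $\mu\neq 0$ as an element of $\OO[[C_F(p^\infty)]]$. The hypothesis provides a critical integer $j_0\neq 0$; since $|j_0|\geq 1$, the point $s_0'=\tfrac{1}{2}+j_0$ satisfies $\mathrm{Re}(s_0')\geq\tfrac{3}{2}$ or, by the functional equation applied to the unitary $\Pi\widehat{\otimes}\Sigma\otimes\chi$, $L(s_0',\Pi\widehat{\otimes}\Sigma\otimes\chi)$ equals a non-zero $\Gamma$-factor ratio times an Euler product value at $\mathrm{Re}(1-s_0')\geq\tfrac{3}{2}$. In either case $L(s_0',\Pi\widehat{\otimes}\Sigma\otimes\chi)\neq 0$ for every finite order Hecke character $\chi$ unramified outside $p\infty$. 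Choosing $\chi$ with ramification at $p$ deep enough that $\chi_p\vartheta$ has fully supported constant conductor, Theorem~\ref{introthm:firstpadicL} then expresses the integral $\int_{C_F(p^\infty)}\chi(x)\omega_F^{j_0}(x)\langle x\rangle_F^{j_0}\,d\mu(x)$ as a product of a non-zero normalisation factor, non-zero Gauss sums, the period $\Omega_{(-1)^{j_0}\sgn\chi,j_0}^{-1}$ (non-zero by Sun's theorem), and the non-vanishing $L$-value. Hence $\mu\neq 0$.

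Next I would extract the non-vanishing of central $L$-values from the rigid analytic structure of $\mu$. Decomposing $C_F(p^\infty)=\Delta\times\Gamma$ with $\Delta$ finite and $\Gamma\cong\ZZ_p^d$, the measure $\mu$ becomes a non-zero element of the semilocal regular ring $\bigoplus_{\psi\in\widehat{\Delta}}\psi\cdot\OO[[\Gamma]]$, and therefore defines a non-zero rigid analytic function on the character variety of $C_F(p^\infty)$. Its zero locus is a proper Zariski closed subvariety, so evaluating the interpolation formula at $j=0$, where Gauss sums and $\Omega_{\pm,0}$ remain non-zero, yields the first assertion. For the transversality statement, I would push $\mu$ forward along the projection $C_F(p^\infty)\twoheadrightarrow\ZZ_p^\times$ dual to the cyclotomic characters $\chi=\chi'\circ N_{F/\QQ}$; the pushforward $\mu_{\mathrm{cyc}}$ is non-zero because the Euler product argument above already produces a non-zero integral for $\chi'$ of sufficiently deep $p$-power conductor at $j=j_0$. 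A non-zero $p$-adic measure on $\ZZ_p^\times$ has an Amice transform that is a non-zero rigid analytic function on a finite union of open unit disks, hence vanishes on a finite set only; specialising at $j=0$ yields the transversality claim.

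The main obstacle I expect is the third step, where one must confirm that enough cyclotomic characters $\chi'\circ N_{F/\QQ}$ meet the fully supported constant conductor hypothesis of Theorem~\ref{introthm:firstpadicL} to force $\mu_{\mathrm{cyc}}\neq 0$. The remark following the theorem, that deep enough ramification of $\chi$ automatically implies this condition, reduces this to a routine check; the essential new ingredient is the uniqueness of $\mu$ together with unconditional non-vanishing of $L$-values at non-central critical integers, furnished by the unitarity assumption.
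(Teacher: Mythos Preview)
Your proposal is correct and follows essentially the same route as the paper's proof (Theorem~\ref{thm:abeliannonvanishing} and Corollary~\ref{cor:abeliannonvanishing}): establish that the measure is non-zero by evaluating the interpolation formula at a non-central critical point where the complex $L$-value is non-zero by absolute convergence of the Euler product (the paper cites Jacquet--Shalika \cite{jacquetshalika1981} for this), then invoke Weierstrass preparation on the one-variable Iwasawa algebra along the cyclotomic line. The only cosmetic difference is that the paper phrases the passage between critical points via the explicit Tate-twist identity of Theorem~\ref{thm:tatetwists} rather than the uniqueness clause of Theorem~\ref{introthm:firstpadicL}, and works directly with $\chi$-translates of the cyclotomic line (which also yields the slightly stronger ``for every $\chi$, all but finitely many $\chi'$'' form) rather than first arguing generic non-vanishing on the full character variety.
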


We may always pass from general regular algebraic automorphic representations $\Pi$ and $\Sigma$ to unitary twists $\Pi^{\rm u}=\Pi\otimes|\cdot|_\Adeles^{-w}$ and $\Sigma^{\rm u}=\Sigma\otimes|\cdot|_\Adeles^{-w'}$ for suitable $w,w'\in\frac{1}{2}\ZZ$.

The existence and location of critical values and the notion of being of balanced weight is entirely governed by the cohomological weights of $\Pi$ and $\Sigma$, i.e.\ by the infinity types $\Pi_\infty$ and $\Sigma_\infty$. Likewise, being nearly ordinary at places $\mathfrak{p}$ above $p$ is a local condition on $\Pi_{\mathfrak{p}}$ and $\Sigma_{\mathfrak{p}}$ (cf.\ section \ref{sec:abelianpadicL}). Therefore, Theorem \ref{introthm:firstnonvanishing} provides sufficient local conditions on $\Pi$ and $\Sigma$ for generic non-vanishing of twists.

In particular, it is easy to see that if $\Pi$ and $\Sigma$ satisfy the hypotheses of Theorem \ref{introthm:firstnonvanishing}, then so do $\Pi\otimes\chi'$ and $\Sigma\otimes\chi''$ for arbitrary finite order Hecke characters $\chi'$ and $\chi''$ over $F$.

Theorem \ref{introthm:firstnonvanishing} implies {\em simultaneous} generic non-vanishing for any finite collection of representations $\Pi_1,\dots,\Pi_r$, $\Sigma_1,\dots,\Sigma_r$ on $\GL_{n_1+1}(\Adeles_F),\dots,\GL_{n_r+1}(\Adeles_F),\GL_{n_1}(\Adeles_F),\dots,\GL_{n_r}(\Adeles_F),$ such that the pairs $(\Pi_i,\Sigma_i)$ satisfy the hypothesis of Theorem \ref{introthm:firstnonvanishing} for the same prime $p$. We may allow the pairs $(\Pi_i,\Sigma_i)$ to live over different number fields $F_i$, at the cost of obtaining simultaneous non-vanishing only for all but finitely many norm-inflated Dirichlet characters of $p$-power conductor.

Using symmetric power functoriality for $\GL(2)$, it is easy to produce automorphic representations $\Pi$ and $\Sigma$ satisfying the hypotheses of Theorems \ref{introthm:firstpadicL} and \ref{introthm:firstnonvanishing}. By \citep{gelbartjacquet1978,kim2003,kimshahidi2002}, the symmetric power functoriality $\Sym^n$ from $\GL(2)$ to $\GL(n+1)$ is known for $n\leq 4$ over arbitrary number fields. Thanks to recent progress by Clozel-Thorne \citep{clozelthorne2014,clozelthorne2015,clozelthorne2017}, we know that $\Sym^n$ exists for $n\leq 8$ for Hilbert modular forms over totally real fields $F$ under mild hypotheses. Symmetric power lifts preserve near ordinarity and regular algebraicity (cf.\ Theorem 5.3 and Proposition 5.4 in \citep{raghuram2010} and Theorem 3.2 in \citep{raghuram2015}). Using base change in solvable extensions \citep{arthurclozel1989}, we may produce examples over more general number fields.

Iteratively, Theorems \ref{introthm:firstpadicL} and \ref{introthm:firstnonvanishing} imply the existence of $p$-adic meromorphic $L$-functions for symmetric power $L$-functions $L(s,\Sym^n f)$ of non-CM nearly ordinary Hilbert modular cusp forms $f$ over a totally real number field $F/\QQ$, which is linearly disjoint from $\QQ(e^{2\pi i/35}),$ and $1\leq n\leq 8$, provided that $f$ is of sufficiently large parallel weight. Our non-vanishing result also allows for the extension of the rationality results in \citep{raghuram2010,raghuram2015} to central $L$-values.

We refer to \citep{dimitrovjanuszewskiraghurampre} for a detailed discussion of examples and applications and a direct construction of $p$-adic $L$-functions for odd symmetric powers.

The result of Theorem \ref{introthm:firstnonvanishing} appears to be new for $n>1$, the case $n=1$ being to Shimura and Rohrlich \citep{shimura1977,rohrlich1984,rohrlich1989}. The idea to use congruences to deduce non-vanishing of central $L$-values goes back to Greenberg \citep{greenberg1983,greenberg1985} (the author kindly thanks Michael Harris for pointing this out). Non-vanishing is known for non-central critical values by results of Jacquet-Shalika \citep{jacquetshalika1981} and Shahidi \citep{shahidi1981}, generalizing previous results for non-central $L$-values for $\GL(n)$ in \citep{jacquetshalika1976}. At the moment, the most general result for central values known appears to be due to Luo \citep{luo2005} (see also Nastasescu's recent thesis \citep{nastasescu2016}). Independently of our work, Sugiyama and Tsuzuki recently established in \citep{sugiyamatsuzuki2018} a non-vanishing result for central values for $\GL(3)\times\GL(2)$ where the representation on the $\GL(2)$ factor arises from a cuspidal Maa\ss{} form. For yet another analytic approach to non-vanishing for $\GL(2)\times\GL(2),$ and possibly triple products we refer to \citep{vanorderpre}.\\\ \\{\bf Non-abelian $p$-adic interpolation.} Previous work on $p$-adic $L$-functions for $\GL(n+1)\times\GL(n)$ \citep{schmidt1993,kazhdanmazurschmidt2000,januszewski2011,januszewski2015,januszewski2016} as well as Theorem \ref{introthm:firstpadicL} is limited to abelian deformations, i.\,e.\ interpolation of twists by finite order Hecke characters. Furthermore, these methods did not apply to torsion classes.

In order to study the variation of the above $p$-adic $L$-functions in Hida families and to treat torsion classes, we first establish in Theorem \ref{thm:regularcontrol} a control theorem for nearly ordinary cohomology of $G$ following ideas of Hida \citep{hida1986b,hida1986a,hida1988a,hida1989,hida1992,hida1994,hida1995,hida1998} and using delicate results on the contribution of the residual spectrum of $\GL(n)$ to the cohomology of arithmetic groups \citep{jacquetshalika1981,jacquetshalika1981.2,moeglinwaldspurger1989,frankeschwermer1998,scholze2015,harderraghuram2017}.

Assume $F$ totally real, CM or that Conjecture \ref{conj:galoisrepresentations} on the existence of Galois representations for torsion classes holds for $\GL(m)$ over $F$ ($m\leq n+1$) (which is known for $F$ totally real or CM by Scholze's breakthrough \citep{scholze2015}). In this situation, we have a notion of non-Eisenstein maximal ideals $\mathfrak{m}$ in Hida's universal nearly ordinary Hecke algebra ${\bf h}_{\rm ord}(K_{\infty,\infty};\OO)$ for $G,$ which are characterized by corresponding to residual Galois representations $\overline{\rho}_{\mathfrak{m}}$ which are (semi-simplifications of) tensor products of {absolutely irreducible} residual representations for $\GL(n+1)$ and $\GL(n)$ respectively. In particular, the residual representation $\overline{\rho}_{\mathfrak{m}}$ attached to $\mathfrak{m}$ itself may be {\em reducible} (take a tensor product of symmetric powers of the same $2$-dimensional representation for example).

We consider universal nearly ordinary cohomology for $G$
$$
\mathcal H_{\rm ord}^{q_0+l_0}(K_{\infty,\infty};\OO)\;=\;\varprojlim_\alpha H_{\rm ord}^{q_0+l_0}(K_{\alpha,\alpha};\OO/p^\alpha\OO),
$$
which is canonically an ${\bf h}_{\rm ord}(K_{\infty,\infty};\OO)$-module and may contain torsion classes. Its localization at a (contragredient) non-Eistenstein maximal ideal $\mathfrak{m}^\vee$ is expected to be free of rank $1$ over ${\bf h}_{\rm ord}(K_{\infty,\infty};\OO)_{\mathfrak{m}^\vee}$ (cf.\ Conjecture \ref{conj:freeness}; see Theorem 4.9 in \citep{hansenthorne2017} for a partial result towards this conjecture). Independently of this conjecture we show (cf.\ Theorem \ref{thm:nonabelianinterpolation} in the text)

\begin{introtheorem}\label{introthm:secondpadicL}
  Assume $p\nmid (n+1)n$ and let $F/\QQ$ denote a totally real or CM field or otherwise assume the existence of Galois respresentations for torsion classes. Let $\mathfrak{m}$ denote a non-Eisenstein maximal ideal in ${\bf h}_{\rm ord}(K_{\infty,\infty};\OO)$. Then there exists an element
    $$L_{p,\mathfrak{m}}^{\rm univ}\;\in\;{\mathcal H}_{\rm ord}^{q_0+l_0}(K_{\infty,\infty};\OO)_{\mathfrak{m}^\vee}^{\pi_0(H(\RR))}$$
  with the following interpolation property. For every classical point
  $$
    \xi\;\in\;\Spec{\bf h}_{\rm ord}(K_{\infty,\infty};\OO)_{\mathfrak{m}}(\overline{E})
  $$
  of balanced weight $\lambda$ and Nebentypus $\vartheta$, such that $s_0=\frac{1}{2}$ is critical for $L(s,\Pi_\xi\widehat{\otimes}\Sigma_\xi),$ we have
  \begin{align*}
    \Omega_{\xi,p}^{-1}\cdot\xi^\vee(L_{p,\mathfrak{m}}^{\rm univ})
    \;=\;&
    \int_{C_F(p^\infty)}d\mu_{\Pi_\xi\widehat{\otimes}\Sigma_\xi}\\
    \;=\;&
    \absNorm(\mathfrak{f}_{\vartheta})^{\frac{(n+1)n(n-1)}{6}}\cdot
    \prod_{\mu=1}^n\prod_{\nu=1}^\mu
      G(\vartheta_{\mu,\nu})
    \cdot
    \frac{L(\frac{1}{2},\Pi_\xi\widehat{\otimes}\Sigma_\xi)}{\Omega_{\xi}},
  \end{align*}
where the second identity is valid whenever $\vartheta$ has fully supported constant conductor.

  Here $\Omega_{\xi,p}\in\OO[\xi]^\times$ is a $p$-adic period and $\Omega_{\xi}\in\CC^\times$ is a complex period.
\end{introtheorem}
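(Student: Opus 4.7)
The plan is to realize $L_{p,\mathfrak{m}}^{\rm univ}$ as an inverse limit along the Iwasawa tower of integrally normalized nearly ordinary cohomology classes, designed so that pairing with the Birch-type modular symbol at the trivial character recovers, at each classical specialization, the central-value slice $\int_{C_F(p^\infty)} d\mu_{\Pi_\xi\widehat{\otimes}\Sigma_\xi}$ from Theorem \ref{introthm:firstpadicL}. The foundation is Hida's control theorem (Theorem \ref{thm:regularcontrol}) which, in the presence of the non-Eisenstein hypothesis on $\mathfrak{m}$ and of the Galois representations hypothesis (known for $F$ totally real or CM by \cite{scholze2015}), identifies the specialization of $\mathcal{H}^{q_0+l_0}_{\rm ord}(K_{\infty,\infty};\OO)_{\mathfrak{m}^\vee}$ at a classical point $\xi$ of balanced weight with the cuspidal $\mathfrak{m}^\vee$-eigenspace of the classical nearly ordinary cohomology attached to $\Pi_\xi\widehat{\otimes}\Sigma_\xi$. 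The residual-spectrum contributions appearing in \cite{moeglinwaldspurger1989,frankeschwermer1998,harderraghuram2017} are eliminated by the non-Eisenstein condition, so after $\mathfrak{m}^\vee$-localization one is left with a clean cohomological realization of the relevant cuspidal data alone.

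Given this setup, at each finite level $\alpha$ I would fix an integrally normalized generator of the $\pi_0(H(\RR))$-isotypic component of the $\mathfrak{m}^\vee$-localized ordinary cohomology $H^{q_0+l_0}_{\rm ord}(K_{\alpha,\alpha};\OO/p^\alpha\OO)_{\mathfrak{m}^\vee}$, arranged compatibly under the trace maps of the tower; the inverse limit defines $L_{p,\mathfrak{m}}^{\rm univ}$, and the $\pi_0(H(\RR))$-invariance is inherited from the finite-level generators. By the control theorem, $\xi^\vee(L_{p,\mathfrak{m}}^{\rm univ})$ equals the value of this compatible system on the $\xi$-eigenclass, which differs from the complex-analytically normalized rational generator entering the definition of $\mu_{\Pi_\xi\widehat{\otimes}\Sigma_\xi}$ in Theorem \ref{introthm:firstpadicL} by precisely the $p$-adic unit $\Omega_{\xi,p}$. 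Thus $\Omega_{\xi,p}^{-1}\cdot\xi^\vee(L_{p,\mathfrak{m}}^{\rm univ}) = \int_{C_F(p^\infty)} d\mu_{\Pi_\xi\widehat{\otimes}\Sigma_\xi}$, and the second displayed identity is then the $\chi=1$, $j=0$ specialization of the interpolation formula of Theorem \ref{introthm:firstpadicL} under the assumption that $\vartheta$ has fully supported constant conductor.

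I expect the hard part to be showing that the trace maps of the Iwasawa tower are surjective (up to bounded $p$-power factors) on the $\mathfrak{m}^\vee$-localized ordinary cohomology, so that the inverse system does not collapse and yields a genuinely nonzero universal element. This is precisely where the non-Eisenstein hypothesis is indispensable: it suppresses the Eisenstein and residual cohomology whose behaviour in the tower is poorly controlled and which would otherwise produce uncontrolled torsion in the limit. The assumption $p\nmid(n+1)n$ is needed to invert the dimensional factors arising from the $T(\QQ_p)$-action on the stalks of the coefficient local systems and in the Gauss-sum normalizations of the Birch evaluation. Finally, the Galois representations hypothesis (respectively Scholze's theorem \cite{scholze2015}) is invoked precisely so that non-Eisenstein maximal ideals in ${\bf h}_{\rm ord}(K_{\infty,\infty};\OO)$ correspond on the nose to cuspidal automorphic data, which is what makes the cohomological $\mathfrak{m}^\vee$-localization an unambiguous operation even though $\overline{\rho}_{\mathfrak{m}}$ itself may be reducible.
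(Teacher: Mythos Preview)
Your proposal contains a genuine conceptual gap. You propose to build $L_{p,\mathfrak{m}}^{\rm univ}$ as an inverse limit of \emph{chosen generators} of the $\pi_0(H(\RR))$-isotypic finite-level cohomology, but a generator carries no $L$-value information: specializing a generator at $\xi^\vee$ yields a unit, not a quantity proportional to $L(\tfrac12,\Pi_\xi\widehat{\otimes}\Sigma_\xi)$. The paper does something entirely different. It first builds the modular symbol as an $\OO$-linear functional
\[
L_p^{0,0}:\ \mathcal H_{\rm c,ord}^{q_0}(K_{\infty,\infty};E/\OO)\ \longrightarrow\ E/\OO,\qquad \phi\mapsto\int_{C_F(p^\infty)}d\mu^{0,0}(\phi),
\]
and then \emph{identifies this functional}, via Poincar\'e--Pontryagin duality, with an element of $\mathcal H_{\rm ord}^{q_0+l_0}(K_{\infty,\infty};\OO)$; the localized projection to the $\pi_0(H(\RR))$-invariants is $L_{p,\mathfrak{m}}^{\rm univ}$. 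No choice of generator is made, and the element can (and sometimes does) specialize to zero. The interpolation formula then drops out of the commutative square relating $\mu^{\lambda,0}_{E/\OO}$ to $L_p^{\lambda,0}$ together with the control theorem: applying $\xi^\vee$ is Poincar\'e-dual to precomposing with the $\xi$-eigenclass $\phi_{\Pi_\xi\widehat{\otimes}\Sigma_\xi}$, and the value of the modular symbol on that class is exactly $\int d\mu_{\Pi_\xi\widehat{\otimes}\Sigma_\xi}$ up to a $p$-adic unit.

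You also miss the actual technical crux. What makes the construction canonical across weights is the \emph{independence of weight} for the torsion-valued modular symbol (the paper's Theorem~\ref{thm:muindependenceofweight}), which in turn rests on the lattice congruence of Proposition~\ref{prop:generalmaninlemma}: for $v\in L_{\lambda,\OO}^{x,\beta}$ one has $\eta_{j,\OO}(v)\equiv N_{F/\QQ}(x)^j\cdot\Omega_p^{\alpha,v}\pmod{p^\alpha}$ with the same $\Omega_p^{\alpha,v}$ for all admissible $j$. This is what pins $L_p^{\lambda,0}$ to $L_p^{0,0}$ and hence lets one specialize at any balanced weight. Your ``hard part'' (surjectivity of trace maps so the inverse system does not collapse) is not the issue: the functional exists unconditionally as an inverse limit of $\OO$-module maps, and its nonvanishing is a separate theorem (the paper's Theorem~\ref{thm:nonabeliannonvanishing}). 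Finally, the hypothesis $p\nmid(n+1)n$ enters not through Gauss-sum normalizations but through Hida's argument for the control theorem; and the non-Eisenstein and Galois hypotheses are used (via Theorem~\ref{thm:localizedcohomology}) to ensure that localized cohomology is purely cuspidal so that control holds at \emph{all} dominant weights, not only regular ones.
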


By Theorem \ref{introthm:firstpadicL}, both periods $\Omega_{\xi,p}$ and $\Omega_{\xi}$ may be normalized in such a way that they are invariant under twists of $\Pi_\xi\widehat{\otimes}\Sigma_\xi$ by finite order Hecke characters $\chi$ unramified outside $p$.

Previous results on the variation of special values in Hida families in our setup were limited to the case $n=1$ and $F$ totally real, cf.\ \citep{dimitrov2011}. Even for $\GL(2)$ over a CM field our results are new.

A subtle consequence of Theorem \ref{introthm:secondpadicL} is that the \lq{}big\rq{} $p$-adic $L$-function $L_{p,\mathfrak{m}}^{\rm univ}$ does not see any possible defects to exact control, although we can currently only establish control up to finite error (cf.\ Theorem \ref{thm:regularcontrol}). This is in line with the following expectation.

Current methods of establishing ${\mathcal R}={\mathbb T}$ theorems usually establish the freeness and cyclicity of ${\mathcal H}_{\rm ord}^{q_0+l_0}(K_{\infty,\infty};\OO)_{\mathfrak{m}^\vee}^{\pi_0(H(\RR))}$ over ${\bf h}_{\rm ord}(K_{\infty,\infty};\OO)_{\mathfrak{m}^\vee}$ as a byproduct. In this situation, we may interpret $L_{p,\mathfrak{m}}^{\rm univ}$ as an element of the Hecke algebra, and if ${\mathcal R}={\mathbb T}$ is established for $\mathfrak{m}$, then $L_{p,\mathfrak{m}}^{\rm univ}$ may also be considered as an element of ${\mathcal R}$ (remark that unless the residual Galois representation $\overline{\rho}_{\mathfrak{m}}$ is absolutely irreducible, we cannot define $\mathcal R$ as a classical deformation ring). Assuming this freeness, we establish with Theorem \ref{thm:nonabelianinterpolation2} a refinement of Theorem \ref{introthm:secondpadicL}, which is in line with general expectations on the existence and properties of genuine $p$-adic $L$-functions (cf.\ Conjecture 4.2.1 and Question 4.4.1 in \citep{hida1996}).

As an application of Theorem \ref{introthm:secondpadicL}, we extend Theorem \ref{introthm:firstnonvanishing} to cases where the central value is the only critical value (cf.\ Corollary \ref{cor:nonabeliannonvanishing2} in the text).

\begin{introtheorem}\label{introthm:secondnonvanishing}
  Let $p\nmid (n+1)n$, $F/\QQ$ totally real or CM, or assume that Conjecture \ref{conj:galoisrepresentations} holds for $F$. Let $\mathfrak{m}$ denote a non-Eisenstein maximal ideal in ${\bf h}_{\rm ord}(K_{\infty,\infty};\OO)$.

  Assume that $\mathcal X\subseteq\Spec{\bf h}_{\rm ord}(K_{\infty,\infty};\OO)_{\mathfrak {m}}(\overline{E})$ is an irreducible component containing a classical point $\xi$ of balanced weight such that $L(s,\Pi_\xi\widehat{\otimes}\Sigma_\xi)$ admits at least two critical values.

  Assume furthermore that the classical points in $\mathcal X$ of balanced cohomological weight $\lambda=\lambda_{n+1}\otimes\lambda_n,$ are dense and that their $L$-functions admit $s_0=\frac{1}{2}$ as the unique critical value.

 Then there exist irreducible regular algebraic cuspidal automorphic representations $\Pi'$ and $\Sigma'$ of cohomological weights $\lambda_{n+1}$ and $\lambda_n$ contributing to $\mathcal X$ satisfying
  \begin{equation}
    L(\frac{1}{2},\Pi'\widehat{\otimes}\Sigma')\neq 0.
    \label{eq:intrononvanishing2}
  \end{equation}
  Furthermore,
  \begin{equation}
    L(\frac{1}{2},\Pi'\widehat{\otimes}\Sigma'\otimes\chi)\neq 0.
    \label{eq:intrononvanishing2.1}
  \end{equation}
  for all but finitely many finite order Hecke characters $\chi$ in the cyclotomic line.
\end{introtheorem}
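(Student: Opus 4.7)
The strategy is to transfer the abelian non-vanishing result of Theorem \ref{introthm:firstnonvanishing} at the single classical point $\xi \in \mathcal{X}$ (which enjoys $\geq 2$ critical values) to the Zariski-dense locus of classical points of only-central-critical balanced weight, using the universal $p$-adic $L$-function $L_{p,\mathfrak{m}}^{\rm univ}$ of Theorem \ref{introthm:secondpadicL} as a glue. The guiding principle is that the specialization of $L_{p,\mathfrak{m}}^{\rm univ}$ at a classical point of only-central-critical weight computes, up to non-zero periods and Gauss sums, the complex central $L$-value, so non-triviality of $L_{p,\mathfrak{m}}^{\rm univ}$ on $\mathcal{X}$ combined with Zariski density will propagate into non-vanishing of complex central values.

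\textbf{Step 1 (Non-triviality of $L_{p,\mathfrak{m}}^{\rm univ}$ on $\mathcal{X}$).} Since $\Pi_\xi\widehat{\otimes}\Sigma_\xi$ is balanced, nearly ordinary at $p$, and admits at least two critical values, Theorem \ref{introthm:firstnonvanishing} yields a finite-order Hecke character $\chi$ of $F$, unramified outside $p\infty$ and of $p$-power conductor sufficiently deep that each $\chi\vartheta_{\mu,\nu}$ has fully supported constant conductor, with $L(\tfrac{1}{2},\Pi_\xi\widehat{\otimes}\Sigma_\xi\otimes\chi)\neq 0$. The twisted representation corresponds to a classical point $\xi_\chi$ lying over the same maximal ideal $\mathfrak{m}$ (the residual mod-$p$ Hecke eigensystem is unchanged), and moreover lying in the same irreducible component $\mathcal{X}$, because the twist is effected by the central-torus Iwasawa variables intrinsic to ${\bf h}_{\rm ord}(K_{\infty,\infty};\OO)$. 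The second identity of Theorem \ref{introthm:secondpadicL} at $\xi_\chi$ then forces $\xi_\chi^\vee(L_{p,\mathfrak{m}}^{\rm univ})\neq 0$, so
$$Z := \{\xi'\in\mathcal{X}\,:\,{\xi'}^\vee(L_{p,\mathfrak{m}}^{\rm univ})=0\}$$
is a proper Zariski-closed subset of the irreducible variety $\mathcal{X}$.

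\textbf{Step 2 (Untwisted central non-vanishing).} By the density hypothesis there exists a classical point $\xi'\in\mathcal{X}\setminus Z$ of balanced weight $\lambda_{n+1}\otimes\lambda_n$ with $s_0=\tfrac{1}{2}$ the unique critical value of $L(s,\Pi_{\xi'}\widehat{\otimes}\Sigma_{\xi'})$; after a further cyclotomic twist if necessary (preserving the non-empty Zariski-open $\mathcal{X}\setminus Z$), one may assume the Nebentypus of $\Pi_{\xi'}\widehat{\otimes}\Sigma_{\xi'}$ has fully supported constant conductor. The second identity of Theorem \ref{introthm:secondpadicL} applied at $\xi'$ then expresses $L(\tfrac{1}{2},\Pi_{\xi'}\widehat{\otimes}\Sigma_{\xi'})$ as ${\xi'}^\vee(L_{p,\mathfrak{m}}^{\rm univ})$ multiplied by a product of non-zero factors (the period ratio $\Omega_{\xi'}\,\Omega_{\xi',p}^{-1}$, an integral power of an absolute norm, and Gauss sums $G(\vartheta_{\mu,\nu})$), proving \eqref{eq:intrononvanishing2} with $(\Pi',\Sigma'):=(\Pi_{\xi'},\Sigma_{\xi'})$.

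\textbf{Step 3 (Twisted non-vanishing on the cyclotomic line).} Fix $\xi'$ as in Step 2 and consider the norm-inflated cyclotomic line
$$\mathcal{L} := \{\xi'\cdot(\chi\circ N_{F/\QQ})\,:\,\chi\text{ a finite-order Dirichlet character of }p\text{-power conductor}\}\subseteq\mathcal{X}.$$
The restriction of $L_{p,\mathfrak{m}}^{\rm univ}$ to $\mathcal{L}$ is an element of a one-variable Iwasawa algebra, and is non-zero because ${\xi'}^\vee(L_{p,\mathfrak{m}}^{\rm univ})\neq 0$. Weierstrass preparation then implies that this restriction has only finitely many zeros among finite-order characters; after excluding in addition the finitely many $\chi$ for which the conductor hypothesis fails, the second identity of Theorem \ref{introthm:secondpadicL} converts non-vanishing specializations into non-vanishing complex central $L$-values, yielding \eqref{eq:intrononvanishing2.1}. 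The principal technical obstacle lies in Step 1: the assertion that cyclotomic twists stay inside the same irreducible component $\mathcal{X}$ relies on the structure of Hida's nearly ordinary Hecke algebra, where such twists are realized by the central Iwasawa subalgebra acting by automorphisms compatible with the connected component structure on the non-Eisenstein localization; a careful verification is required to reduce this to the known geometry of ${\bf h}_{\rm ord}(K_{\infty,\infty};\OO)$.
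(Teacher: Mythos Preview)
Your overall strategy---non-triviality of the universal $p$-adic $L$-function on $\mathcal X$ together with Zariski density of the weight-$\lambda$ locus and the interpolation property---is exactly the mechanism the paper uses (Corollary~\ref{cor:nonabeliannonvanishing2}, relying on Theorem~\ref{thm:nonabeliannonvanishing}). The paper also reduces the cyclotomic-line statement to Weierstra\ss{} preparation, as you do in Step~3.

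The substantive difference is in Step~2. You invoke the \emph{second} identity of Theorem~\ref{introthm:secondpadicL}, which is only valid when the Nebentypus has fully supported constant conductor, and then propose a ``further cyclotomic twist'' to enforce this. But such a twist sends $\xi'$ to some $\xi'_\chi$, and you give no reason why $\xi'_\chi$ should remain in $\mathcal X\setminus Z$: by the \emph{first} identity, ${\xi'_\chi}^\vee(L_{p,\mathfrak m}^{\rm univ})$ is (up to a unit) $\int_{C_F(p^\infty)}\chi\,d\mu_{\Pi_{\xi'}\widehat\otimes\Sigma_{\xi'}}$, which may well vanish even when the untwisted integral does not. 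So the argument as written does not close.

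The paper avoids this entirely by arguing via the \emph{first} identity of Theorem~\ref{introthm:secondpadicL} combined with the uniqueness clause of Theorem~\ref{introthm:firstpadicL}. Concretely: suppose for contradiction that $L(\tfrac12,\Pi_{\xi'}\widehat\otimes\Sigma_{\xi'}\otimes\chi)=0$ for every weight-$\lambda$ classical point $\xi'\in\mathcal X$ and every $\chi$ unramified outside $p\infty$. Since $s_0=\tfrac12$ is the \emph{only} critical value at such $\xi'$, the uniqueness clause in Theorem~\ref{introthm:firstpadicL} forces $\mu_{\Pi_{\xi'}\widehat\otimes\Sigma_{\xi'}}=0$ identically; the first identity then gives ${\xi'}^\vee(L_{p,\mathfrak m}^{\rm univ})=0$ for all such $\xi'$, hence $L_{p,\mathfrak m}^{\rm univ}$ vanishes on a Zariski-dense subset of $\mathcal X$, contradicting your Step~1. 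This route never needs the conductor hypothesis at the weight-$\lambda$ points and is what you should use to repair Step~2. The output is then a pair $(\Pi_{\xi'},\Sigma_{\xi'}\otimes\chi)$ still of cohomological weight $(\lambda_{n+1},\lambda_n)$ and contributing to $\mathcal X$, as the theorem allows.
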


In fact, \eqref{eq:intrononvanishing2} holds {\em generically} for classical points $\xi$ of weight $\lambda$. By construction, the representation $\Pi'\widehat{\otimes}\Sigma'$ also contributes to the non-Eisenstein component $\Spec{\bf h}_{\rm ord}(K_{\infty,\infty};\OO)_{\mathfrak {m}}$. In particular, it is nearly ordinary at $p$ and unramified outside $p$.

Remark that for $F$ not totally real, we know that there are components which do not contain a Zariski dense subset of classical points, cf.\ \citep{calegarimazur2009} for the case $n=1$, and the same is to be expected for general $n$.\\\ \\{\bf Outline of the paper.} In the first section, we recollect fundamental facts on Hecke algebras and $p$-stabilization. Our treatment diverges from \citep{schmidt1993,kazhdanmazurschmidt2000,januszewski2011,januszewski2015,januszewski2016}, since $p$-stabilization may not be achieved by the projectors constructed in loc.\ cit.\ due to the arbitrary ramification we allow: The known explicit formulae for the valuation of Whittaker functions (cf.\ \citep{kondoyasuda2012,matringe2013,miyauchi2014}) imply the vanishing of the ordinary projection of the essential vectors in these cases.

In section \ref{sec:birchlemma}, we prove local and global Birch Lemmata for general nearly ordinary automorphic representations in Theorems \ref{thm:localbirch} and \ref{thm:globalbirch}.

Section \ref{sec:lattices} contains the fundamental observations on the level of lattices in rational representations of $G$, which will allow us to prove the congruences which are necessary to establish Theorems \ref{introthm:firstpadicL} and \ref{introthm:secondpadicL}.

In section \ref{sec:cohomology}, exploiting results on the contribution of the residual spectrum of $\GL(n)$ to the cohomology of arithmetic groups \citep{jacquetshalika1981,jacquetshalika1981.2,moeglinwaldspurger1989,frankeschwermer1998,scholze2015,harderraghuram2017}, we establish with Theorem \ref{thm:regularcontrol} a control Theorem for $G$. We build on Hida's fundamental work \citep{hida1986b,hida1986a,hida1988a,hida1989,hida1992,hida1994} on the case $\GL(2)$ and make extensive use of his results for $\SL(n)$ and (inner forms of) $\GL(n)$ in \citep{hida1995,hida1998}.

Section \ref{sec:measures} contains our construction of $p$-adic measures, both in the Iwasawa algebra for abelian interpolation and also on Hida's universal nearly ordinary cohomology with torsion coefficients. In this section we use the beautiful formalism of automorphic symbols introduced in \citep{dimitrov2011} in the case of $\GL(2)$.

Finally, section \ref{sec:padicL} applies the results of the previous sections to the construction of abelian and non-abelian $p$-adic $L$-functions for automorphic representations of $G$. It also contains our applications to non-vanishing of central $L$-values. Two local non-vanishing results are crucial to our approach: At places dividing $p,$ we need the results on $p$-stabilization and on the local zeta integrals from sections \ref{sec:pstabilization} and \ref{sec:birchlemma}, and at archimedean places we rely on the non-vanishing results established in \citep{kastenschmidt2013,sunjams}.

The reader only interested in the construction of abelian $p$-adic $L$-functions or the proof of the non-vanishing result in Theorem \ref{introthm:firstnonvanishing} may safely skip sections \ref{sec:independenceofweight} to \ref{sec:controltheorem} and sections \ref{sec:modularsymbolsinfamilies}, \ref{sec:nonabelianpadicL} and \ref{sec:nonabeliannonvanishing}.

\subsubsection*{Acknowledgement}
The author would like to thank G\"unter Harder, Haruzo Hida, Claus-G\"unther Schmidt, Joachim Schwermer, Giovanni Rosso, Binyong Sun, Jack Thorne and Shunsuke Yamana for valuable discussions and helpful remarks. The author thanks Michael Harris for pointing out the references to Greenberg's work \citep{greenberg1983,greenberg1985}, as well as for informing the author that Ellen Eischen worked independently on a similar approach to non-vanishing in the case of unitary groups, which in the meantime appeared as \citep{eischenpre}. The author thanks the participants of the recent BIRS-CMO workshop on this topic for their interest in this work and the coorganizers for the opportunity to present it in a series of lectures. The author acknowledges support from the Max Planck Institute for Mathematics in Bonn while hosting him when part of this work was finalized. The author thanks the referees for helpful remarks and comments. Lastly, the author thanks his collaborators Mladen Dimitrov and A.\ Raghuram from \citep{dimitrovjanuszewskiraghurampre} for their enthusiasm.

\section*{Notation}

\noindent{\bf Fields and Algebras.} In the body of the paper, $F$\nomenclature[]{$F$}{number field or finite extension of $\QQ_p$} denotes a number field, i.e.\ a finite extension of $\QQ_p$ or of $\QQ$.

In the first case, we write $\OO\subseteq F$\nomenclature[]{$\OO$}{valuation ring in finite extension of $\QQ_p$ (in $E$ or in $F$)} for the valuation ring, $\mathfrak{p}\subseteq\OO$\nomenclature[]{$\mathfrak{p}$}{prime ideal in $F$ dividing $p$} for the maximal ideal, and $q=\absNorm(\mathfrak{p})$\nomenclature[]{$q$}{residue field cardinality} for the cardinality of the residue field $\OO/\mathfrak{p}$. We normalize the valuation $\absnorm{\cdot}$ on $F$ in such a way that $|\varpi|=q^{-1}$ for a uniformizer $\varpi\in\mathfrak{p}$\nomenclature[]{$\varpi$}{uniformizer of $\mathfrak{p}$}.

In the case of a number field $F/\QQ$, let $\Adeles_F=\Adeles_\QQ\otimes_\QQ F$\nomenclature[]{$\Adeles_F$}{ring of ad\`eles over $F$} denote the ring of ad\`eles over $F$ and abbreviate $\Adeles=\Adeles_\QQ$\nomenclature[]{$\Adeles$}{ring of ad\`eles over $\QQ$}. We write $\Adeles_F=\Adeles_{F,\infty}\otimes\Adeles_F^{(\infty)}$ where $\Adeles_F^{(\infty)}$ denotes the ring of finite adeles over $F$. More generally, $\Adeles_F^{(p\infty)}$ denotes the ring of finite adeles outside $p$, etc.\nomenclature[]{$\Adeles^{(\infty)}_F$}{finite ad\`eles over $F$}\nomenclature[]{$\Adeles^{(p\infty)}_F$}{finite ad\`eles outside $p$ over $F$}

The letter $p$ always denotes a prime in $\ZZ$\nomenclature[]{$p$}{a fixed rational prime}. $E/\QQ_p$\nomenclature[]{$E$}{finite extension of $\QQ_p$, field of coefficients} is a finite extension whose valuation ring is also denoted by $\OO$ (there is no confusion possible). We let $\overline{E}$\nomenclature[]{$\overline{E}$}{algebraic closure of $E$} denote an algebraic closure and $\overline{\OO}\subseteq\overline{E}$\nomenclature[]{$\overline{\OO}$}{valuation ring inside $\overline{E}$} the corresponding valuation ring.

\noindent{\bf Groups.} If $L$ is an algebraic or topological group, we write $L^0$\nomenclature[]{$L^0$}{identity component of (topological) group $L$} for the connected component containing the identity, and $\pi_0(L)=L/L^0$\nomenclature[]{$\pi_0(L)$}{group of connected components of (topological) group $L$} for the component group. We denote by $L^{\rm{der}}=\overline{[L,L]}$\nomenclature[]{$L^{\rm{der}}$}{derived group of $L$} the derived group. We let $B_n$ denote the standard Borel subgroup of $\GL_n$\nomenclature[]{$B_n$}{standard upper triangular Borel subgroup in $\GL(n)$} of upper triangular matrices, $U_n$ denotes its unipotent radical\nomenclature[]{$U_n$}{unipotent radical of $B_n$} and we choose the subgroup $B_n^-\subseteq\GL_n$\nomenclature[]{$B_n^-$}{lower triangular Borel subgroup in $\GL(n)$} of lower triangular matrices as opposite to $B_n$. Then $T_n=B_n\cap B_n^-$ is the standard diagonal torus\nomenclature[]{$T_n$}{standard diagonal torus in $\GL(n)$}.

$W(\GL_n,T_n)$\nomenclature[]{$W(\GL_n,T_n)$}{Weyl group of $\GL(n)$ with respect to $T_n$} denotes the Weyl group of $\GL_n$ with respect to $T_n,$ realized as the subgroup of permutation matrices in $\GL_n$. It is canonically isomorphic to the symmetric group $S_n$ on $\{1,\dots,n\}$\nomenclature[]{$S_n$}{symmetric group on $\{1,\dots,n\}$, occationally identified with $W(\GL_n,T_n)$}: To each $\omega\in W(\GL_n,T_n)$ we associate a permutation $\sigma\in S_n$ via the rule\nomenclature[]{$\omega$}{element in $W(\GL_n,T_n)\cong S_n$, realized as permuation matrix in $\GL(n)$}
$$
\omega\cdot b_k = b_{\sigma^{-1}(k)}
$$
for $1\leq k\leq n$. Here $b_k$ denotes the $k$-th standard basis vector of $\ZZ^n$\nomenclature[]{$b_k$}{$k$-th standard basis vector in $\ZZ^n$}. Then for any $a=(a_i)_{1\leq i\leq n}\in A^n$, $A$ a (commutative) ring, we have $\omega a=(a_{\sigma(i)})_{1\leq i\leq n}$. The map $\omega\mapsto\sigma^{-1}$ is an isomorphism $W(\GL_n,T_n)\to S_n$. Let $w_n\in W(\GL_n,T_n)$ denote the longest element. It is explicitly given by\nomenclature[]{$w_n$}{longest Weyl element in $W(\GL_n,T_n)$}
$$
w_{n}=
\begin{pmatrix}
&& 1\\
&\adots&\\
1&&
\end{pmatrix}.
$$
We have the embedding\nomenclature[]{$j_n$}{embedding $\GL(n)\to\GL(n+1)$}
$$
j_{n}\quad:\GL(n)\to\GL(n+1),
$$
$$
g\mapsto
\begin{pmatrix}
g&0\\
0&1
\end{pmatrix}.
$$

Set\nomenclature[]{$G$}{$\res_{F/\QQ}\GL(n+1)\times\GL(n)$} $$G\;:=\;\res_{F/\QQ}\left(\GL(n+1)\times\GL(n)\right)$$
for a number field $F$. Consistent with the notation for $\GL_n$, let $B\subseteq G$ denote the standard upper triangular Borel subgroup\nomenclature[]{$B$}{upper triangular Borel subgroup in $G$} with unipotent radical $U$\nomenclature[]{$U$}{unipotent radical of $B$}, $B^-$ its standard opposite of lower triangular matrices\nomenclature[]{$B^-$}{lower triangular Borel subgroup in $G$}, and $T=\res_{F/\QQ}T_{n+1}\times T_n$ the diagonal torus\nomenclature[]{$T$}{diagonal torus in $G$}. The longest element in the Weyl group $W(G,T)$ with respect to $B$ is denoted $$w_0\;:=\;(w_{n+1},w_n)\;\in\; G(\ZZ).$$\nomenclature[]{$w_0$}{longest Weyl element in $W(G,T)$} Dominance for $G$ and $\GL_n$ is understood with respect to $B$ and $B_n$ respectively.

Write $\Delta=j_n\times{\bf1}:\res_{F/\QQ}\GL(n)\to G$\nomenclature[]{$\Delta$}{diagonal embedding $\res_{F/\QQ}\GL(n)\to G$} for the diagonal embedding and set\nomenclature[]{$H$}{$\res_{F/\QQ}\GL(n)$, diagonally embedded into $G$}
$$H\;:=\;\Delta(\res_{F/\QQ}\GL(n))\;\subseteq\; G$$
for the diagonally embedded copy of $\GL(n)$, which we freely identify with the latter. It comes with a distinguished character\nomenclature[]{$N_H$}{norm character $H\to\GL_1$}
$$
  N_H:=N_{F/\QQ}\circ\det:\quad H\to\GL_1
$$
defined over $\QQ$.

Write $X_\QQ(H)$ for the lattice of $\QQ$-rational characters of $H$\nomenclature[]{$X_\QQ(H)$}{character lattice of $H$}, which is generated by $N_H$. Put\nomenclature[]{$\widetilde{H}$}{subgroup of $H$ complementary to the maximal $\QQ$-split torus in its center}
$$
\widetilde{H}\;:=\;\bigcap_{\chi\in X_\QQ(H)}\ker\chi.
$$
For a dominant weight $\lambda$ of $G$\nomenclature[]{$\lambda$}{$B$-dominant weight of $G$}, write $L_{\lambda,E}$ for the irreducible rational representation of highest weight $\lambda$ defined over some field $E/\QQ$\nomenclature[]{$L_{\lambda,E}$}{irreducible rational $G$-representation of highest weight $\lambda$ over the field $E$}. We call $\lambda$ {\em balanced}, if
\begin{equation}
  H^0(\widetilde{H}; L_{\lambda,E})\;\neq\;0.
  \label{eq:admissiblelambdacondition}
\end{equation}
This is the same to say that there is a non-zero $H$-invariant functional\nomenclature[]{$\eta_j$}{$H$-intertwining $L_{\lambda,E}\to E_{(j)}$}\nomenclature[]{$E_{(j)}$}{one-dimensional $H$-representation $N_H^{\otimes j}$}
$$
\eta_j:\quad L_{\lambda,E}\;\to\;E_{(j)}:=N_H^{\otimes j}\otimes E
$$
for some $j\in\ZZ$, i.e.\ 
\begin{equation}
  \Hom_H(L_{\lambda,E},E_{(j)})\;\neq\;0.
  \label{eq:jfunctionals}
\end{equation}
We call such a non-zero $\eta_j$ {\em admissible} for $\lambda$. We refer to section 2.4.5 of \citep{raghuram2015} for an explicit description of condition \eqref{eq:jfunctionals} in terms of highest weights for $\GL(n)$.

Let $\lieg_{\ZZ}$ denote the Lie algebra of $G$ over $\ZZ$\nomenclature[]{$\lieg$}{Lie algebra of $G$}, i.e.\ more precisely we take the restriction of scalars $G=\res_{\OO_F/\ZZ}\GL(n+1)\times\GL(n)$ of the ring of integers in $F$ to $\ZZ$ of the standard smooth group scheme $\GL(n+1)\times\GL(n)$ over $\OO_F$ and use the same notation for $B$, $H$, ..., with the similar standard choice of smooth models over $\ZZ$ in each case. For any $\ZZ$-algebra $A/\ZZ$ we set
$$
\lieg_A\;:=\;\lieg_{\ZZ}\otimes_{\ZZ}A,
$$
again likewise for the other Lie algebras under consideration. Let $U(\lieg_A)$ denote the universal enveloping algebra of $\lieg_A$ over $A$\nomenclature[]{$U(\lieg_A)$}{universal enveloping Lie algebra of $\lieg_A$ over $A$}.

\noindent{\bf Matrices.} We introduce the matrix\nomenclature[]{$h_n$}{matrix in $\GL(n+1)$}
$$
h_n\;:=\;
\begin{pmatrix}
  &&&1\\
  &w_n&&\vdots\\
  &&&\vdots\\
  0&\hdots&0&1
\end{pmatrix}\;\in\;\GL_{n+1}(\ZZ).
$$
For $e=(e_1,\dots,e_n)\in\ZZ^n$\nomenclature[]{$e$}{tuple $e=(e_1,\dots,e_n)\in\ZZ^n$} and $a\in A^\times$, define the matrix\nomenclature[]{$(-)^e$}{application of the coroot corresponding to $e$}
$$
a^e:=\diag(a^{e_1},\dots,a^{e_n})\;\in\;\GL_n(A).
$$
We consider any $\delta\in\ZZ$ as the constant tuple denoted $(\delta)\in\ZZ^n$\nomenclature[]{$(\delta)$}{constant tuple $(\delta)\in\ZZ^n$}. Then $a^\delta\cdot a^{e}=a^{(\delta)+e}$.

The character lattice of $T_n$ is canonically identified with $\ZZ^n$ in such a way that the dominant weights for $\GL(n)$ with respect to $B_n$ are ordered $n$-tuples $\lambda=(\lambda_1,\dots,\lambda_n)$ of decreasing integers
$$
\lambda_1\geq\lambda_2\geq\cdots\geq \lambda_n.
$$
Then the sum $2\rho_n$ of the positive roots of $\GL(n)$ is represented by the tuple\nomenclature[]{$\rho_n$}{half sum of positive roots in $\GL(n)$}
$$
2\rho_n\;=\;(n-1,n-3,\dots,3-n,1-n)\;\in\;\ZZ^n.
$$

For $x\in A^\times$ define\nomenclature[]{$d_x$}{diagonal matrix with determinant $x$}\nomenclature[]{$t_x$}{diagonal matrix $\diag(x^n,x^{n-1},\dots,x)$}
\begin{equation}
d_x\;:=\;
\Delta(\diag(x,1,\dots,1))\;\in\;G(A),
\label{eq:definitionofdx}
\end{equation}\begin{equation}
t_{x}\;:=\;x^{\rho_n+((n-1)/2)}\;=\;\diag(x^{n},x^{n-1},\dots,x)\;\in\;\GL_n(A).
\label{eq:definitionoftx}
\end{equation}
We also consider the latter element as an element of $G(A)$ via the diagonal embedding. With this notation at hand, set\nomenclature[]{$h$}{matrix in $G(\ZZ_p)$}\nomenclature[]{${\bf1}_n$}{identity matrix in $\GL(n)$}
\begin{equation}
h\;:=\;
\left(
h_nj_n(t_{-1}),
{\bf1}_n\right)\;\in\;G(\ZZ_p).
\label{eq:definitionofh}
\end{equation}

\section{Hecke Algebras}\label{sec:hecke}

\subsection{Hecke pairs}

For any Hecke pair $(R,S)$ consider the free $\ZZ$-module $\mathcal H_\ZZ(R,S)$ over the set of all double cosets $RsR$, which naturally embeds into the free $\ZZ$-module $\mathscr R_\ZZ(R,S)$\nomenclature[]{$\mathcal R_\ZZ(R,S)$}{free $\ZZ$-module of right cosets $sR$} over the set of the right cosets $sR$, $s\in S$:
$$
RsR=\bigsqcup_i s_iR\mapsto\sum_i s_iR.
$$
Identify $\mathcal H_\ZZ(R,S)$ with its image under this embedding. Then $\mathcal H_\ZZ(R,S)$ is the $\ZZ$-module of $R$-invariants under the action
$$
R\times \mathscr R_\ZZ(R,S)\to \mathscr R_\ZZ(R,S),\;\;\;(r,sR)\mapsto rsR.
$$
It is well known that $\mathcal H_\ZZ(R,S)$ admits a structure of an associative $\ZZ$-algebra with multiplication
$$
\left(\sum_i s_iR\right)\cdot\left(\sum_jt_jR\right):=\sum_{i,j}s_it_jR.
$$
This algebra is unitary if and only if $R\cap S\neq\emptyset$. For any commutative ring $A$ set\nomenclature[]{$\mathcal H_A(R,S)$}{double coset Hecke algebra over the ring $A$}
$$
\mathcal H_A(R,S):=\mathcal H_\ZZ(R,S)\otimes_\ZZ A.
$$
Then $\mathcal H_A(R,S)$ is an associative $A$-algebra, the {\em Hecke algebra} of the pair $(R,S)$ over $A$.

For a locally compact topological group $G$ and a compact open subgroup $K\leq G,$ the module $\mathscr R_A(K,G)$ may be interpreted as the $A$-module of locally constant right $K$-invariant mappings $f:G\to A$ with compact support and $\mathcal H_A(K,G)$ is just the submodule of left $K$-invariant mappings. If $A\subseteq\CC$, then multiplication is nothing but convolution
$$
\alpha*\beta\;:\;
x\mapsto \int_G \alpha(g)\beta(xg^{-1})dg,
$$
where $dg$ is the right invariant Haar measure on $G$ which assigns measure $1$ to $K$. This integral is eventually a finite sum with integer coefficients. Therefore, this interpretations is valid even without the assumption $A\subseteq\CC$.

\subsection{$p$-adic Hecke algebras}\label{sec:padicheckealgebras}

Let $F/\QQ_p$ denote a $p$-adic field with integer ring $\OO$. Write $\mathfrak{p}\subseteq\OO$ for the maximal ideal, $\varpi\in\mathfrak{p}$ for a uniformizer and $q=\absNorm(\mathfrak{p})$ the norm of $\mathfrak{p}$\nomenclature[]{$\absNorm(-)$}{absolute norm of an ideal}. For any $\alpha\geq\alpha'\geq 0$\nomenclature[]{$\alpha$}{non-negative integer, at least as large as $\alpha'$}\nomenclature[]{$\alpha'$}{non-negative integer, bounded above by $\alpha$} write $I_{\alpha',\alpha}^n\subseteq\GL_n(\OO)$ for the subgroup of matrices becoming upper triangular modulo $\mathfrak{p}^\alpha$ and which lies in $U_n(\OO/\mathfrak{p}^{\alpha'})$ when considered modulo $\mathfrak{p}^{\alpha'}$\nomenclature[]{$I_{\alpha',\alpha}^n$}{subgroup of $\GL_n(\OO)$, upper triangular mod $p^\alpha$, unipotent mod $p^{\alpha'}$}. Set $I_{\alpha}^n:=I_{0,\alpha}^n$.\nomenclature[]{$I_\alpha^n$}{subgroup of $\GL_n(\OO)$, upper triangular mod $p^\alpha$}

Recall that a tuple $e=(e_1,\dots,e_n)\in\ZZ^n$ is {\em dominant} if
$$
e_1\geq e_2\geq\cdots\geq e_n.
$$
Consider the semigroup\nomenclature[]{$\Delta_{F,n}$}{semigroup in $T_n(F)$}
$$
\Delta_{F,n}\;:=\;T_n(\OO^\times)\cdot\{\varpi^e\mid e\in\ZZ_{\geq 0}^n\;\text{dominant}\}\;\subseteq\;T_n(F)
$$
and define the Hecke algebra\nomenclature[]{$\mathcal H_A^n(\alpha',\alpha)$}{abstract Hecke algebra for $\GL(n)$ of level $I_{\alpha',\alpha}$ for cosets represented by $\Delta_{F,n}$}
$$
\mathcal H_A^n(\alpha',\alpha)\;:=\;\mathcal H_A(I_{\alpha',\alpha}^n,I_{\alpha',\alpha}^n\Delta_{F,n}I_{\alpha',\alpha}^n).
$$
Whenever $e$ is dominant, $e_n\geq 0$ and $\alpha>0$ we define a Hecke operator\nomenclature[]{$U_{\varpi}^e$}{generalized $U_p$-operator in $\mathcal H_A^n(\alpha',\alpha)$}
\begin{equation}
  U_{\varpi}^e
  \;:=\;I_{\alpha',\alpha}^n\varpi^e I_{\alpha',\alpha}^n\;=\;
  \bigsqcup_{u\in U_n(\OO)/\varpi^e U_n(\OO)\varpi^{-e}}
    u\varpi^e I_{\alpha',\alpha},
    \label{eq:definitionofUpie}
\end{equation}
which depends on the choice of $\varpi$ whenever $\alpha'>0$. It is well known that these operators commute \citep{iwahorimatsumoto1965,gritsenko1992,hida1995,hida1998}. Moreover, we have the relation
$$
U_{\varpi}^e\cdot U_{\varpi}^{e'}\;=\;U_{\varpi}^{e+e'}
$$
for any dominant $e,e'\in\ZZ_{\geq 0}^n$. Therefore, writing
$$
\omega_{\nu}\;:=\;(\underbrace{1,\dots,1}_{\nu},\underbrace{0,\dots,0}_{n-\nu})
$$
for the $\nu$-th fundamental weight with $\nu$ leading $1$'s and $n-\nu$ tailing $0$'s\nomenclature[]{$\omega_{\nu}$}{$\nu$-th fundamental weight for $\GL(n)$}, the operators\nomenclature[]{$V_\nu$}{$U_p$ operator for $\GL(n)$ for $\nu$-th fundamental weight}
$$
V_\nu\;:=\;U_{\varpi}^{\omega_{\nu}},\quad 1\leq\nu\leq n,
$$
generate $\mathcal H_A^n(0,\alpha)$.

Sending $U_{\varpi}^e\in \mathcal H_A^n(0,\alpha)$ to $U_{\varpi}^e\in \mathcal H_A^n(\alpha',\alpha)$ defines an inclusion
$$
\mathcal H_A^n(0,\alpha)\;\subseteq\;
\mathcal H_A^n(\alpha',\alpha),
$$
depending on the choice of uniformizer $\varpi$. We see that
\begin{equation}
\mathcal H_A^n(\alpha',\alpha)\;=\;\mathcal H_A^n(0,\alpha)[I_\alpha^n/I_{\alpha',\alpha}^n]
\;=\;\mathcal H_A^n(0,\alpha)[T_n(\OO/\mathfrak{p}^{\alpha'})],
\label{eq:iwahoriheckeasgroupring}
\end{equation}
which is a finitely generated commutative $A$-algebra (cf.\ \citep{hida1995,hida1998}).

\subsection{Parabolic Hecke algebras}

Define\nomenclature[]{$I_{\alpha'}^{B_n}$}{compact open in $B_n(F)$}
$$
I_{\alpha'}^{B_n}\;:=\;B_n(F)\cap I_{\alpha',\alpha}^n.
$$
As the notation suggests, this compact open subgroup of $B_n(F)$ is independent of $\alpha$. Restriction induces a canonical isomorphism
\begin{equation}
\mathcal H_A^n(\alpha',\alpha)\;\cong\;
\mathcal H_A(I_{\alpha'}^{B_n},I_{\alpha'}^{B_n}\Delta_{F,n}I_{\alpha'}^{B_n}),
\label{eq:iwahoriheckeasparabolichecke}
\end{equation}
which on cosets is explicitly given by the map
$$
 g I_{\alpha',\alpha}^n\mapsto g I_{\alpha'}^{B_n}.
$$
Existence of the Iwasawa decomposition shows that this is well defined. Set\nomenclature[]{$\mathcal H_A^{B_n}(\alpha')$}{parabolic Hecke algebra for $\GL(n)$}
$$
\mathcal H_A^{B_n}(\alpha')\;:=\;
\mathcal H_A(I_{\alpha'}^{B_n},I_{\alpha'}^{B_n}(T_n(F)\cap \OO^n)I_{\alpha'}^{B_n}).
$$
Then by \eqref{eq:iwahoriheckeasparabolichecke}, $\mathcal H_A^n(\alpha',\alpha)$ is a subalgebra of $\mathcal H_A^{B_n}(\alpha')$.

\subsection{The $U_{\mathfrak{p}}$-operators}

In $\mathcal H_A^{B_n}(\alpha')$ we have the Hecke operators\nomenclature[]{$\widetilde{U}_i$}{parabolic Hecke operators}
$$
\widetilde{U}_i:=I_{\alpha'}^{B_n}
\begin{pmatrix}
{\bf1}_{i-1}&0&0\\
0&\varpi&0\\
0&0&{\bf1}_{n-i}
\end{pmatrix}
I_{\alpha'}^{B_n},\quad 1\leq i\leq n.
$$
With \eqref{eq:iwahoriheckeasgroupring} we see
$$
\mathcal H_A^{B_n}(\alpha')\;=\;\mathcal H_A^{B_n}(0)[T_n(\OO/\mathfrak{p}^{\alpha'})].
$$

\begin{proposition}\label{prop:TnuviaUi}
We have for $0\leq \nu\leq n$,
\begin{equation}
  q^{\frac{\nu(\nu-1)}{2}}\cdot V_\nu\;=\;
  \widetilde{U}_1 \widetilde{U}_2\cdots \widetilde{U}_\nu.
  \label{eq:TnuviaUi}
\end{equation}
\end{proposition}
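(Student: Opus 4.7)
My plan is to verify (\ref{eq:TnuviaUi}) by computing with explicit right cosets inside the parabolic Hecke algebra $\mathcal{H}_A^{B_n}(\alpha')$, which contains the image of $\mathcal{H}_A^n(\alpha',\alpha)$ via (\ref{eq:iwahoriheckeasparabolichecke}). Set $\tau_i := \diag(1,\ldots,1,\varpi,1,\ldots,1)$ with $\varpi$ in position $i$, so that $\widetilde{U}_i = I_{\alpha'}^{B_n}\tau_i I_{\alpha'}^{B_n}$. A direct stabilizer computation ($b\in I_{\alpha'}^{B_n}$ with $\tau_i^{-1}b\tau_i\in I_{\alpha'}^{B_n}$ forces $b_{i,k}\in\mathfrak{p}$ for $k>i$) shows that a complete set of right coset representatives for $\widetilde{U}_i$ is given by the upper unipotent matrices
$$
u_i \;=\; {\bf 1} + \sum_{k=i+1}^n x_{i,k}E_{i,k}, \qquad x_{i,k}\in\OO/\mathfrak{p},
$$
yielding $q^{n-i}$ cosets, where $E_{i,k}$ denotes the matrix unit at position $(i,k)$.

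The key structural observation is that $\tau_i u_j \tau_i^{-1} = u_j$ for every $j > i$: conjugation by $\tau_i$ rescales only entries in row $i$ or column $i$, while $u_j$ is supported at positions $(j,k)$ with $k > j > i$. Commuting the $\tau_i$'s past all subsequent $u_j$'s therefore yields
$$
u_1 \tau_1 u_2 \tau_2 \cdots u_\nu \tau_\nu \;=\; (u_1 u_2 \cdots u_\nu)\cdot\tau_1\tau_2\cdots\tau_\nu \;=\; (u_1 u_2 \cdots u_\nu)\cdot\varpi^{\omega_\nu}.
$$
Since $u_1 u_2 \cdots u_\nu \in I_{\alpha'}^{B_n}$, every summand of $\widetilde{U}_1\cdots\widetilde{U}_\nu$ is a right coset contained in $I_{\alpha'}^{B_n}\varpi^{\omega_\nu}I_{\alpha'}^{B_n}$, which under (\ref{eq:iwahoriheckeasparabolichecke}) is precisely the double coset corresponding to $V_\nu$. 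Hence the product is a scalar multiple of $V_\nu$.

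To pin down this scalar, the total number of summands in $\widetilde U_1\cdots\widetilde U_\nu$ is $\prod_{i=1}^{\nu}q^{n-i}=q^{n\nu-\nu(\nu+1)/2}$, while a parallel stabilizer computation shows that $V_\nu$ consists of $q^{\nu(n-\nu)}$ distinct right cosets, giving the ratio $q^{\nu(\nu-1)/2}$ as anticipated. The main delicate point is upgrading this average count to a pointwise statement: one must show that the map $(u_1,\ldots,u_\nu)\mapsto u_1 u_2 \cdots u_\nu$, viewed modulo $I_{\alpha'}^{B_n}\cap\varpi^{\omega_\nu}I_{\alpha'}^{B_n}\varpi^{-\omega_\nu}$, has fibers of constant size $q^{\nu(\nu-1)/2}$. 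The target is parameterized by the entries at positions $(j,k)$ with $j\leq\nu<k$, while the $\binom{\nu}{2}$ parameters $x_{j,k}$ with $j<k\leq\nu$ are auxiliary; proceeding from row $\nu$ upward and solving triangularly for each $x_{j,k}$ with $k>\nu$ in terms of the prescribed image and the auxiliary variables yields the required uniform fibers. The congruence conditions defining $I_{\alpha'}^{B_n}$ for $\alpha'>0$ do not disturb the argument, since all constraints arising are off-diagonal and impose only integrality, not divisibility by $\mathfrak{p}^{\alpha'}$.
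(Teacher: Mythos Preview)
Your argument is correct. The coset decomposition of each $\widetilde{U}_i$, the commutation $\tau_i u_j \tau_i^{-1}=u_j$ for $j>i$, and the triangular fiber analysis all check out; in particular your observation that for fixed auxiliary parameters $x_{j,k}$ ($j<k\le\nu$) the map to the target entries is affine in the remaining variables row by row is exactly what is needed to upgrade the degree count to the pointwise multiplicity statement.

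The paper itself gives no argument: it simply records that the proof of Lemma~4.1 in \cite{kazhdanmazurschmidt2000} carries over to the present parabolic Hecke algebra $\mathcal H_A^{B_n}(\alpha')$ (the only new feature being the congruence condition modulo $\mathfrak{p}^{\alpha'}$ on the diagonal, which---as you also note---is irrelevant since all representatives are unipotent). Your write-up therefore supplies precisely the details that the paper outsources to that reference; the underlying computation is the same explicit right-coset manipulation.
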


\begin{proof}
The proof of Lemma 4.1 in \citep{kazhdanmazurschmidt2000} remains valid in our setting.
\end{proof}

Set\nomenclature[]{$U_{\mathfrak{p}}$}{$U_p$-operator in $\GL(n)$}
$$
U_{\mathfrak{p}}\;:=\;\prod_{\nu=1}^{n-1}V_{\nu},
$$
and\nomenclature[]{$U_{\mathfrak{p}}'$}{$U_p$-operator in $\GL(n)$, including central contribution}
$$
U_{\mathfrak{p}}'\;:=\;V_{n}\cdot U_\mathfrak{p}=\prod_{\nu=1}^{n}V_{\nu}.
$$

\subsection{Decomposition of Hecke polynomials}

Consider the standard Hecke operators\nomenclature[]{$T_\nu$}{standard spherical Hecke operator for $\GL(n)$}
$$
T_\nu\;:=\;I_{0,0}\varpi^{\omega_\nu}I_{0,0}\;\in\;\mathcal H_A^n(0,0)
$$
in the spherical Hecke algebra. The reciprocal Hecke polynomial\nomenclature[]{$H_F(X)$}{reciprocal spherical Hecke polynomial for $\GL(n)$ over nonarchimedean field $F$}
\begin{equation}
  H_F(X):=\sum_{\nu=0}^n (-1)^\nu q^{\frac{\nu(\nu-1)}{2}}T_\nu X^{n-\nu}\in\mathcal H_A^n(\alpha',\alpha)
  \label{eq:Fheckepolynomial}
\end{equation}
admits a factorization
\begin{equation}
H_F(X)=\prod_{i=1}^n(X-\widetilde{U}_i),
\label{eq:gritsenko}
\end{equation}
cf.\ \citep[Theorem 2]{gritsenko1992}. Although $H_F(X)$ is defined via the spherical Hecke algebra, it is relevant for us in the ramified case as well due to its relation to the Hodge polygon (cf.\ \citep{hida1998}).

\subsection{$p$-stabilization in principal series representations}\label{sec:pstabilization}

Let $E/\QQ$ denote a field of characteristic $0$. Recall that the norm $\absnorm{\cdot}$ on $F$ is normalized such that $|\varpi|=q^{-1}$ and consider its values in $E$.

For an admissible representation $(V,\pi)$ of $\GL_n(F)$ over $E$ the Jacquet module is defined as\nomenclature[]{$V_{B_n}$}{Jaquet module of $V$}
$$
V_{B_n}\;:=\;V/\langle uv-v\mid u\in U_n(F), v\in V\rangle.
$$
This is an admissible representation of $T_n(F)$. For a representation $W$ of $B_n(F)$ over $E,$ define the space\nomenclature[]{$W^{U_n(\OO)}$}{space of $U_n(\OO)$-fixed vectors in $W$}
$$
W^{U_n(\OO)}\;:=\;\{w\in W\mid \forall u\in U_n(\OO): uw=w\}
$$
of invariants. If $W$ is of finite length, then $W^{U_n(\OO)}$ is naturally an $\mathcal H_E^{B_n}(\alpha')$-module for $\alpha'\gg0$ sufficiently large. The operator
$$
w\;\mapsto\;\int_{U_n(\OO)}uw\; du
$$
for the normalized Haar measure $du$ on $U_n(\OO)$ is a projector $W\to W^{U_n(\OO)}$. Therefore, taking invariants is an exact functor and we have an epimorphism
\begin{equation}
  V^{U_n(\OO)}\;\to\;V_{B_n}^{U_n(\OO)}.
  \label{eq:epimorphicUnOinvariants}
\end{equation}

Fix a continuous character $\lambda:T_n(F)\to E^\times$, where $E^\times$ is topologized with the discrete topology. We introduce a modified character\nomenclature[]{$\widetilde{\lambda}$}{modification of character $\lambda$}
$$
\widetilde{\lambda}:\quad \diag(t_1,\dots,t_n)\;\mapsto\;\prod_{i=1}^n\absnorm{t_i}^{n-i}\lambda_i(t_i),
$$
where $\lambda_i$ denotes the restriction of $\lambda$ to the $i$-th component of $T_n(F)$. Considering $\tilde{\lambda}$ as a character of $B_n(F)$, define an algebraically induced principal series representation\nomenclature[]{$I_{B_n}^{\GL_n}(\lambda)$}{unnormalized induction of $\widetilde{\lambda}$}
$$
I_{B_n}^{\GL_n}(\lambda)\;:=\;\Ind_{B_n(F)}^{\GL_n(F)}\widetilde{\lambda},
$$
where $\Ind_{B_n(F)}^{\GL_n(F)}$ denotes unnormalized algebraic induction in the category of smooth representations (cf.\ D\'efinition 1.9 in \citep{clozel1990}). Then, if $E=\CC$,
$$
\absnorm{\det(\cdot)}^{\frac{1-n}{2}}\otimes I_{B_n}^{\GL_n}(\lambda)
$$
agrees with the normalized induction of $\lambda$ from $B_n(F)$ to $\GL_n(F)$.

Recall that the Weyl group $W(\GL_n,T_n)$ acts naturally on the set of characters of $T_n(F)$ from the right. Then for every $\omega\in W(\GL_n,T_n)$,
$$
\left(I_{B_n}^{\GL_n}(\lambda)\right)^{ss}\;=\;\left(I_{B_n}^{\GL_n}\left(\lambda^\omega\right)\right)^{ss},
$$
where the superscript $(-)^{ss}$ denotes semisimplification\nomenclature[]{$(-)^{ss}$}{semisimplification}.

\begin{proposition}[Hida]\label{prop:hida}
  Let $E/\QQ$ denote a field of characteristic $0$ and let $\lambda:T_n(F)\to E^\times$ denote a continuous character as above. Assume that the characters $\lambda^\omega$ for $\omega\in W(\GL_n,T_n)$ are pairwise distinct. Then
  \begin{enumerate}[(i)]
  \item The Jacquet module of $I_{B_n}^{\GL_n}(\lambda)$ is a semisimple $T_n(F)$-module and as such
    \begin{equation}
      \left(I_{B_n}^{\GL_n}(\lambda)\right)_{B_n}\;=\;\bigoplus_{\omega\in W(\GL_n,T_n)}\widetilde{\lambda^\omega}.
      \label{eq:jacquetmoduledecomposition}
    \end{equation}
  \item Each $v$ in the $\widetilde{\lambda^\omega}$-isotypic component in \eqref{eq:jacquetmoduledecomposition} is a simultaneous eigenvector of $V_\nu$, $1\leq\nu\leq n$, and
    \begin{equation}
      V_\nu v\;=\;q^{-\frac{\nu(\nu-1)}{2}}(\lambda^\omega)(\varpi^{\omega_\nu})\cdot v.
      \label{eq:jacquetmoduleeigenvalues}
    \end{equation}
  \end{enumerate}
\end{proposition}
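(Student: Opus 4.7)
My plan is to establish (i) via the Bernstein--Zelevinsky geometric lemma together with the regularity hypothesis, and then to deduce (ii) by computing the action of $V_\nu$ directly on each isotypic summand of the Jacquet module.

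For (i), the Bruhat decomposition $\GL_n(F) = \bigsqcup_{\omega \in W(\GL_n, T_n)} B_n(F)\,\omega\, B_n(F)$, ordered by Bruhat length, provides a $B_n(F)$-stable filtration of the underlying space of $I_{B_n}^{\GL_n}(\lambda)$. Applying the exact Jacquet functor $V \mapsto V_{B_n}$ yields a filtration of the Jacquet module whose successive quotients are isomorphic to the Weyl translates $\widetilde{\lambda^\omega}$; the modulus character $\delta_{B_n}^{1/2}$ arising in the geometric lemma is absorbed into the shift from $\lambda$ to $\widetilde{\lambda}$, as one verifies from the identity $\widetilde{\lambda^\omega}\cdot(\widetilde{\lambda}^\omega)^{-1} = \delta_{B_n}^{1/2}\cdot (\delta_{B_n}^{-1/2})^\omega$ on the diagonal torus. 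Under the regularity hypothesis the characters $\widetilde{\lambda^\omega}$ are pairwise distinct, so choosing any $t \in T_n(F)$ that separates them, the operator $t$ on the finite-dimensional $V_{B_n}$ has $|W(\GL_n,T_n)|$ distinct eigenvalues and is therefore diagonalisable with one-dimensional eigenspaces; these eigenspaces are automatically $T_n(F)$-stable and furnish the direct sum \eqref{eq:jacquetmoduledecomposition}.

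For (ii), fix $v$ in the $\widetilde{\lambda^\omega}$-summand. For $\alpha'$ large enough that $\widetilde{\lambda^\omega}$ is trivial on $T_n(\OO)\cap I_{\alpha',\alpha}^n$, the surjection \eqref{eq:epimorphicUnOinvariants} lifts $v$ to some $\tilde v \in V^{I_{\alpha',\alpha}^n}$ lying in the corresponding generalised eigenspace. The coset decomposition \eqref{eq:definitionofUpie} then gives
\begin{equation*}
V_\nu \tilde v \;=\; \sum_{u \in U_n(\OO)/\varpi^{\omega_\nu} U_n(\OO) \varpi^{-\omega_\nu}} u\, \varpi^{\omega_\nu}\, \tilde v,
\end{equation*}
and projecting to $V_{B_n}$ collapses each term to $\varpi^{\omega_\nu}\cdot v = \widetilde{\lambda^\omega}(\varpi^{\omega_\nu})\, v$. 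Counting coordinates $(i,j)$ with $i \leq \nu < j$ shows
\begin{equation*}
[U_n(\OO) : \varpi^{\omega_\nu} U_n(\OO) \varpi^{-\omega_\nu}] \;=\; q^{\nu(n-\nu)},
\end{equation*}
while $\widetilde{\lambda^\omega}(\varpi^{\omega_\nu}) = q^{\nu(\nu+1)/2 - \nu n}(\lambda^\omega)(\varpi^{\omega_\nu})$. The exponents sum to $-\nu(\nu-1)/2$, yielding \eqref{eq:jacquetmoduleeigenvalues}. Equivalently one may appeal to Proposition \ref{prop:TnuviaUi}: a shorter analogous calculation shows each $\widetilde{U}_i$ acts on the $\widetilde{\lambda^\omega}$-isotype by $q^{n-i}\widetilde{\lambda^\omega}(e_i) = \lambda_i^\omega(\varpi)$, and taking the product over $i=1,\ldots,\nu$ recovers the same eigenvalue.

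The main obstacle sits in (i): the geometric lemma produces the filtration for free, but upgrading it to a genuine semisimple direct sum decomposition is precisely where the regularity hypothesis is used. Part (ii) is then essentially bookkeeping, reconciling the modulus twist hidden in $\widetilde{\lambda}$ with the Iwahori coset indices to produce the clean normalising factor $q^{-\nu(\nu-1)/2}$ already foreshadowed by the factorization \eqref{eq:gritsenko} of the reciprocal Hecke polynomial.
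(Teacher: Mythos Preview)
Your argument is correct and essentially self-contained, whereas the paper does not prove this proposition at all: it simply records that statements (i) and (ii) are a restatement of Hida's Proposition~5.4 and Corollary~5.5 in \cite{hida1998}, after adjusting for the opposite convention on left/right actions, and notes as a byproduct that $\bigl(I_{B_n}^{\GL_n}(\lambda)\bigr)_{B_n}$ coincides with its own $U_n(\OO)$-invariants. Your use of the Bernstein--Zelevinsky geometric lemma followed by the regularity splitting is the standard route to the semisimplicity in (i), and your direct coset computation for (ii) is exactly what underlies Hida's argument; the bookkeeping $\nu(n-\nu)+\nu(\nu+1)/2-\nu n=-\nu(\nu-1)/2$ is correct. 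One minor point: the existence of a \emph{single} $t\in T_n(F)$ separating all the $\widetilde{\lambda^\omega}$ simultaneously is not automatic (values may live in a finite subgroup of $E^\times$), but this is harmless since commuting operators with pairwise distinct character quotients still split the filtration; you could instead argue extension-by-extension or invoke that the finitely many distinct characters of the abelian group $T_n(F)$ make the finite-dimensional module semisimple. The alternative computation via the $\widetilde{U}_i$ and Proposition~\ref{prop:TnuviaUi} is also correct and matches the factorization~\eqref{eq:gritsenko}.
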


\begin{proof}
  This is a restatement of Proposition 5.4 and Corollary 5.5 in \citep{hida1998}, taking into account that Hida works with a different normalization stemming from the right actions considered in loc.\ cit., where we work with left actions. Hida showed in particular that
  \begin{equation}
    \left(\left(I_{B_n}^{\GL_n}(\lambda)\right)_{B_n}\right)^{U_n(\OO)}\;=\;
    \left(I_{B_n}^{\GL_n}(\lambda)\right)_{B_n}.
    \label{eq:jacquetmoduleinvariants}
 \end{equation}
 Therefore, for $\alpha'$ sufficiently large, $\mathcal H_E^{B_n}(\alpha')$ acts on the Jacquet module canonically.
\end{proof}

Let $\pi$ be a generic irreducible admissible representation of $\GL_n(F)$ with Whittaker model $\mathscr W(\pi,\psi)$\nomenclature[]{$\mathscr W(\pi,\psi)$}{$\psi$-Whittaker model of $\pi$} with respect to a generic character $\psi$ of $U_n(F)$ trivial on $U_n(\OO)$.

\begin{proposition}\label{prop:nonvanishingofeigenvectors}
  Let $\pi$ be a generic irreducible admissible representation of $\GL_n(F)$. Assume that $\pi$ occurs as a subquotient of $I_{B_n}^{\GL_n}(\lambda)$ for a character $\lambda$ satisfying the condition that $\lambda^\omega$ for $\omega\in W(\GL_n,T_n)$ are pairwise distinct. Then every simultaneous $V_{\nu}$-eigenvector $W\in\mathscr W(\pi,\psi)^{U_n(\OO)}$, $1\leq\nu<n$, with non-zero eigenvalues enjoys the following properties:
  \begin{itemize}
    \item[(i)] There exists a unique $\omega\in W(\GL_n,T_n)$ such that for all $1\leq\nu\leq n:$
      \begin{equation}
        V_{\nu}W\;=\;
        q^{-\frac{\nu(\nu-1)}{2}}(\lambda^{\omega})(\varpi^{\omega_\nu})\cdot W.
        \label{eq:VnuWeigenvalue}
      \end{equation}
    \item[(ii)]
      $W$ lies in a unique line in $\mathscr W(\pi,\psi)$ characterized by \eqref{eq:VnuWeigenvalue} and
      $$
        W({\bf1}_n)\;\neq\;0.
      $$
   \end{itemize}
\end{proposition}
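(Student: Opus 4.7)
The plan is to reduce to Hida's computation of Jacquet modules (Proposition \ref{prop:hida}) via the Hecke-equivariant surjection $\mathscr W(\pi,\psi)^{U_n(\OO)} \twoheadrightarrow \pi_{B_n}$ induced by \eqref{eq:epimorphicUnOinvariants} applied to $V = \mathscr W(\pi,\psi)$, combined with \eqref{eq:jacquetmoduleinvariants}. By exactness of the Jacquet functor and the semisimplicity asserted in part (i) of Proposition \ref{prop:hida} (available under our distinctness hypothesis), $\pi_{B_n}$ is a multiplicity-free direct sum of a subset of the characters $\{\widetilde{\lambda^\omega}\}_{\omega \in W(\GL_n,T_n)}$, with $V_\nu$ acting on each $\widetilde{\lambda^\omega}$ by the non-zero scalar \eqref{eq:jacquetmoduleeigenvalues}.

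The technical heart of the proof is to show that $U_{\mathfrak p} = V_1 \cdots V_{n-1}$ acts nilpotently on the kernel $K$ of the above surjection; this is the Hida-style ordinary projector principle expressing that the ordinary part of $\mathscr W(\pi,\psi)^{U_n(\OO)}$ maps isomorphically to the Jacquet module. Granting this, any $W$ which is a simultaneous $V_\nu$-eigenvector with non-zero eigenvalues for $1 \le \nu < n$ has non-zero $U_{\mathfrak p}$-eigenvalue and so cannot lie in $K$; its image $\overline W$ in $\pi_{B_n}$ is a non-zero joint eigenvector, forced by distinctness of the $\widetilde{\lambda^\omega}$ to lie on a unique line $\widetilde{\lambda^\omega}$. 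This pins down $\omega$ and yields \eqref{eq:VnuWeigenvalue}, proving (i). Uniqueness of the line in (ii) follows by the same token: two such eigenvectors with the same eigenvalues project to the same line in $\pi_{B_n}$, so a non-trivial linear combination lies in $K$; being itself a $V_\nu$-eigenvector with non-zero $U_{\mathfrak p}$-eigenvalue, nilpotency forces it to vanish.

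For the non-vanishing $W({\bf 1}_n) \neq 0$ in (ii), the strategy is to exhibit an explicit representative of the eigenline via a suitable Hecke-operator average of the essential / new vector at level $I_{\alpha',\alpha}^n$, and read off the value at ${\bf 1}_n$ from the explicit Whittaker-value formulas of Kondo--Yasuda, Matringe and Miyauchi \cite{kondoyasuda2012,matringe2013,miyauchi2014}; alternatively one can argue contrapositively that $W({\bf 1}_n) = 0$ together with $U_n(\OO)$-invariance and the eigenequations for $V_\nu$ (expanded via the coset decomposition \eqref{eq:definitionofUpie}) forces $W \equiv 0$ on a generating set of cosets in the Iwasawa decomposition, contradicting $\overline W \neq 0$ in the Jacquet module.

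The main obstacle is the nilpotency of $U_{\mathfrak p}$ on $K$: it is plausible and consistent with Hida's framework \cite{hida1995,hida1998}, but requires a careful support-shrinking argument for $U_n(\OO)$-invariant Whittaker functions, showing that iterated application of $U_{\mathfrak p}$ pushes their support into the regime where they are captured faithfully by their image in the Jacquet module — in other words, ruling out the existence of non-ordinary joint eigenvectors outside $\pi_{B_n}$.
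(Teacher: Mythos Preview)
Your treatment of part (i) and of the uniqueness assertion in (ii) is essentially the paper's argument, with one correction: what you flag as the ``main obstacle'' --- nilpotency of the $V_\nu$ on the kernel of $\mathscr W(\pi,\psi)^{U_n(\OO)}\to\mathscr W(\pi,\psi)_{B_n}^{U_n(\OO)}$ --- is not an open issue but a direct citation. The paper invokes equation (5.4) on p.~678 of \cite{hida1998}, which gives the splitting
\[
\mathscr W(\pi,\psi)^{U_n(\OO)}\;=\;\mathscr W(\pi,\psi)_{B_n}^{U_n(\OO)}\;\oplus\;\kernel,
\]
with each $V_\nu$ nilpotent on the second summand. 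No support-shrinking argument is needed here; this is already in Hida's work.

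Your approach to $W({\bf 1}_n)\neq 0$, however, has a genuine gap. The first strategy you propose --- produce the eigenvector as a Hecke-average of the essential vector and read off its value at ${\bf 1}_n$ from the formulae of \cite{kondoyasuda2012,matringe2013,miyauchi2014} --- is precisely what the paper's introduction (end of the outline paragraph on section~\ref{sec:hecke}) warns \emph{fails} in the present generality: those very formulae show that the ordinary projection of the essential vector \emph{vanishes} once $\pi$ is ramified. So this route cannot work when $\lambda$ has non-trivial conductor, which is exactly the case of interest. Your alternative contrapositive sketch is too vague to evaluate.

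The paper's argument is different and more direct. Having identified $\omega$, Frobenius reciprocity realises $\pi$ as a \emph{quotient} of $I_{B_n}^{\GL_n}(\lambda^{\omega w_n})$. In this induced model one writes down the explicit function $f_0$ supported on the open cell $B_n(F)w_n U_n(\OO)$, checks directly that it satisfies the eigen-equations \eqref{eq:f0eigenvaluerelations}, and applies Rodier's Jacquet integral (Corollary 1.8 of \cite{casselmanshalika1980}) to obtain a Whittaker function $W_{f_0}$. The value $W_{f_0}({\bf 1}_n)$ is then computed as the integral $\int_{U_n(\OO)}du\neq 0$. Uniqueness of Whittaker models and of the eigenline pushes this down to $\mathscr W(\pi,\psi)$ and forces $W({\bf 1}_n)\neq 0$.
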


\begin{proof}
  By equation (5.4) on page 678 of \citep{hida1998} we know that
  \begin{equation}
    W(\pi,\psi)^{U_n(\OO)}\;=\;W(\pi,\psi)_{B_n}^{U_n(\OO)}\oplus
    \kernel\left( W(\pi,\psi)^{U_n(\OO)}\to W(\pi,\psi)_{B_n}^{U_n(\OO)}\right),
    \label{eq:Winvariantsdecomposition}
  \end{equation}
  and each $V_\nu$ acts nilpotently on the second summand on the right hand side. Therefore, by our hypothesis on $W$, this Whittaker vector maps to a non-zero $V_\nu$-eigenvector $W_{B_n}\in W(\pi,\psi)_{B_n}^{U_n(\OO)}$ with same eigenvalue. By the hypothesis on $\lambda$, relation \eqref{eq:jacquetmoduleeigenvalues} shows that there is a unique $\omega\in W(\GL_n,T_n)$ satisfying (i), because distinct elements of the Weyl group yield different sets of eigenvalues. This also implies the uniqueness of $W$ up to a scalar.

  This also shows that the Jacquet module of $\pi$ admits $\widetilde{\lambda^{\omega}}$ as a direct summand. Therefore, by Frobenius reciprocity, $\pi$ occurs as a submodule of $I_{B_n}^{\GL_n}(\lambda^{\omega w_n})$.

  For the second statement in (ii), we may assume $E=\CC$ and realize $I_{B_n}^{\GL_n}(\lambda^{\omega w_n})$ inside the space of functions
  $$
    \{f:\GL_n(F)\to\CC\mid \forall g\in\GL_n(F),b\in B_n(F):\;f(bg)=\widetilde{\lambda^{\omega w_n}}(b)f(g)\},
  $$
  Remark that for $\alpha\geq 1$, the double coset
  \begin{equation}
    B_n(F)w_n I_{\alpha,\alpha}^n\;=\;B_n(F)w_n U_n(\OO)
    \label{eq:opencell}
  \end{equation}
  is independent of $\alpha$. Put
  $$
  f_0:\quad g\mapsto
  \begin{cases}
    \widetilde{\lambda^{\omega w_n}}(b)
    ,&\text{if}\;g=bw_nr,\,b\in B_n(F),r\in U_n(\OO),\\
    0,&\text{else}.
  \end{cases}
  $$
  By construction, $f_0$ is an element of $I_{B_n}^{\GL_n}\left(\lambda^{\omega w_n}\right)^{U_n(\OO)}$ satisfying
  \begin{equation}
    V_\nu f_0\;=\;q^{-\frac{\nu(\nu-1)}{2}}
    \lambda^{\omega}\left(\varpi^{\omega_\nu}\right)\cdot f_0,\quad 1\leq\nu\leq n.
    \label{eq:f0eigenvaluerelations}
  \end{equation}
  For every $f\in V$ supported on $B_n(F)w_n B_n(F)$ and $g\in B_n(F)w_n B_n(F),$ consider for the normalized Haar measure $du$ on $U_n(F)$ the integral
  \begin{equation}
    W_f(g)\;:=\;\int_{U_n(F)}f(w_nug)\overline{\psi}(u)du.
    \label{eq:whittakerintegral}
  \end{equation}
  By a well known result of Rodier, this integral converges and extends uniquely to an intertwining operator
  $$
    I_{B_n}^{\GL_n}\left(\lambda^{\omega w_n}\right)\to\Ind_{U_n(F)}^{\GL_n(F)}\psi,
  $$
  cf.\ Corollary 1.8 in \citep{casselmanshalika1980}. By \eqref{eq:whittakerintegral}, the vector $f_0$ is sent to a Whittaker vector which evaluates at ${\bf1}_n$ to
  \begin{align*}
    W_{f_0}({\bf1}_n)=&\int_{U_n(F)}f_0(w_nu)\overline{\psi}(u)du\\
    =&\int_{U_n(\OO)}du\\
    \neq&0,
  \end{align*}
  because the integrand vanishes for $u\not\in U_n(\OO)$ and assumes the value $f_0(w_0)=1$ for $u\in U_n(\OO)$.

  Now since $\pi$ is a submodule of $I_{B_n}^{\GL_n}\left(\lambda^{\omega w_n}\right)$, the uniqueness of Whittaker models for $I_{B_n}^{\GL_n}\left(\lambda^{\omega w_n}\right)$ shows that we have a commutative square
  $$
    \begin{CD}
      I_{B_n}^{\GL_n}\left(\lambda^{\omega w_n}\right) @>>> \Ind_{U_n(F)}^{\GL_n(F)}\psi\\
      @AAA @|\\
      \pi @>>> \Ind_{U_n(F)}^{\GL_n(F)}\psi
    \end{CD}
  $$
  Therefore, $W_{f_0}$ maps to a simultaneous eigenvector $W'$ of the operators $V_\nu$ in $\mathscr W(\pi,\psi)$. By the multiplicity one property of the eigenspaces characterized by \eqref{eq:f0eigenvaluerelations}, there is a non-zero scalar $c\in F^\times$ satisfying
  $$
    W\;=\;c\cdot W',
  $$
  and therefore
  $$
    W({\bf1})\;=\;c\cdot W'({\bf1}_n)\;=\;c\cdot W_{f_0}({\bf1}_n)\;\neq\;0.
  $$
  This concludes the proof.
\end{proof}

\begin{remark}\label{rmk:Lfunctiontoeigenvalues}
  The relation between the eigenvalues of the operators $V_\nu$ and the local $L$-function $L(s,\pi)$ attached to $\pi$ as in \citep{godementjacquet1972} is the following\nomenclature[]{$L(s,\pi)$}{$L$-function of local or global representaiton $\pi$ with complex variable $s$}. Let
  $$
    L(s,\pi)\;=\;\prod_{i=1}^n\frac{1}{1-\alpha_iq^{-s}},
  $$
  for $\alpha_i\in\CC$. By Corollary 3.6 in loc.\ cit. we know that there is a polynomial $P(X)\in\CC[X]$ satisfying $P(1)=1$ and
  $$
    L(s,\pi)\;=\;P(q^{-s})\cdot L\left(s,I_{B_n}^{\GL_n}(\lambda)\right)\;=\;\frac{P(q^{-s})}{\prod\limits_{\begin{subarray}ci=1\\\lambda_i(\OO^\times)=1\end{subarray}}^n \left(1-\lambda_i(\varpi)q^{-\frac{n-1}{2}-s}\right)},
  $$
  where the product in the denominator on the right hand side runs over all $i$ for which $\lambda_i$ is unramified. Hence, $\alpha_i$ equals either $\lambda_i(\varpi)q^{-\frac{n-1}{2}}\neq 0$ or $0$. Assuming without loss of generality that $\alpha_1,\dots,\alpha_\ell\neq 0$ and
  $$
    \alpha_{\ell+1}=\cdots=\alpha_n=0,
  $$
    for some $0\leq\ell\leq n$, we have $\ell=n$ if and only if $\pi$ is spherical. Furthermore, $\pi$ is Iwahori-spherical if and only if the characters $\lambda_1,\dots,\lambda_n$ are unramified.
\end{remark}

\section{A Birch Lemma for $p$-nearly ordinary automorphic forms}\label{sec:birchlemma}

In this section we generalize the Birch Lemma from \citep{januszewski2011,januszewski2015} to arbitrary $p$-nearly ordinary forms. The local main result is Theorem \ref{thm:localbirch}, whose proof will occupy section \ref{sec:birchlocal}. The global main result is Theorem \ref{thm:globalbirch}.

\subsection{The twisted local Zeta integral}\label{sec:birchlocal}

We use the notation of \cite[Section 2]{januszewski2011} in the modified setting of \citep{januszewski2015} with minor modifications. In particular, in this section $F$ denotes a non-archimedean local field with valuation ring $\OO\subseteq F$. Fix again a uniformizer $\varpi\in\OO$ and write $\mathfrak{p}\subset\OO$ for the maximal ideal and $q=\absNorm(\mathfrak{p})$ as before.

If $\chi:F^\times\to\CC^\times$ is a quasi-character\nomenclature[]{$\chi$}{a local or global quasi-character}, we write $\mathfrak{f}_\chi$ for its conductor\nomenclature[]{$\mathfrak{f}_\chi$}{conductor of $\chi$} and assume that it is generated by $f_\chi=\varpi^{e_\chi}$\nomenclature[]{$f_\chi$}{power of $\varpi$ generating $\mathfrak{f}_\chi$}, $e_\chi\geq 0$\nomenclature[]{$e_\chi$}{exponent satisfying $\mathfrak{f}_\chi=\mathfrak{p}^{e_\chi}$}. By abuse of notation, we occasionally write $\chi(g)$ for $\chi(\det(g))$\nomenclature[]{$\chi(g)$}{$\chi$ evaluated at $\det g$}, $g\in\GL_n(F)$.

Fix a non-trivial additive character $\psi:F\to\CC^\times$ of conductor $\OO$. The choice of $\psi$ normalizes the Gau\ss{} sum\nomenclature[]{$G(\chi)$}{Gau\ss{} sum of $\chi$ with respect to $\psi$}
$$
G(\chi)\;:=\;
\sum_{x+\mathfrak{f}_\chi\in(\OO/\mathfrak{f}_\chi)^\times}\chi(x/f_\chi)\psi(x/f_\chi)\;=\;
\frac{1}{\absNorm(\mathfrak{f}_\chi)}\cdot\int_{F^\times}\chi(x)\psi(x)dx,
$$
where the second identity is only valid for $\mathfrak{f}_\chi\neq 1$ and $dx$ denotes the {\em additive} Haar measure on $F$ which attaches volume $1$ to $\OO$.

Implicit in the second identity is the fact that for any $0\neq g\in\OO$, we have with
$\mathfrak{h}:=\mathfrak{f}\cap g\OO$ the relation
\begin{equation}
\sum_{x+\mathfrak{h}\in(\OO/\mathfrak{h})^\times}\chi(x/g)\psi(x/g)\;=\;
\begin{cases}
\absNorm(\mathfrak h/\mathfrak f_\chi)\cdot
\chi(g/f_\chi)\cdot G(\chi),&\text{if\;}\mathfrak{f}_\chi=
\OO g,\\
0,&\text{otherwise}.
\end{cases}
\label{eq:gaussnull}
\end{equation}

Extend $\psi$ to $U_n(F)$ by the rule\nomenclature[]{$\psi$}{a non-trivial character of $F$ (locally) or $\Adeles_F$, extended to generic character of $U_n(F)$ (locally) or $U_n(\Adeles_F)$}
\begin{equation}
\psi(u):=\prod_{i=1}^{n-1}\psi(u_{ii+1})
\label{eq:psitounextension}
\end{equation}
for $u=(u_{ij})\in U_n(F)$.

The Haar measure $dg$ on $\GL_n(F)$\nomenclature[]{$dg$}{normalized Haar measure} is normalized such that the maximal compact subgroup $\GL_n(\OO)$ has measure $1$.

Denote by $I^n$ the Iwahori subgroup of $\GL_n(\OO)$\nomenclature[]{$I^n$}{Iwahori subgroup of $\GL_n(\OO)$}, i.e.\ the group of matrices $g\in\GL_n(\OO)$ that become upper triangular modulo $\mathfrak{p}$. Fixing another element $f=\varpi^\alpha\in\OO$ with $\alpha\geq 1$, write $I^{n}_{\alpha}$ for the subgroup of elements of $\GL_n(\OO)$ lying in $B_n(\OO/\mathfrak{f})$ modulo $\mathfrak{f}:=\OO\cdot f$.

All quantities that are defined relative to $f$ in \citep{januszewski2011} keep their meaning, i.e.\ the matrices $A_n$, $B_n$, $C_n$, $D_n$, $E_n$, $\phi_n$ are all defined with respect to $f=\varpi^\alpha$\nomenclature[]{$E_n$}{matrix depending on $f$ defined in \citep{januszewski2011}}. We recall their definition below.

In particular,\nomenclature[]{$D_n$}{diagonal matrix $\diag(f^{-(n-1)},\dots,f^{n-1})\in\GL_n(F)$}
$$
D_n\;:=\;\diag(f^{-(n-1)},f^{-(n-3)},\dots,f^{n-3},f^{n-1})\;\in\;\GL_n(F)
$$
and for any $\delta\in\ZZ$ the definition of the linear form\nomenclature[]{$\lambda_n^\delta$}{map $F^{n\times n}\to F$}
$$
\lambda_n^\delta:F^{n\times n}\to F,\;\;\;g\mapsto \varpi^{-\delta}\cdot b_n^t\cdot g\cdot \phi_n,
$$
where\nomenclature[]{$\phi_n$}{column vector depending on $f$}
$$
\phi_n := (f^{-n},f^{-(n-1)},\dots,f^{-1})^t.
$$
Again for $\delta\in\ZZ$ we have\nomenclature[]{$j_{n,\delta}$}{map $\GL_n(F)\to\GL_{n+1}(F)$}
$$
j_{n,\delta}:\GL_n(F)\to\GL_{n+1}(F),
$$
$$
g\mapsto
\begin{pmatrix}
g&0\\
0&\varpi^\delta
\end{pmatrix}.
$$
Then $j_n=j_{n,0}$.

Deviating slightly from notation in previous works, put\nomenclature[]{$J^n_{\ell}$}{principal congruence subgroup mod $\mathfrak{f}^\ell$ in $\GL_n(\OO)$}
$$
J^n_{\ell}\;:=\;\kernel\left[\GL_n(\OO)\to\GL_n(\OO/\mathfrak{f}^{\ell})\right].
$$

Assume $\ell\geq 2n$. Then
$$
J^n_{\ell}\;\subseteq\; I^{n}_{\alpha}\cap w_nD_n^{-1}I^{n}_{\alpha}D_nw_n.
$$

Fix a system $R_\ell$ of representatives for $\OO/\mathfrak{f}^\ell$\nomenclature[]{$R_\ell$}{system of representatives for $\OO/\mathfrak{f}^\ell$} and let $R_\ell^\times\subseteq R_\ell$\nomenclature[]{$R_\ell^\times$}{system of representatives for $(\OO/\mathfrak{f}^\ell)^\times$ in $R_\ell$} be a system of representatives of $\left(\OO/\mathfrak{f}^\ell\right)^\times$. To simplify notation in the sequel, we assume that
\begin{equation}
0,\pm1,\pm f,\dots,\pm f^{\ell-1}\in R_\ell.
\label{eq:specialrep}
\end{equation}
Set\nomenclature[]{$\mathfrak{R}_{n,\ell}$}{matrices in $I^n$ with entries in $R_\ell$}
$$
\mathfrak{R}_{n,\ell}:=
\{
(r_{ij})\in I^{n}
\mid
r_{ij}\in R_{\ell}
\}.
$$
As in \citep{januszewski2011} (where $l$ is our $\ell$), $\mathfrak{R}_{n,\ell}$ is a system of representatives for $I^{n}/J^n_{\ell}$ and as such may be endowed with the natural group structure which is induced by matrix multiplication modulo $\mathfrak{f}^{\ell}$.

Define for any $\omega\in W(\GL_n,T_n),$\nomenclature[]{$\mathfrak{R}_{n,\ell}^\omega$}{defined as the intersection $\mathfrak{R}_{n,\ell}\cap \omega^{-1} B_n^-(\OO)\omega$}
$$
\mathfrak{R}_{n,\ell}^\omega:=
\mathfrak{R}_{n,\ell}\cap \omega^{-1} B_n^-(\OO)\omega.
$$
Then $\mathfrak{R}_{n,\ell}^{\omega}$ is a subgroup of $\mathfrak{R}_{n,\ell}$.

We consider the action of the compact torus
$$
T_n(\OO)=\left(\OO^\times\right)^n
$$
on $\GL_n(F)$, which for $\gamma=(\gamma_1,\dots,\gamma_n)\in T_n(\OO)$ is given by\nomenclature[]{$g^{\gamma}$}{right multiplication of $g$ with $\gamma\in T_n(\OO)$, extended to representatives}
$$
{}^\gamma\cdot:\GL_n(F)\to\GL_n(F),\;\;\;g\mapsto {}^{\gamma}g:=g\cdot \diag(\gamma_1,\dots,\gamma_n).
$$
Then $T_n(\OO)$ acts naturally on the set of representatives $\mathfrak{R}_{n,\ell}^\omega$ via its action on the quotient $I^n/J^n_{\ell}$. This action factors over the finite torus
$$
T_{n}(\OO/\mathfrak{f}^\ell)=\left(\OO^\times/(1+\mathfrak{f}^{\ell})\right)^n.
$$
The action of the latter on $\mathfrak{R}_{n,\ell}^\omega$ is faithful. We fix a system of representatives $\mathcal T_{n,\ell}\subseteq T_{n}(\OO)$ for $T_{n}(\OO/\mathfrak{f}^\ell)$\nomenclature[]{$\mathcal T_{n,\ell}$}{system of representatives for $T_{n}(\OO/\mathfrak{f}^\ell)$}.

If $\sigma\in S_n$ corresponds to $\omega$ and $\sigma(n)=n,$ set\nomenclature[]{$\tilde{\mathfrak{R}}_{n,\ell}^\omega$}{representatives with fixed last column $(f^{n-1},-f^{n-2},\dots,-1)^t$}
$$
\tilde{\mathfrak{R}}_{n,\ell}^\omega:=
\{
(r_{ij})\in\mathfrak{R}_{n,\ell}^\omega
\mid
r_{n1}= f^{n-1},\;r_{nj}= -f^{n-j},\;2\leq j\leq n
\}.
$$

\begin{proposition}[Proposition 2.4 in \citep{januszewski2011}]\label{prop:rtildebahn}
If $\sigma(n)=n$ we have for any $r\in\tilde{\mathfrak{R}}_{n,\ell}^\omega$
$$
\#\left(T_{n}(\OO)\cdot r\cap\tilde{\mathfrak{R}}_{n,\ell}^\omega\right)=
\absNorm(\mathfrak{f})^{\frac{n(n-1)}{2}}.
$$
In other words, the orbit of $r$ under the action of $T_n(\OO)$ on $\mathfrak{R}_{n,\ell}^\omega$ contains $\absNorm(\mathfrak{f})^\frac{n(n-1)}{2}$ elements of $\tilde{\mathfrak{R}}_{n,\ell}^\omega$.
\end{proposition}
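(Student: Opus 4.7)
The plan is to reduce everything to the action on the last row and then count. The right multiplication $r \mapsto r \cdot \diag(\gamma_1,\dots,\gamma_n)$ scales the $j$-th column by $\gamma_j$, so starting from $r \in \tilde{\mathfrak{R}}_{n,\ell}^\omega$ the last row of $r\gamma$ becomes $(f^{n-1}\gamma_1, -f^{n-2}\gamma_2, \dots, -\gamma_n)$. The condition that the $J^n_\ell$-representative of $r\gamma$ lies in $\tilde{\mathfrak{R}}_{n,\ell}^\omega$ amounts to requiring this last row to coincide, modulo $\mathfrak{f}^\ell$, with the distinguished last row $(f^{n-1},-f^{n-2},\dots,-1)$; indeed thanks to \eqref{eq:specialrep} the entries $\pm f^{n-j}$ are their own chosen representatives in $R_\ell$, so no representative ambiguity arises.

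This congruence condition is equivalent to
$$
f^{n-j}(\gamma_j-1)\;\in\;\mathfrak{f}^\ell\qquad(1\le j\le n),
$$
i.e.\ $\gamma_j \in 1+\mathfrak{f}^{\ell-(n-j)}$ for each $j$. The remaining defining conditions of $\tilde{\mathfrak{R}}_{n,\ell}^\omega$ are automatically preserved: $T_n(\OO)$ sits inside $B_n^-(\OO)\cap \omega^{-1}B_n^-(\OO)\omega$ (the diagonal torus is normalized by every Weyl element), so right multiplication by $\gamma$ keeps $r$ inside $\omega^{-1}B_n^-(\OO)\omega$, and the hypothesis $\sigma(n)=n$ ensures that fixing the last row is compatible with this intersection.

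Next I will verify that the induced action of $T_n(\OO/\mathfrak{f}^\ell)$ on $\mathfrak{R}_{n,\ell}^\omega$ is in fact free (not just faithful as stated in the text). This is where the principal congruence structure enters: since $J^n_\ell$ is normal in $\GL_n(\OO)$, the stabilizer of any $r J^n_\ell$ under right translation by $T_n(\OO)$ equals $T_n(\OO)\cap J^n_\ell = (1+\mathfrak{f}^\ell)^n$, which is precisely the kernel of $T_n(\OO)\twoheadrightarrow T_n(\OO/\mathfrak{f}^\ell)$. Consequently each $\gamma\in T_n(\OO/\mathfrak{f}^\ell)$ satisfying the last-row congruence contributes a distinct element of the orbit lying in $\tilde{\mathfrak{R}}_{n,\ell}^\omega$.

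It then remains to count the admissible $\gamma$: the number of $\gamma_j \in \OO^\times/(1+\mathfrak{f}^\ell)$ with $\gamma_j \in 1+\mathfrak{f}^{\ell-(n-j)}$ equals $[(1+\mathfrak{f}^{\ell-(n-j)}):(1+\mathfrak{f}^\ell)] = \absNorm(\mathfrak{f})^{n-j}$, so multiplying over $j=1,\dots,n$ gives
$$
\prod_{j=1}^n \absNorm(\mathfrak{f})^{n-j}\;=\;\absNorm(\mathfrak{f})^{\sum_{k=0}^{n-1}k}\;=\;\absNorm(\mathfrak{f})^{n(n-1)/2}.
$$
The main subtlety, and really the only non-bookkeeping step, is the normality-plus-freeness argument in the third paragraph, together with checking that the hypothesis $\ell\ge 2n$ is strong enough that all the congruence classes $1+\mathfrak{f}^{\ell-(n-j)}$ genuinely lie inside $\OO^\times$ and that the chosen representatives in \eqref{eq:specialrep} give canonical lifts of $\pm f^{n-j}$ modulo $\mathfrak{f}^\ell$; everything else is a direct orbit-stabilizer computation.
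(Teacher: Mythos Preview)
Your proof is correct. The paper does not actually prove this proposition here; it is quoted verbatim from \cite{januszewski2011} and used as a black box, so there is no in-paper argument to compare against. Your approach---reducing to the last row, converting the membership condition into the congruences $\gamma_j\in 1+\mathfrak{f}^{\ell-(n-j)}$, invoking normality of $J^n_\ell$ to get freeness of the $T_n(\OO/\mathfrak{f}^\ell)$-action, and then multiplying the indices $[(1+\mathfrak{f}^{\ell-(n-j)}):(1+\mathfrak{f}^\ell)]=\absNorm(\mathfrak{f})^{n-j}$---is exactly the natural orbit-count one would expect, and all the side conditions (that $0,\pm f^{n-j}\in R_\ell$, that $\ell\ge 2n$ forces $\ell-(n-j)\ge 1$, that the diagonal torus lies in every $\omega^{-1}B_n^-\omega$) are handled cleanly.
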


As in \citep{januszewski2011}, define the matrices\nomenclature[]{$A_n$}{matrix in $I^n$ depending on $f$}\nomenclature[]{$B_n$}{matrix in $I^n$ depending on $f$}
$$
A_n:=
\begin{pmatrix}
1&f^{-1}&0&\hdots&0\\
0&1&-f^{-1}&\ddots&\vdots\\
\vdots&\ddots&\ddots&\ddots&0\\
\vdots&&\ddots&1&-f^{-1}\\
0&\hdots&\hdots&0&1\\
\end{pmatrix}\in I^n,
$$
$$
B_n:=
\begin{pmatrix}
1&0&\hdots&\hdots&0\\
f&-1&\ddots&&\vdots\\
0&f&-1&\ddots&\vdots\\
\vdots&\ddots&\ddots&\ddots&0\\
0&\hdots&0&f&-1\\
\end{pmatrix}\in I^n,
$$
and\nomenclature[]{$C_n$}{matrix in $I^n$ depending on $f$}
$$
C_n:=
\begin{pmatrix}
1&0&\cdots&\cdots&0\\
0&\ddots&\ddots&&\vdots\\
\vdots&\ddots&\ddots&\ddots&\vdots\\
0&\cdots&0&1&0\\
f^{n-1}&-f^{n-2}&\hdots&-f&-1
\end{pmatrix}\in I^n,
$$
subject to the convention that $B_1=C_1={\bf1}_1$ and $B_0:={\bf1}_0$. This guarantees that for all $n\geq 0$ relation \eqref{eq:bncnbn} below holds.

Define the projection\nomenclature[]{$p$}{$F$-linear projection $F^n\times F^n\to F^{n-1\times n-1}$}
$$
p:F^{n\times n}\to F^{n-1\times n-1},\;\;\;
(g_{ij})\mapsto (g_{ij})_{1\leq i,j\leq n-1}.
$$

\begin{proposition}[Proposition 2.5 in \citep{januszewski2011}]\label{prop:rtildeindex}
If $\sigma(n)=n$, we have for $\tilde{\omega}:=p(\omega)$, $\tilde{r}:=p(r)$,
$$
\#\tilde{\mathfrak{R}}_{n,\ell}^\omega
=
\#\mathfrak{R}_{n-1,\ell}^{\tilde{\omega}}.
$$
Furthermore, the projection $p$ induces a bijection
$$
p:
\tilde{\mathfrak{R}}_{n,\ell}^\omega\to
\mathfrak{R}_{n-1,\ell}^{\tilde{\omega}}
$$
and
$$
\GL_{n-1}(F)\to\GL_n(F),\;\;\;
\tilde{g}\mapsto j_{n-1}(\tilde{g})\cdot C_n
$$
induces the inverse of $p$.
\end{proposition}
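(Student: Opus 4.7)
The plan is to check directly that the two maps $p$ and $\tilde g\mapsto j_{n-1}(\tilde g)\cdot C_n$ are well defined between the stated sets and are mutually inverse, from which the equality of cardinalities follows. The hypothesis $\sigma(n)=n$ is what forces the top $n-1$ entries of the last column of $r\in\mathfrak{R}_{n,\ell}^\omega$ to vanish, and this is the pivotal structural observation.

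First I would unravel the condition $r\in\omega^{-1}B_n^-(\OO)\omega$ componentwise. Using the identification of $\omega$ with $\sigma$ via $\omega b_k = b_{\sigma^{-1}(k)}$, this reads $r_{ij}=0$ whenever $\sigma^{-1}(i)<\sigma^{-1}(j)$. Since $\sigma(n)=n$ gives $\sigma^{-1}(n)=n$, for $j=n$ this forces $r_{in}=0$ for all $i<n$, while for $i=n$ no new vanishing is imposed on $r_{nj}$ with $j<n$. Combined with the prescribed last row of $r\in\tilde{\mathfrak{R}}_{n,\ell}^\omega$, we conclude that every such $r$ has block form
\begin{equation*}
r\;=\;\begin{pmatrix}\tilde r & 0\\ v^t & -1\end{pmatrix},\qquad v^t=(f^{n-1},-f^{n-2},\dots,-f).
\end{equation*}
Moreover, since $r\in I^n$ forces the upper-left $(n-1)\times(n-1)$ block to lie in $\GL_{n-1}(\OO)$ and be upper triangular mod $\mathfrak{p}$, and since the entries of $r$ already lie in $R_\ell$, the restriction $\tilde r=p(r)$ belongs to $\mathfrak{R}_{n-1,\ell}$. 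The same vanishing condition for the subpermutation $\tilde\sigma=\sigma|_{\{1,\dots,n-1\}}$ is the restriction of the original one, so in fact $\tilde r\in\mathfrak{R}_{n-1,\ell}^{\tilde\omega}$. This proves $p$ lands in $\mathfrak{R}_{n-1,\ell}^{\tilde\omega}$.

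Next I would check the reverse direction. Given $\tilde g\in\mathfrak{R}_{n-1,\ell}^{\tilde\omega}$, a direct block multiplication gives
\begin{equation*}
j_{n-1}(\tilde g)\cdot C_n\;=\;\begin{pmatrix}\tilde g & 0\\ v^t & -1\end{pmatrix}.
\end{equation*}
Its entries visibly lie in $R_\ell$ by assumption \eqref{eq:specialrep}; the determinant is $-\det(\tilde g)\in\OO^\times$; the last row reduces mod $\mathfrak{p}$ to $(0,\dots,0,-1)$ because $f=\varpi^\alpha$ with $\alpha\geq 1$, so combined with $\tilde g$ being upper triangular mod $\mathfrak{p}$ we obtain an element of $I^n$. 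Running through the cases $(i,j)$ once more, the zeros in the last column and the combinatorics of $\tilde{\omega}$ (together with the non-constraint at $i=n$, $j<n$ noted above) show that $j_{n-1}(\tilde g)\cdot C_n\in\omega^{-1}B_n^-(\OO)\omega$. Its last row is the required one, hence it lies in $\tilde{\mathfrak{R}}_{n,\ell}^\omega$.

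Finally, from the explicit block form one sees immediately that the two maps are mutually inverse: $p(j_{n-1}(\tilde g)\cdot C_n)=\tilde g$ and, for $r$ of the block shape above, $j_{n-1}(\tilde r)\cdot C_n=r$. Bijectivity gives the cardinality equality. The only real content is the vanishing of the top of the last column of $r$, which is a direct consequence of $\sigma(n)=n$; everything else is bookkeeping with block matrices and the definitions of $I^n$, $\mathfrak{R}_{n,\ell}$ and $C_n$.
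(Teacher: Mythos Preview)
Your proof is correct. The paper does not actually prove this proposition; it simply cites it as Proposition~2.5 from \cite{januszewski2011}, so there is no argument in the present paper to compare against. Your direct verification---identifying the componentwise condition $r_{ij}=0\Leftrightarrow\sigma^{-1}(i)<\sigma^{-1}(j)$, using $\sigma(n)=n$ to force the block form with vanishing upper part of the last column, and then checking that $\tilde g\mapsto j_{n-1}(\tilde g)\cdot C_n$ lands in $\tilde{\mathfrak{R}}_{n,\ell}^\omega$ (with the entries $\pm f^k$ lying in $R_\ell$ by \eqref{eq:specialrep} and $\ell\geq 2n$)---is exactly the intended elementary argument and is carried out without gaps.
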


Recall the well known decomposition
\begin{equation}
\GL_n(F)=\bigsqcup_{
\begin{subarray}{c}
\omega\in W(\GL_n,T_n)\\e\in\ZZ^n
\end{subarray}
}U_n(F)\varpi^e\omega I^n.
\label{eq:iwasawa}
\end{equation}
by Iwahori-Matsumoto \citep{iwahorimatsumoto1965}, Proposition 2.33, and Satake \citep{satake1963}, section 8.2. The definition of $\mathfrak{R}_{n,\ell}^{\omega}$ is justified by the following refinement of \eqref{eq:iwasawa}.

\begin{proposition}[Proposition 2.2 in \citep{januszewski2011}]\label{prop:repr}
The set $\varpi^e \omega \mathfrak{R}_{n,\ell}^\omega$ is a system of representatives for the double cosets
$$
U_n(F)\varpi^e\omega r J^n_{\ell},\;\;\;r\in I^n
$$
in $U_n(F)\varpi^e\omega I^n$. Fix an $\ell(e)\geq 2n$ for any $e\in\ZZ^n$. Then
$$
\GL_n(F)=\bigsqcup_{
\begin{subarray}{c}
e\in\ZZ^n\\
\omega\in W(\GL_n,T_n)\\
r\in\mathfrak{R}_{n,\ell(e)}^\omega
\end{subarray}
} U_n(F)\varpi^e\omega r J^n_{\ell(e)}.
$$
\end{proposition}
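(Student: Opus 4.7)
The plan is to reduce to a single Iwahori--Matsumoto cell and then establish a bijection between $\mathfrak{R}_{n,\ell}^\omega$ and the relevant space of double cosets via an $\omega$-twisted Iwahori decomposition of $I^n$. By the Iwahori--Matsumoto decomposition~\eqref{eq:iwasawa}, it suffices to fix a pair $(e,\omega)\in\ZZ^n\times W(\GL_n,T_n)$ and show that $\mathfrak{R}_{n,\ell}^\omega$ is a system of representatives for the double cosets $U_n(F)\varpi^e\omega r J^n_\ell$ ($r\in I^n$) inside the single cell $U_n(F)\varpi^e\omega I^n$; the final disjoint union for $\GL_n(F)$ with arbitrary choices of $\ell(e)\geq 2n$ then follows formally. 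A direct translation, using that $\varpi^{\pm e}$ normalizes $U_n(F),$ shows that $U_n(F)\varpi^e\omega r_1 J^n_\ell=U_n(F)\varpi^e\omega r_2 J^n_\ell$ is equivalent to $r_2\in(\omega^{-1}U_n(F)\omega\cap I^n)\cdot r_1\cdot J^n_\ell,$ so I must show that $\mathfrak{R}_{n,\ell}^\omega$ represents $(\omega^{-1}U_n(F)\omega\cap I^n)\backslash I^n/J^n_\ell$.

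For existence of representatives, given $r_0\in I^n$ I would apply the $\omega$-twisted Iwahori decomposition
$$I^n\;=\;(\omega^{-1}U_n(F)\omega\cap I^n)\cdot(\omega^{-1}B_n^-(\OO)\omega\cap I^n),$$
a standard structural consequence of Bruhat theory for $\GL_n$: for $\omega={\bf1}_n$ it specializes to the familiar decomposition $I^n=U_n(\OO)\cdot(B_n^-(\OO)\cap I^n),$ and the general case is obtained by conjugation together with the observation that the integral Weyl element $\omega$ permutes the root subgroups. Writing $r_0=v b$ accordingly, one absorbs $v$ into $U_n(F)\varpi^e\omega$ via the identity $\varpi^e\omega v=(\varpi^e(\omega v\omega^{-1})\varpi^{-e})\varpi^e\omega,$ where $\omega v\omega^{-1}\in U_n(F)$. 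The unique $\mathfrak{R}_{n,\ell}$-representative $r$ of $b$ modulo $J^n_\ell$ then lies in $\mathfrak{R}_{n,\ell}^\omega,$ since $b$ has identically zero entries in the positions outside $\omega^{-1}B_n^-(\OO)\omega$ and the convention $0\in R_\ell$ from~\eqref{eq:specialrep} ensures that the corresponding entries of $r$ vanish as well.

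For uniqueness, suppose $r_1,r_2\in\mathfrak{R}_{n,\ell}^\omega$ satisfy $r_2=v r_1 j$ with $v\in\omega^{-1}U_n(F)\omega\cap I^n$ and $j\in J^n_\ell$. Modulo $J^n_\ell$ this gives $v\equiv r_2 r_1^{-1}\in\omega^{-1}B_n^-(\OO)\omega.$ Since $\omega\in\GL_n(\ZZ)$ normalizes $J^n_\ell,$ conjugating by $\omega$ reduces the problem to showing $U_n(F)\cap B_n^-(\OO)\cdot J^n_\ell\subseteq J^n_\ell.$ Writing such an element as $u=b' j'$ with $u\in U_n(F),$ $b'\in B_n^-(\OO),$ $j'\in J^n_\ell,$ and comparing the congruence $u\equiv b'\pmod{\mathfrak{f}^\ell}$ entry by entry across diagonal, strictly upper, and strictly lower triangular positions, one forces every entry of $b'-{\bf1}_n$ to lie in $\mathfrak{f}^\ell\OO,$ hence $b'\in J^n_\ell$ and $u\in J^n_\ell$. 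Thus $v\in J^n_\ell,$ whence $r_2\equiv r_1\pmod{J^n_\ell},$ and the injectivity of the natural map $\mathfrak{R}_{n,\ell}\hookrightarrow I^n/J^n_\ell$ gives $r_1=r_2$.

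I expect the main obstacle to be supplying a self-contained argument for the $\omega$-twisted Iwahori decomposition and matching it cleanly with the combinatorial choice of $\mathfrak{R}_{n,\ell}^\omega$: although each individual step is standard, ensuring that the zero-pattern of $\omega^{-1}B_n^-(\OO)\omega$ is preserved by the $R_\ell$-representative selection demands careful bookkeeping, especially because $\omega$-conjugation mixes upper and lower root subgroups in a way that must remain compatible with the upper-triangular-mod-$\mathfrak{p}$ condition defining $I^n$.
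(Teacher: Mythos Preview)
Your argument is correct and is the natural one. The present paper does not supply its own proof of this proposition; it simply quotes the statement from \cite{januszewski2011} and uses it as input. Your reduction via the Iwahori--Matsumoto decomposition~\eqref{eq:iwasawa} to a single cell, the reformulation as a double coset problem for $(\omega^{-1}U_n(F)\omega\cap I^n)\backslash I^n/J^n_\ell$, and the use of the $\omega$-twisted Iwahori factorization of $I^n$ together with normality of $J^n_\ell$ in $\GL_n(\OO)$ constitute exactly the standard proof. The uniqueness step via the elementary inclusion $U_n(F)\cap B_n^-(\OO)J^n_\ell\subseteq J^n_\ell$ is clean, and your observation that the convention $0\in R_\ell$ in~\eqref{eq:specialrep} guarantees that the $\mathfrak{R}_{n,\ell}$-representative of an element of $\omega^{-1}B_n^-(\OO)\omega\cap I^n$ again lies in $\mathfrak{R}_{n,\ell}^\omega$ is precisely the point that needs to be checked.
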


We will also need the following refinement of Proposition 2.3 in \citep{januszewski2011}.

\begin{proposition}\label{prop:ujvolumen}
For any $e\in\ZZ^n$, $\omega\in W(\GL_n,T_n)$ and $r\in\mathfrak{R}_{n,\ell}^\omega$ the measure
$$
\int_{U_n(\OO)\varpi^e\omega r J^n_{\ell}}dg
$$
is independent of $\omega$ and $r$. If $e\in\ZZ^n$ and $\ell>0$, then
$$
\int_{U_n(\OO)\varpi^e\omega r J^n_{\ell}}dg=
[\varpi^e U_n(\OO)\varpi^{-e}:U_n(\OO)]\cdot
\prod_{\mu=1}^n
\left({1-\absNorm(\mathfrak{p})^{-\mu}}\right)^{-1}
\cdot
\absNorm(\mathfrak{f})^{-\frac{\ell (n+1)n}{2}}.
$$
\end{proposition}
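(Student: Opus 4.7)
My plan is to deal with the independence statement and the explicit volume separately, both via unimodularity of $\GL_n(F)$ and the fact that $J^n_\ell$ is the principal congruence subgroup.

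For the independence of $\omega$ and $r$: since $J^n_\ell$ is normal in $\GL_n(\OO)$ (being the kernel of $\GL_n(\OO)\to\GL_n(\OO/\mathfrak{f}^\ell)$) and $\omega r\in\GL_n(\OO)$, we have $\omega rJ^n_\ell=J^n_\ell\omega r$, so
$$
U_n(\OO)\varpi^e\omega rJ^n_\ell\;=\;\bigl(U_n(\OO)\varpi^eJ^n_\ell\bigr)\cdot\omega r,
$$
and the claim is immediate from the right-invariance of $dg$.

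For the volume, I would conjugate $\varpi^e$ across $J^n_\ell$ by introducing $K_e:=\varpi^eJ^n_\ell\varpi^{-e}$, a compact open subgroup of $\GL_n(F)$ whose entries satisfy $k_{ii}\in 1+\mathfrak{f}^\ell$ and $k_{ij}\in\mathfrak{p}^{\alpha\ell+e_i-e_j}$ off-diagonal. Right-translation by $\varpi^{-e}$ reduces the task to computing $\vol(U_n(\OO)\cdot K_e)$. Since both $U_n(\OO)$ and $K_e$ are compact open subgroups of $\GL_n(F)$, the product decomposes as a disjoint union of left $K_e$-cosets indexed by $U_n(\OO)/(U_n(\OO)\cap K_e)$, giving
$$
\vol(U_n(\OO)\cdot K_e)\;=\;[U_n(\OO):U_n(\OO)\cap K_e]\cdot\vol(K_e).
$$
Conjugation preserves volume by unimodularity, so $\vol(K_e)=\vol(J^n_\ell)=(\#\GL_n(\OO/\mathfrak{f}^\ell))^{-1}=\absNorm(\mathfrak{f})^{-\ell n^2}\prod_{\mu=1}^n(1-q^{-\mu})^{-1}$, using the standard order formula for $\GL_n$ over finite quotient rings. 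Inspecting entries and combining the constraint $u_{ij}\in\OO$ from $U_n(\OO)$ with $k_{ij}\in\mathfrak{p}^{\alpha\ell+e_i-e_j}$ from $K_e$ yields
$$
[U_n(\OO):U_n(\OO)\cap K_e]\;=\;\prod_{i<j}q^{\max(0,\alpha\ell+e_i-e_j)}.
$$

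In the regime of interest (in particular $e$ dominant, which is where the Hecke operators $U^e_\varpi$ of Section~\ref{sec:padicheckealgebras} live, so that $\alpha\ell+e_i-e_j\geq 0$ for $i<j$), the maximum collapses and the combinatorial factor splits as
$$
\prod_{i<j}q^{\alpha\ell+e_i-e_j}\;=\;\absNorm(\mathfrak{f})^{\ell n(n-1)/2}\cdot\prod_{i<j}q^{e_i-e_j},
$$
where the second factor is precisely the (generalized) index $[\varpi^eU_n(\OO)\varpi^{-e}:U_n(\OO)]$, understood as the ratio $\vol(U_n(\OO))/\vol(\varpi^eU_n(\OO)\varpi^{-e})$ of Haar measures on $U_n(F)$ (recovering the ordinary group index whenever $\varpi^eU_n(\OO)\varpi^{-e}\subseteq U_n(\OO)$). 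Collecting exponents of $\absNorm(\mathfrak{f})$ via $\ell n(n-1)/2-\ell n^2=-\ell n(n+1)/2$ yields exactly the stated formula. The only nontrivial bookkeeping is the entrywise computation of $U_n(\OO)\cap K_e$ and its reconciliation with the adjoint-character quantity $\prod_{i<j}q^{e_i-e_j}$; everything else is standard $p$-adic measure theory.
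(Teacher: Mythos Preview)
Your argument is correct and is essentially a self-contained unpacking of the paper's proof. The paper defers the independence statement and the case $e=0$ to Proposition~2.3 of \cite{januszewski2011}, then reduces dominant $e$ to $e=0$ via the single observation
\[
\int_{U_n(\OO)\varpi^e J^n_{\ell}}dg\;=\;[\varpi^e U_n(\OO)\varpi^{-e}:U_n(\OO)]\cdot\int_{U_n(\OO) J^n_{\ell}}dg.
\]
You instead conjugate $J^n_\ell$ to $K_e$ and compute $[U_n(\OO):U_n(\OO)\cap K_e]\cdot\vol(K_e)$ directly; this is the same coset-counting idea organised slightly differently, with the benefit that you make explicit the computation of $\vol(J^n_\ell)$ and the entrywise description of $U_n(\OO)\cap K_e$ that the paper leaves to the reference.

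Two small remarks. First, $U_n(\OO)$ is compact but not open in $\GL_n(F)$; this slip is harmless, since your coset decomposition $U_n(\OO)\cdot K_e=\bigsqcup_u uK_e$ only needs $U_n(\OO)$ to be a group and $K_e$ to be open. Second, both your proof and the paper's only justify the explicit formula for dominant $e$ (or, in your slightly sharper phrasing, when $\alpha\ell+e_i-e_j\geq 0$ for $i<j$); this is the only case used downstream, so the apparent generality in the statement is not an issue. Your handling of the index $[\varpi^e U_n(\OO)\varpi^{-e}:U_n(\OO)]$ as a generalised volume ratio is the right way to read the paper's notation.
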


\begin{proof}
  The first statement and the second in the case $e=0$ follows as in Proposition 2.3 in \citep{januszewski2011}. For general dominant $e\in\ZZ^n,$ observe that
  $$
  \int_{U_n(\OO)\varpi^e J^n_{\ell}}dg\;=\;
  [\varpi^e U_n(\OO)\varpi^{-e}:U_n(\OO)]\cdot
  \int_{U_n(\OO)\omega r J^n_{\ell}}dg,
  $$
  whence the claim.
\end{proof}

Let $\theta_1,\dots,\theta_{n+1}:F^\times\to\CC^\times$ denote quasi-characters with conductors dividing $f$\nomenclature[]{$\theta_\mu$}{quasi-character $F^\times\to\CC^\times$ of conductor dividing $f$}. These characters give rise to a character\nomenclature[]{$\theta$}{extension of the characters $\theta_1,\dots,\theta_{n+1}$ to $I^{n+1}_{\alpha}\to\CC^\times$}
\begin{align}
  \theta:\quad&I^{n+1}_{\alpha}\to\CC^\times,
  \label{eq:Inplusonecharacter}\\
  &(r_{ij})_{ij}\;\mapsto\;\prod_{i=1}^{n+1}\theta_i(r_{ii}).\nonumber
\end{align}
Assume furthermore given another set of (finite order) characters $\theta_1',\dots,\theta_n'$ again with conductors dividing $f.$\nomenclature[]{$\theta_\nu'$}{quasi-character $F^\times\to\CC^\times$ of conductor dividing $f$} These give likewise rise to a character\nomenclature[]{$\theta'$}{extension of the characters $\theta_1',\dots,\theta_{n}'$ to $I^{n}_{\alpha}\to\CC^\times$}
\begin{equation}
  \theta':\quad I^{n}_{\alpha}\to\CC^\times.
  \label{eq:Incharacter}
\end{equation}
Remark that
$$
J_{1}^{n+1}\;\subseteq\;\kernel\theta,\quad J^n_{1}\;\subseteq\;\kernel\theta'.
$$

Let $w$ and $v$ denote $\psi$- resp.\ $\psi^{-1}$-Whittaker functions on $\GL_{n+1}(F)$ resp.\ $\GL_{n}(F)$\nomenclature[]{$w$}{local $\psi$-Whittaker function on $\GL_{n+1}(F)$}\nomenclature[]{$v$}{local $\psi^{-1}$-Whittaker function on $\GL_n(F)$}, with the additional property that $w$ and $v$ transform under $I^{n+1}_{\alpha}$ and $I^{n}_{\alpha}$ (from the right) via $\theta$ resp.\ $\theta'$:
\begin{align}
\forall g\in\GL_{n+1}(F),r\in I^{n+1}_{\alpha}:& &w(gr)\;=\;\theta(r)\cdot w(g),\label{eq:wtransform}\\
\forall g\in\GL_{n}(F),r\in I^{n}_{\alpha}:& &v(gr)\;=\;\theta'(r)\cdot v(g).\label{eq:vtransform}
\end{align}

We need the following statement which generalizes Lemma 2.6 in \citep{januszewski2011} and Lemma 4.1 in \citep{januszewski2015}.

\begin{lemma}\label{lem:birchinductionrelation}
We have for any $\delta\in\ZZ$ an identity
$$
w\left(j_{n,-\delta}(g)C_{n+1}\cdot D_{n+1}w_{n+1}\right) v(g)=
$$
\begin{equation}
  \psi\left(\lambda_n^\delta(gB_n)\right) w\left(j_{n,-\delta}(gB_n\cdot D_nw_n)\right) v(gB_n)
  \cdot\theta'(B_n^{-1}).
\label{eq:lemma26}
\end{equation}\end{lemma}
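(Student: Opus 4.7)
The proof will reduce to the Iwahori--spherical case of Lemma~2.6 in \cite{januszewski2011} and Lemma~4.1 in \cite{januszewski2015}; the only new ingredient is to account for the Nebentypus factor $\theta'(B_n^{-1})$, which should fall out of the transformation rule \eqref{eq:vtransform}. First, observe that $B_n$ is lower triangular with subdiagonal entries divisible by $f=\varpi^\alpha$ and diagonal $(1,-1,\dots,-1)\in T_n(\OO^\times)^n$, hence $B_n\in I^{n}_\alpha$. Applying \eqref{eq:vtransform} yields
$$
v(g)\;=\;\theta'(B_n)^{-1}\,v(gB_n)\;=\;\theta'(B_n^{-1})\,v(gB_n),
$$
so the factor $\theta'(B_n^{-1})$ on the right-hand side of \eqref{eq:lemma26} is precisely what is needed to replace $v(g)$ by $v(gB_n)$. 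This reduces the lemma to the identity
\begin{equation}
w\bigl(j_{n,-\delta}(g)\,C_{n+1}\,D_{n+1}w_{n+1}\bigr)
\;=\;
\psi\bigl(\lambda_n^\delta(gB_n)\bigr)\cdot w\bigl(j_{n,-\delta}(gB_nD_nw_n)\bigr),
\label{eq:planreduced}
\end{equation}
in which the Whittaker function $w$ now carries the full burden.

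The next step is to establish a matrix decomposition of the form
$$
j_{n,-\delta}(g)\,C_{n+1}\,D_{n+1}w_{n+1}
\;=\;
u(g)\cdot j_{n,-\delta}(gB_nD_nw_n)\cdot r,
$$
where $u(g)\in U_{n+1}(F)$ satisfies $\psi(u(g))=\psi(\lambda_n^\delta(gB_n))$ and $r\in J_1^{n+1}\subseteq\kernel\theta$. This is a direct block computation that exploits the rank-one perturbation structure of $C_{n+1}$ (its last row being $(f^n,-f^{n-1},\dots,-f,-1)$), the antidiagonal form of $D_{n+1}w_{n+1}$, and the tridiagonal shape of $B_n$. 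The identity is exactly the statement of Lemma~2.6 in \cite{januszewski2011} in the Iwahori--spherical (trivial Nebentypus) case; here the exact same matrix manipulation applies, but we must track the congruence level of the remainder $r$. Once this decomposition is in hand, \eqref{eq:planreduced} follows by combining $w(ug')=\psi(u)w(g')$ for $u\in U_{n+1}(F)$ with $w(g'r)=\theta(r)w(g')=w(g')$ for $r\in\kernel\theta\cap I_{\alpha}^{n+1}$ by \eqref{eq:wtransform}.

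\paragraph{Main obstacle.} The technical heart is the verification that the ``error'' matrix $r$ in the block decomposition lies in $J^{n+1}_1$ and not merely in some $J^{n+1}_\ell$ with $\ell\geq 1$. In \cite{januszewski2011,januszewski2015} the Iwahori--spherical hypothesis meant that the level of $r$ was irrelevant (any $r\in I^{n+1}_\alpha$ acted trivially on $w$), so one did not need to record it. Here we genuinely have to check that the bottom-row perturbation introduced by $C_{n+1}$ combined with the scaling by $D_{n+1}w_{n+1}$ produces, after the conjugation by $j_{n,-\delta}(g)$ and extraction of the unipotent part $u(g)$, a remainder congruent to the identity modulo $\mathfrak{f}=\mathfrak{p}^\alpha$. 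This is an entirely explicit (if slightly tedious) computation with the matrices $B_n$, $C_{n+1}$, $D_{n+1}$, $w_{n+1}$, and the key input in identifying the character $\psi(\lambda_n^\delta(gB_n))$ is that the last column of $gB_n$ reproduces the entries of $\phi_n$ up to the scaling $\varpi^{-\delta}$ that $j_{n,-\delta}$ introduces in the $(n+1,n+1)$-slot.
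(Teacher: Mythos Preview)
Your proposal is correct and follows essentially the same approach as the paper: both reduce to Lemma~4.1 of \cite{januszewski2015} (equivalently Lemma~2.6 of \cite{januszewski2011}), both observe that $v(g)=\theta'(B_n^{-1})\,v(gB_n)$ via \eqref{eq:vtransform}, and both identify the key new point as checking that the remainder term lies in $J_1^{n+1}\subseteq\kernel\theta$. The paper's proof is slightly more explicit in that it names the remainder as the conjugate $w_{n+1}D_{n+1}^{-1}A_{n+1}D_{n+1}w_{n+1}$ (with $A_{n+1}$ the explicit upper-triangular matrix defined in the text), for which membership in $J_1^{n+1}$ is immediate from the shape of $A_{n+1}$ --- this sidesteps the ``tedious computation'' you anticipate.
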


\begin{proof}
  The proof proceeds as the proof of Lemma 4.1 in \citep{januszewski2015}, with the following additional observations:
  $$
  w_{n+1}D_{n+1}^{-1}A_{n+1}D_{n+1}w_{n+1}\;\in\;J_{1}^{n+1}\;\subseteq\;\kernel\theta,
  $$
  and
  $$
  v(g)\;=\;\theta'(B_n^{-1})\cdot v(gB_n).
  $$
\end{proof}

Let $\chi$ be a quasi-character of $F$. We suppose that the condition
\begin{equation}
\alpha\;\geq\;e_\chi
\label{eq:chiconductorcondition}
\end{equation}
is satisfied in all what follows. Let $e\in\ZZ^n$, $\omega\in W(\GL_n,T_n)$,
\begin{equation}
\ell\;\geq\;\max\{2n,n-e_1/\alpha,\dots,n-e_n/\alpha\},
\label{eq:ellcondition}
\end{equation}
and $\delta\in\ZZ$.

The decomposition of $\mathfrak{R}_{n,\ell}^\omega$ into $T_n(\OO)$-orbits leads to partial sums\nomenclature[]{$Z(s;w,v,\delta,e,\omega,r)$}{local partial sum}
\begin{align*}
Z_n(s;w,v,\delta,e,\omega,r)\;:=\;&
\sum_{\gamma\in \mathcal T_{n,\ell}}
\psi(\lambda_n^{\delta}(\varpi^e\omega {}^\gamma r))\cdot
w\left(
\varpi^e\omega {}^\gamma r\cdot D_n w_n
\right)\cdot\\
&
v(\varpi^e\omega{}^\gamma r)\cdot
\chi(\varpi^e\omega{}^\gamma r)\cdot
\absnorm{\det(\varpi^e\omega{}^\gamma r)}^{s-\frac{1}{2}},
\end{align*}
for any $r\in\mathfrak{R}_{n,\ell}^\omega$ and $s\in\CC$.

For any $g\in \GL_{n+1}(F)$ set\nomenclature[]{$\theta^g$}{conjugate of $\theta$ by $g$}
$$
\theta^g(r)\;:=\;\theta(grg^{-1}).
$$

\begin{lemma}\label{lem:preinduction}
  Let $s\in\CC$, $e\in\ZZ^n$, $\omega\in W(\GL_n,T_n)$, $\ell\in\ZZ$ subject to \eqref{eq:ellcondition}, and $\delta\in\ZZ$. Then
  \begin{itemize}
  \item[(i)] $Z_n(s;w,v,\delta,e,\omega,r)$ is independent of the choice of $\mathcal T_{n,\ell}$.
  \item[(ii)] Assume that for all $1\leq\nu\leq n$ and $\mu=n+1-\nu$,
\begin{equation}
  e_{\chi\theta_\mu\theta_\nu'}\;>\;0,
  \label{eq:singlecharactercondition}
\end{equation}
is satisfied. Then $Z_n(s;w,v,\delta,e,\omega,r)$ vanishes unless
\begin{equation}
  e_n\;=\;\delta+\alpha\cdot(n+1-\sigma(n))-e_{\chi\theta_{n+1-\sigma(n)}\theta_{\sigma(n)}'}.
  \label{eq:encondition}
\end{equation}
\item[(iii)] If conditions \eqref{eq:singlecharactercondition} and \eqref{eq:encondition} are satisfied, and if the exponent in \eqref{eq:singlecharactercondition} is independent of $\nu$, then $Z_n(s;w,v,\delta,e,\omega,r)$ vanishes unless $\sigma(n)=n$ and for $1\leq\nu\leq n$,
\begin{equation}
\absnorm{r_{n\nu}}\;=\;\absnorm{f^{n-\nu}}.
  \label{eq:rnormcondition}
\end{equation}
\item[(iv)] If the hypotheses of (iii) are satisfied, we may assume without loss of generality that
\begin{equation}
  r_{n1}=f^{n-1},\quad\text{and}\quad r_{n\nu}\;=\;-f^{n-\nu}\;\text{for}\;2\leq\nu\leq n.
  \label{eq:explicitchoiceforrn}
\end{equation}
If additionally, \eqref{eq:singlecharactercondition} holds for all $1\leq\nu\leq\mu\leq n$, then
\begin{align*}
  Z_n(s;w,v,\delta,e,\omega,r)\;=\;&
\chi\theta^{w_n}\theta'(B_n)\cdot
\prod_{\nu=1}^n
\chi\theta_{\nu}^{w_n}\theta'_\nu\left(f_{\chi\theta^{w_n}_\nu\theta_\nu'}\right)\cdot
\absNorm(\mathfrak{f}^\ell/\mathfrak{f}_{\chi\theta^{w_n}_\nu\theta_\nu'})\cdot
G(\chi\theta^{w_n}_\nu\theta_\nu')\cdot\\
&
w\left(
\varpi^e\omega r\cdot D_n w_n
\right)\cdot
v(\varpi^e\omega r)\cdot
\chi(\varpi^e\omega r)\cdot\absnorm{\det(\varpi^e)}^{s-\frac{1}{2}}.
\end{align*}
  \end{itemize}
  \end{lemma}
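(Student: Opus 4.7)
My approach is to isolate the dependence of each summand on $\gamma\in\mathcal T_{n,\ell}$, which factorises the sum into a product of one-dimensional Gauss-type sums over the components $\gamma_k$, and then to evaluate each factor via \eqref{eq:gaussnull}. For part (i) I would verify that each summand is well-defined modulo shifts by $(1+\mathfrak{f}^\ell)^n$: the transformation laws \eqref{eq:wtransform}, \eqref{eq:vtransform} combined with the fact that the conductors of $\theta$, $\theta'$ divide $\mathfrak{f}$ show that $w$ and $v$ are invariant under right multiplication by such diagonal units; the shift in $\psi\circ\lambda_n^\delta$ lies in $\OO$ by a valuation estimate using \eqref{eq:ellcondition}; and $\chi\circ\det$ together with the absolute value factor are visibly invariant on $T_n(\OO)$.

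For (ii)--(iv) I would first make the $\gamma$-dependence completely explicit. Writing ${}^\gamma r=r\gamma$ and applying the identity $\gamma D_nw_n=D_nw_n\gamma^{w_n}$ together with \eqref{eq:wtransform}, \eqref{eq:vtransform} and the observation that conjugation by $j_n(w_n)$ reverses the first $n$ diagonal entries (so that $(\theta^{w_n})_k=\theta_{n+1-k}$), one obtains
\[
Z_n(s;w,v,\delta,e,\omega,r)\;=\;\mathcal{B}(s,e,\omega,r)\cdot\prod_{k=1}^n S_k,
\]
where $\mathcal{B}=w(\varpi^e\omega r D_nw_n)\,v(\varpi^e\omega r)\,\chi(\varpi^e\omega r)\,|\det\varpi^e|^{s-1/2}$ and
\[
S_k\;=\;\sum_{\gamma_k\in(\OO/\mathfrak{f}^\ell)^\times}\eta_k(\gamma_k)\,\psi(c_k\gamma_k),\quad\eta_k:=\chi\theta_{n+1-k}\theta'_k,\quad c_k:=\varpi^{e_n-\delta-\alpha(n+1-k)}r_{\sigma(n),k}.
\]
Part (ii) then follows by specialising to $k=\sigma(n)$: $r_{\sigma(n),\sigma(n)}\in\OO^\times$ yields $v(c_{\sigma(n)})=e_n-\delta-\alpha(n+1-\sigma(n))$, hypothesis \eqref{eq:singlecharactercondition} makes $\eta_{\sigma(n)}$ ramified, and \eqref{eq:gaussnull} forces this valuation to equal $-e_{\eta_{\sigma(n)}}$, which is precisely \eqref{eq:encondition}.

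For part (iii) I would propagate the matching condition to every index under the constant-conductor hypothesis: non-vanishing of $S_k$ requires $v(r_{\sigma(n),k})=\alpha(\sigma(n)-k)$, which is incompatible with $r\in I^n\subseteq\GL_n(\OO)$ whenever $k>\sigma(n)$. This forces $\sigma(n)=n$ and the relation \eqref{eq:rnormcondition}.

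For part (iv), the $T_n(\OO)$-invariance of $Z_n$ from (i) together with Proposition \ref{prop:rtildebahn} permits replacing $r$ by a representative in $\tilde{\mathfrak{R}}_{n,\ell}^\omega$ of the shape \eqref{eq:explicitchoiceforrn}, available thanks to \eqref{eq:specialrep}. A direct computation gives $c_k=\pm f_{\eta_k}^{-1}$ with sign $+$ for $k=1$ and $-$ for $k\geq 2$, so the unit substitution $\gamma_k\mapsto(c_kf_{\eta_k})\gamma_k$ followed by exploiting the periodicity of the integrand modulo $\mathfrak{f}_{\eta_k}$ reduces each $S_k$ to $\eta_k(\pm 1)\cdot\eta_k(f_{\eta_k})\cdot\absNorm(\mathfrak{f}^\ell/\mathfrak{f}_{\eta_k})\cdot G(\eta_k)$. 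Recognising $\prod_\nu\eta_\nu(\pm 1)=\chi\theta^{w_n}\theta'(B_n)$ as the character evaluated at the lower-triangular matrix $B_n$, whose diagonal is $(1,-1,\dots,-1)$, then produces the stated formula. The main delicate point I anticipate is the careful bookkeeping of the several $w_n$-conjugations, the sign conventions inherited from $\tilde{\mathfrak{R}}_{n,\ell}^\omega$, and the normalisations built into $G(\chi)$; once these are aligned the assertion drops out of \eqref{eq:gaussnull}.
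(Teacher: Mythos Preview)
Your proposal is correct and follows essentially the same route as the paper: you pull the diagonal action of $\gamma$ through $D_nw_n$ via \eqref{eq:wtransform}, \eqref{eq:vtransform} to factor $Z_n$ as $\mathcal{B}\cdot\prod_k S_k$ with one-dimensional character sums $S_k$, and then read off each conclusion from \eqref{eq:gaussnull}. The paper's proof does exactly this (its unfolding formula for $\psi(\lambda_n(\cdots))$ is your $\prod_k\psi(c_k\gamma_k)$, and its case analysis for (ii)--(iv) matches yours); the only cosmetic differences are that the paper argues (iii) by looking at the single index $\nu=n$ rather than all $k>\sigma(n)$ at once, and that your citation of Proposition~\ref{prop:rtildebahn} for (iv) is not actually needed---the passage to the normalized representative \eqref{eq:explicitchoiceforrn} follows already from the $T_n(\OO)$-orbit invariance (your (i)) together with \eqref{eq:rnormcondition} and \eqref{eq:specialrep}.
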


\begin{proof}
The claimed property of $Z_n(s;w,v,\delta,e,\omega,r)$ in (i) is clear by definition.

The relation
$$
\varpi^e\omega {}^\gamma r\cdot D_n w_n=
\varpi^e\omega r\cdot D_n  w_n\cdot \left(w_n{}^\gamma {\bf1}_n w_n\right),
$$
with
$$
w_n{}^\gamma {\bf1}_n w_n\in I^{n}_{\alpha},
$$
yields
\begin{align*}
Z_n(s;w,v,\delta,e,\omega,r)\;=\;&
\chi\theta'(r)\cdot
\sum_{\gamma\in\mathcal T_{n,\ell}}
\chi\theta^{w_n}\theta'\left({}^\gamma{\bf1}_n\right)\cdot
\psi(\lambda_n^0(\varpi^{e-(\delta)}\omega{}^\gamma r))\cdot\\
&w\left(
\varpi^e\omega r\cdot D_n w_n
\right)\cdot
v(\varpi^e\omega)\cdot
\absnorm{\det(\varpi^e)}^{s-\frac{1}{2}}\cdot\chi(\varpi^e\omega).
\end{align*}
Unfolding gives
$$
\psi(\lambda_n(\varpi^{e-(\delta)}\omega{}^\gamma r))=
\prod_{\nu=1}^n
\psi\left(\varpi^{e_n-\delta} f^{\nu-n-1} r_{\sigma(n)\nu}\cdot\gamma_\nu\right),
$$
which in turn shows that
\begin{align}
&\sum_{\gamma\in\mathcal T_{n,\ell}}
\chi\theta^{w_n}\theta'\left({}^\gamma{\bf1}_n\right)\cdot
\psi(\lambda_n(\varpi^{e-(\delta)}\omega{}^\gamma r))
\nonumber
\\
=\;&
\prod_{\nu=1}^{n}
\sum_{\gamma_\nu\in\left(\OO/\mathfrak{f}^{l}\right)^\times}
\chi
\theta_\nu'
\theta^{w_n}_\nu(\gamma_\nu)\cdot
\psi\left(\varpi^{e_n-\delta} f^{\nu-n-1} r_{\sigma(n)\nu}\cdot\gamma_\nu\right).
\label{eq:partialgausssum}
\end{align}
In the case $\nu=\sigma(n),$ the entry $r_{\sigma(n)\nu}$ is a unit, and hypothesis \eqref{eq:singlecharactercondition} together with the vanishing relation \eqref{eq:gaussnull} therefore implies (ii).

Under the hypotheses of (iii), if $\sigma(n)\neq n$, we have $\sigma(n) < n$ and we see with \eqref{eq:encondition} that therefore
$$
e_n-\delta-\alpha\;>\;-e_{\chi\theta^{w_n}_\nu\theta_\nu'}.
$$
for $1\leq\nu\leq n$ (recall that the right hand side is independent of $\nu$ by our hypothesis). In the case $\nu=n$, we obtain
$$
\absnorm{\varpi^{e_n-\delta} f^{n-n-1} r_{\sigma(n)n}\cdot\gamma_{n}}\leq
\absnorm{\varpi^{e_n-\delta} f^{-1}}
<\absnorm{f_{\chi\theta_{1}\theta_n'}^{-1}},
$$
whence $Z_n(s;w,v,\delta,e,\omega,r)$ vanishes by \eqref{eq:gaussnull}.

In the case $\sigma(n)=n$, by \eqref{eq:gaussnull} once again any violation of condition \eqref{eq:rnormcondition} implies vanishing of \eqref{eq:partialgausssum}. This proves (iii).

In case (iv), we may by (i) and \eqref{eq:rnormcondition} assume without loss of generality that $r_{n\bullet}$ is chosen as in \eqref{eq:explicitchoiceforrn}. Then by \eqref{eq:gaussnull}, the value of \eqref{eq:partialgausssum} is given by
$$
\chi\theta^{w_n}\theta'(B_n)
\prod_{\nu=1}^n
\chi\theta^{w_n}_\nu\theta_\nu'\left(f_{\chi\theta^{w_n}_\nu\theta_\nu'}\right)\cdot
\absNorm(\mathfrak{f}^\ell/\mathfrak{f}_{\chi\theta^{w_n}_\nu\theta_\nu'})\cdot
G(\chi\theta^{w_n}_\nu\theta_\nu'),
$$
and (iv) follows.
\end{proof}

Under the hypotheses in statement (iii) of the previous lemma set for $\gamma\in\ZZ$\nomenclature[]{$d_{n,\gamma}$}{multi-exponent in $\ZZ^n$}
$$
d_{n,\gamma}:=(\gamma\cdot(n+1-i))_{1\leq i\leq n}\in\ZZ^n.
$$
For $1\leq\mu\leq n$ we denote by $w_\mu\in\GL(n+1)$ the long Weyl element in $S_\mu$ as embedded as antidiagonal matrix in the upper $\mu\times\mu$-block.
\begin{lemma}\label{lem:zentrales}
  Assume that for all $1\leq\mu\leq n$ and all $1\leq\nu\leq\mu:$
  $$
    e_{\chi\theta^{w_\mu}_\nu\theta_\nu'}\;>\;0.
  $$
Assume additionally that these exponents are independent of $\mu$ and $\nu$. Let $\mathfrak{f}_{\chi\theta\theta'}$ denote the common conductor of $\chi\theta_\nu^{w_\mu}\theta'_\nu$ and define
$$
\gamma\;:=\;\alpha-e_{\chi\theta^{w_n}_\nu\theta_\nu'}.
$$
Then for all $e\in\ZZ^n$, $\omega\in W(\GL_n,T_n)$, $\ell\geq\max\{2n,n-e_1/\alpha,\dots,n-e_n/\alpha\}$ and $\delta\in\ZZ,$ we have
\begin{align*}
  &
  \vol\left({U_n(\OO)\varpi^e J^n_{\ell}}\right)\cdot\!\!\!
  \sum_{g\in\varpi^{e}\omega \mathfrak{R}_{n,\ell}^{\omega}}
\!\!\!\psi(\lambda_n^{\delta}(g))\cdot
w(
g\cdot D_n w_n
)\cdot
v(g)\cdot
\chi(\det(g))\cdot
\absnorm{\det(g)}^{s-\frac{1}{2}}\\
=\;&\absNorm(\mathfrak{f}_{\chi\theta\theta'})^{
  -\frac{(n+1)n(n-1)}{2}}\cdot
  \prod_{\mu=1}^n \theta^{w_\mu}(B_\mu)\cdot \prod_{\nu=1}^\mu\absNorm(\mathfrak{f}_{\chi\theta^{w_\mu}_\nu\theta_\nu'})^{-1}\cdot\chi\theta^{w_\mu}_\nu\theta_\nu'(f_{\chi\theta^{w_\mu}_\nu\theta_\nu'})
  \cdot G(\chi\theta^{w_\mu}_\nu\theta_\nu')\cdot\\
  &
   w(\varpi^{e}) v(\varpi^{e})
\chi(\varpi^{e})
\absnorm{\varpi^{e}}^{s-\frac{1}{2}},
\end{align*}
if $(e,\omega)=(d_{n,\gamma}+(\delta),{\bf1}_n)$ and $0$ otherwise.
\end{lemma}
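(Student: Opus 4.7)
The plan is induction on $n$. The base case ($n=1$, where $W(\GL_n,T_n)$ is trivial and $B_1={\bf 1}_1$ by convention) reduces to Lemma \ref{lem:preinduction}(iv). For the inductive step the sum over $\mathfrak{R}_{n,\ell}^\omega$ is broken into $T_n(\OO)$-orbits, each orbit sum is evaluated by Lemma \ref{lem:preinduction}(iv), and the remaining sum over orbit representatives and Weyl elements is reduced to the analogous statement in dimension $n-1$ via Propositions \ref{prop:rtildebahn} and \ref{prop:rtildeindex} together with Lemma \ref{lem:birchinductionrelation}.

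Concretely, the uniformity assumption on the conductors $\mathfrak{f}_{\chi\theta_\nu^{w_\mu}\theta'_\nu}$ together with Lemma \ref{lem:preinduction}(ii)--(iii) forces $\sigma(n)=n$ and $e_n=\delta+\gamma$; Proposition \ref{prop:rtildebahn} allows us to represent each $T_n(\OO)$-orbit by an element of $\tilde{\mathfrak{R}}_{n,\ell}^\omega$ with normalised last row satisfying \eqref{eq:explicitchoiceforrn}, up to a combinatorial overcount factor $\absNorm(\mathfrak{f})^{n(n-1)/2}$. Lemma \ref{lem:preinduction}(iv) applied to $Z_n$ at such representatives extracts exactly the ``$\mu=n$'' slice of the product $\prod_{\mu=1}^n\prod_{\nu=1}^\mu(\ldots)$ appearing in the statement, leaving an unevaluated Whittaker product $w(\varpi^e\omega r\cdot D_n w_n)\cdot v(\varpi^e\omega r)$ at the chosen representatives.

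Parameterising the remaining representatives by $\mathfrak{R}_{n-1,\ell}^{\tilde\omega}$ via Proposition \ref{prop:rtildeindex} (writing $r=j_{n-1}(\tilde r)C_n$ with $\tilde\omega=p(\omega)$), Lemma \ref{lem:birchinductionrelation}, applied in dimension $n-1$ with shift $\delta+\gamma$ to match the $e_n$-constraint, rewrites this Whittaker product in the shape of the integrand of the statement in dimension $n-1$, producing the phase $\psi(\lambda_{n-1}^{\delta+\gamma}(\cdot B_{n-1}))$ and the factor $\theta'(B_{n-1}^{-1})$. Summing over $\tilde r$ and $\tilde\omega$ and invoking the inductive hypothesis forces $\tilde\omega={\bf 1}_{n-1}$ and $\tilde e=d_{n-1,\gamma}+(\delta+\gamma)$; these combine with $e_n=\delta+\gamma$ to yield $(e,\omega)=(d_{n,\gamma}+(\delta),{\bf 1}_n)$, while the nested product supplied by the induction hypothesis fills in the $\mu<n$ slices of the Gauss-sum product together with the $\theta^{w_\mu}(B_\mu)$ factors contributed at each inductive step by Lemma \ref{lem:birchinductionrelation}.

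The final step is to assemble the numerical factors: the volume prefactor $\vol(U_n(\OO)\varpi^e J^n_\ell)$ of Proposition \ref{prop:ujvolumen}, the orbit-overcount factors $\absNorm(\mathfrak{f})^{\mu(\mu-1)/2}$ from each of the $n$ iterations of Proposition \ref{prop:rtildebahn}, and the $\absNorm(\mathfrak{f}^\ell/\mathfrak{f}_{\chi\theta_\nu^{w_\mu}\theta'_\nu})$ factors produced by the corresponding iterations of Lemma \ref{lem:preinduction}(iv). The principal obstacle is precisely this bookkeeping: one must verify both the full cancellation of the $\ell$-dependence and the exact exponent $(n+1)n(n-1)/2$ of $\absNorm(\mathfrak{f}_{\chi\theta\theta'})$ in the prefactor, as well as the coherent propagation of the shift $\delta\mapsto\delta+\gamma$ through the induction, so that the ``tip'' $(d_{n,\gamma}+(\delta),{\bf 1}_n)$ is singled out and no spurious contributions from $\omega\ne{\bf 1}_n$ survive.
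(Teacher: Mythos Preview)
Your strategy is correct and matches the paper's proof essentially step for step: induction on $n$ (the paper takes $n=0$ as the base case), reduction via Lemma~\ref{lem:preinduction}(ii)--(iv) to $\sigma(n)=n$ and extraction of the $\mu=n$ Gauss-sum slice, descent via Propositions~\ref{prop:rtildebahn}, \ref{prop:rtildeindex} and Lemma~\ref{lem:birchinductionrelation}, then the induction hypothesis and bookkeeping with Proposition~\ref{prop:ujvolumen}. The only details you do not spell out --- the identity \eqref{eq:bncnbn} and the observation that right-multiplication by $B_{n-1}$ permutes $\mathfrak{R}_{n-1,\ell}^{\tilde\omega}$ --- are used in the paper to tidy the $\chi,\theta,\theta'$-factors before invoking the induction hypothesis, and fall squarely under your anticipated bookkeeping.
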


\begin{proof}
We proceed by induction on $n$. If $n=0$, then $W_0=\GL_0(\OO)=\{{\bf1}_0\}$, $\mathfrak{R}_0^\omega=\{{\bf1}_0\}$, $\ZZ^0=\{0\}$. The case $\omega\neq{\bf1}_0$ or $e\neq 0$ actually never occurs. This concludes the case $n=0$. Now let $n\geq 1$ and suppose that the claim is true for $n-1$.


By the constancy of the partial sums on $T_n(\OO/\mathfrak{f}^\ell)$-orbits, we have
\begin{equation}
\sum_{g\in\varpi^{e}\omega \mathfrak{R}_{n,\ell}^{\omega}}
\psi(\lambda_n^{\delta}(g))
w(g\cdot D_n w_n)
v(g)\chi(g)
\absnorm{\det(g)}^{s-\frac{1}{2}}\;=\;
\absNorm(\mathfrak{f})^{-\ell n}\cdot\!\!\!
\sum_{r\in\mathfrak{R}_{n,\ell}^{\omega}}
Z_n(s;w,v,\delta,e,\omega,r).
\label{eq:partialsummen}
\end{equation}

By Lemma \ref{lem:preinduction} (iii), we already know that this expression vanishes unless $e_n$ is given by \eqref{eq:encondition} and $\sigma(n)=n$, which we henceforth assume. In that case the summands on the right hand side of \eqref{eq:partialsummen} vanish unless the representatives $r$ satisfy \eqref{eq:rnormcondition}.

By Propositions \ref{prop:rtildebahn} and \ref{prop:rtildeindex}, the right hand side of \eqref{eq:partialsummen} therefore simplifies to
$$
\absNorm(\mathfrak{f})^{-\frac{n(n-1)}{2}}\cdot\!\!\!
\sum_{r\in\tilde{\mathfrak{R}}_{n,\ell}^\omega}
Z_n(s;w,v,\delta,e,\omega,r)\;=\;
\absNorm(\mathfrak{f})^{-\frac{n(n-1)}{2}}\cdot\!\!\!
\sum_{\tilde{r}\in\mathfrak{R}_{n-1,\ell}^{p(\omega)}}
Z_n(s;w,v,\delta,e,\omega,j_{n-1,0}(\tilde{r})C_{n}).
$$
Statement (iv) in Lemma \ref{lem:preinduction} together with Lemma \ref{lem:birchinductionrelation} yields with the abbreviations $\tilde{\omega}:=p(\omega)$ and $\tilde{e}:=(e_\nu)_{1\leq\nu\leq n-1}$\nomenclature[]{$\tilde{\omega}$}{projection of $\omega$ to $\GL_{n-1}(F)$}\nomenclature[]{$\tilde{e}$}{projection of $e$ to $\ZZ^{n-1}$}, and the relation
\begin{equation}
  j_{n-1,0}(B_{n-1}^{-1})C_n\;=\;B_n^{-1},
  \label{eq:bncnbn}
\end{equation}
the identity
\begin{align*}
  &Z_n(s;w,v,\delta,e,\omega,j_{n-1,0}(\tilde{r})C_{n} )\\
  =\;&
  \absnorm{\varpi^{e_n}}^{s-\frac{1}{2}}\cdot\chi(\varpi^{e_n})\cdot
  \chi\theta^{w_n}\theta'(B_n)\cdot
  \prod_{\nu=1}^n
  \chi\theta_{n+1-\nu}\theta'_\nu\left(f_{\chi\theta^{w_n}_\nu\theta_\nu'}\right)\cdot
  \absNorm(\mathfrak{f}^\ell/\mathfrak{f}_{\chi\theta^{w_n}_\nu\theta_\nu'})\cdot
  G(\chi\theta^{w_n}_\nu\theta_\nu')\cdot\\
  &
  w\left(
  j_{n,e_n}(\varpi^{\tilde{e}}\tilde{\omega}\tilde{r})C_{n}\cdot D_n w_n
  \right)\cdot
  v(j_{n,e_n}(\varpi^{\tilde{e}}\tilde{\omega}\tilde{r})C_n)\cdot
  \chi(j_{n,e_n}(\varpi^{\tilde{e}}\tilde{\omega}\tilde{r})C_n)\cdot
  \absnorm{\varpi^{\tilde{e}}}^{s-\frac{1}{2}}\\
  =\;&
  \prod_{\nu=1}^n
  \chi\theta_{n+1-\nu}\theta'_\nu\left(f_{\chi\theta^{w_n}_\nu\theta_\nu'}\right)\cdot
  \absNorm(\mathfrak{f}^\ell/\mathfrak{f}_{\chi\theta^{w_n}_\nu\theta_\nu'})\cdot
  G(\chi\theta^{w_n}_\nu\theta_\nu')\cdot\\
  &
  \absnorm{\varpi^{e_n}}^{s-\frac{1}{2}}\cdot\chi(\varpi^{e_n})\cdot
  \chi\theta'(C_n)\cdot
  \chi\theta^{w_n}\theta'(B_n)\cdot\\
  &
  w\left(
  j_{n,e_n}(\varpi^{\tilde{e}}\tilde{\omega}\tilde{r})C_{n}\cdot D_n w_n
  \right)\cdot
  v(j_{n,e_n}(\varpi^{\tilde{e}}\tilde{\omega}\tilde{r}))\cdot
  \chi(j_{n,e_n}(\varpi^{\tilde{e}}\tilde{\omega}\tilde{r}))\cdot
  \absnorm{\varpi^{\tilde{e}}}^{s-\frac{1}{2}}\\
  =\;&
  \prod_{\nu=1}^n
  \chi\theta_{n+1-\nu}\theta'_\nu\left(f_{\chi\theta^{w_n}_\nu\theta_\nu'}\right)\cdot
  \absNorm(\mathfrak{f}^\ell/\mathfrak{f}_{\chi\theta^{w_n}_\nu\theta_\nu'})\cdot
  G(\chi\theta^{w_n}_\nu\theta_\nu')\cdot\\
  &
  \absnorm{\varpi^{e_n}}^{s-\frac{1}{2}}\cdot\chi(\varpi^{e_n})\cdot
  \chi\theta'\left(j_{n-1,0}(B_{n-1}^{-1})C_n\right)\cdot
  \chi\theta^{w_n}\theta'(B_n)\cdot\\
  &
  \psi(\lambda_{n-1}^{-e_n}(\varpi^{\tilde{e}}\tilde{\omega}\tilde{r}B_{n-1})\cdot
  w\left(
  j_{n,e_n}(\varpi^{\tilde{e}}\tilde{\omega}\tilde{r}B_{n-1}\cdot D_{n-1} w_{n-1})
  \right)\cdot
  v(j_{n,e_n}(\varpi^{\tilde{e}}\tilde{\omega}\tilde{r}B_{n-1}))\cdot\\
  &
  \chi(j_{n,e_n}(\varpi^{\tilde{e}}\tilde{\omega}\tilde{r}B_{n-1}))\cdot
  \absnorm{\varpi^{\tilde{e}}}^{s-\frac{1}{2}}\\
  =\;&
  \prod_{\nu=1}^n
  \chi\theta_{n+1-\nu}\theta'_\nu\left(f_{\chi\theta^{w_n}_\nu\theta_\nu'}\right)\cdot
  \absNorm(\mathfrak{f}^\ell/\mathfrak{f}_{\chi\theta^{w_n}_\nu\theta_\nu'})\cdot
  G(\chi\theta^{w_n}_\nu\theta_\nu')\cdot\\
  &
  \absnorm{\varpi^{e_n}}^{s-\frac{1}{2}}\cdot\chi(\varpi^{e_n})\cdot
  \theta^{w_n}(B_n)\cdot\\
  &
  \psi(\lambda_{n-1}^{-e_n}(\varpi^{\tilde{e}}\tilde{\omega}\tilde{r}B_{n-1})\cdot
  w\left(
  j_{n,e_n}(\varpi^{\tilde{e}}\tilde{\omega}\tilde{r}B_{n-1}\cdot D_{n-1} w_{n-1})
  \right)\cdot
  v(j_{n,e_n}(\varpi^{\tilde{e}}\tilde{\omega}\tilde{r}B_{n-1}))\cdot\\
  &
  \chi(j_{n,e_n}(\varpi^{\tilde{e}}\tilde{\omega}\tilde{r}B_{n-1}))\cdot
  \absnorm{\varpi^{\tilde{e}}\tilde{\omega}\tilde{r}B_{n-1}}^{s-\frac{1}{2}}.
\end{align*}

Right multiplication by $B_{n-1}\in I_{n-1}$ permutes the double cosets $U_{n-1}(F)\varpi^{\tilde{e}} \tilde{r} J_{n-1,\ell}$, but leaves the double cosets $U_{n-1}(F)\varpi^{\tilde{e}} I_{n-1}$ invariant, whence induces a permutation of the system of representatives $\mathfrak{R}_{n-1,\ell}^{\tilde{\omega}}$.

Therefore, summing over all $\tilde{r}\in\mathfrak{R}_{n-1,\ell}^{\tilde{\omega}}$, gives
\begin{align}
  &\sum_{\tilde{r}\in\tilde{\mathfrak{R}}_{n-1,\ell}^{\tilde{\omega}}}
\psi\left(
\lambda_{n-1}^{-e_n}(\varpi^{\tilde{e}}\tilde{\omega}\tilde{r}B_{n-1})
\right)\cdot\nonumber\\
&\quad\quad\quad
w\left(
j_{n-1,e_n}(\varpi^{\tilde{e}}\tilde{\omega}\tilde{r}B_{n-1}\cdot D_{n-1} w_{n-1})
\right)\cdot
v\left(j_{n-1,e_n}(\varpi^{\tilde{e}}\tilde{\omega}\tilde{r}B_{n-1})\right)\cdot\nonumber\\
&\quad\quad\quad\absnorm{\det(\varpi^{\tilde{e}}\tilde{\omega}\tilde{r}B_{n-1})}^{s-\frac{1}{2}}\cdot\chi(\varpi^{\tilde{e}}\tilde{\omega}\tilde{r}B_{n-1})\nonumber\\
=\;&
\sum_{\tilde{r}\in\tilde{\mathfrak{R}}_{n-1,\ell}^{\tilde{\omega}}}
\psi\left(
\lambda_{n-1}^{-e_n}(\varpi^{\tilde{e}}\tilde{\omega}\tilde{r})
\right)\cdot\nonumber\\
&\quad\quad\quad w\left(
j_{n-1,e_n}(\varpi^{\tilde{e}}\tilde{\omega}\tilde{r}\cdot D_{n-1} w_{n-1})
\right)\cdot
v\left(j_{n-1,e_n}(\varpi^{\tilde{e}}\tilde{\omega}\tilde{r})\right)\cdot\label{eq:inductionsum}\\
&\quad\quad\quad\absnorm{\det(\varpi^{\tilde{e}}\tilde{\omega}\tilde{r})}^{s-\frac{1}{2}}\cdot\chi(\varpi^{\tilde{e}}\tilde{\omega}\tilde{r}).\nonumber
\end{align}

By the induction hypothesis this expression vanishes unless
\begin{equation}
  (\tilde{e},\tilde{\omega})\;=\;(d_{n-1,e_n}+(e_n),{\bf1}_{n-1}).
  \label{eq:nminusonevanishingcondition}
\end{equation}
With \eqref{eq:encondition} we see that $e_n=\gamma+\delta$, whence
\begin{align*}
  \left(d_{n-1,\gamma}+(e_n)\right)_i\;=\;&\gamma\cdot(n+1-i)+\delta\\
  =\;&\left(d_{n,\gamma}+(\delta)\right)_i.
\end{align*}
Therefore, condition \eqref{eq:nminusonevanishingcondition} is equivalent to
$$
(e,\omega)\;=\;(d_{n,\gamma}+(\delta),{\bf1}_{n}).
$$
Under condition \eqref{eq:nminusonevanishingcondition}, the induction hypothesis shows with Proposition \ref{prop:ujvolumen} that \eqref{eq:inductionsum} takes the value
\begin{align*}
  &\absNorm(\mathfrak{f})^{\frac{\ell n(n-1)}{2}-\frac{n(n-1)(n-2)}{6}}\cdot
  \prod_{\mu=1}^{n-1}\prod_{\nu=1}^\mu\absNorm(\mathfrak{f}_{\chi\theta^{w_\mu}_\nu\theta_\nu'})^{-1}\cdot\\
  &
  \prod_{\mu=1}^{n-1}\prod_{\nu=1}^\mu\chi\theta^{w_\mu}_\nu\theta_\nu'(f_{\chi\theta^{w_\mu}_\nu\theta_\nu'})
  \cdot G(\chi\theta^{w_\mu}_\nu\theta_\nu')\cdot\\
  &
  \prod_{\mu=1}^{n-1}\theta^{w_\mu}(B_\mu)\cdot
   w(\varpi^{e}) v(\varpi^{e})\chi(\varpi^{\tilde{e}})
  \absnorm{\varpi^{\tilde{e}}}^{s-\frac{1}{2}}.
\end{align*}
Whence \eqref{eq:partialsummen} is given by
\begin{align*}
  &\absNorm(\mathfrak{f})^{\frac{\ell n(n-1)}{2}+\ell n -\left(\frac{n(n-1)(n-2)}{6}+\frac{n(n-1)}{2}\right)}\cdot\\
  &
  \prod_{\mu=1}^n\prod_{\nu=1}^\mu
  \absNorm\left(\mathfrak{f}_{\chi\theta^{w_\mu}_\nu\theta_\nu'}\right)^{-1}\cdot
  \chi\theta_{\nu}^{w_\mu}\theta'_\nu\left(f_{\chi\theta^{w_\mu}_\nu\theta_\nu'}\right)\cdot
  G(\chi\theta^{w_n}_\nu\theta_\nu')\cdot\\
  &
  \prod_{\mu=1}^n\theta^{w_\mu}(B_\mu)\cdot
  w(\varpi^{e})\cdot
  v(\varpi^{e})\cdot
  \chi(\varpi^{e})\cdot
  \absnorm{\varpi^{e}}^{s-\frac{1}{2}},
\end{align*}
and the claim follows by another application of Proposition \ref{prop:ujvolumen}.
\end{proof}

Recall the definition of the matrix $t_x$ in \eqref{eq:definitionoftx}.
\begin{theorem}[Local Birch Lemma]\label{thm:localbirch}
  Let $w$ and $v$ be $\psi$- (resp. $\psi^{-1}$-) Whittaker functions on $\GL_{n+1}(F)$ resp.\ $\GL_n(F)$, which satisfy relations \eqref{eq:wtransform} and \eqref{eq:vtransform}. Assume furthermore that $\chi:F^\times\to\CC^\times$ is a quasi-character with the property that for all $1\leq\mu\leq n$ and all $1\leq\nu\leq\mu$ the conductors of $\chi\theta^{w_\mu}_\nu\theta_\nu'$ are non-trivial, all agree, and are generated by an element $f_{\chi\theta\theta'}=\varpi^{e_{\chi\theta\theta'}}$. Set $\mathfrak{f}_{\chi\theta\theta'}:=\OO f_{\chi\theta\theta'}$. Then for every $s\in\CC$,
  \begin{align*}
    &\int\limits_{U_{n}(F)\backslash{}\GL_{n}(F)}
    w\left(
    j_{n,0}(g)
    \cdot h_n\cdot j_{n}(t_{-f})
    \right)
    v\left(g\cdot t_{f}\right)
    \chi(\det(g))
    \absnorm{\det(g)}^{s-\frac{1}{2}}dg\\
    =\;&
    \prod_{\mu=1}^{n}
    \left({1-q^{-\mu}}\right)^{-1}
    \cdot
    \absNorm(\mathfrak{f}_{\chi\theta\theta'})^{-\frac{(n+2)(n+1)n}{6}}\cdot
    \absnorm{t_{f_{\chi\theta\theta'}}}^{\frac{1}{2}-s}\cdot\\
    &
    \prod_{\mu=1}^n\prod_{\nu=1}^\mu\left[\theta^{w_\mu}_\nu\theta_\nu'(f_{\chi\theta\theta'})
    \cdot G(\chi\theta^{w_\mu}_\nu\theta_\nu')\right]\cdot
    w(j_{n,0}(t_{f f_{\chi\theta\theta'}^{-1}}))\cdot
    v(t_{f f_{\chi\theta\theta'}^{-1}}).
  \end{align*}
\end{theorem}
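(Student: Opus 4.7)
The approach is to unfold the integral using the Iwahori-refined Iwasawa decomposition (Proposition~\ref{prop:repr}), reduce it to the partial sums $Z_n(s;w,v,\delta,e,\omega,r)$ introduced in Section~\ref{sec:birchlocal}, and then apply Lemma~\ref{lem:zentrales}, whose vanishing statement collapses the resulting triple sum to a single surviving term.

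First I would reshape the integrand. A direct matrix computation, in the spirit of Lemma~\ref{lem:birchinductionrelation}, rewrites $j_{n,0}(g)\cdot h_n\cdot j_n(t_{-f})$ as $u\cdot j_{n,\delta}(gB_n\cdot D_n w_n)\cdot k$ for some $u\in U_{n+1}(F)$, $k\in I^{n+1}_{\alpha}$, and a specific $\delta\in\ZZ$ depending on $\alpha$. Via \eqref{eq:psitounextension} and \eqref{eq:wtransform} this produces factors $\psi(\lambda_n^{\delta}(gB_n))$ and $\theta(k)$. The second Whittaker argument $g\cdot t_f$, combined with \eqref{eq:vtransform}, is likewise brought into the shape required by $Z_n$.

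Applying Proposition~\ref{prop:repr} with $\ell$ satisfying \eqref{eq:ellcondition} decomposes the integral as $\sum_{e,\omega,r}\vol(U_n(\OO)\varpi^e\omega r J^n_\ell)\cdot(\text{integrand on that coset})$. Since the integrand is constant on $T_n(\OO/\mathfrak{f}^\ell)$-orbits of $r$ modulo $\chi,\theta,\theta'$, grouping $r$ into such orbits yields precisely $\vol(U_n(\OO)\varpi^e J^n_\ell)\cdot Z_n(s;w,v,\delta,e,\omega,r)$. Lemma~\ref{lem:zentrales} then kills all terms except $(e,\omega)=(d_{n,\gamma}+(\delta),{\bf1}_n)$ with $\gamma=\alpha-e_{\chi\theta\theta'}$. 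A direct identification shows $\varpi^{d_{n,\gamma}+(\delta)}=\varpi^{(\delta)}\cdot t_{f f_{\chi\theta\theta'}^{-1}}$; after reversing the substitution of the previous step and absorbing $D_n w_n$, the surviving Whittaker arguments become exactly $j_{n,0}(t_{f f_{\chi\theta\theta'}^{-1}})$ and $t_{f f_{\chi\theta\theta'}^{-1}}$.

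The remaining bookkeeping combines the Gau{\ss} sum factors from Lemma~\ref{lem:zentrales} with the volume formula of Proposition~\ref{prop:ujvolumen}. The $\ell$-dependent terms $\absNorm(\mathfrak{f})^{-\ell(n+1)n/2}$ (from the volume) and $\prod_{\mu,\nu}\absNorm(\mathfrak{f}^\ell/\mathfrak{f}_{\chi\theta^{w_\mu}_\nu\theta'_\nu})$ (from Lemma~\ref{lem:zentrales}) cancel; the modular index $[\varpi^e U_n(\OO)\varpi^{-e}:U_n(\OO)]$ together with $\absnorm{\det\varpi^e}^{s-1/2}$ produces $\absnorm{t_{f_{\chi\theta\theta'}}}^{1/2-s}$; and the factors $\prod_{\mu=1}^n(1-q^{-\mu})^{-1}$ and $\absNorm(\mathfrak{f}_{\chi\theta\theta'})^{-(n+2)(n+1)n/6}$ emerge in their stated form. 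The main obstacle is the matrix identity of the second paragraph: one has to pin down $\delta$ and the unipotent/Iwahori decomposition with enough precision that, combined with Lemma~\ref{lem:zentrales}, the $\psi$- and $\theta$-contributions assemble cleanly into the full product $\prod_{\mu=1}^n\prod_{\nu=1}^\mu\theta^{w_\mu}_\nu\theta'_\nu(f_{\chi\theta\theta'})\cdot G(\chi\theta^{w_\mu}_\nu\theta'_\nu)$.
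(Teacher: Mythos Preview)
Your outline is correct and follows the paper's route: reshape the integrand by a matrix identity, then apply Lemma~\ref{lem:zentrales}. The ``main obstacle'' you flag is resolved in the paper by the explicit decomposition
\[
  h_n\cdot j_n(t_f)\;=\;j_n(t_f)\cdot B_{n+1}^{-1}\cdot D_{n+1}\cdot w_{n+1}\cdot\tilde{E}_{n+1},
  \qquad \tilde{E}_{n+1}\in J_{n+1,1}\subseteq\ker\theta,
\]
followed by \eqref{eq:bncnbn} (in the form $j_n(B_n^{-1})C_{n+1}=B_{n+1}^{-1}$), a change of variable, and one more application of Lemma~\ref{lem:birchinductionrelation}; this yields $\delta=0$ and the stray factor $\theta(t_{-1})=\prod_{\mu=1}^n\theta^{w_\mu}(B_\mu)$. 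Two small corrections to your bookkeeping: Lemma~\ref{lem:zentrales} already absorbs the volume from Proposition~\ref{prop:ujvolumen} on its left-hand side, so there is no separate volume cancellation to perform; and the factor $\absnorm{t_{f_{\chi\theta\theta'}}}^{\frac12-s}$ arises not from the modular index but from combining the $\absnorm{t_f}^{\frac12-s}$ produced by the change of variable $g\mapsto gt_f^{-1}$ with the $\absnorm{\varpi^e}^{s-\frac12}=\absnorm{t_{ff_{\chi\theta\theta'}^{-1}}}^{s-\frac12}$ coming out of Lemma~\ref{lem:zentrales}.
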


We emphasize that there are modifications in the formulation of Theorem \ref{thm:localbirch} compared to previous statements in \citep{schmidt2001,januszewski2011,januszewski2015,januszewski2016}. In particular, the matrix
$$
h_n\cdot j_{n}(t_{(-1)}).
$$
plays the role of the matrix $h_n$ in loc.\ cit.

\begin{proof}
  Introduce the matrix\nomenclature[]{$\tilde{E}_{n+1}$}{matrix in $J_{n+1},1$}
  $$
    \tilde{E}_{n+1}\;:=\;
    \begin{pmatrix}
      1&-f&-f^2&\cdots&-f^n\\
      0&1&f&\cdots&f^{n-1}\\
      \vdots&\ddots&\ddots&\ddots&\vdots\\
      \vdots&&\ddots&\ddots&f\\
      0&\cdots&\cdots&0&1
    \end{pmatrix}\;\in\;J_{n+1,1}.
  $$
  In the notation of \citep{januszewski2011} this matrix agrees with $w_{n+1}E_{n+1}$. A direct computation shows
  \begin{equation}
    h_n\cdot j_n(t_{f})\;=\;
    j_n(t_{f})\cdot B_{n+1}^{-1}\cdot D_{n+1}\cdot w_{n+1}\cdot\tilde{E}_{n+1}.
    \label{eq:hdecomposition}
  \end{equation}
  We deduce the relation
  $$
    w\left(
    j_n(g)
    \cdot h_n\cdot j_n(t_{-f})
    \right)\;=\;
    \theta(t_{-1})
    w\left(
    j_n(g)
    \cdot j_n(t_{f})\cdot B_{n+1}^{-1}\cdot D_{n+1}w_{n+1}
    \right).
  $$
  Together with \eqref{eq:bncnbn} and the right invariance of the Haar measure $dg$, we obtain
  \begin{align*}
    &
    \int
    w\left(
    j_n(g)
    \cdot h_n\cdot j_n(t_{f})
    \right)
    v\left(g\cdot t_{f}\right)
    \chi(g)
    \absnorm{g}^{s-\frac{1}{2}}dg\\
    =\;&
    \chi(t_{f}^{-1})\absnorm{t_{f}}^{\frac{1}{2}-s}
    \int
    w\left(
    j_n(gB_n^{-1})
    \cdot C_{n+1} D_{n+1}w_{n+1}
    \right)
    v(g)
    \chi(g)
    \absnorm{g}^{s-\frac{1}{2}}dg\\
    =\;&
    \chi(t_{f}^{-1})\absnorm{t_{f}}^{\frac{1}{2}-s}
    \int
    w\left(
    j_n(g)
    \cdot C_{n+1} D_{n+1}w_{n+1}
    \right)
    v(gB_n)
    \chi(gB_n)
    \absnorm{gB_n}^{s-\frac{1}{2}}dg\\
    =\;&
    \chi\theta'(B_n)
    \chi(t_{f}^{-1})\absnorm{t_{f}}^{\frac{1}{2}-s}
    \int
    w\left(
    j_n(g)
    \cdot C_{n+1} D_{n+1}w_{n+1}
    \right)
    v(g)
    \chi(g)
    \absnorm{g}^{s-\frac{1}{2}}dg.
  \end{align*}
  At this point, invoke Lemma \ref{lem:birchinductionrelation} once again to obtain the expression
  \begin{align*}
    &\chi(B_n)
    \chi(t_{f}^{-1})\absnorm{t_{f}}^{\frac{1}{2}-s}
    \int
    \psi\left(\lambda_n^0(gB_n)\right )
    w\left(
    j_n(gB_n\cdot D_{n}w_{n})
    \right)
    v(gB_n)
    \chi(g)
    \absnorm{g}^{s-\frac{1}{2}}dg\\
    =\;&
    \chi(t_{f}^{-1})\absnorm{t_{f}}^{\frac{1}{2}-s}
    \int
    \psi\left(\lambda_n^0(g)\right )
    w\left(
    j_n(g\cdot D_{n}w_{n})
    \right)
    v(g)
    \chi(g)
    \absnorm{g}^{s-\frac{1}{2}}dg.
  \end{align*}
  By Lemma \ref{lem:zentrales}, we may evaluate the latter integral explicitly as
  \begin{align*}
    &
    \int
    \psi\left(\lambda_n^0(g)\right )
    w\left(
    j_n(g\cdot D_{n}w_{n})
    \right)
    v(g)
    \chi(g)
    \absnorm{g}^{s-\frac{1}{2}}dg\\
    =\;&
    \vol(U_n(\OO)\varpi^{e} J^n_{\ell})\cdot\!\!\!
    \sum_{g\in\varpi^{e}\omega \mathfrak{R}_{n,\ell}^{\omega}}
      \!\!\!
      \psi(\lambda_n^{\delta}(g))\cdot
      w(
      g\cdot D_n w_n
      )\cdot
      v(g)\cdot
      \chi(g)\cdot
      \absnorm{g}^{s-\frac{1}{2}}\\
      =\;&
      \absNorm(\mathfrak{f}_{\chi\theta\theta'})^{-\frac{(n+1)n(n-1)}{6}}\cdot
      \prod_{\mu=1}^n\theta^{w_\mu}(B_\mu)\cdot\prod_{\nu=1}^\mu\absNorm(\mathfrak{f}_{\chi\theta^{w_\mu}_\nu\theta_\nu'})^{-1}\cdot
      \chi\theta^{w_\mu}_\nu\theta_\nu'(f_{\chi\theta^{w_\mu}_\nu\theta_\nu'})
      \cdot G(\chi\theta^{w_\mu}_\nu\theta_\nu')\cdot
      \\&
      w(t_{f f_{\chi\theta\theta'}^{-1}}) v(t_{f f_{\chi\theta\theta'}^{-1}})
      \chi(t_{f f_{\chi\theta\theta'}^{-1}})
      \absnorm{t_{f f_{\chi\theta\theta'}^{-1}}}^{s-\frac{1}{2}}.
  \end{align*}
  Finally, remark that
  $$
    \theta(t_{(-1)})\;=\;\prod_{\mu=1}^n\theta^{w_\mu}(B_\mu),
  $$
  and the claim follows.
\end{proof}

\subsection{The generically nearly ordinary $p$-adic Hecke algebra}\label{sec:generichecke}

Fix a number field $F/\QQ$ with ring of integers $\OO_F$ and a rational prime $p$. Put\nomenclature[]{$F_p$}{defined as $F\otimes_\QQ\QQ_p$}\nomenclature[]{$\OO_p$}{defined as $\OO_F\otimes_\ZZ\ZZ_p$}
$$
F_p\;:=\;F\otimes_\QQ\QQ_p,\quad\text{and}\quad \OO_p:=\OO_F\otimes_\ZZ\ZZ_p.
$$
Let\nomenclature[]{$I_\alpha$}{elements in $G(\ZZ_p)$ which are upper triangular mod $p^\alpha$}
$$
I_{\alpha}\;:=\;\{r\in G(\ZZ_p)\mid r\!\!\!\pmod{p^{\alpha}}\in B(\ZZ_p/p^{\alpha}\ZZ_p)\},
$$
$$
I_{\alpha',\alpha}\;:=\;\{r\in I_\alpha\mid r\!\!\!\pmod{p^{\alpha'}}\in U(\ZZ_p/p^{\alpha'}\ZZ_p)\},
$$
for $\alpha\geq\alpha'\geq 0$\nomenclature[]{$I_{\alpha',\alpha}$}{elements in $I_\alpha$ which are unipotent mod $p^{\alpha'}$}.

Define the semigroup\nomenclature[]{$\Delta_G$}{semigroup in $T(\QQ_p)$}
$$
\Delta_{G}\;:=\;\prod_{v\mid p}\Delta_{F_v,n+1}\times\Delta_{F_v,n}\;\subseteq\;T(\QQ_p),
$$
and the corresponding Hecke algebra\nomenclature[]{$\mathcal H_A(\alpha',\alpha)$}{double coset Hecke algebra of level $I_{\alpha',\alpha}$ for $G$}
$$
\mathcal H_A(\alpha',\alpha)\;:=\;\mathcal H_A(I_{\alpha',\alpha},I_{\alpha',\alpha}\Delta_{G}I_{\alpha',\alpha}).
$$
Then $\mathcal H_A(\alpha;,\alpha)$ is the product of the products of the local Hecke algebras
$$
\mathcal H_A^{n+1}(v_{\mathfrak{p}}(p)\alpha',v_{\mathfrak{p}}(p)\alpha)\quad\text{and}\quad\mathcal H_A^{n}(v_{\mathfrak{p}}(p)\alpha',v_{\mathfrak{p}}(p)\alpha)
$$
introduced in section \ref{sec:padicheckealgebras}. Therefore, \eqref{eq:iwahoriheckeasgroupring} translates to
\begin{equation}
\mathcal H_A(\alpha',\alpha)\;=\;\mathcal H_A(0,\alpha)[I_\alpha/I_{\alpha',\alpha}]
\;=\;\mathcal H_A(0,\alpha)[T_n(\ZZ_p/p^{\alpha'}\ZZ_p)],
\label{eq:piwahoriheckeasgroupring}
\end{equation}
This is a finitely generated commutative $A$-algebra. By \eqref{eq:piwahoriheckeasgroupring}, there is a canonical map\nomenclature[]{$\langle-\rangle$}{canonical map $T_n(\ZZ_p/p^{\alpha'}\ZZ_p)\;\to\;\mathcal H_A(\alpha',\alpha)$}
$$
\langle\cdot\rangle:\quad T_n(\ZZ_p/p^{\alpha'}\ZZ_p)\;\to\;\mathcal H_A(\alpha',\alpha),
$$
$$
t\;\mapsto\;\langle t\rangle.
$$
Furthermore, we have a canonical isomorphism
$$
U:\quad A[\Delta_G]\;\cong\;\mathcal H_A(0,\alpha)
$$
given by\nomenclature[]{$U(\delta)$}{$I_\alpha$-double coset corresponding to $\delta\in\Delta_G$}
$$
\Delta_G\;\ni\;\delta\;\mapsto\;I_\alpha\delta I_\alpha=:U(\delta).
$$
We define the {\em generic nearly ordinary $p$-adic Hecke algebra} as the localization\nomenclature[]{$\mathcal H_A^{\rm ord}$}{generic nearly ordinary $p$-adic Hecke algebra for $G$}
$$
\mathcal H_A^{\rm ord}(\alpha',\alpha)\;:=\;
\mathcal H_A(\alpha',\alpha)[U(\delta)^{-1}|\delta\in\Delta_G].
$$
If $\langle\Delta_G\rangle$ denotes the group generated by $\Delta_G,$ we obtain a canonical isomorphism
$$
A[\langle\Delta_G\rangle]\;\cong\;\mathcal H_A^{\rm ord}(0,\alpha)
$$
extending $U$ and \eqref{eq:piwahoriheckeasgroupring} extends to
\begin{equation}
\mathcal H_A^{\rm ord}(\alpha',\alpha)
\;=\;\mathcal H_A(0,\alpha)^{\rm ord}[T_n(\ZZ_p/p^{\alpha'}\ZZ_p)].
\label{eq:pordinaryheckeasgroupring}
\end{equation}

If $\mathcal M$ is an $\mathcal H_A(\alpha',\alpha)$-module\nomenclature[]{$\mathcal M$}{Hecke module}, then the Hecke module structure on $\mathcal M$ extends to $\mathcal H_A^{\rm ord}(\alpha',\alpha)$ if and only if the operators $U(\delta)$, $\delta\in\Delta_G$, act invertibly on $\mathcal M$. Since $\Delta_G$ is finitely generated, it suffices to check this for an arbitrary finite set of generators of $\Delta_G,$ or a product thereof.

For any character $\vartheta:T(\QQ_p)\to A^\times$\nomenclature[]{$\vartheta$}{(quasi-)character $T(\QQ_p)\to A^\times$, also its canonical extension to $\mathcal H_A(\alpha,\alpha)\to A$, often a $p$-Nebentypus}
 whose restriction to $T(\ZZ_p)$ factors over $T(\ZZ_p/p^\alpha\ZZ_p)$, we consider $\vartheta|_{T(\ZZ_p)}$ as an algebra homomorphism $A[T(\ZZ_p/p^\alpha\ZZ_p)]\to A$. By \eqref{eq:piwahoriheckeasgroupring}, $\vartheta$ then extends to an $A$-algebra homorphism
  $$
    \vartheta:\quad \mathcal H_A(\alpha,\alpha)\;\to\;A
  $$
  by setting
  $$
    U(\delta)\;\mapsto\;\vartheta(\delta),\quad\delta\in\Delta_G.
  $$
  The values $\vartheta(\delta)$ being invertible, this extends uniquely to an $A$-algebra homomorphism
  $$
    \vartheta:\quad \mathcal H_A^{\rm ord}(\alpha,\alpha)\;\to\;A.
  $$

In order to streamline notation in the sequel, set for $\underline{\varepsilon}=(\varepsilon_\varpi)_{\varpi\mid p}$\nomenclature[]{$\underline{\varepsilon}$}{multi-exponent indexed by $\varpi\mid p$}
, $e_\varpi\in\ZZ$,
\begin{equation}
  p^{\underline{\varepsilon}}\;:=\;\prod_{\varpi\mid p}\varpi^{\varepsilon_\varpi},
  \label{eq:definitionofptothee}
\end{equation}
and define for $\alpha\geq\alpha'\geq0,$ $\alpha>0,$ the Hecke operator\nomenclature[]{$U_{p^{\underline{\varepsilon}}}$}{Hecke operator of level $I_{\alpha',\alpha}$ for $G(\QQ_p)$}
$$
U_{p^{\underline{\varepsilon}}}\;=\;I_{\alpha',\alpha}\Delta(t_{p^{\underline{\varepsilon}}}) I_{\alpha',\alpha}\;=\;
\bigsqcup_{u\in U(\ZZ_p)/\Delta(t_{p^{\underline{\varepsilon}}}) U(\ZZ_p)\Delta(t_{p^{\underline{\varepsilon}}}^{-1})}
u\Delta(t_{p^{\underline{\varepsilon}}}) I_{\alpha',\alpha}\;\in\;\mathcal H_A(\alpha',\alpha).
$$
Put\nomenclature[]{$U_p$}{the $U_p$-operator of level $I_{\alpha',\alpha}$ for $G(\QQ_p)$}
\begin{equation}
U_{p}\;:=\;I_{\alpha',\alpha} t_p I_{\alpha',\alpha}\;=\;\prod_{\mathfrak{p}\mid p}\left(U_{\mathfrak{p}}\otimes U_{\mathfrak{p}}'\right)^{v_{\mathfrak{p}}(p)}\;\in\;\mathcal H_A(\alpha',\alpha).
\label{eq:definitionofUp}
\end{equation}

\subsection{The twisted global Zeta integral}\label{sec:birchglobal}

Recall from the previous section that $F/\QQ$ is a number field with ring of integers $\OO_F$ and $p$ denotes a rational prime. Fix a non-trivial additive character $\psi:F\backslash\Adeles_F\to\CC^\times$ with local factors $\psi_v:F_v^\times\to\CC^\times$. Put\nomenclature[]{$\psi_p$}{additive character $F_p\to\CC^\times$ derived from $\psi$}
$$
\psi_p:=\underset{\mathfrak{p}\mid p}{\otimes}\psi_{\mathfrak{p}}:\quad F_p\to\CC^\times
$$
and assume without loss of generality that $\psi_p$ is of conductor $\OO_p$.

Let $\Pi$ and $\Sigma$ be irreducible cuspidal automorphic representations of $\GL_{n+1}(\Adeles_F)$ and $\GL_{n}(\Adeles_F)$ respectively\nomenclature[]{$\Pi$}{cuspidal automorphic representation of $\GL_{n+1}(\Adeles_F)$}\nomenclature[]{$\Sigma$}{cuspidal automorphic representation of $\GL_{n}(\Adeles)$}, which we also consider as an automorphic representation $\Pi\widehat{\otimes}\Sigma$ of $G(\Adeles)$\nomenclature[]{$\Pi\widehat{\otimes}\Sigma$}{cuspidal automorphic representation of $G(\Adeles)$ corresponding to $\Pi$ and $\Sigma$}. Let $S_\infty$ denote the set of infinite places of $F$\nomenclature[]{$S_\infty$}{archimedean places of number field $F$} and $S_{\Pi\widehat{\otimes}\Sigma}$ the set of finite places where $\Pi$ or $\Sigma$ ramifies\nomenclature[]{$S_{\Pi\widehat{\otimes}\Sigma}$}{finite places where $\Pi\widehat{\otimes}\Sigma$ ramifies}.

We make free use of the theory of Rankin-Selberg $L$-function $L(s,\Pi\widehat{\otimes}\Sigma)$ as developed in \citep{jpss1979a,jpss1979b,jpss1983,jacquetshalika1990s,cogdellpiatetskishapiro1994,cogdellpiatetskishapiro2004}\nomenclature[]{$L(s,\Pi\widehat{\otimes}\Sigma)$}{Rankin-Selberg $L$-function for the pair $(\Pi,\Sigma)$}.

At any archimedean places $v\in S_\infty$, we consider the smooth models $\Pi_v$ and $\Sigma_v$, i.e.\ these are smooth Fr\'echet representations of $\GL_{n+1}(F_v)$ and $\GL_n(F_v)$ of moderate growth and finite length. They agree with the Casselman-Wallach completions of the subspaces of $K$-finite vectors. We refer the reader to \citep{casselman1989,bernsteinkrotz2014} for the notion of Casselman-Wallach completion. We also write $\Pi_v\widehat{\otimes}\Sigma_v$ for the completed projective tensor product\nomenclature[]{$\widehat{\otimes}$}{(outer) completed projective tensor product of Casselman-Wallach representations}. This is a Casselman-Wallach representation of $G(F_v)$.

Recall that $\Pi$ and $\Sigma$ are always generic \citep{shalika1974}. At any place $v$ of $F$, consider for any vector\nomenclature[]{$W_v$}{local Whittaker vector on $\GL_{n+1}(F_v)\times\GL_n(F_v)$}
$$
W_v\;\in\;
{\mathscr{W}}(\Pi_v\otimes\Sigma_v,\psi_v\otimes\psi_v^{-1})=
{\mathscr{W}}(\Pi_v,\psi_v)\otimes{\mathscr{W}}(\Sigma_v,\psi_v^{-1})
$$
in the local Whittaker model of $\Pi_v\otimes\Sigma_v$ (or of $\Pi_v\widehat{\otimes}\Sigma_v$ for $v\mid\infty$) the local zeta integral\nomenclature[]{$\Psi_v(s,W_v)$}{local Rankin-Selberg zeta integral}
$$
\Psi_v(s,W_v)\;:=\;\int_{U_n(F_v)\backslash\GL_n(F_v)} W_v(\Delta(g_v))\absnorm{g_v}^{s-\frac{1}{2}}dg_v
$$
as in \citep{jpss1983}.

At archimedean places $v$, we know that for any $W_v$ the local zeta integral $\Psi_v(s,W_v)$ satisfies an identity\nomenclature[]{$\Omega_v(s,W_v)$}{ratio of local zeta integral to local $L$-function}
$$
  \Psi_v(s,W_v)\;=\;\Omega_v(s,W_v)\cdot L(s,\Pi_v\widehat{\otimes}\Sigma_v)
$$
for a function $\Omega_v(s,W_v)$ holomorphic in $s$. For $K$-finite $W_v$, it is known that $\Omega_v(s,W_v)$ is a {\em polynomial}. Moreover, there is a good $K$-finite test vector $W_v^0$ trivializing $\Omega_v(s,W_v)$ \citep{cogdellpiatetskishapiro2004,jacquet2009}.

At all finite places $v$, we fix a {\em good tensor} $W_v^0\in{\mathscr{W}}(\Pi_v,\psi_v)\otimes{\mathscr{W}}(\Sigma_v,\psi_v^{-1})$\nomenclature[]{$W_v^0$}{good local test vector in the Whittaker model (the new vector in the spherical case)} with the property that
$$
  L(s,\Pi_v\otimes\Sigma_v)=\Psi(s,W_v^0).
$$
We suppose that $W_v^0=W_{n+1,v}^0\otimes W_{n,v}^0$ for spherical Whittaker functions $W_{n+1,v}^0$ and $W_{n_v}^0$ at all places $v\not\in S_{\Pi\widehat{\otimes}\Sigma}\cup S_\infty$. By Shintani's explicit formula \citep{shintani1976}, for $v\not\in S_{\Pi\widehat{\otimes}\Sigma}\cup S_\infty$, the local $L$-function is given explicitly by
$$
  L(s,\Pi_v\otimes\Sigma_v)=
  \det\left({\bf 1}_{(n+1)n}-q_v^{-s}(A_{\Pi_v}\otimes A_{\Sigma_v})\right)^{-1},
$$
where $A_{\pi_\mathfrak{q}}$ and $A_{\sigma_\mathfrak{q}}$ denote the corresponding Frobenius-Hecke parameters (Satake parameters)\nomenclature[]{$A_{\pi_\mathfrak{q}}$}{Satake parameters for $\pi_{\mathfrak{q}}$} and $q_v=\absNorm(\mathfrak{p}_v)$.

Define the good test vector\nomenclature[]{$W^0$}{global good test vector in the Whittaker model}\nomenclature[]{$\mathscr W(\Pi\widehat{\otimes}\Sigma),\psi\otimes\psi^{-1})$}{global $\psi$-Whittaker model of $\Pi{\otimes}\Sigma$}
$$
  W^0\;:=\;\underset{v}{\otimes} W_v^0\;\in\;
  {\mathscr{W}}(\Pi\widehat{\otimes}\Sigma,\psi\otimes\psi^{-1}).
$$
Then for any other test vector
$$
  W\;\in\;{\mathscr{W}}(\Pi\widehat{\otimes}\Sigma,\psi\otimes\psi^{-1}),
$$
which agrees with $W^0$ at all places outside a finite set $S_W$, the global zeta integral\nomenclature[]{$\Psi(s,W)$}{global zeta integral for $G$}
$$
  \Psi(s,W)\;:=\;\int_{U_n(\Adeles_F)\backslash\GL_n(\Adeles_F)} W(\Delta(g))\absnorm{g}^{s-\frac{1}{2}}dg
$$
is well defined: $\Psi(s,W)$ converges absolutely for ${\rm Re}(s)\gg 0$ and has an Euler product decomposition
$$
  \Psi(s,W)\;=\;\prod_v\Psi_v(s,W_v)\;=\;\prod_{v\in S_W}\Omega_v(s,W_v)\cdot\prod_v L(s,\Pi_v\widehat{\otimes}\Sigma_v).
$$
Therefore,
\begin{equation}
  \Psi(s,W)\;=\;\Omega(s,W)\cdot L(s,\Pi_v\widehat{\otimes}\Sigma_v)
  \label{eq:psiomegaL}
\end{equation}
for a function\nomenclature[]{$\Omega(s,W)$}{ratio of global zeta integral to global $L$-function}
$$
\Omega(s,W)\;:=\;\prod_{v}\Omega_v(s,W_v)\;=\;\prod_{v\in S_W}\Omega_v(s,W_v)
$$
holomorphic in $s$. Moreover, $\Omega(s,W)$ is a polynomial in $s$ and $q_v^{-s}$, $v\in S_W$ finite, whenever $W$ is $K$-finite.

By Fourier transform, we may associate to $W$ an automorphic form $\phi_W$ on $G(\Adeles)$\nomenclature[]{$\phi_W$}{cusp form: the inverse Fourier transform of $W$}. Then we have an identity
\begin{equation}
\Psi(s,W)\;=\;
\Phi(s,\phi_W)\;:=\;
\int_{H(\QQ)\backslash H(\Adeles)}
\phi_W(g)
\absnorm{\det(g)}^{s-\frac{1}{2}}dg
\label{eq:whittakertoautomorphicform}
\end{equation}
for ${\rm Re}(s)\gg 0$, where the right hand side converges absolutely for all $s\in\CC$ and defines an analytic function in $s$. Then \eqref{eq:psiomegaL} extends by holomorphic continuation to the identity
\begin{equation}
  \Phi(s,\phi_W)\;=\;
  \Omega(s,W)\cdot L(s,\Pi_v\widehat{\otimes}\Sigma_v),
  \label{eq:phiomegaL}
\end{equation}
which is valid for {\em all} $s\in\CC$.

For any quasi-character $\chi:F^\times\backslash\Adeles_F^\times\to\CC^\times$ we identify
$$
L(s,\Pi\widehat{\otimes}\Sigma\otimes\chi)\;:=\;
L(s,\Pi\widehat{\otimes}(\Sigma\otimes\chi))\;=\;
L(s,(\Pi\otimes\chi)\widehat{\otimes}\Sigma).
$$
Then the twisted Whittaker function\nomenclature[]{$W_\chi$}{global Whittaker function twisted by $\chi$}
$$
W_\chi:\quad G(\Adeles)\to\CC,\quad (g_1,g_2)\mapsto \chi(\det(g_2))W(g_1,g_2),
$$
is an element of ${\mathscr{W}}(\Pi\widehat{\otimes}(\Sigma\otimes\chi),\psi\otimes\psi^{-1})$. At all places outside $S_W\cup (S_{\Pi\widehat{\otimes}\Sigma}\cap S_{\chi})$ the function $W_\chi$ coincides with a good test vector for the twisted $L$-function.

For a finite set of places $S$ of $F$ we write $L^S(s,\Pi\widehat{\otimes}\Sigma)$\nomenclature[]{$L^S(s,\Pi\widehat{\otimes}\Sigma)$}{global partial Rankin-Selberg $L$-function with Euler factors at places in $S$ removed} for the partial $L$-functions where the Euler factors at places in $S$ have been removed. Likewise, $\Omega^S(s,W)$ denotes the correction factor with the factors for places in $S$ removed\nomenclature[]{$\Omega^S(s,W)$}{ratio of partial zeta integral to partial $L$-function (outside $S$)}.

For a quasi-character $\vartheta:T(\QQ_p)\to\CC^\times,$ write $\vartheta_{\mu,\nu}:F_p^\times\to\CC^\times$ for the respective $(\mu,\nu)$-component according to the decomposition $T=T_{n+1}\times T_n$, where $1\leq\mu\leq n+1$, $1\leq\nu\leq n$\nomenclature[]{$\vartheta_{\mu,\nu}$}{the $(\mu,\nu)$-component of $\vartheta$}.

If $\chi_p:F_p^\times\to\CC^\times$ is another quasi-character, consider $\chi_p$ as a quasi-character of $T(\QQ_p)$ via pullback along the determinant in the second factor. We say that the resulting character $\chi_p\vartheta$ has {\em constant conductor}, if:
  \begin{itemize}
  \item[(C)] For all $1\leq\nu\leq\mu\leq n$ the conductors of
    $\chi_p\vartheta_{\mu,\nu}$ all agree.
  \end{itemize}
  We say furthermore that $\chi_p\vartheta$ has {\em fully supported conductor}, if
  \begin{itemize}
    \item[(F)] For all $1\leq\nu\leq \mu\leq n$, the conductors of the local characters $\chi_p\vartheta_{\mu,\nu}$ are supported at all $\mathfrak{p}\mid p$.
  \end{itemize}

  For a character satisfying (C) and (F), let\nomenclature[]{$\underline{\varepsilon}_{\chi\vartheta}$}{multi-exponent in the conductor of (each component of) $\chi_p\vartheta$}\nomenclature[]{$f_{\chi\vartheta}$}{generator of the conductor of (each component of) $\chi\vartheta$}
  \begin{equation}
    f_{\chi\vartheta}=p^{\underline{\varepsilon}_{\chi\vartheta}}\;\in\;\OO_{p}
    \label{eq:charactergenerator}
  \end{equation}
  denote a generator of the conductor $\mathfrak{f}_{\chi\vartheta}$ of any $\chi_p\vartheta_{\mu,\nu}$ for $\nu\leq\mu$.

  Recall that for $\alpha > 0$ with the property that $f=p^\alpha$ is divisible by $f_{\vartheta}$ the quasi-character $\vartheta$ extends to an algebra homomorphism
  $$
  \vartheta:\quad\mathcal H_\CC(\alpha,\alpha)\to\CC.
  $$
  Recall the definition of the matrix $h$ in \eqref{eq:definitionofh}. We consider $h$ as a diagonally (with respect to the places above $p$) embedded element in $G(\ZZ_p)$ and identify for any $f\in F_p$ the element $t_f\in\GL_n(F_p)$ with its image under $\Delta\circ(j_n\times{\bf1})$ in $G(\QQ_p)$. Let $S(p)$ denote the set of places of $F$ dividing $p$\nomenclature[]{$S(p)$}{places in $F$ above $p$}.

\begin{theorem}[Global Birch Lemma]\label{thm:globalbirch}
  Let $W\in{\mathscr{W}}(\Pi\widehat{\otimes}\Sigma,\psi\otimes\psi^{-1})$ be a Whittaker function with the following properties:
  \begin{itemize}
    \item[(i)] $W$ admits a factorization $W=W_p\otimes W^{S(p)}$ for a local Whittaker function $W_p$ at $p$ and a Whittaker function $W^{S(p)}$ outside $p$.
    \item[(ii)] $W_p$ is right invariant under $I_{\alpha,\alpha}$ for some $\alpha\geq 1$.
    \item[(iii)] $W_p$ is an eigenvector for $\mathcal H_\CC(\alpha,\alpha)$ for a quasi-character $\vartheta:T(\QQ_p)\to\CC^\times$.
  \end{itemize}
  Let $\chi:F^\times\backslash\Adeles_F^\times\to\CC^\times$ be a finite order character such that $\chi_p\vartheta$ has fully supported constant conductor $\mathfrak{f}_{\chi\theta}$ dividing $f=p^{\alpha}$ for an $\alpha>0$ which is sufficiently large satisfying (ii).

  Then for every $s\in\CC$,
  \begin{align*}
    &
      \int\limits_{H(\QQ)\backslash H(\Adeles)}
       \phi_W
       \left(
         g\cdot ht_{f}
       \right)
       \chi(\det(g))
       \absnorm{\det(g)}^{s-\frac{1}{2}}
      dg\\
    =\;&
      \Omega^{S(p)}(s,W_\chi)\cdot
      \delta(W_p)\cdot
      \absNorm(\mathfrak{f})^{-\frac{(n+2)(n+1)n +(n+1)n(n-1)}{6}}\cdot
      \absNorm(\mathfrak{f}_{\chi\vartheta})^{-\frac{(n+1)n(n-1)}{6}}\cdot
      \absnorm{t_{f_{\chi\theta}}}^{\frac{1}{2}-s}\cdot
      \\
    &
      \vartheta(t_{p}^{\alpha})\cdot
      \prod_{\mu=1}^n\prod_{\nu=1}^\mu
        G(\chi\vartheta_{\mu,\nu})\cdot
      L(s,\Pi\widehat{\otimes}\Sigma\otimes\chi),
  \end{align*}
  where\nomenclature[]{$\delta(W_p)$}{complex constant}
  $$
    \delta(W_p):=
    W_p\left({\bf1}_n\right)
    \cdot
    \prod_{\mu=1}^{n}
    \prod_{v\mid p}
    \left(1-q_v^{-\mu}\right)^{-1}.
  $$
\end{theorem}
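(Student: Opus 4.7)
The plan is to reduce the global integral to a product of local zeta integrals and then apply the Local Birch Lemma (Theorem \ref{thm:localbirch}) at every place above $p$. First, using Fourier inversion and the identity \eqref{eq:whittakertoautomorphicform} applied to the right-translated and $\chi$-twisted Whittaker function, the integral on the left rewrites as the global Rankin-Selberg zeta integral
$$
  \int_{U_n(\Adeles_F)\backslash\GL_n(\Adeles_F)}\chi(\det g)\,W(\Delta(g)\cdot ht_f)\,|\det g|^{s-\frac{1}{2}}dg.
$$
Because $W=W_p\otimes W^{S(p)}$ factorizes and the element $ht_f$ has trivial component at every place $v\nmid p$, this integral decomposes as an Euler product. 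Outside $S(p)$, the local factors are the usual Rankin-Selberg local integrals of $W_{v,\chi_v}$, and combine by definition of $\Omega^{S(p)}$ into $\Omega^{S(p)}(s,W_\chi)\cdot L^{S(p)}(s,\Pi\widehat\otimes\Sigma\otimes\chi)$. Under the fully supported constant conductor hypothesis on $\chi_p\vartheta$, the local $L$-factor $L_v(s,\Pi_v\widehat\otimes\Sigma_v\otimes\chi_v)$ is trivial at every $v\mid p$, so this partial $L$-function coincides with the full $L(s,\Pi\widehat\otimes\Sigma\otimes\chi)$.

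At each place $v\mid p$, the component of $ht_f$ at $v$ is $(h_nj_n(t_{-f_v}),t_{f_v})$, so the local zeta integral matches precisely the left-hand side of Theorem \ref{thm:localbirch} with $\theta,\theta'$ identified with the corresponding local components of $\vartheta$ (the hypothesis that $W_p$ is a $\vartheta$-eigenvector for $\mathcal H_\CC(\alpha,\alpha)$, together with the constant-conductor assumption, guarantees the hypotheses of Theorem \ref{thm:localbirch}). Applying the Local Birch Lemma evaluates this integral in terms of a product of local Gauss sums $G(\chi_v\vartheta_{\mu,\nu,v})$, the norm-conductor factor $\absNorm(\mathfrak{f}_{\chi\vartheta,v})^{-(n+2)(n+1)n/6}$, the scaling $|t_{f_{\chi\vartheta,v}}|^{\frac{1}{2}-s}$, the volume product $\prod_\mu(1-q_v^{-\mu})^{-1}$, and the value $w_v\!\left(j_n(t_{f_v f_{\chi\vartheta,v}^{-1}})\right)\cdot v_v(t_{f_v f_{\chi\vartheta,v}^{-1}})$.

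Multiplying across $v\mid p$, the specific Whittaker values combine into $W_p(\Delta(t_{f f_{\chi\vartheta}^{-1}}))$. Using the $\mathcal H_\CC(\alpha,\alpha)$-eigenvector property of $W_p$ together with the Iwahori coset decomposition \eqref{eq:definitionofUpie}, this simplifies to $\vartheta(t_p^{\alpha-\underline{\varepsilon}_{\chi\vartheta}})\cdot W_p({\bf1}_n)$; combined with the additional $\vartheta(t_p^{\underline{\varepsilon}_{\chi\vartheta}})$-contribution that arises when unraveling the $\theta^{w_\mu}_\nu\theta'_\nu(f_{\chi\vartheta})$-factors produced by the Local Birch Lemma, the net $\vartheta$-contribution becomes $\vartheta(t_p^\alpha)$. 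Finally, the products $\prod_\mu\prod_{v\mid p}(1-q_v^{-\mu})^{-1}$ and $W_p({\bf1}_n)$ assemble into $\delta(W_p)$, the local conductor norms into the stated powers of $\absNorm(\mathfrak{f})$ and $\absNorm(\mathfrak{f}_{\chi\vartheta})$, and the local Gauss sums into $\prod_{\mu,\nu}G(\chi\vartheta_{\mu,\nu})$, yielding the stated identity. The main obstacle will be the bookkeeping in this last step: tracking the precise power of $\vartheta(t_p)$ that emerges after combining the Hecke eigenvalue relation with the Gauss-sum normalizations, and reconciling all the normalization constants across places $v\mid p$ into the global form on the right-hand side.
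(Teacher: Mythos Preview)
Your proposal follows essentially the same route as the paper's proof: unfold the period integral to a global Rankin--Selberg zeta integral via \eqref{eq:whittakertoautomorphicform}, factorize, apply the Local Birch Lemma (Theorem~\ref{thm:localbirch}) at each $v\mid p$, and then simplify $W_p(t_{f f_{\chi\vartheta}^{-1}})$ using the $U_p$-eigenvalue together with the identity $\vartheta(t_{f_{\chi\vartheta}})=\prod_{\mu,\nu}\vartheta_{\mu+1-\nu,\nu}(f_{\chi\vartheta})$ to produce the net factor $\vartheta(t_p^\alpha)$; this is exactly the paper's computation.

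The one point where you go beyond the paper is your assertion that the conductor hypothesis on $\chi_p\vartheta$ forces $L_v(s,\Pi_v\widehat{\otimes}\Sigma_v\otimes\chi_v)=1$ for $v\mid p$, so that $L^{S(p)}=L$. The paper's proof also passes silently from $L^{S(p)}$ to the full $L$-function at the factorization step, so you are not deviating from its logic; but be aware that your stated justification is not complete as written --- the conductor condition constrains $\chi\vartheta_{\mu,\nu}$ only for $1\le\nu\le\mu\le n$, whereas the local Rankin--Selberg Euler factor involves all $(n+1)n$ pairs of inducing characters, so triviality of $L_v$ does not follow immediately from (C) and (F) alone.
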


\begin{remark}
By the discussion preceeding Theorem \ref{thm:globalbirch},
$$
\Omega^{S(p)}(s,W_\chi)\;=\;
\Omega_\infty(s,W_{\chi,\infty})\cdot
\Omega_{S_W\cup S_\chi}^{S_\infty\cup S(p)}(s,W).
$$
In particular, if $S_W\cup S_\chi\subseteq S_\infty\cup S(p)$, then
\begin{equation}
  \Omega^{S(p)}(s,W_\chi)\;=\;
\Omega_\infty(s,W_{\chi,\infty}).
\label{eq:omegacomparison}
\end{equation}
\end{remark}

\begin{proof}[Proof of Theorem \ref{thm:globalbirch}]
  For ${\rm Re}(s)\gg 0$ we have by \eqref{eq:whittakertoautomorphicform} and identity
  \begin{align*}
    &
    \int\limits_{H(\QQ)\backslash H(\Adeles)}
    \phi_W
    \left(
    g\cdot ht_{f}
    \right)
    \chi(g)
    \absnorm{g}^{s-\frac{1}{2}}
    dg\\
    =\;&
    \chi(t_{f})\absnorm{t_{f}}^{s-\frac{1}{2}}\cdot
    \Phi\left(s,\phi_{W_\chi}(-\cdot ht_{f})\right)\\
    =\;&
    \chi(t_{f})\absnorm{t_{f}}^{s-\frac{1}{2}}\cdot
    \Psi\left(s,W_\chi(-\cdot ht_{f})\right)\\
    =\;&
    \chi(t_{f})\absnorm{t_{f}}^{s-\frac{1}{2}}\cdot
    \Psi_p\left(s,W_{\chi,p}(-\cdot ht_{f})\right)\cdot
    \Omega^{S(p)}(s,W_\chi)\cdot
    L(s,\Pi\widehat{\otimes}\Sigma\otimes\chi).
  \end{align*}  
  By holomorphic continuation, this identity extends to all $s\in\CC$ and the value of the expression
  \begin{align*}
    &\chi(t_{f})\absnorm{t_{f}}^{s-\frac{1}{2}}\cdot
    \Psi_p\left(s,W_{\chi,p}(-\cdot ht_{f})\right)\\
    =\;&\prod_{v\mid p}
    \int_{U_n(F_v)\backslash\GL_n(F_v)} W_v\left(\Delta(g_v)\cdot h_vt_{f_v}\right)\cdot
    \chi_v(g_v)\absnorm{g_v}^{s-\frac{1}{2}}dg
  \end{align*}  
  is given by the Local Birch Lemma (Theorem \ref{thm:localbirch}). Therefore, the global zeta integral evaluates to
  \begin{align*}
    &
    \Omega^{S(p)}(s,W_\chi)\cdot
    \prod_{\mu=1}^{n}
    \prod_{v\mid p}
    \left(1-q_v^{-\mu}\right)^{-1}\cdot
    \absNorm(\mathfrak{f}_{\chi\vartheta})^{-\frac{(n+2)(n+1)n}{6}}\cdot
    \absnorm{t_{f_{\chi\theta}}}^{\frac{1}{2}-s}\cdot\\
    &
    W_p\left(t_{f f_{\chi\vartheta}^{-1}}\right)\cdot
    \prod_{\mu=1}^n\prod_{\nu=1}^\mu
      \left[\vartheta_{\mu+1-\nu,\nu}(f_{\chi\vartheta})\cdot
      G(\chi\vartheta_{\mu+1-\nu,\nu})\right]\cdot
    L(s,\Pi\widehat{\otimes}(\Sigma\otimes\chi)).
  \end{align*}
  Recall that $\psi_p(\OO_p)=1$ and $f_{\chi\theta}=p^{\underline{\varepsilon}_{\chi\theta}},$ whence
  \begin{align*}
    W_p\left(t_{f f_{\chi\vartheta}^{-1}}\right)
    &\;=\;
    W_p\left(t_{fp^{-\underline{\varepsilon}_{\chi\vartheta}}}\right)\\
    &\;=\;
    [U(\ZZ_p):t_{fp^{-\underline{\varepsilon}_{\chi\vartheta}}}U(\ZZ_p)
      t_{fp^{-\underline{\varepsilon}_{\chi\vartheta}}}^{-1}]^{-1}\cdot
    U_{fp^{-\underline{\varepsilon}_{\chi\vartheta}}}
     W_p
    ({\bf1})\\
     &\;=\;
     \absNorm(\mathfrak f_{\chi\vartheta}/\mathfrak f)^{\frac{(n+2)(n+1)n+(n+1)n(n-1)}{6}}\cdot
    \vartheta(t_{f_{\chi\vartheta}}^{-1})\cdot\vartheta(t_f)    \cdot
    W_p({\bf1}).
  \end{align*}
  The observation
  $$
    \vartheta(t_{f_{\chi\vartheta}})\;=\;
    \prod_{\mu=1}^n\prod_{\nu=1}^\mu
    \vartheta_{\mu+1-\nu,\nu}(f_{\chi\vartheta})
  $$
  concludes the proof.    
\end{proof}

\section{$p$-adic Lattices}\label{sec:lattices}

In this section we establish a general relation between lattices which will translate into congruences essential to establish the independence of the weight of the non-abelian $p$-adic $L$-function we construct.

\subsection{Integral algebras}

As before, we consider the element $t_p\in\GL_n(F_p)$ as a diagonally embedded element $t_p\in G(\QQ_p)$\nomenclature[]{$t_p$}{the element $\diag(p^n,\dots,p)$ diagonally embedded into $G(\QQ_p)$}. For any non-negative integer $\alpha\geq 0$ consider the element\nomenclature[]{$g_\alpha$}{the element $h\cdot t_p^\alpha$ in $G(\QQ_p)$}
$$
g_{\alpha}\;:=\;h\cdot t_p^\alpha\;\in\;G(\QQ_p),
$$
and set for any subgroup $L\subseteq G$,\nomenclature[]{$L^\alpha$}{the $g_\alpha$-conjugate of subgroup $L\subseteq G$}
$$
L^{\alpha}\;:=\;g_{\alpha}Lg_{\alpha}^{-1}.
$$
and likewise for sub Lie algebras of $\lieg$ with respect to the adjoint action on $\lieg$.\nomenclature[]{$\liel^\alpha$}{the $g_\alpha$-conjugate of a Lie subalgebra $\liel\subseteq\lieg$}

\begin{lemma}\label{lem:transversality}
For any $\alpha\geq 0$, the subgroups $H$ and $B^{-,\alpha}$ are transversal, i.e.\
\begin{equation}
\lieg_{E}\;=\;\lieh_{E}\oplus\lieb^{-,\alpha}_{E}.
\label{eq:transversality}
\end{equation}
Furthermore, we have $\lieb^{-,\alpha}_{E}=\lieb^{-,0}_{E}$, and likewise for $B$ replacing $B^-$.
\end{lemma}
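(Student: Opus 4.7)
The plan is, firstly, to deduce the identity $\lieb^{-,\alpha}_E = \lieb^{-,0}_E$ (and its analogue for $B$) from the observation that $t_p \in T(\QQ_p)$ lies in the standard maximal torus. Consequently $\Ad(t_p^\alpha)$ acts by a character on each root space of $\lieg_E$ with respect to $T$, and in particular stabilises both $\lieb^-$ and $\lieb$, so
\[
  \lieb^{-,\alpha}_E \;=\; \Ad(h)\Ad(t_p^\alpha)\lieb^-_E \;=\; \Ad(h)\lieb^-_E \;=\; \lieb^{-,0}_E,
\]
and the same argument gives the analogue with $B$ in place of $B^-$. This reduces the direct sum decomposition \eqref{eq:transversality} to the case $\alpha = 0$.

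For the decomposition itself, a straightforward dimension count yields
\[
  \dim H + \dim B^- \;=\; [F:\QQ]\bigl(n^2 + (n+1)^2\bigr) \;=\; \dim G,
\]
so it suffices to prove $\lieh_E \cap \lieb^{-,0}_E = 0$. An arbitrary element of $\lieh_E$ has the form $(j_n(Y), Y)$ with $Y$ in the restriction of scalars of $\liegl_n$. Recalling that the second component of $h$ is trivial and that $j_n(t_{-1})$ is diagonal (hence stabilises $\lieb^-_{n+1}$), membership in $\lieb^{-,0}_E = \Ad(h_n)\lieb^-_{n+1,E} \oplus \lieb^-_{n,E}$ amounts to the two conditions
\[
  Y \in \lieb^-_{n,E}, \qquad j_n(Y) \;=\; h_n Z h_n^{-1} \;\text{for some}\; Z \in \lieb^-_{n+1, E},
\]
and I must show that these force $Y = 0$.

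I will then carry out the explicit matrix computation. The vanishing of the last row of $j_n(Y)$, combined with the identity $h_n^t e_{n+1} = e_{n+1}$, forces the last row of $Z$ to vanish. Hence $Z$ takes the block form $\begin{pmatrix} Z' & 0 \\ 0 & 0 \end{pmatrix}$ with $Z' \in \lieb^-_{n,E}$. The vanishing of the last column of $j_n(Y)$, combined with $h_n^{-1} e_{n+1} = (-1,\dots,-1,1)^t$, translates into the requirement that the row sums of $Z'$ vanish. A direct calculation then gives the identity
\[
  Y \;=\; w_n Z' w_n.
\]
Imposing that $Y$ and $Z'$ are simultaneously lower triangular forces $Z'$ to be diagonal; the row-sum vanishing then annihilates $Z'$, and hence $Y = 0$.

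The main obstacle --- more a bookkeeping nuisance than a conceptual hurdle --- is the $h_n$-conjugation computation itself, where one must carefully track how $h_n$ and $h_n^{-1}$ act on the basis vector $e_{n+1}$, the only vector on which $h_n$ departs from the Weyl permutation $w_n$. The argument is nothing more than the Lie-algebra shadow of the ``big cell'' transversality for the Rankin--Selberg pair $(H, B^-)$ twisted by the element $h_n$.
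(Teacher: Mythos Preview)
Your proof is correct and follows exactly the same approach as the paper: reduce to $\alpha=0$ via the fact that $t_p$ normalises $B^-$, then use the dimension count to reduce to $\lieh_E \cap \lieb^{-,0}_E = 0$, and verify this by linear algebra. The only difference is that the paper dismisses the final step as ``an easy exercise in linear algebra,'' whereas you actually carry out the computation in full; your block-matrix analysis of $h_n Z h_n^{-1}$ and the deduction $Y = w_n Z' w_n$ are accurate.
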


\begin{proof}
Since $t_p$ normalizes $B^-$ over fields, we are reduced to the case $\alpha = 0$ and $E=\QQ_p$. Counting dimensions shows that \eqref{eq:transversality} is equivalent to
$$
\lieh_{\QQ_p}h\cap h \lieb^{-}_{\QQ_p}\;=\;0,
$$
which in turn is an easy excercise in linear algebra.
\end{proof}

By the Poincar\'e-Birkhoff-Witt Theorem we obtain
\begin{corollary}\label{cor:transversality}
For any $\alpha\geq 0$ we have a canonical isomorphism
\begin{equation}
U(\lieg_{E})\;=\;U(\lieh_{E})\otimes_{E}U(\lieb^{-,\alpha}_{E}).
\label{eq:envelopingtransversality}
\end{equation}
\end{corollary}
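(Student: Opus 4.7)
The plan is to deduce the Corollary directly from the Poincar\'e-Birkhoff-Witt Theorem applied to an ordered basis of $\lieg_E$ adapted to the decomposition in Lemma \ref{lem:transversality}. Concretely, I would fix bases $\{h_1,\dots,h_r\}$ of $\lieh_E$ and $\{b_1,\dots,b_s\}$ of $\lieb^{-,\alpha}_E$, and observe that by \eqref{eq:transversality} their concatenation, ordered with the $h_i$'s preceding the $b_j$'s, is an ordered basis of $\lieg_E$.

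The PBW Theorem then produces an $E$-basis of $U(\lieg_E)$ consisting of the ordered monomials
$$
h_1^{a_1}\cdots h_r^{a_r}\cdot b_1^{c_1}\cdots b_s^{c_s},\qquad a_i,c_j\in\ZZ_{\geq 0},
$$
while applied separately to $\lieh_E$ and $\lieb^{-,\alpha}_E$, it yields PBW bases of $U(\lieh_E)$ and $U(\lieb^{-,\alpha}_E)$ consisting of the ordered monomials in the $h_i$'s, resp.\ in the $b_j$'s. Hence the multiplication map
$$
\mu:\quad U(\lieh_E)\otimes_E U(\lieb^{-,\alpha}_E)\;\to\;U(\lieg_E),\qquad x\otimes y\mapsto x\cdot y,
$$
carries the tensor product of these two PBW bases bijectively onto the PBW basis of $U(\lieg_E)$ above, and therefore is an isomorphism of $E$-vector spaces, which is the content of \eqref{eq:envelopingtransversality}.

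The only point worth flagging, rather than a genuine obstacle, is that the tensor product on the left is understood purely as a tensor product of $E$-vector spaces: since $\lieh_E$ and $\lieb^{-,\alpha}_E$ do not commute inside $\lieg_E$, the map $\mu$ is \emph{not} an algebra homomorphism, and the corollary makes no such claim. All that is needed for the sequel is the linear splitting provided by $\mu$, which is exactly what PBW together with Lemma \ref{lem:transversality} delivers. No further input is required.
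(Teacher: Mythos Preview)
Your proposal is correct and follows precisely the approach the paper indicates: the paper simply states ``By the Poincar\'e-Birkhoff-Witt Theorem we obtain'' before the corollary, and your argument spells out exactly this application of PBW to an ordered basis adapted to the decomposition of Lemma~\ref{lem:transversality}. Your remark that \eqref{eq:envelopingtransversality} is only a linear (not algebra) isomorphism is a useful clarification and consistent with how the result is used in Lemma~\ref{lem:integralityrelation}.
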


\begin{lemma}\label{lem:integralityrelation}
For any $\alpha\geq 0$ we have
\begin{equation}
U(\lieu_{\OO}^{\alpha})\;\subseteq 
\;\left(\OO+p^\alpha U(\lieh_{\OO})\right)\otimes_{\OO}\left(\OO+p^\alpha U(\lieb^{-,0}_{\OO})\right)
\label{eq:integralenvelopingtransversality}
\end{equation}
as subspaces of \eqref{eq:envelopingtransversality}.
\end{lemma}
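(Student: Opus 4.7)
My plan is to combine a scaling estimate for conjugation by $t_p^\alpha$ with an integral version of the transversality of Lemma~\ref{lem:transversality}, then feed the result into the PBW decomposition of Corollary~\ref{cor:transversality}.

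First, I would check that $\lieu_\OO^\alpha \subseteq p^\alpha\lieg_\OO$ with explicit $\lieh$ and $\lieb^{-,0}$ components. Since $t_p = \Delta(\diag(p^n,\dots,p))$ has diagonal $(p^n,p^{n-1},\dots,p,1)$ in the $\GL_{n+1}$-factor and $(p^n,\dots,p)$ in the $\GL_n$-factor of $G(\QQ_p)$, every positive root space of $\lieu$ is scaled by a strictly positive power of $p$, at least $p^1$, under $\Ad(t_p)$. Hence $\Ad(t_p^\alpha)(\lieu_\OO) \subseteq p^\alpha\lieu_\OO$, and composing with $\Ad(h)$, which preserves $\lieg_\OO$ since $h \in G(\ZZ_p)$ by construction, gives $\lieu_\OO^\alpha \subseteq p^\alpha \Ad(h)(\lieu_\OO) \subseteq p^\alpha\lieg_\OO$.

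Second, I would upgrade Lemma~\ref{lem:transversality} to an integral direct sum $\lieg_\OO = \lieh_\OO \oplus \lieb^{-,0}_\OO$. Because the matrix $h$ is unimodular (its building blocks $h_n$ and $j_n(t_{-1})$ lie in $G(\ZZ)$ with unit determinant), the natural $\OO$-bases of $\lieh_\OO$ and of $\lieb^{-,0}_\OO = \Ad(h)(\lieb^-_\OO)$ together form an $\OO$-basis of $\lieg_\OO$, which one verifies by a determinant computation in the standard coordinate basis. This yields, for every $X \in \lieu_\OO^\alpha$, a decomposition $X = p^\alpha Y + p^\alpha Z$ with $Y \in \lieh_\OO$ and $Z \in \lieb^{-,0}_\OO$. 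This integral refinement is the main obstacle, since Lemma~\ref{lem:transversality} supplies only the field-level splitting.

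Finally, the integral PBW isomorphism $U(\lieg_\OO) \cong U(\lieh_\OO)\otimes_\OO U(\lieb^{-,0}_\OO)$ issuing from Corollary~\ref{cor:transversality} allows me to identify the right-hand side $\mathcal R := (\OO + p^\alpha U(\lieh_\OO)) \otimes_\OO (\OO + p^\alpha U(\lieb^{-,0}_\OO))$ with the $\OO$-submodule of $U(\lieg_\OO)$ consisting of elements whose positive-degree pure $U(\lieh)$- and $U(\lieb^{-,0})$-components are divisible by $p^\alpha$ and whose mixed positive-bidegree components are divisible by $p^{2\alpha}$. Since $U(\lieu_\OO^\alpha)$ is $\OO$-spanned by monomials $X_1\cdots X_k$ with $X_i \in \lieu_\OO^\alpha$, it suffices to verify each such monomial lies in $\mathcal R$. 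The empty case $k=0$ is trivial; for $k=1$ the decomposition of the previous step gives $X_1 = p^\alpha Y_1\otimes 1 + 1\otimes p^\alpha Z_1 \in \mathcal R$; and for $k\geq 2$ the prefactor $p^{k\alpha}$ forces $X_1\cdots X_k \in p^{k\alpha}\,U(\lieg_\OO) \subseteq p^{2\alpha}\bigl(U(\lieh_\OO)\otimes_\OO U(\lieb^{-,0}_\OO)\bigr) = p^\alpha U(\lieh_\OO)\otimes_\OO p^\alpha U(\lieb^{-,0}_\OO) \subseteq \mathcal R$, completing the argument.
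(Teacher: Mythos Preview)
Your proposal is correct and follows essentially the same route as the paper: the conjugation estimate $t_p^\alpha\lieu_\OO t_p^{-\alpha}\subseteq p^\alpha\lieu_\OO$ (hence $\lieu_\OO^\alpha\subseteq p^\alpha\lieu_\OO^0$), the integral splitting $\lieg_\OO=\lieh_\OO\oplus\lieb^{-,0}_\OO$ coming from $h\in G(\ZZ_p)$, and then PBW. Your degree-by-degree bookkeeping in the PBW step (separating $k=0$, $k=1$, $k\geq 2$) is in fact more explicit than the paper's terse ``the claim follows with \eqref{eq:integraltransversality}''.
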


\begin{proof}
In the case $\alpha=0$, it suffices to remark that $h\in G(\ZZ_p)$ and therefore $g_{0}\in G(\OO)$, whence
\begin{equation}
\lieg_{\OO}\;=\;\lieh_{\OO}\oplus\lieb_{\OO}^{-,0},
\label{eq:integraltransversality}
\end{equation}
by \eqref{eq:transversality}. For $\alpha>0,$ observe
\begin{equation}
t_p^\alpha\lieu_{\OO} t_p^{-\alpha}\;\subseteq\;p^\alpha
\lieu_{\OO},
\label{eq:tconjugationofu}
\end{equation}
which implies
$$
\lieu_{\OO}^{\alpha}\;\subseteq\;p^\alpha
\lieu_{\OO}^{0}.
$$
Therefore,
$$
U(\lieu_{\OO}^{\alpha})\;\subseteq\;\OO+p^\alpha U(\lieu_{\OO}^{0})
$$
and the claim follows with \eqref{eq:integraltransversality}.
\end{proof}

\subsection{$p$-integral structures on rational representations}

Consider a finite extension $E/\QQ_p$ and let $L_{\lambda,E}$ denote a rational representation of $G$ of $B$-highest weight $\lambda$ defined over $E$. Write $w_0=(w_{n+1},w_n)\in W(G,T)$ for the longest element in the Weyl group of $G$ with respect to the torus $T$ corresponding to the diagonal matrices in $G$ and the positive system $\Delta^+$ given by our choice of $\lieb$\nomenclature[]{$\Delta^+$}{system of positive roots in $\lieb$}. The (algebraic) differential of $\lambda$ is a canonical $E$-valued character of $\lieb$ (trivial on the radical), which in turn gives rise to a character $\lambda^{w_0}$ of $\lieb^-_{E}=w_0\lieb_Ew_0$. It extends uniquely to a character of $U(\lieb^-_{E})$ that we denote the same. We pull it back to a character $(\lambda^{w_0})^{\alpha}$ of $\lieb_{E}^{-,\alpha}$. Fix a highest weight vector $v_0\in L_{\lambda,E}$ once and for all\nomenclature[]{$v_0$}{a fixed highest weight vector in $L_{\lambda,E}$}. Then $g_{\alpha}\cdot v_0$ is a $B_E^{-,\alpha}$-highest weight vector of weight $\lambda^{\alpha}$.

Then any $t\in T(\QQ_p)$ acts on $v_0$ via the scalar $\lambda^{w_0}(t)\in E^\times$. Renormalize its action on $V_{\lambda,E}$ by defining\nomenclature[]{$t\bullet(-)$}{$p$-normalized action of $T(\QQ_p)$}
\begin{equation}
  t\bullet v\;:=\;(-\lambda^{w_0})(t)\cdot (tv),\quad v\in V_{\lambda,E}.
  \label{eq:taction}
\end{equation}
Inside $L_{\lambda,E}$ consider the $G(\OO)$-lattice $L_{\lambda,\OO}$\nomenclature[]{$L_{\lambda,\OO}$}{$G(\OO)$-lattice generated by $v_0$} generated by $v_0\in L_{\lambda,E}$. Then
\begin{equation}
  L_{\lambda,\OO}\;=\;U(\lieu_{\OO})\cdot v_0.
  \label{eq:integrallattice}
\end{equation}
In particular, by \eqref{eq:tconjugationofu} and $t_p\bullet v_0=v_0$ the lattice $L_{\lambda,\OO}$ is stable under the renormalized action of $t_p$. Recall the definition of $d_x$ in \eqref{eq:definitionofdx} and define the lattices\nomenclature[]{$L_{\lambda,\OO}^{x,\alpha}$}{translation of the lattice $L_{\lambda,\OO}$ by $d_xg_\alpha$ using normalized $T(\QQ_p)$-action}
\begin{align*}
  L_{\lambda,\OO}^{x,\alpha}
  &\;:=\;
  d_xh\cdot\left(t_p^{\alpha}\bullet L_{\lambda,\OO}\right)\\
  &\;=\;
  (-\lambda^{w_0})(t_p^\alpha)\cdot d_xg_{\alpha}\cdot L_{\lambda,\OO}.
\end{align*}

Recall that $X_\QQ(H)\cong\ZZ$ is generated by $N_H:H\to\GL_1$, and identify the $\QQ$-rational characters of $H$ likewise with the $H$-representations $\QQ_{(j)}$ for $j\in\ZZ$. Again, $A_{(j)}=A\otimes_\QQ\QQ_{(j)}$ for any $\QQ$-algebra $A$\nomenclature[]{$A_{(j)}$}{codomain of the $A$-valued $j$-th power of the Norm character}.

\begin{proposition}\label{prop:generalmaninlemma}
For all $x\in\OO^\times$, $\alpha\geq 0$ and $v\in L_{\lambda,\OO}^{x,\alpha}$, there is a constant $\Omega_p^{\alpha,v}\in\OO$ with the following property: For every non-zero $H$-invariant functional
$$
\eta_j:\quad L_{\lambda,E}\;\to\;E_{(j)},
$$
we have the congruence\nomenclature[]{$\Omega_p^{\alpha,v}$}{$p$-adic constant}
\begin{equation}
\eta_j(v)\;\equiv\;
N_{F/\QQ}(x)^j\cdot
\Omega_p^{\alpha,v}\cdot
\eta_{j}(g_{0}v_0)
\pmod{\OO\cdot p^\alpha
\eta_{j}(g_{0}v_0)},
\label{eq:manincongruence}
\end{equation}
with
\begin{equation}
  \eta_{j}(g_{0}v_0)\;\neq\;0.
  \label{eq:etajg0v0nonvanishing}
\end{equation}
Furthermore, if $v=d_x\cdot h\cdot v_0$ we have
\begin{equation}
\Omega_p^{\alpha,v}\;=\;1.
  \label{eq:specialomegaone}
\end{equation}
\end{proposition}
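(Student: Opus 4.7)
The strategy is to reduce the congruence to the $\OO$-integral PBW structure of $U(\lieu_\OO^\alpha)$ furnished by Lemma \ref{lem:integralityrelation}. Since $L_{\lambda,\OO} = U(\lieu_\OO)\cdot v_0$ by \eqref{eq:integrallattice}, every $v \in L_{\lambda,\OO}^{x,\alpha}$ takes the form $v = (-\lambda^{w_0})(t_p^\alpha) \cdot d_x g_\alpha u\, v_0$ for some $u \in U(\lieu_\OO)$. Setting $\tilde u := g_\alpha u g_\alpha^{-1} \in U(\lieu_\OO^\alpha)$ and using $g_\alpha v_0 = \lambda^{w_0}(t_p^\alpha)\cdot g_0 v_0$, the scalar $(-\lambda^{w_0})(t_p^\alpha) = \lambda^{w_0}(t_p^\alpha)^{-1}$ cancels and one obtains the clean identity $v = d_x \tilde u \cdot g_0 v_0$. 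The $H$-equivariance of $\eta_j$ applied to $d_x \in H(\OO)$ then yields
\[
\eta_j(v) \;=\; N_{F/\QQ}(x)^j \cdot \eta_j(\tilde u \cdot g_0 v_0).
\]

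Next I would invoke the PBW isomorphism $U(\lieg_\OO) \cong U(\lieh_\OO)\otimes_\OO U(\lieb^{-,0}_\OO)$ of Corollary \ref{cor:transversality} together with Lemma \ref{lem:integralityrelation} to decompose
\[
\tilde u \;=\; c\cdot 1 \;+\; p^\alpha \cdot r,
\]
with $c \in \OO$ and $r \in U(\lieh_\OO)\otimes_\OO U(\lieb^{-,0}_\OO)$. Two integral characters then govern how this acts on $g_0 v_0$: the differential of $N_H^j$ extends to an $\OO$-algebra homomorphism $\chi_j : U(\lieh_\OO)\to\OO$ which controls the action of $U(\lieh_\OO)$ through the $H$-intertwining property of $\eta_j$; and the vector $g_0 v_0 = h v_0$ is a $\lieb^{-,0}$-eigenvector with character $b \mapsto \lambda^{w_0}(h^{-1}b h)$, which takes values in $\OO$ on $U(\lieb^{-,0}_\OO)$ because $h \in G(\OO)$ and $\lambda$ is an algebraic integral weight. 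Applying $\eta_j$ to the PBW decomposition of $r$ and combining these integrality statements produces
\[
\eta_j(\tilde u \cdot g_0 v_0) \;\equiv\; c \cdot \eta_j(g_0 v_0) \pmod{p^\alpha \OO \cdot \eta_j(g_0 v_0)},
\]
hence \eqref{eq:manincongruence} with $\Omega_p^{\alpha,v} := c \in \OO$. In the special case $v = d_x h v_0 = d_x g_0 v_0$ one checks that $u$, and hence $\tilde u$, equals $1$, so $c = 1$ and \eqref{eq:specialomegaone} follows.

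The non-vanishing \eqref{eq:etajg0v0nonvanishing} I would deduce from the irreducibility of $L_{\lambda,E}$: by PBW $U(\lieg_E) = U(\lieh_E)\cdot U(\lieb^{-,0}_E)$, and since $g_0 v_0$ is a $\lieb^{-,0}$-eigenvector one has $U(\lieb^{-,0}_E)\cdot g_0 v_0 = E\cdot g_0 v_0$; hence $L_{\lambda,E} = U(\lieh_E)\cdot g_0 v_0$ by irreducibility, and the non-vanishing of $\eta_j$ on $L_{\lambda,E}$ together with its $H$-equivariance forces $\eta_j(g_0 v_0) \neq 0$.

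The principal technical obstacle I anticipate is the second integrality statement, namely that the character $b \mapsto \lambda^{w_0}(h^{-1}bh)$ is genuinely $\OO$-valued on $\lieb^{-,0}_\OO$. This reduces to the explicit description of $h \in G(\OO)$ together with the fact that the algebraic character $\lambda^{w_0}$ takes values in $\OO$ on $\liet_\OO$, so that its pullback under the $h$-conjugation (which preserves the $\OO$-form of $\lieb^-$) is integral on the corresponding $\OO$-form of $\lieb^{-,0}$.
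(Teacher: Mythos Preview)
Your proposal is correct and follows essentially the same route as the paper: both arguments write $v$ as $d_x$ times an element of $U(\lieu_\OO^\alpha)$ acting on $g_0 v_0$, invoke Lemma~\ref{lem:integralityrelation} to extract a constant term in $\OO$ modulo $p^\alpha$, and then use that $g_0 v_0$ is a $\lieb^{-,0}$-eigenvector together with the $H$-equivariance of $\eta_j$ to reduce the $p^\alpha$-remainder to an $\OO$-multiple of $\eta_j(g_0 v_0)$. Your non-vanishing argument via $L_{\lambda,E}=U(\lieh_E)\cdot g_0 v_0$ is exactly what the paper's appeal to Corollary~\ref{cor:transversality} encodes, and the integrality of the $\lieb^{-,0}$-character that you flag as a potential obstacle is indeed immediate from $h\in G(\OO)$ and the integrality of $\lambda^{w_0}$ on $\lieb^-_\OO$.
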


\begin{proof}
Observe that for any $v\in L_{\lambda,\OO}^{x,\alpha}$,
$$
 L_{\lambda,\OO}^{x,\alpha}\;=\;
 (-\lambda^{w_0})(t_p^\alpha)\cdot
 d_x\cdot U(\lieu_{\OO}^{\alpha})\cdot g_{\alpha}v_0.
$$
In particular, we find a $u\in U(\lieu_{\OO}^{\alpha})$ with the property that
$$
v\;=\;(-\lambda^{w_0})(t_p^\alpha)\cdot d_x\cdot u\cdot g_{\alpha}v_0.
$$
According to Lemma \ref{lem:integralityrelation}, applying the decomposition \eqref{eq:integralenvelopingtransversality} to $u$, we find
$$
r\;=\;r_0+p^\alpha r_1\;\in\; \OO+p^\alpha U(\lieh_{\OO})
$$
and
$$
s\;=\;s_0+p^\alpha s_1\;\in\;\OO+p^\alpha U(\lieb^{-,0}_{\OO})
$$
satisfying the relation $u=rs$. Therefore,
\begin{align*}
  \eta_{j}(v)
  &\;=\;
  \eta_{j}\left(d_x\cdot rs\cdot g_{0}\cdot (-\lambda^{w_0})(t_p^\alpha)\cdot t_p^{\alpha}v_0\right)\\
  &\;=\;
  \eta_{j}\left(d_x\cdot rs\cdot g_{0}v_0\right)\\
  &\;=\;
  (s_0+p^\alpha s_1)\cdot
  (r_0+p^\alpha r_1)\cdot\Delta(\diag(x,1,\dots,1))\cdot
  \eta_{j}(g_{0}v_0)\\
  &\;\equiv\;
  N_H(\diag(x,1,\dots,1))^j\cdot
  r_0s_0\cdot
  \eta_{j}(g_{0}v_0)
  \pmod{\OO\cdot p^\alpha
  \eta_{j}(g_{0}v_0)}.
\end{align*}
This proves the first claim. The non-vanishing statement \eqref{eq:etajg0v0nonvanishing} is an immediate consequence of Corollary \ref{cor:transversality}.
\end{proof}

\section{Universal $p$-ordinary cohomology}\label{sec:cohomology}

For a compact open subgroup $K\subseteq G(\Adeles^{(\infty)})$\nomenclature[]{$K$}{compact open in $G(\Adeles^{(\infty)}$} we consider the locally symmetric space\nomenclature[]{$\mathscr X(K)$}{locally symmetric space for $G$ of level $K$}
$$
\mathscr X(K)\;:=\;G(\QQ)\backslash G(\Adeles)/K\cdot \widetilde{K}_\infty,
$$
where\nomenclature[]{$\widetilde{K}_\infty$}{standard maximal compact $K_\infty$ times connected component of the split center in $G$}
$$
\widetilde{K}_\infty\;:=\;Z^{\rm s}(\RR)^0K_\infty^0\subseteq G(\RR)
$$
with $Z^{\rm s}\subseteq G$ the maximal $\QQ$-split torus in the center $Z\subseteq G$ \nomenclature[]{$Z$}{center in $G$}\nomenclature[]{$Z^{\rm s}$}{split center in $G$} and $K_\infty\subseteq G(\RR)$ a standard maximal compact subgroup\nomenclature[]{$K_\infty$}{standard maximal compact in $G(\RR)$}. If $K=K_{n+1}\times K_n$ with compact open subgroups $K_{m}\subseteq\GL_m(\Adeles_F^{(\infty)})$\nomenclature[]{$K_{m}$}{compact open in $\GL_m(\Adeles_F^{(\infty)})$, a factor of $K$}, we have
\begin{equation}
  \mathscr X(K)\;=\;\mathscr X_{n+1}(K_{n+1})\times\mathscr X_n(K_n)
  \label{eq:Xdecomposition}
\end{equation}
with\nomenclature[]{$\mathscr X_m(K_m)$}{locally symmetric space for $\GL_m(\Adeles_F)$ of level $K_m$}
$$
  \mathscr X_m(K_m)\;:=\;GL_{m}(F)\backslash \GL_{m}(\Adeles_F)/K_{m}\cdot \widetilde{K}^{m}_\infty,
$$
$$
  \widetilde{K}_\infty^m\;:=\;Z_m^{\rm s}(F\otimes \RR)^0K^{m,0}_\infty\subseteq \GL_m(F\otimes\RR),
$$
  $Z_m^{\rm s}\subseteq \res_{F/\QQ}\GL_m$ the $\QQ$-split center\nomenclature[]{$Z_m$}{center of $\GL_m$}\nomenclature[]{$Z_m^{\rm s}$}{split center of $\res_{F/\QQ}\GL_m$} and $\widetilde{K}_\infty^{m}\subseteq \GL_n(F\otimes\RR)$ the corresponding standard maximal compact subgroup\nomenclature[]{$\widetilde{K}_\infty^m$}{standard maximal compact times (connected component of) the center in $\GL_m(F\otimes_\QQ\RR)$}.

\subsection{Arithmetic subgroups}

  We call $K$ torsion free or neat if for all $g\in G(\Adeles^{(\infty)})$ the arithmetic group\nomenclature[]{$\Gamma_g$}{arithmetic subgroup in $G(\QQ)$ corresponding to $gKg^{-1}$}
  $$
    \Gamma_g\;:=\;G(\QQ)\cap gKg^{-1}
  $$
  is torsion free resp.\ neat. Each neat $K$ is torsion-free and has the property that the arithmetic subgroups
  $$
    \Gamma_g\;\subseteq\;G(\RR)^0
  $$
  contain only totally positive elements. Every $K$ contains a neat $K$ of finite index.

  Each rational representation $L_{\lambda,E}$ gives rise to a sheaf $\underline{L}_{\lambda,E}$ on $\mathscr X(K)$\nomenclature[]{$\underline{L}_{\lambda,E}$}{sheaf associated to $L_{\lambda,E}$}.

  For $K$ neat, $\mathscr X(K)$ is a manifold and the sheaf cohomology of $\underline{L}_{\lambda,E}$ is a sum of the cohomologies of the arithmetic subgroups $\Gamma_g$ corresponding to $K$. The sheaf $\underline{L}_{\lambda,E}$ is non-trivial, provided that the {\em algebraicity condition}
  \begin{equation}
    H^0(Z(\QQ)\cap K;L_{\lambda,E})\;=\;L_{\lambda,E}
    \label{eq:lambdaalgebraicity}
  \end{equation}
  is satisfied, i.e.\ the centers of the arithmetic groups $\Gamma_g$ act trivially on $L_{\lambda,E}$. Condition \eqref{eq:lambdaalgebraicity} only depends on the Zariski closure of $\Gamma_g$ in $G$ and therefore is independent of $K$ if $K$ is sufficiently small.

  A dominant weight $\lambda\in X_\CC(\res_{F/\QQ}T_m)$ of $\res_{F/\QQ}\GL_m$ corresponds to a tuple $\lambda=(\lambda_{\tau,i})_{\tau:F\to\CC,1\leq i\leq m}$ of dominant weights $(\lambda_{\tau,1},\dots,\lambda_{\tau,m})\in\ZZ^m$, $\tau:F\to\CC$ running through all field embeddings. Complex conjugation canonically acts on the set of dominant weights $\lambda$ via its action on the embeddings $\tau:F\to\CC$, sending $\tau$ to $\tau^{\rm c}$, the postcomposition of $\tau$ with complex conjugation. Write $\lambda^{\rm c}$ for the complex conjugate weight attached to $\lambda$. We say that $\lambda$ is {\em essentially conjugate self-dual over $\QQ$} if
  $$
  \lambda=\lambda^{\vee,\rm c}+(w)
  $$
  for some $w\in \ZZ$. This is the same to say that
  $$
  L_{\lambda,\CC}\;\cong\;L_{\lambda,\CC}^{\vee,\rm c}\otimes(N_{F/\QQ}\circ\det)^{\otimes w}.
  $$
  The absolute Galois group $\Gal(\overline{\QQ}/\QQ)$ of $\QQ$ also acts on the dominant weights $\lambda$ via its action on $\res_{F/\QQ}T_m$, which permutes the entries in each tuple $(\lambda_{\tau,i})_{\tau:F\to\CC},$ $1\leq i\leq m$ via precomposition with $\tau$.

   We say that $\lambda$ is {\em arithmetic} or {\em strongly pure} (in the terminology of \citep{raghuram2015}), if
  $$
    \lambda^\sigma=(\lambda^{\vee,\rm c})^\sigma+(w_{\lambda})
  $$
  for all $\sigma\in\Gal(\overline{\QQ}/\QQ)$ and $w_{\lambda}\in\ZZ$ {\em independent} of $\sigma$. We call $w_{\lambda}$ the {\em (purity) weight} of $\lambda$\nomenclature[]{$w_\lambda$}{purity weight of $\lambda$}. In other words,
  $$
    \lambda_{\tau,i}+\lambda_{\tau^{\rm c},m+1-i}\;=\;w_{\lambda},
  $$
  for all $\tau$ and all $i$.
    
  We adopt the same terminology for dominant weights $\lambda=\lambda_{n+1}\otimes\lambda_n$ of $G$, that we call {\em arithmetic} if $\lambda_{n+1}$ and $\lambda_n$ both are arithmetic. For such a $\lambda,$ define
  $$
    w_\lambda\;:=\;w_{\lambda_{n+1}}+w_{\lambda_{n}}.
  $$

  Cuspidal cohomology
  $$
    H_{\rm cusp}^\bullet(\mathscr X(K);\underline{L}_{\lambda,\CC})
  $$
  vanishes if $\lambda$ is not arithmetic (use the K\"unneth Theorem to reduce to Clozel's \lq{}Lemme de puret\'e\rq{} for $\GL(n)$ in \citep{clozel1990}). Put\nomenclature[]{$l_0$}{length of cuspidal range for $G$}\nomenclature[]{$\rank G(\RR)$}{split rank of $G(\RR)$ as real Lie group}
  $$
    l_0\;:=\;\rank G(\RR)-\rank \widetilde{K}_\infty,
  $$
  and\nomenclature[]{$q_0$}{bottom degree for cuspidal cohomology of $G$}
  $$
    q_0\;:=\;\frac{\dim{\mathscr X}(K)-l_0}{2}.
  $$
  Then $q_0$ is an integer, which is known as the {\em bottom degree} of $G$, because cuspidal cohomology vanishes in degree $q<q_0$ and if it is non-zero, then it is non-zero precisely in degrees $q_0\leq q\leq q_0+l_0$, $q_0+l_0$ being the {\em top degree}.

\subsection{Nearly ordinary cohomology}

  Recall the compact open subgroups
$$
I_\alpha\;=\;\prod_{\mathfrak{p}\mid p}I_{v_{\mathfrak{p}}(p)\alpha}^{n+1}\times I_{v_{\mathfrak{p}}(p)\alpha}^n
$$
and
$$
I_{\alpha',\alpha}\;=\;\prod_{\mathfrak{p}\mid p}I_{v_{\mathfrak{p}}(p)\alpha',v_{\mathfrak{p}}(p)\alpha}^{n+1}\times I_{v_{\mathfrak{p}}(p)\alpha',v_{\mathfrak{p}}(p)\alpha}^n
$$
of $G(\QQ_p)$ from section \ref{sec:generichecke} and the corresponding Hecke algebras $\mathcal H_A(\alpha)$ and $\mathcal H_A(\alpha',\alpha)$ for $\alpha\geq\alpha'\geq 0,$ $\alpha>0,$ which contain the distinguished Hecke operator $U_p$ defined in \eqref{eq:definitionofUp}.

  Consider any family of compact open subgroups $K_{\alpha',\alpha}\subseteq G(\Adeles^{(\infty)})$, $\alpha\geq\alpha'\geq 0$ and $\alpha>0$, which admits a decomposition $K_{\alpha',\alpha}=I_{\alpha',\alpha}\times K^{(p)}$ with $K^{(p)}$ a compact open outside $p$, trivial at $p$ and independent of $\alpha',\alpha$\nomenclature[]{$K_{\alpha',\alpha}$}{compact open in $G(\Adeles^{(\infty)})$ with $p$-component $I_{\alpha',\alpha}$}. Define $\alpha_0^K$ to be the minimal $\alpha_0>0$ such that $K_{0,\alpha_0}$ is neat\nomenclature[]{$\alpha_0^K$}{minimal $\alpha_0$ such that $K_{0,\alpha_0}$ is neat}.

The cohomology
$$
H_{?}^{\bullet}({\mathscr X}(K_{\alpha',\alpha}); \underline{L}_{\lambda,E})
$$
for $?\in\{-,\rm c,!\}$ is naturally a module over the Hecke algebra of level $K_{\alpha',\alpha}$. At $p$ we renormalize the action of $U_p$ by multiplication by the scalar $\lambda^\vee(t_p)=(-\lambda^{w_0})(t_p)$\nomenclature[]{$\lambda^\vee$}{highest weight of the contragredient to $L_{\lambda,E}$}. Likewise, we may renormalize every Hecke operator in $\mathcal H_\OO(0,\alpha)\subseteq\mathcal H_\OO(\alpha',\alpha)$ via \eqref{eq:taction}. Then $\mathcal H_\OO(0,\alpha)$ acts on cohomology\nomenclature[]{$H_{?}^\bullet(-;-)$}{sheaf cohomology with support condition $?$}
\begin{equation}
  H_{?}^{\bullet}({\mathscr X}(K_{\alpha',\alpha}); \underline{L}_{\lambda,\OO})
  \label{eq:integralcohomology}
\end{equation}
with $p$-integral coefficients $p$-optimally. Attached to this action is an ordinary projector $e_p$\nomenclature[]{$e_p$}{Hida's ordinary projector}, which projects onto the subspace\nomenclature[]{$H_{?,\rm ord}^\bullet(-;-)$}{$U_p$-ordinary part of $H_{?}^\bullet(-;-)$}
\begin{equation}
  H_{?,\rm ord}^{\bullet}({\mathscr X}(K_{\alpha',\alpha}); \underline{L}_{\lambda,\OO})
  \;:=\;e_p  H_{?}^{\bullet}({\mathscr X}(K_{\alpha',\alpha}); \underline{L}_{\lambda,\OO})
  \label{eq:ordinarycohomology}
\end{equation}
of \eqref{eq:integralcohomology} on which $U_p$ acts invertibly. More generally, we will consider the spaces
$$
  H_{?,\rm ord}^{\bullet}({\mathscr X}(K_{\alpha',\alpha}); \underline{L}_{\lambda,A})
$$
for\nomenclature[]{$A$}{from section 4 on, $A$ is in $\{
  {\OO/p^\alpha\OO},
  {p^{-\alpha}\OO/\OO},
  {\OO},
  {K/\OO}\}$}
\begin{equation}
  A\;\in\;\{
  {\OO/p^\alpha\OO},
  {p^{-\alpha}\OO/\OO},
  {\OO},
  {K/\OO}\},
  \label{eq:choiceofA}
\end{equation}
where\nomenclature[]{$L_{\lambda,A}$}{tensor product of $L_{\lambda,\OO}$ with $A$}
$$
  L_{\lambda,A}\;=\;L_{\lambda,\OO}\otimes_\OO A.
$$
Since $U_p$ is a product of the local operators $V_\nu\otimes 1$, $1\otimes V_\nu,$ $1\leq\nu\leq n,$ and since $V_{n+1}\otimes 1$ acts invertibly as well (for every place $v\mid p$) from section \ref{sec:padicheckealgebras}, the action of $\mathcal H_\OO(\alpha',\alpha)$ on these nearly ordinary cohomology naturally extends to an action of $\mathcal H_\OO^{\rm ord}(\alpha',\alpha)$ and
$$
  H_{?,\rm ord}^{\bullet}({\mathscr X}(K_{\alpha',\alpha}); \underline{A})\;=\;
  \mathcal H_\OO^{\rm ord}(\alpha',\alpha)\otimes_{\mathcal H_\OO(\alpha',\alpha)}
  H_{?}^{\bullet}({\mathscr X}(K_{\alpha',\alpha}); \underline{A})
$$
as Hecke modules for $A\in\{\OO/p^\beta\OO,p^{-\beta}\OO/\OO,\OO,E/\OO\}$. Put
$$
  H_{?,\rm ord}^{\bullet}({\mathscr X}(K_{\alpha',\alpha}); \underline{E})\;:=\;
  H_{?,\rm ord}^{\bullet}({\mathscr X}(K_{\alpha',\alpha}); \underline{\OO})\otimes_\OO E.
$$
Then in all cases, passing to ordinary parts is an exact functor.

\begin{proposition}\label{prop:independenceofalpha}
  For $A$ as in \eqref{eq:choiceofA}, any dominant weight $\lambda,$ and every $\alpha\geq\alpha'\geq 0$ with $\alpha\geq\alpha_0^K$, we have for every $\alpha''\geq\alpha$ a canonical isomorphism
  $$
    H_{?,\rm ord}^{\bullet}({\mathscr X}(K_{\alpha',\alpha}); \underline{L}_{\lambda,A})\;=\;
    H_{?,\rm ord}^{\bullet}({\mathscr X}(K_{\alpha',\alpha''}); \underline{L}_{\lambda,A})
  $$
  of Hecke modules.
\end{proposition}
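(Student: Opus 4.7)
The plan is as follows. For $\alpha'' \geq \alpha \geq \alpha_0^K$ the inclusion $K_{\alpha',\alpha''} \subseteq K_{\alpha',\alpha}$ yields a finite unramified covering of manifolds
$$
\pi:\mathscr X(K_{\alpha',\alpha''}) \to \mathscr X(K_{\alpha',\alpha}),
$$
of degree $[I_{\alpha',\alpha}:I_{\alpha',\alpha''}]$, which is a power of $p$. Pullback $\pi^\ast$ and the transfer $\pi_\ast$ induce maps between the two cohomologies in both directions. Both are equivariant for the action of $\mathcal H_\OO(\alpha',\alpha)$, since the Hecke algebra at level $I_{\alpha',\alpha}$ is naturally a subalgebra of the Hecke algebra at level $I_{\alpha',\alpha''}$ via the coset decomposition, and this inclusion intertwines the natural Hecke actions at the two levels. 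The composition $\pi_\ast\circ\pi^\ast$ is multiplication by the degree, which is already invertible for coefficients $A\in\{E/\OO,\OO\}$ rationally, but the nontrivial point is the statement for $A\in\{\OO/p^\alpha\OO,\,p^{-\alpha}\OO/\OO\}$, where $[I_{\alpha',\alpha}:I_{\alpha',\alpha''}]$ vanishes.

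To upgrade $\pi^\ast$ to an isomorphism after passing to the ordinary part, I would invoke the standard Hida-type contraction argument. Using the coset decomposition $I_{\alpha',\alpha}=\bigsqcup_u u\cdot I_{\alpha',\alpha''}$ with representatives $u$ chosen in $U(\ZZ_p)$, and the conjugation relation $\Delta(t_p)^{\alpha''-\alpha}\cdot I_{\alpha',\alpha''}\cdot \Delta(t_p)^{-(\alpha''-\alpha)}\supseteq I_{\alpha',\alpha}$ (which follows from \eqref{eq:tconjugationofu}), I would derive an identity on cohomology, up to the renormalization \eqref{eq:taction} by $\lambda^\vee(t_p)$, of the form
$$
\pi^\ast\circ U_p^{\,\alpha''-\alpha}\;=\;\tilde\pi_\ast\qquad\text{and}\qquad \pi_\ast\circ\pi^\ast\;=\;U_p^{\,\alpha''-\alpha}\cdot c,
$$
where $\tilde\pi_\ast$ is an auxiliary Hecke-type transfer between the same spaces and $c$ is an invertible correction factor; the key input is that the cosets $I_{\alpha',\alpha}/I_{\alpha',\alpha''}$ are absorbed by conjugation by $\Delta(t_p)^{\alpha''-\alpha}$. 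Since $U_p$ acts invertibly on the ordinary part, this identity provides both a left and a right inverse to $\pi^\ast$ on $H^\bullet_{?,\mathrm{ord}}(-;\underline{L}_{\lambda,A})$, yielding the required isomorphism of $\mathcal H^{\rm ord}_\OO(\alpha',\alpha)$-modules.

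The argument applies uniformly to all support conditions $?\in\{-,\mathrm c,!\}$ because $\pi$ is proper, so $\pi^\ast$ and $\pi_\ast$ commute with the natural maps between ordinary, compactly supported, and parabolic cohomology. The main obstacle will be the careful bookkeeping of the renormalization by $\lambda^\vee(t_p)$ together with the product decomposition $U_p=\prod_{\mathfrak p\mid p}(U_{\mathfrak p}\otimes U'_{\mathfrak p})^{v_{\mathfrak p}(p)}$: the contraction must be applied place by place, and one needs to verify that the resulting identity descends faithfully to all coefficient rings $A$ in \eqref{eq:choiceofA}, in particular to $A=\OO/p^\alpha\OO$, where the raw degree of $\pi$ is zero and the inversion genuinely requires $U_p$-invertibility rather than invertibility of the degree. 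Once this identity is in place, the statement is immediate, and the construction is manifestly canonical.
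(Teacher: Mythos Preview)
Your overall strategy---Hida's level-contraction argument---is exactly what the paper invokes (it simply refers to the proof of (4.7c) on p.~445 of \cite{hida1995}, using the explicit coset decompositions of Hida's operators $T_1,T_2,T_3$). But three of the details in your sketch are wrong and, as written, the argument would not go through.

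First, coset representatives for $I_{\alpha',\alpha}/I_{\alpha',\alpha''}$ cannot be taken in $U(\ZZ_p)$: the two groups share the \emph{same} upper-triangular and diagonal parts and differ only in the lower-triangular congruence depth ($p^\alpha$ versus $p^{\alpha''}$), so representatives must be chosen in $U^-$, as lower-unipotent matrices with entries in $p^\alpha\OO_p$ modulo $p^{\alpha''}\OO_p$. Second, the inclusion $\Delta(t_p)^{N}\,I_{\alpha',\alpha''}\,\Delta(t_p)^{-N}\supseteq I_{\alpha',\alpha}$ is false: conjugation by $t_p^{-N}$ multiplies strictly upper-triangular entries by \emph{negative} powers of $p$, so $t_p^{-N}I_{\alpha',\alpha}t_p^{N}\not\subseteq G(\ZZ_p)$. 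Equation \eqref{eq:tconjugationofu} says $t_p$ contracts $\lieu$, not $\lieu^-$, which is the opposite of what you need. The correct statement is at the level of double cosets, not subgroups: for lower-unipotent representatives $r$ one has $t_p^{-N}rt_p^{N}\in I_{\alpha',\alpha''}$ once $N\geq\alpha''-\alpha$, whence $I_{\alpha',\alpha}\,t_p^{N}\,I_{\alpha',\alpha}=I_{\alpha',\alpha''}\,t_p^{N}\,I_{\alpha',\alpha}$. Third, $\pi_\ast\circ\pi^\ast$ is multiplication by the \emph{scalar} index $[I_{\alpha',\alpha}:I_{\alpha',\alpha''}]$, never by the operator $U_p^{N}$ times a unit; the actual mechanism (Hida's $T_1,T_2,T_3$) uses the mixed double-coset operators $[I_{\alpha',\alpha''}\,t_p^{N}\,I_{\alpha',\alpha}]$ and $[I_{\alpha',\alpha}\,t_p^{N}\,I_{\alpha',\alpha''}]$, whose composites with $\pi^\ast$ (resp.\ with each other) recover powers of $U_p$ at the appropriate level, and it is this that furnishes the two-sided inverse to $\pi^\ast$ on ordinary parts.
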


\begin{proof}
  The proof proceeds as the proof of the isomorphism (4.7c) on p.445 of \citep{hida1995}, using the explicit left coset decomposition of the Hecke operators $T_1, T_2, T_3$ of loc.\ cit., adapted to right coset decompositions we are working with.
\end{proof}

\begin{remark}
  As in \citep{hida1995}, Proposition \ref{prop:independenceofalpha} holds for  more general coefficient sheaves, in particular for the ones considered in the proof of the Control Theorem (Theorem \ref{thm:regularcontrol}) below.
\end{remark}

\subsection{Independence of weight}\label{sec:independenceofweight}

Write $\OO[\lambda^{w_0}]$ for the $\OO$-module of rank $1$ on which $B^-$ acts via $\lambda^{w_0}$\nomenclature[]{$\OO[\lambda^{w_0}$}{free $\OO$-module of rank $1$ on which $B^-$ acts via $\lambda^{w_0}$}. We assume that we are given a fixed generator $1\in\OO[\lambda^{w_0}]$, which we use to identify this space with $\OO$. Consider the inclusion
$$
i:\quad \OO[\lambda^{w_0}]\;\to\;L_{\lambda,\OO},\quad c\mapsto c\cdot v_0,
$$
and the projection
$$
p:\quad L_{\lambda,\OO}\;\to\; \OO[\lambda^{w_0}],
$$
which projects $T$-equivariantly onto the lowest weight space. By our identification of $\OO[\lambda^{w_0}]$ with $\OO$, we obtain maps\nomenclature[]{$i_\lambda$}{inclusion $\OO\to L_{\lambda,\OO}$ of lowest weight space}
$$
i_{\lambda}:\quad \OO\to L_{\lambda,\OO},
$$
and\nomenclature[]{$p_\lambda$}{projection $L_{\lambda,\OO}\to \OO$ to lowest weight space}
$$
p_{\lambda}:\quad L_{\lambda,\OO}\to \OO.
$$
\begin{theorem}\label{thm:independenceofweight}
For $\alpha\geq\alpha'\geq 0$, $\alpha\geq\alpha_0^K,$ and $A\in\{\OO/p^{\alpha'}\OO,p^{-\alpha'}\OO/\OO\}$ the maps $i_\lambda$ and $p_\lambda$ induce isomorphisms\nomenclature[]{$\iota_\lambda$}{map (isomorphism) on (universal nearly) ordinary cohomology induced by $i_\lambda$}
\begin{equation}
  \iota_\lambda:\quad
  H_{?,\rm ord}^{\bullet}({\mathscr X}(K_{\alpha',\alpha}); \underline{A})
  \to
    H_{?,\rm ord}^{\bullet}({\mathscr X}(K_{\alpha',\alpha}); \underline{L}_{\lambda,A})
  \label{eq:iotaiso}
\end{equation}
and\nomenclature[]{$\pi_\lambda$}{map (isomorphism) on (universal nearly) ordinary cohomology induced by $p_\lambda$}
\begin{equation}
  \pi_\lambda:\quad
  H_{?,\rm ord}^{\bullet}({\mathscr X}(K_{\alpha',\alpha}); \underline{L}_{\lambda,A})
  \to
    H_{?,\rm ord}^{\bullet}({\mathscr X}(K_{\alpha',\alpha}); \underline{A}),
  \label{eq:piiso}
\end{equation}
which are inverses of each other, Hecke-equivariant outside $p$, and satisfy
\begin{equation}
T\circ\iota_\lambda\;=\;\iota_\lambda\circ T,
\quad\text{and}\quad
T\circ\pi_\lambda\;=\;\pi_\lambda\circ T,
  \label{eq:Upiso}
\end{equation}
for $T\in\mathcal H_\OO(0,\alpha)$ and for every $t\in T(\ZZ_p/p^{\alpha'}\ZZ_p)$,
\begin{equation}
  \langle t\rangle\circ\iota_\lambda\;=\;
  \iota_\lambda\circ \lambda^{w_0}(t)\langle t\rangle,
\quad\text{and}\quad
\lambda^{w_0}(t)\langle t\rangle\circ\pi_\lambda\;=\;\pi_\lambda\circ \langle t\rangle,
  \label{eq:diamondiso}
\end{equation}
\end{theorem}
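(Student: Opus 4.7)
The plan is to run Hida's standard nilpotency argument in the present setting. First I would use the $T$-equivariant $\OO$-module decomposition $L_{\lambda,\OO}=\OO\cdot v_0\oplus K_\OO$, with $K_\OO:=\ker p_\lambda$ the sum of the non-lowest weight spaces, and the parallel decomposition yielding $C_\OO:=\coker i_\lambda\cong K_\OO$. Tensoring with $A$ and sheafifying produces short exact sequences
$$
0\to\underline{A}\xrightarrow{i_\lambda}\underline{L}_{\lambda,A}\to\underline{C}_A\to 0,\qquad 0\to\underline{K}_A\to\underline{L}_{\lambda,A}\xrightarrow{p_\lambda}\underline{A}\to 0
$$
on $\mathscr X(K_{\alpha',\alpha})$, and corresponding long exact sequences in $H_{?}^\bullet$.

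The key step is to show that $e_p$ annihilates $H_{?}^\bullet(\mathscr X(K_{\alpha',\alpha});\underline{C}_A)$ and $H_{?}^\bullet(\mathscr X(K_{\alpha',\alpha});\underline{K}_A)$. For this I would decompose $C_A$ and $K_A$ into $T$-weight spaces: every weight $\mu$ that occurs satisfies $\lambda^{w_0}\lneq\mu\leq\lambda$, so $\mu-\lambda^{w_0}$ is a non-zero non-negative integral combination of positive roots of $G$. Since each simple positive root evaluates to $p$ on $t_p$, the renormalized action \eqref{eq:taction} of $t_p$ on each such weight space has eigenvalue $p^{k_\mu}$ with $k_\mu\geq 1$. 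Hence the Hecke operator $U_p=I_{\alpha',\alpha}\Delta(t_p)I_{\alpha',\alpha}$ acts divisibly by $p$ on sections of $\underline{C}_A$ and $\underline{K}_A$, and $U_p^{\alpha'}$ annihilates both at the coefficient level. Since $A$ is killed by $p^{\alpha'}$, the ordinary projector $e_p$ consequently vanishes on cohomology with these coefficients.

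Combining with the long exact sequences, the induced maps $\iota_\lambda$ and $\pi_\lambda$ become isomorphisms, and the identity $p_\lambda\circ i_\lambda=\id_\OO$ forces $\pi_\lambda\circ\iota_\lambda=\id$, so they are mutually inverse. The equivariance properties \eqref{eq:Upiso} and \eqref{eq:diamondiso} are then read off at the coefficient level: the Hecke operators in $\mathcal H_\OO(0,\alpha)$ are written in terms of the $U(\delta)$, $\delta\in\Delta_G$, whose renormalized action leaves $\OO\cdot v_0$ pointwise invariant, while $\langle t\rangle$ for $t\in T(\ZZ_p/p^{\alpha'}\ZZ_p)$ acts on $v_0$ by the $B^-$-character $\lambda^{w_0}(t)$, producing exactly the indicated twist. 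Hecke-equivariance outside $p$ is tautological since the maps $i_\lambda,p_\lambda$ are defined purely on the coefficient sheaves.

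The hard part will be the nilpotency step: one must pin down how the renormalization \eqref{eq:taction} interacts with the sheaf-theoretic Hecke correspondence in order to see that on each $T$-weight component (and not merely after averaging over coset representatives) a strictly positive power of $p$ really appears in the eigenvalue of $U_p$. Once this is confirmed at every $\mathfrak{p}\mid p$ simultaneously, using that $U_p$ factors through the local operators of section \ref{sec:padicheckealgebras}, the rest of the argument is formal, with Proposition \ref{prop:independenceofalpha} ensuring the necessary independence from the auxiliary parameter $\alpha$.
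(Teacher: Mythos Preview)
Your proposal is correct and follows the same approach as the paper: both use the short exact sequences for $i_\lambda$ and $p_\lambda$, show that the renormalized $U_p$-operator acts nilpotently on cohomology with coefficients in $\coker i_\lambda\otimes A$ and $\ker p_\lambda\otimes A$, and conclude via the long exact sequence together with exactness of the ordinary projector. The paper simply asserts the nilpotency of $(-\lambda^{w_0})(t_p)\cdot t_p$ on these coefficient modules where you spell out the weight-space reason; your closing appeal to Proposition~\ref{prop:independenceofalpha} is unnecessary for the theorem as stated.
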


\begin{proof}
  We discuss the case $A=\OO/p^{\alpha'}\OO$, the other case follow similarly. Consider the short exact sequences
\begin{equation}
  0\to \OO/p^{\alpha'}\OO\to L_{\lambda,\OO/p^{\alpha'}\OO}\to\coker i_\lambda\otimes\OO/p^{\alpha'}\OO\to 0,
  \label{eq:icokernelsequence}
\end{equation}
  $$
  0\to \ker p_\lambda\otimes\OO/p^{\alpha'}\OO\to L_{\lambda,\OO/p^{\alpha'}\OO}\to\OO/p^{\alpha'}\OO\to 0.
  $$
  We have
  $$
  [I_{\alpha',\alpha} t_p I_{\alpha',\alpha}]\;=\;\bigsqcup_{u\in U(\OO)/t_p U(\OO)t_p^{-1}} ut_p I_{\alpha,\alpha},
  $$
  which shows that
  $$
      H_{?}^{\bullet}({\mathscr X}(K_{\alpha',\alpha}); \underline{\coker i_\lambda\otimes\OO/p^{\alpha'}\OO}).
  $$
  is a $U_p$-module, and likewise for $\kernel p_\lambda\otimes\OO/p^{\alpha'}\OO$.

  By construction, $(-\lambda^{w_0})(t_p)\cdot t_p$ acts nilpotently on both $\coker i_\lambda\otimes\OO/p^{\alpha'}\OO$ and $\ker p_\lambda\otimes\OO/p^{\alpha'}\OO$. Therefore, $U_p$ acts nilpotently on
  $$
    H_{?}^{\bullet}({\mathscr X}(K_{\alpha',\alpha}); \underline{\coker i_\lambda\otimes\OO/p^{\alpha'}\OO}).
    $$
  This shows that
  $$
    H_{?,\rm ord}^{\bullet}({\mathscr X}(K_{\alpha',\alpha}); \underline{\coker i_\lambda\otimes\OO/p^{\alpha'}\OO})\;=\;0.
  $$
  Since projection to the ordinary part is an exact functor, the long exact sequence attached to \eqref{eq:icokernelsequence} implies that $i_\lambda$ must be an isomorphism of $\OO$-modules on the ordinary part. The same argument shows that $p_\lambda$ induces the inverse isomorphism.
\end{proof}

We introduce for every dominant weight $\lambda$ the notation\nomenclature[]{$L_{\lambda^\vee,A}^\circ$}{(Pontryagin) dual lattice}
$$
L_{\lambda^\vee,E}^\circ\;:=\;\Hom_E(L_{\lambda,E},E)
$$
for the contragredient. It comes with a canonical perfect pairing
$$
\langle-,-\rangle:\quad L_{\lambda,E}\otimes_E L_{\lambda^\vee,E}^\circ\to E.
$$
Define the dual lattice for $L_{\lambda,\OO}$ accordingly as
$$
L_{\lambda^\vee,\OO}^\circ\;:=\;\{f\in L_{\lambda^\vee,E}^\circ\mid f(L_{\lambda,\OO})\subseteq\OO\}.
$$
Then it is easy to check that
$$
\langle-,-\rangle:\quad L_{\lambda,\OO}\otimes_\OO L_{\lambda^\vee,\OO}^\circ\to \OO
$$
is still a perfect pairing: As a sublattice of $L_{\lambda^\vee,E}^\circ$ is generated by a $B$-highest weight vector $v_0^\circ$ dual to $v_0$. In particular, $L_{\lambda^\vee,\OO}^\circ$ is canonically identified with the $\OO$-dual of $L_{\lambda,\OO}$.

Along the same lines we see that $L_{\lambda^\vee,\OO}^\circ$ is also canonically identified with the Pontryagin dual of $L_{\lambda,E/\OO}$ and likewise for finite torsion coefficients.

With this notation at hand, define the universal nearly ordinary cohomology with torsion coeffients as\nomenclature[]{$\mathcal H_{?,\rm ord}^\bullet(K_{\infty,\infty};\lambda,A)$}{universal nearly ordinary homology of weight $\lambda$ with coefficients in $A\in\{\OO,E/\OO\}$}
$$
  \mathcal H_{\rm ord}^\bullet(K_{\infty,\infty};\lambda,E/\OO)\;:=\;
  \varinjlim\limits_{\alpha,\alpha'}
  H_{\rm ord}^{\bullet}({\mathscr X}(K_{\alpha',\alpha}); \underline{L}_{\lambda,p^{-\alpha'}\OO/\OO}).
$$
Likewise, we obtain with respect to the transfer maps its Pontryagin dual
$$
  \mathcal H_{\rm c,ord}^\bullet(K_{\infty,\infty};\lambda^\vee,\OO)\;:=\;
  \varprojlim_{\alpha,\alpha'}
  H_{\rm c,ord}^{\bullet}({\mathscr X}(K_{\alpha',\alpha}); \underline{L}_{\lambda^\vee,\OO/p^{\alpha'}\OO}^\circ).
$$
We also consider the mutually Pontryagin dual nearly ordinary cohomology groups of the form
$$
  \mathcal H_{\rm ord}^\bullet(K_{\infty,\infty};\lambda,\OO)\;:=\;
  \varprojlim_{\alpha,\alpha'}
  H_{\rm ord}^{\bullet}({\mathscr X}(K_{\alpha',\alpha}); \underline{L}_{\lambda,\OO/p^{\alpha'}\OO}^\circ),
$$
$$
  \mathcal H_{\rm c,ord}^\bullet(K_{\infty,\infty};\lambda^\vee,E/\OO)\;:=\;
  \varinjlim_{\alpha,\alpha'}
  H_{\rm c,ord}^{\bullet}({\mathscr X}(K_{\alpha',\alpha}); \underline{L}_{\lambda^\vee,p^{-\alpha'}\OO/\OO}).
$$
We define analogously the maps $\iota_{\lambda^\vee}^\circ$ and $p_{\lambda^\vee}^\circ$ as the Pontryagin duals of the maps $p_{\lambda}$ and $\iota_{\lambda}$ respectively.\nomenclature[]{$\iota_{\lambda^\vee}^\circ$}{Pontryagin dual to $p_{\lambda}$}\nomenclature[]{$p_{\lambda^\vee}^\circ$}{Pontryagin dual to $\iota_{\lambda}$}
  
Then Theorem \ref{thm:independenceofweight} shows
\begin{corollary}\label{cor:independenceofweight}
  The map $\pi_\lambda$ induces an isomorphism
  $$
    \pi_\lambda:\quad
    \mathcal H_{?,\rm ord}^\bullet(K_{\infty,\infty};\lambda,E/\OO)\;\to\;
    \mathcal H_{?,\rm ord}^\bullet(K_{\infty,\infty};0,E/\OO)\;=:\;
    \mathcal H_{?,\rm ord}^\bullet(K_{\infty,\infty};E/\OO)
  $$
  respecting the actions of $\mathcal H_\OO(0,\alpha)$ and of the Hecke operators outside $p$, and for every $t\in T(\ZZ_p)$,
  $$
    \lambda^{w_0}(t)\langle t\rangle\circ\pi_\lambda\;=\;\pi_\lambda\circ \langle t\rangle.
  $$
  Likewise, the map $\pi_\lambda^\circ$ induces an isomorphism
  $$
    \pi_\lambda^\circ:\quad
    \mathcal H_{?,\rm ord}^\bullet(K_{\infty,\infty};\lambda,\OO)\;\to\;
    \mathcal H_{?,\rm ord}^\bullet(K_{\infty,\infty};0,\OO)\;=:\;
    \mathcal H_{?,\rm ord}^\bullet(K_{\infty,\infty};\OO)
  $$
  respecting the actions of $\mathcal H_\OO(0,\alpha)$ and of the Hecke operators outside $p$, and for every $t\in T(\ZZ_p)$,
  $$
    \lambda(t)\langle t\rangle\circ\pi_\lambda^\circ\;=\;\pi_\lambda^\circ\circ \langle t\rangle.
  $$
\end{corollary}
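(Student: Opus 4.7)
The plan is to deduce Corollary \ref{cor:independenceofweight} from Theorem \ref{thm:independenceofweight} by passing to (co)limits in the level parameters $(\alpha',\alpha)$ and then applying Pontryagin duality. No new Hecke-algebraic input beyond the theorem is needed; the work is essentially formal.

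First I would invoke Theorem \ref{thm:independenceofweight} at each finite level $(\alpha',\alpha)$ with $\alpha\geq\alpha_0^K$ to obtain the isomorphism $\pi_\lambda$ on nearly ordinary cohomology with coefficients in $\underline{L}_{\lambda,p^{-\alpha'}\OO/\OO}$. Because the underlying sheaf map $p_\lambda\colon L_{\lambda,\OO}\to\OO$ is a fixed $\OO$-linear morphism independent of the level, it commutes with the corestriction maps appearing in the direct system defining
$$
\mathcal H_{?,\rm ord}^\bullet(K_{\infty,\infty};\lambda,E/\OO)\;=\;\varinjlim_{\alpha',\alpha} H_{?,\rm ord}^\bullet(\mathscr X(K_{\alpha',\alpha});\underline{L}_{\lambda,p^{-\alpha'}\OO/\OO}),
$$
so passing to $\varinjlim$ yields the first claimed isomorphism. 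Using Proposition \ref{prop:independenceofalpha} one may moreover harmlessly restrict to any cofinal subsystem in $\alpha$. The Hecke equivariance outside $p$ is immediate from the $G(\Adeles^{(p\infty)})$-equivariance of $p_\lambda$; equivariance under $\mathcal H_\OO(0,\alpha)$ follows from \eqref{eq:Upiso} at each finite level and stability under the transition maps; and for $\langle t\rangle$ with $t\in T(\ZZ_p)$, one chooses $\alpha'$ large enough that $t$ descends to $T(\ZZ_p/p^{\alpha'}\ZZ_p)$ and transports \eqref{eq:diamondiso} to the colimit, noting that the scalar $\lambda^{w_0}(t)$ is independent of $(\alpha',\alpha)$.

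For the second isomorphism I apply Pontryagin duality. By construction, the inverse-limit cohomology $\mathcal H_{?,\rm ord}^\bullet(K_{\infty,\infty};\lambda,\OO)$ is the Pontryagin dual of the direct system above: the transition maps on $L_{\lambda,\OO/p^{\alpha'}\OO}^{\circ}$-coefficients are adjoint to the corestriction maps on the $p^{-\alpha'}\OO/\OO$-side. Dualizing the isomorphism of the first statement therefore produces an isomorphism of the inverse limits, and the adjoint of the Hecke action recovers the usual Hecke action on the $\OO$-valued side. The only nontrivial point is the conversion of the diamond twist: on the contragredient lattice $L_{\lambda^\vee,\OO}^{\circ}$, the line dual to the highest-weight vector $v_0$ carries $T$-weight $-\lambda$, so after the renormalization \eqref{eq:taction} the scalar contributed by $\langle t\rangle$ in the dual direction is $\lambda(t)$ rather than $\lambda^{w_0}(t)$, which is precisely the modification appearing in the corollary.

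The main substance is contained entirely in Theorem \ref{thm:independenceofweight}. At the corollary level, the only obstacle is bookkeeping: ensuring that the direct limits, the Pontryagin duality, and the two weight normalizations interact consistently, and in particular that the Weyl-element flip $\lambda^{w_0}\leftrightarrow\lambda$ under duality is correctly tracked in the diamond operator formula.
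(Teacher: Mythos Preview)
Your approach is correct and matches the paper's own treatment: the corollary is stated immediately after Theorem \ref{thm:independenceofweight} with no separate proof beyond the phrase ``Then Theorem \ref{thm:independenceofweight} shows'', so passing to the direct limit for the $E/\OO$-statement and invoking Pontryagin duality (via the maps $\iota_{\lambda^\vee}^\circ,p_{\lambda^\vee}^\circ$ defined just before the corollary) for the $\OO$-statement is exactly what is intended. Two small points of bookkeeping you might tighten: the transition maps in the direct system $\varinjlim_{\alpha',\alpha}$ are pullbacks (restriction along the covering maps and inclusion of coefficients $p^{-\alpha'}\OO/\OO\hookrightarrow p^{-\alpha''}\OO/\OO$), not corestrictions; and Poincar\'e--Pontryagin duality in the paper's conventions swaps both the support condition $?\in\{-,{\rm c}\}$ and the weight $\lambda\leftrightarrow\lambda^\vee$, so to obtain $\pi_\lambda^\circ$ on $\mathcal H_{?,\rm ord}^\bullet(K_{\infty,\infty};\lambda,\OO)$ you should dualize the first statement applied to $\lambda^\vee$ with the complementary support, which then also accounts cleanly for the replacement of $\lambda^{w_0}(t)$ by $\lambda(t)=(\lambda^\vee)^{w_0}(t)^{-1}$ in the diamond relation.
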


\subsection{The universal nearly ordinary Hecke algebra}

Put\nomenclature[]{$\Lambda$}{completed $\OO$-group ring for $T(\ZZ_p)$}\nomenclature[]{$\Lambda^\circ$}{completed $\OO$-group ring for $T(\ZZ_p)/({\rm torsion})$}
$$
\Lambda\;:=\;\OO[[T(\ZZ_p)]]\;=\;\varprojlim_\alpha \OO[T(\ZZ_p/p^\alpha\ZZ_p)].
$$
$$
\Lambda^\circ\;:=\;\OO[[T(\ZZ_p)/T(\ZZ_p)_{\rm tors}]]
$$
Then $\Lambda$ is a complete Noetherian semi-local ring with each local factor isomorphic to $\Lambda^\circ$ and each $\mathcal H_\OO(\alpha',\alpha)$ carries a canonical $\Lambda$-module structure. Following Hida, define the universal nearly ordinary Hecke algebra $\mathbf{h}_{?,\rm ord}(K_{\infty,\infty};\OO)$ as the $\Lambda$-subalgebra generated by the image of the canonical map\nomenclature[]{${\bf h}_{?,\rm ord}(K_{\infty,\infty};\OO)$}{Hida's universal nearly ordinary Hecke algebra for $G$}
$$
\left[\varinjlim_{\alpha,\alpha'}\mathcal H_\OO(\alpha',\alpha)\right]\otimes_\OO
\mathcal H_\OO(K^{S(K)},G(\Adeles^{S(K)}))\to\End_\OO(\mathcal H_{?,\rm ord}^\bullet(K_{\infty,\infty};E/\OO)),
$$
where $S(K)$ denotes the minimal finite set of places of $\QQ$ containing $p$ and $\infty$\nomenclature[]{$S(K)$}{finite set of places depending on compact open $K$} and for which there is a factorization
$$
K_{\alpha',\alpha}\;=\;L_{S(K)}\times \prod_{v\not\in S(K),v\nmid \infty}G(\ZZ_v)
$$
for some compact open subgroup
$$
L_{S(K)}\subseteq \prod_{v\in S(K),v\nmid\infty}G(\QQ_v).
$$
Since $\mathcal H_\OO(K^{S(K)},G(\Adeles^{S(K)}))$ is by definition a product of spherical Hecke algebras, $\mathbf{h}_{?,\rm ord}(K_{\infty,\infty};\OO)$ is commutative. By Corollary \ref{cor:independenceofweight}, $\mathcal H_{?,\rm ord}^\bullet(K_{\infty,\infty};\lambda,E/\OO)$ is an $\mathbf{h}_{?,\rm ord}(K_{\infty,\infty};\OO)$-module, which differs from $\mathcal H_{?,\rm ord}^\bullet(K_{\infty,\infty};E/\OO)$ only in terms of the $\Lambda$-action.

\subsection{Galois representations}\label{sec:galois}

In this section, assume that $S(K)=\{p,\infty\}$, i.e.\ $K^{(p)}\;=\;G\left(\widehat{\ZZ}^{(p)}\right)$ is the product of the standard maximal compact open subgroup over all primes $\ell\neq p$ and
$$
K_{\alpha',\alpha}\;=\;K^{(p)}\times I_{\alpha',\alpha}.
$$
Then the Hecke algebra of interest is
$$
\mathcal H_\OO(K_{\alpha',\alpha},G(\Adeles^{(p\infty)})\times\Delta_G)\;=\;
\otimes_{v\nmid p\infty}'
\left(
\mathcal H_\OO(\GL_{n+1}(\OO_v),\GL_{n+1}(F_v))
\otimes
\mathcal H_\OO(\GL_{n}(\OO_v),\GL_{n}(F_v))
\right)\otimes\mathcal H_{\OO}(\alpha',\alpha).
$$
In $\mathcal H_\OO(\GL_{n}(\OO_v),\GL_{n}(F_v))$, we find the standard Hecke operators
$$
T_{v,\nu}\;=\;\GL_{n}(\OO_v)\varpi_v^{\omega_\nu}\GL_{n}(\OO_v),\quad 1\leq\nu\leq n.
$$
Consider as in \eqref{eq:Fheckepolynomial} the spherical (reciprocal) Hecke polynomial
$$
H_{F_v,n}(X):=\sum_{\nu=0}^n (-1)^\nu q_v^{\frac{\nu(\nu-1)}{2}}T_\nu X^{n-\nu}\in\mathcal H_A^n(\alpha',\alpha).
$$
Recall that $H_{F_v,n}(X)$ admits a factorization
$$
H_{F_v,n}(X)\;=\;\prod_{i=1}^n (X-\widetilde{U}_i)
$$
for $\widetilde{U}_i$ in the parabolic Hecke algebra at $v$. Define the Hecke polynomial\nomenclature[]{$H_v(X)$}{reciprocal spherical Hecke polynomial for $G$ at $v$}
$$
H_v(X)\;:=\;\prod_{i=1}^{n+1}\prod_{j=1}^n (X-\widetilde{U}_i\otimes\widetilde{U}_j)\;\in\;
\left(\mathcal H_\OO(\GL_{n+1}(\OO_v),\GL_{n+1}(F_v))
\otimes
\mathcal H_\OO(\GL_{n}(\OO_v),\GL_{n}(F_v))\right)[X].
$$
This is the spherical Hecke polynomial for $\GL_{n+1}\times\GL_n$ over $F_v$.

For $1\leq\mu\leq n+1$ and $1\leq\nu\leq n$, the operators $T_{v,\mu}\otimes 1$ and $1\otimes T_{v,\nu}$ act on
$$
  \sum_q H_{?,\rm ord}^{q}({\mathscr X}(K_{\alpha',\alpha}); \underline{L}_{\lambda,A})
$$
  for $?\in\{-,\rm c,!\}$ and $A\in\{\OO,E,E/\OO,p^{-\beta}\OO/\OO,\OO/p^\beta\OO\}$.

  The operators $T_{v,\mu}\otimes T_{v,\nu}$ for $v\not\in S(p)$ together with the image of $\mathcal H_\OO(\alpha',\alpha)$ generate the nearly ordinary Hecke algebra ${\rm h}_{?,\rm ord}(K_{\alpha',\alpha};\lambda,\OO)$ over $\OO[T(\ZZ_p/p^{\alpha'}\ZZ_p]$\nomenclature[]{${\rm h}_{?,\rm ord}(K_{\alpha,\alpha};\lambda,\OO)$}{nearly ordinary Hecke algebra of weight $\lambda$ and level $K_{\alpha,\alpha}$}. Passing to the projective limit over $\alpha',\alpha$, we obtain the universal nearly ordinary Hecke algebra ${\bf h}_{?,\rm ord}(K_{\infty,\infty};\lambda,\OO)$ over $\Lambda$\nomenclature[]{${\bf h}_{?,\rm ord}(K_{\infty,\infty};\lambda,\OO)$}{Hida's universal nearly ordinary Hecke algebra of weight $\lambda$}. Recall for $\lambda=0,$ define
  $$
    {\bf h}_{?,\rm ord}(K_{\infty,\infty};\OO)\;:=\;
    {\bf h}_{?,\rm ord}(K_{\infty,\infty};0,\OO).
  $$
  Finally, for any $q\in\ZZ,$ let ${\rm h}_{?,\rm ord}^q(K_{\alpha,\alpha};\lambda,\OO)$ denote the image of the canonical map\nomenclature[]{${\rm h}_{?,\rm ord}^q(K_{\alpha,\alpha};\lambda,\OO)$}{nearly ordinary Hecke algebra of level $K_{\alpha,\alpha}$ in cohomological degree $q$}
  $$
    {\rm h}_{?,\rm ord}(K_{\alpha,\alpha};\lambda,\OO)
    \;\to\;\End_{\OO}H_{?,\rm ord}^{q}({\mathscr X}(K_{\alpha',\alpha}); \underline{L}_{\lambda,A}).
  $$
  We adopt the same notation for univeral nearly ordinary Hecke algebras.

  Write $\lambda=\lambda_{n+1}\otimes\lambda_n$ for dominant weights $\lambda_m$ on $\res_{F/\QQ}\GL_m$. Then according to \eqref{eq:Xdecomposition}, we have a K\"unneth spectral sequence
  $$
    E_{pq}^2:=\bigoplus_{q_{n+1}+q_n=q}
    {\rm Tor}_{-p}^{\OO}
        \left(
        H_{?}^{q_{n+1}}({\mathscr X}_{n+1}(K^{n+1}_{\alpha,\alpha}); \underline{L}_{\lambda_{n+1},E/\OO})
        ,
        H_{?}^{q_{n}}({\mathscr X}_{n}(K^{n}_{\alpha,\alpha}); \underline{L}_{\lambda_n,E/\OO})
        \right)
  $$
  $$
    \Longrightarrow\quad
    H_{?}^{p+q}({\mathscr X}(K_{\alpha,\alpha}); \underline{L}_{\lambda,E/\OO}).
  $$
  Since $\OO$ is a principal ideal domain, ${\rm Tor}_\bullet$ vanishes in degrees $\geq 2$, whence we deduce a short exact sequence
  $$
    0\to
      \bigoplus_{q_{n+1}+q_n=q}
      H_{?}^{q_{n+1}}({\mathscr X}_{n+1}(K^{n+1}_{\alpha,\alpha}); \underline{L}_{\lambda_{n+1},E/\OO})\otimes
      H_{?}^{q_{n}}({\mathscr X}_{n}(K^{n}_{\alpha,\alpha}); \underline{L}_{\lambda_n,E/\OO})
        \to
        $$
        $$
     H_{?}^{p+q}({\mathscr X}(K_{\alpha,\alpha}); \underline{L}_{\lambda,E/\OO})
     \to
    \!\!\!\!\!\!\!\!\!\!
    \bigoplus_{q_{n+1}+q_n=q+1}
    \!\!\!\!\!\!\!\!\!\!
        {\rm Tor}_{1}^{\OO}
        \left(
        H_{?}^{q_{n+1}}({\mathscr X}_{n+1}(K^{n+1}_{\alpha,\alpha}); \underline{L}_{\lambda_{n+1},E/\OO})
        ,
        H_{?}^{q_{n}}({\mathscr X}_{n}(K^{n}_{\alpha,\alpha}); \underline{L}_{\lambda_n,E/\OO})
        \right)
      \to 0.
      $$
      In particular, the edge morphism of above spectral sequence provides us with a canonical monomorphism
      $$
      \bigoplus_{q_{n+1}+q_n=q}
      \varinjlim_{\alpha}
      H_{?}^{q_{n+1}}({\mathscr X}_{n+1}(K^{n+1}_{\alpha,\alpha}); \underline{L}_{\lambda_{n+1},E/\OO})\otimes
      \varinjlim_{\alpha}
      H_{?}^{q_{n}}({\mathscr X}_{n}(K^{n}_{\alpha,\alpha}); \underline{L}_{\lambda_n,E/\OO})
      \;\to\;
      \varinjlim_{\alpha}H_{?}^{q}({\mathscr X}(K_{\alpha,\alpha}); \underline{L}_{\lambda,E/\OO}),
  $$
  and likewise for ordinary cohomology.

  For any field $k$ which is either (an extension of) the residue field of $\OO$\nomenclature[]{$k$}{(extension of the) residue field of $\OO$}, or the field $E$, the K\"unneth spectral sequence degenerates and induces a canonical isomorphism
  \begin{equation}
      \bigoplus_{q_{n+1}+q_n=q}
      \varinjlim_{\alpha}
      H^{q_{n+1}}({\mathscr X}_{n+1}(K^{n+1}_{\alpha,\alpha}); \underline{L}_{\lambda_{n+1},k})\otimes
      \varinjlim_{\alpha}
      H^{q_{n}}({\mathscr X}_{n}(K^{n}_{\alpha,\alpha}); \underline{L}_{\lambda_n,k})
      \;\cong\;
      \varinjlim_{\alpha}H^{q}({\mathscr X}(K_{\alpha,\alpha}); \underline{L}_{\lambda,k}).
      \label{eq:kuennethoverfields}
  \end{equation}
  In particular, we obtain a canonical isomorphism
  \begin{equation}
    {\bf h}_{\rm ord}(K_{\infty,\infty}^{n+1};k)\otimes
    {\bf h}_{\rm ord}(K_{\infty,\infty}^{n};k)
    \;\cong\;
    {\bf h}_{\rm ord}(K_{\infty,\infty};k),
    \label{eq:hasaproduct}
  \end{equation}
  of $\Lambda$-algebras. Here ${\bf h}_{\rm ord}(K_{\infty,\infty}^{m};k)$ is the Hecke algebra generated by $T_{v,m}$, $v\not\in S(p)$, and $\prod_{v\mid p}\mathcal H_k^m(\alpha,\alpha)$ acting on\nomenclature[]{${\bf h}_{\rm ord}(K_{\infty,\infty}^{m};k)$}{universal nearly ordinary Hecke algebra for $\GL(m)$ over $k$}
  $$
      \varinjlim_{\alpha}
      \sum_{q_m}H^{q_{m}}({\mathscr X}_{m}(K^{m}_{\alpha,\alpha}); \underline{L}_{\lambda_m,k})
  $$
  over the corresponding Iwasawa algebra $\OO[[T_m(\OO_p)]]$.

   Let $\mathfrak{m}$ denote a maximal ideal in ${\bf h}_{\rm ord}(K_{\infty,\infty};\OO)$ with residue field $k$\nomenclature[]{$\mathfrak{m}$}{(non-Eisenstein) maximal ideal in ${\bf h}_{\rm ord}(K_{\infty,\infty};\OO)$}. Write $S$ for the set of finite places of $F$ containing the places above $p$ and the places which ramify in $F/\QQ$\nomenclature[]{$S$}{set of places of $F$ containing $S(p)$ and the finite places ramified in $F/\QQ$}.

  Assume that (after possibly enlarging $k$) the following condition is satisfied:
 \begin{itemize}
  \item[(i)] There exists a continuous semisimple Galois representation\nomenclature[]{$\overline{\rho}_{\mathfrak{m}}$}{residual Galois representation of dimension $(n+1)n$ corresponding to $\mathfrak{m}$}
    $$
    \overline{\rho}_{\mathfrak{m}}:\quad \Gal\left(\overline{\QQ}/F\right)\to \GL_{(n+1)n}(k),
    $$
    such that for every finite place $v\not\in S$ the image $\overline{\rho}_{\mathfrak{m}}({\rm Frob}_v)$ of the geometric Frobenius element has characteristic polynomial
    $$
      H_v(X)\;\in\;({\bf h}_{\rm ord}(K_{\infty,\infty};\OO)/{\mathfrak{m}})[X].
    $$
  \end{itemize}
   By the Chebotarev density theorem, property (i) characterizes the Galois representation in question uniquely up to isomorphism, provided it exists. According to \eqref{eq:hasaproduct}, we find (after possibly enlarging $k$ once again) maximal ideals $\mathfrak{m}_{m}$ in ${\bf h}_{\rm ord}(K_{\infty,\infty}^{m};\OO)$, $m\in\{n,n+1\}$\nomenclature[]{$\mathfrak{m}_m$}{(non-Eisenstein) maximal ideal in ${\bf h}_{\rm ord}(K_{\infty,\infty}^{m};\OO)$}, such that condition (i) amounts to
  \begin{itemize}
  \item[(i')] There exist continuous semisimple Galois representations\nomenclature[]{$\overline{\rho}_{\mathfrak{m}_m}$}{residual Galois representation of dimension $m$ corresponding to $\mathfrak{m}_m$}
    $$
      \overline{\rho}_{\mathfrak{m}_m}:\quad \Gal\left(\overline{\QQ}/F\right)\to \GL_{m}(k),
    $$
    such that for every finite place $v\not\in S$ the image $\overline{\rho}_{\mathfrak{m}_m}({\rm Frob}_v)$ of the geometric Frobenius element has characteristic polynomial
    $$
      H_{F_v,m}(X)\;\in\;({\bf h}_{\rm ord}(K_{\infty,\infty}^m;\OO)/{\mathfrak{m}_m})[X],
    $$
    for $m\in\{n,n+1\}$.
  \end{itemize}

  \begin{remark}
  Condition (i') implies
  $$
    \overline{\rho}_{\mathfrak{m}}\;=\;
    \left(\overline{\rho}_{\mathfrak{m}_{n+1}}\otimes
    \overline{\rho}_{\mathfrak{m}_{n}}\right)^{\rm ss}.
  $$
  \end{remark}
  
  \begin{remark}\label{rmk:modpcharacterization}
  Condition (i') implies that action of $\OO[[Z(\ZZ_p)]]\subseteq\Lambda$ is compatible with the determinant of $\overline{\rho}_{\mathfrak{m}}$ in the following sense. On the one hand, we know that for $v\not\in S$,
  $$
  T_{v,n+1}\otimes 1\;\equiv\;
  \chi_{\rm cyc}^{\otimes\frac{(n+1)n}{2}}\otimes
  \det\overline{\rho}_{\mathfrak{m}_{n+1}}({\rm Frob}_v)\pmod{\mathfrak{m}},
  $$
  and
  $$
  1\otimes T_{v,n}\;\equiv\;
  \chi_{\rm cyc}^{\otimes\frac{n(n-1)}{2}}\otimes
  \det\overline{\rho}_{\mathfrak{m}_{n}}({\rm Frob}_v)\pmod{\mathfrak{m}}.
  $$
  On the other hand, suitable powers of these Hecke operators away from $p$ may be considered as elements of $\Lambda$ (as diamond operators).
  \end{remark}

  \begin{definition}
  A maximal ideal $\mathfrak{m}$ in ${\bf h}_{\rm ord}(K_{\infty,\infty};\OO)$ is called {\em non-Eisenstein}, if (i') is satisfied and
  \begin{itemize}
      \item[(ii)] Each $\overline{\rho}_{\mathfrak{m}_m}$ is absolutely irreducible.
  \end{itemize}
  \end{definition}

  \begin{remark}
    For $\mathfrak{m}$ non-Eisenstein, $\overline{\rho}_{\mathfrak{m}}$ may be reducible (when considering the tensor product of two symmetric powers of the same two-dimensional Galois representation for example).
  \end{remark}

  \begin{conjecture}\label{conj:galoisrepresentations}
    For each $m\geq1$ and each maximal ideal $\mathfrak{m}_m$ in ${\bf h}_{\rm ord}(K_{\infty,\infty}^m;\OO)$, there exists a Galois representation $\overline{\rho}_{\mathfrak{m}}$ as in (i').
  \end{conjecture}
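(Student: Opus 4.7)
The conjecture is known for $F$ totally real or CM by Scholze's theorem \cite{scholze2015}, so the plan is to cite this directly in those cases and to outline what would be required in general. Scholze's construction proceeds by embedding the cohomology of $\mathscr X_m$ into the completed \'etale cohomology of a Siegel Shimura variety via a careful analysis of the Borel--Serre boundary, and then extracting a continuous pseudorepresentation from the combined action of Hecke and Galois on this ambient cohomology through perfectoid methods at infinite level. Reducing modulo $\mathfrak{m}_m$ and semisimplifying produces $\overline\rho_{\mathfrak{m}_m}$, and the requirement that Frobenius characteristic polynomials agree with $H_{F_v,m}(X)$ for $v\notin S$ follows from local--global compatibility at the unramified places together with the Chebotarev density theorem. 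When $F$ is totally real one first base-changes to a CM quadratic extension, establishes the conjecture there, and descends by conjugate self-duality.

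For $F$ neither totally real nor CM one loses access to Shimura varieties entirely. I would first attempt to reduce to the known case by solvable base change \`a la Arthur--Clozel: given a solvable CM extension $F'/F$ and a base change $\mathfrak{m}'_m$ of $\mathfrak{m}_m$ to the Hecke algebra for $\GL_m/F'$, the representation $\overline\rho_{\mathfrak{m}'_m}$ furnished by Scholze's theorem would descend to $\Gal(\overline\QQ/F)$ provided one controls the action of $\Gal(F'/F)$. This handles some but not all general $F$. In the remaining cases a purely deformation-theoretic construction would be required: for each $\mathfrak{m}_m$ I would formulate a functor on complete Noetherian local $\OO$-algebras classifying continuous pseudocharacters, in the sense of Chenevier's theory of determinants, $\Gal(\overline\QQ/F)\to A$ unramified outside $S$ and satisfying the Hecke polynomial relations at all Frobenius elements, and try to establish representability with universal ring a quotient of ${\bf h}_{\rm ord}(K_{\infty,\infty}^{m};\OO)_{\mathfrak{m}_m}$.

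The main obstacle, and the reason the conjecture remains open outside the totally real and CM cases, is the absence of a natural geometric object on which $\Gal(\overline\QQ/F)$ acts and whose cohomology realizes the relevant Hecke module. Every known construction ultimately traces back to \'etale cohomology of a Shimura variety; any proof for general $F$ must either furnish a non-geometric source for the representation, through $p$-adic Langlands input, Emerton's completed cohomology of arithmetic groups, or a Calegari--Geraghty-style patching argument, or else enlarge the notion of Shimura datum so as to accommodate $\GL_m/F$ for an arbitrary number field $F$. Given these difficulties, for the purposes of the present paper the most honest approach is to invoke Scholze's theorem in the totally real and CM cases and to retain the conjecture as a working hypothesis otherwise.
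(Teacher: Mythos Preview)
The statement you were asked to prove is a \emph{conjecture}, not a theorem: the paper does not supply a proof, and indeed none is known in general. Immediately after stating it, the author remarks that for $F$ totally real or CM the existence of $\overline{\rho}_{\mathfrak{m}_m}$ follows from Scholze's results \cite[Corollary 5.4.3]{scholze2015}, and the conjecture is then carried as a standing hypothesis throughout the remainder of the paper (see the formulations of Theorems \ref{thm:localizedcohomology}, \ref{thm:regularcontrol}, \ref{thm:nonabelianinterpolation}, etc.). Your final paragraph arrives at exactly this conclusion, so in substance your treatment agrees with the paper's.

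That said, what you have written is not a proof but a survey of the known cases together with speculative strategies for the unknown ones. This is appropriate given the status of the statement, but you should be explicit from the outset that no proof is being offered. Your summary of Scholze's method is accurate, and your identification of the core obstruction---the absence of a Shimura variety, or any geometric object with a Galois action, attached to $\GL_m$ over a general number field---is exactly the point. The base-change and pseudocharacter ideas you sketch are reasonable directions but are not known to succeed; presenting them as a ``plan'' risks overstating what can actually be carried out. The honest framing is simply: the conjecture is a theorem of Scholze for $F$ totally real or CM, and is open otherwise.
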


  For $F$ totally real or a CM field, results of Scholze \cite[Corollary 5.4.3]{scholze2015} imply the existence of $\overline{\rho}_{\mathfrak{m}_m}$ for all maximal ideals ${\mathfrak{m}_m}$, which also ensures the existence of $\overline{\rho}_{\mathfrak{m}}$. It is expected that the representation $\overline{\rho}_{\mathfrak{m}_m}$ lifts to a representation over the localization ${\bf h}_{\rm ord}(K_{\infty,\infty}^m;\OO)_{\mathfrak{m}_m}$, which is known by Corollary 5.4.4 in loc.\ cit.\ modulo a nilpotent ideal of bounded exponent (see also Theorem 5.13 in \citep{newtonthorne2016}).

  \begin{theorem}\label{thm:localizedcohomology}
    Assume that $F$ is totally real, CM or that Conjecture \ref{conj:galoisrepresentations} holds over $F$. Let $\mathfrak{m}$ be a non-Eisenstein maximal ideal in ${\bf h}_{\rm ord}(K_{\infty,\infty};\OO)$. Assume that the residue field $k$ of $\mathfrak{m}$ embeds into the residue field of $E$. Then for every $E$-rational dominant weight $\lambda$ of $G$, we have an identity
  \begin{equation}
      H^q_{\rm c}(\mathscr X(K_{\alpha',\alpha});\underline{L}_{\lambda,E})_{\mathfrak{m}}=
      H^q(\mathscr X(K_{\alpha',\alpha});\underline{L}_{\lambda,E})_{\mathfrak{m}}
      \label{eq:localizedinnercohomology}
  \end{equation}
  of localizations at $\mathfrak{m}$ and if this space is non-zero, then $q_0\leq q\leq q_0+l_0$. Furthermore, every absolutely irreducible constituent of \eqref{eq:localizedinnercohomology} is cuspidal.
  \end{theorem}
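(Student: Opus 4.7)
The approach is to reduce everything to analysis of boundary cohomology via the Borel--Serre compactification, and to show that the non-Eisenstein hypothesis on $\mathfrak{m}$ forces all boundary and residual contributions to vanish upon localization. Because $G$ decomposes as a product $\res_{F/\QQ}\GL(n+1)\times\GL(n)$, the K\"unneth isomorphism \eqref{eq:kuennethoverfields} and the factorization \eqref{eq:hasaproduct} allow me to reduce statements about Hecke eigensystems for $G$ to combined statements on the two $\GL(m)$ factors.

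First, I would use the standard long exact sequence attached to the Borel--Serre compactification $\overline{\mathscr X}(K_{\alpha',\alpha})$ with boundary $\partial\overline{\mathscr X}(K_{\alpha',\alpha})$:
\begin{equation*}
\cdots\to H^{q-1}(\partial\overline{\mathscr X};\underline{L}_{\lambda,E})\to H^q_{\rm c}(\mathscr X;\underline{L}_{\lambda,E})\to H^q(\mathscr X;\underline{L}_{\lambda,E})\to H^q(\partial\overline{\mathscr X};\underline{L}_{\lambda,E})\to\cdots.
\end{equation*}
Identity \eqref{eq:localizedinnercohomology} will follow at once provided $H^\bullet(\partial\overline{\mathscr X};\underline{L}_{\lambda,E})_{\mathfrak{m}}=0$. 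Franke--Schwermer's description of boundary cohomology expresses this group as a sum over $G(\QQ)$-conjugacy classes of proper parabolic $\QQ$-subgroups $P=LN\subsetneq G$ of contributions induced from the cohomology of the Levi $L$. Since every proper parabolic of $G$ factors through a proper parabolic in at least one of the two factors $\res_{F/\QQ}\GL(n+1)$ or $\res_{F/\QQ}\GL(n)$, the Hecke eigensystems appearing in $H^\bullet(\partial\overline{\mathscr X};\underline{L}_{\lambda,E})$ are, on at least one factor, isobaric sums arising from proper Levi subgroups.

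Second, I would translate this into a statement about residual Galois representations. Under the hypothesis on $F$, Scholze's theorem (or Conjecture \ref{conj:galoisrepresentations}) attaches to every maximal ideal $\mathfrak{n}_m\subseteq{\bf h}_{\rm ord}(K^m_{\infty,\infty};\OO)$ of ${\bf h}_{\rm ord}(K^m_{\infty,\infty};\OO)$ arising on the boundary a Galois representation $\overline{\rho}_{\mathfrak{n}_m}$, and by Franke--Schwermer $\overline{\rho}_{\mathfrak{n}_m}$ is reducible (it decomposes according to the Levi of the parabolic inducing the boundary stratum). Via \eqref{eq:hasaproduct} and Remark \ref{rmk:modpcharacterization}, any maximal ideal $\mathfrak{n}$ of ${\bf h}_{\rm ord}(K_{\infty,\infty};\OO)$ in the support of $H^\bullet(\partial\overline{\mathscr X};\underline{L}_{\lambda,E})$ gives rise to a pair $(\overline{\rho}_{\mathfrak{n}_{n+1}},\overline{\rho}_{\mathfrak{n}_n})$ with at least one factor reducible. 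Since $\mathfrak{m}$ is non-Eisenstein, both $\overline{\rho}_{\mathfrak{m}_{n+1}}$ and $\overline{\rho}_{\mathfrak{m}_n}$ are absolutely irreducible, so by the Chebotarev characterization in (i') no such $\mathfrak{n}$ can lie above $\mathfrak{m}$. This proves $H^\bullet(\partial\overline{\mathscr X};\underline{L}_{\lambda,E})_{\mathfrak{m}}=0$ and hence \eqref{eq:localizedinnercohomology}.

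Finally, to establish the range $q_0\leq q\leq q_0+l_0$ and the cuspidality of irreducible constituents, observe that after the identification \eqref{eq:localizedinnercohomology} both sides agree with the inner cohomology $H^q_!(\mathscr X(K_{\alpha',\alpha});\underline{L}_{\lambda,E})_{\mathfrak{m}}$. By Borel's theorem, inner cohomology equals $L^2$-cohomology and decomposes as cuspidal plus residual. The residual spectrum on each $\GL(m)$-factor is classified by Moeglin--Waldspurger \cite{moeglinwaldspurger1989} as Speh-type Langlands quotients induced from proper parabolics, so by the same Galois-theoretic argument as above (now applied to residual rather than Eisenstein contributions, using again the results of Jacquet--Shalika, Scholze, Harder--Raghuram, etc.) the residual contributions vanish after localization at the non-Eisenstein ideal $\mathfrak{m}$. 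What remains is cuspidal cohomology, which is concentrated in the range $[q_0,q_0+l_0]$ by the vanishing theorems of Borel--Wallach. The principal obstacle is the second step: rigorously identifying the Hecke eigensystems appearing in boundary and residual cohomology with reducible (tensor-product) residual Galois representations on at least one $\GL(m)$-factor; this relies essentially on Scholze's construction of Galois representations attached to torsion classes (or its conjectural analogue) together with the K\"unneth compatibility of the Hecke and Galois sides.
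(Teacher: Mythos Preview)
Your approach is essentially the same as the paper's, but you unpack what the paper delegates to a citation: the paper simply invokes Theorem~4.2 of Newton--Thorne \cite{newtonthorne2016} (cf.\ also Khare--Thorne \cite{kharethorne2016}) for the identity \eqref{eq:localizedinnercohomology}, whereas you sketch the Borel--Serre/Franke--Schwermer argument that underlies that result. For the cuspidality and vanishing range the paper argues just as you do, via K\"unneth reduction to a single $\GL(m)$, Moeglin--Waldspurger's classification of the residual spectrum together with Jacquet--Shalika, and the Harder--Raghuram/Franke--Schwermer description of strongly inner constituents; for regular $\lambda$ the paper also notes a shortcut via Li--Schwermer \cite{lischwermer2004} and Vogan--Zuckerman.

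One small correction: your sentence ``by Borel's theorem, inner cohomology equals $L^2$-cohomology'' is not quite right. What you need is that every irreducible constituent of inner cohomology arises from the discrete automorphic spectrum (cuspidal or residual); this is the content of Franke's theorem combined with the Langlands decomposition (see Franke--Schwermer \cite{frankeschwermer1998}, Proposition~4.1, and Harder--Raghuram \cite{harderraghuram2017}, section~4.3, which the paper cites for exactly this step), not Borel's regularization theorem per se.
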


  \begin{proof}
    By \eqref{eq:Xdecomposition} and \eqref{eq:kuennethoverfields}, the first statement is a consequence of Theorem 4.2 in \citep{newtonthorne2016} (c.f.\ the proof of Theorem 6.23 in \citep{kharethorne2016} for a sketch of the argument in \citep{newtonthorne2016}). For regular $\lambda$ we may invoke Proposition 4.2 in \citep{lischwermer2004}, which says that every automorphic representation contributing to \eqref{eq:localizedinnercohomology} is essentially tempered at infinity (mod center), whence by \citep{voganzuckerman1984} the localized cohomology vanishes outside the cuspidal range.

    For general weights $\lambda$ observe that since we are working with a product of general linear groups, the K\"unneth formula reduces us to a single copy of $\GL(n)$. In this situation, we remark that every irreducible constituent of \eqref{eq:localizedinnercohomology} is {\em strongly inner} in the sense of \citep{harderraghuram2017}, and therefore by the classification of the residual spectrum of \citep{moeglinwaldspurger1989} combined with the classification result of Jacquet-Shalika in \citep{jacquetshalika1981,jacquetshalika1981.2} implies that every constituent of \eqref{eq:localizedinnercohomology} is in fact cuspidal. We refer the reader to section 4.3 in Harder-Raghuram \citep{harderraghuram2017} for the details of this argument and also to Proposition 4.1 in \citep{frankeschwermer1998} for an argument along the same lines.
  \end{proof}

  \begin{remark}
    By \citep{wallach1984}, every automorphic representation contributing to \eqref{eq:localizedinnercohomology} is in fact {\em cuspidal}.
  \end{remark}
  \begin{remark}
    The vanishing in degrees $<q_0$ is also implied by \citep{lischwermer2004}.
  \end{remark}

\subsection{The Control Theorem}\label{sec:controltheorem}

  At every place $v\not\in S$, the parabolic Hecke algebra $\mathcal H_\OO^{B_{n+1}(F_v)\times B_n(F_v)}(\alpha',\alpha)$ at $v$ naturally acts on the cohomology
  \begin{equation}
    H^\bullet(\mathscr X(K_{\alpha',\alpha}\cap G^{\rm{der}}(\Adeles^{(\infty)}));\underline{L}_{\lambda,A}).
    \label{eq:derivedcohomology}
  \end{equation}
  The canonical inclusion
  $$
    \mathcal H_\OO(\GL_{n+1}(\OO_v)\times\GL_n(\OO_v),\GL_{n+1}(F_v)\times\GL_n(F_v))\;\to\;\mathcal H_\OO^{B_{n+1}(F_v)\times B_n(F_v)}(\alpha',\alpha)
  $$
  of the spherical Hecke algebra at $v$ of $G$ into the parabolic Hecke algebra induces a canonical action of the spherical Hecke algebra on \eqref{eq:derivedcohomology}. Likewise, $\mathcal H_\OO(\alpha',\alpha)$ acts on \eqref{eq:derivedcohomology} as well. This remains valid for the more general coefficient systems considered below.

  We call a character $\eta:T(\QQ_p)\to E^\times$ {\em locally algebraic} if there is an $E$-rational algebraic character $\lambda:T\to\GL_1$ and a finite order character $\vartheta:T(\QQ_p)\to E^\times$ with
$$
\eta\;=\;\lambda^{w_0}\vartheta.
$$
Furthermore, $\eta$ is {\em dominant} if $\lambda$ is a dominant $E$-rational character of $T$. Write $P_{\lambda^{w_0}\vartheta}\subseteq\Lambda=\OO[[T(\ZZ_p)]]$ for the kernel of the algebra homomorphism $\lambda^{w_0}\vartheta:\Lambda\to E$ induced by $\lambda^{w_0}\vartheta$.

\begin{proposition}\label{prop:regularweightszariskidense}
The set\nomenclature[]{$\mathscr X_{\rm reg,bal}^0$}{set or regular dominant arithmetic points for which $\eta_0$ is admissible}
$$
\mathscr X_{\rm reg,bal}^0\;:=\;\{P_{\lambda^{w_0}}\;\mid\;\lambda\in X_{\CC}(T)\;\text{regular dominant, }\eta_0\;\text{admissible for }\lambda\}
$$
of regular dominant arithmetic points for which $\eta_0$ is admissible is Zariski dense in $\Spec E[[T(\ZZ_p)]]$.
\end{proposition}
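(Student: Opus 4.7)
Writing $T(\ZZ_p)\cong T(\ZZ_p)_{\rm tors}\times\ZZ_p^d$ with $d=(2n+1)[F:\QQ]$, and choosing topological generators $u_1,\ldots,u_d$ of the pro-$p$ part, after possibly enlarging $E$ so that $E[T(\ZZ_p)_{\rm tors}]$ splits as a product of copies of $E$ one obtains a decomposition
$$E[[T(\ZZ_p)]]\;\cong\;\prod_{\chi\in\widehat{T(\ZZ_p)_{\rm tors}}}E[[X_1,\ldots,X_d]],\qquad u_i\leftrightarrow 1+X_i.$$
Each algebraic character $\lambda^{w_0}$ corresponds, on the component indexed by its restriction $\chi_\lambda:=\lambda^{w_0}|_{T(\ZZ_p)_{\rm tors}}$, to the closed maximal ideal generated by $X_i-(\lambda^{w_0}(u_i)-1)$. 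The plan is to establish density on each component separately.

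Next I would verify that the set $\mathcal C$ of $E$-rational dominant weights $\lambda$ that are regular and for which $\eta_0$ is admissible consists of the integer points of a nonempty, full-dimensional, open rational polyhedral cone in the $\RR$-vector space spanned by $X_\QQ(T)$. Dominance and regularity give the familiar strict linear inequalities. The admissibility of $\eta_0$ is, by the branching rule for $\GL(n+1)\downarrow\GL(n)$ applied embedding-by-embedding and as spelled out in section 2.4.5 of \cite{raghuram2015}, the system of weak interlacing inequalities $\lambda_{n+1,\tau,i}+\lambda_{n,\tau,n+1-i}\ge 0$ and $\lambda_{n+1,\tau,i+1}+\lambda_{n,\tau,n+1-i}\le 0$ for each archimedean embedding $\tau$ of $F$ and each $1\le i\le n$. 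These are compatible with strict dominance and cut out a nonempty open subcone of full dimension $d$. As $\lambda$ ranges over $\mathcal C$, the restriction $\chi_\lambda$ thus exhausts $\widehat{T(\ZZ_p)_{\rm tors}}$, since the kernel of $\lambda\mapsto\chi_\lambda$ already has rank $d$ and meets $\mathcal C$ infinitely often in each coset.

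Finally, on each fixed component $\Spec E[[X_1,\ldots,X_d]]$, Zariski density of the image of $\mathcal C$-weights follows by induction on $d$ from the one-dimensional fact that any $f\in E[[X]]$ with $f(v^a-1)=0$ for an infinite family of integers $a$ having a $p$-adic accumulation point in $\ZZ_p$ must vanish. The cone property $p^k\mathcal C\subseteq\mathcal C$ for $k\ge 0$ supplies precisely the required accumulation of integer $d$-tuples in $\ZZ_p^d$, since $u_i^{p^k a_i}-1\to 0$ $p$-adically as $k\to\infty$. The main technical step is verifying the full dimension of $\mathcal C$; once the explicit branching-rule inequalities above are in hand, the remainder is routine.
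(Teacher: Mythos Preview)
Your argument is essentially correct and reaches the same conclusion as the paper, but via a different route. The paper observes that, after writing the interlacing condition
\[
\lambda_{n+1,\tau,1}\geq -\lambda_{n,\tau,n}\geq \lambda_{n+1,\tau,2}\geq\cdots\geq -\lambda_{n,\tau,1}\geq \lambda_{n+1,\tau,n+1},
\]
the combined constraints (regular dominance for $G$ together with admissibility of $\eta_0$) amount to containing the regular dominant chamber for $\res_{F/\QQ}\GL_{2n+1}$, with $T$ viewed as its maximal torus via the interleaving $(\lambda_{n+1,\tau,i},-\lambda_{n,\tau,n+1-i})$. Density of regular dominant weights for a general linear group in the spectrum of the Iwasawa algebra is then declared ``visible'' and not argued further. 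Your approach instead unpacks exactly this last step: you identify the admissible set as a full-dimensional open rational cone and run the standard Weierstrass-type accumulation argument component by component. In effect, you prove explicitly what the paper takes for granted, at the cost of not recording the elegant $\GL_{2n+1}$ reinterpretation.

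One small point deserves tightening. Your scaling trick $p^k\mathcal C\subseteq\mathcal C$ does give $p$-adic accumulation of exponents, but multiplying $\lambda$ by $p^k$ sends the torsion restriction $\chi_\lambda$ to $\chi_\lambda^{p^k}$, which need not equal $\chi_\lambda$. Since $|T(\ZZ_p)_{\rm tors}|$ is prime to $p$, you may fix this by choosing $k_0$ with $p^{k_0}\equiv 1\pmod{|T(\ZZ_p)_{\rm tors}|}$ and scaling by powers of $p^{k_0}$ instead; this preserves both the cone and the component, and the rest of your induction goes through unchanged.
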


\begin{proof}
  Write $\lambda=\lambda_{n+1}\otimes\lambda_n$ with regular dominant weights $\lambda_{n+1}$ and $\lambda_n$ of $\res_{F/\QQ}T_{n+1}$ and $\res_{F/\QQ}T_n$, where $\lambda_m=(\lambda_{m,\tau,i})_{\tau:F\to\CC,1\leq i\leq m}$ with $\lambda_{m,\tau,i}\in\ZZ$ and
  $$
    \lambda_{m,\tau,1}\geq\lambda_{m,\tau,2}\geq\cdots \geq\lambda_{m,\tau,m},
  $$
  regularity meaning that these inequalities are all strict. Then $\eta_0$ is admissible for $\lambda$ if and only if
  $$
    \lambda_{n+1,\tau,1}\geq-\lambda_{n,\tau,n}\geq\lambda_{n+1,\tau,2}\geq
    \cdots\geq-\lambda_{n,\tau,1}\geq\lambda_{n+1,\tau,n+1},
  $$
  for all embeddings $\tau:F\to\CC$. Therefore, considering $T$ as a maximal torus in $\res_{F/\QQ}\GL_{(n+1)n}$, the semigroup of regular dominant $\lambda$ for which $\eta_0$ is admissible is in canonical bijection with a subset of weights of $T$ which are dominant for a suitable choice of Borel in $\res_{F/\QQ}\GL_{(n+1)n}$. In fact, the subset of weights we obtain contains all regular dominant weights of this larger general group. The prime ideals corresponding to the latter set are visibly Zariski dense in $\Spec E[[T(\ZZ_p)]]$.
\end{proof}

For any $\Lambda$-module $\mathcal M,$ set\nomenclature[]{$\mathcal M[\eta]$}{generalized $\eta$-eigenspace in $\mathcal M$}
$$
\mathcal M[\eta]\;:=\;\{m\in\mathcal M\mid \forall x\in T(\ZZ_p):x\cdot m=\eta(x)\cdot m\}.
$$

\begin{theorem}\label{thm:regularcontrol}
  Let $F/\QQ$ be an arbitrary number field. Assume $p\nmid (n+1)n$. Then for any $E$-valued locally algebraic character $\eta=\lambda^{w_0}\vartheta$ of $T(\QQ_p)$ such that $\vartheta$ factors over $T(\ZZ_p/p^{\alpha'}\ZZ_p)$, $\alpha\geq\alpha'\geq0$, $\alpha\geq\alpha_0^K$, and $\lambda$ regular dominant, the canonical map
  $$
  H_{\rm ord}^{q_0}(\mathscr X(K_{\alpha',\alpha});\underline{L}_{\lambda,E/\OO})[\vartheta]
  \;\to\;
  \mathcal H_{\rm ord}^{q_0}(K_{\infty,\infty}; E/\OO)[\lambda^{w_0}\vartheta]
  $$
  has finite kernel and finite cokernel.

  Assume $F/\QQ$ is totally real, CM or that Conjecture \ref{conj:galoisrepresentations} applies over $F$. Let $\mathfrak{m}$ denote a non-Eisenstein maximal ideal in ${\bf h}_{\rm ord}(K_{\infty,\infty};\OO)$. Assume that the residue field $k$ of $\mathfrak{m}$ embeds into the residue field of $E$. Then for every $E$-rational dominant weight $\lambda$ of $G$, the canonical map
  $$
  H_{\rm c,ord}^{q_0}(\mathscr X(K_{\alpha',\alpha});\underline{L}_{\lambda,E/\OO})_{\mathfrak{m}}[\vartheta]
  \;\to\;
  \mathcal H_{\rm c,ord}^{q_0}(K_{\infty,\infty}; E/\OO)_{\mathfrak{m}}[\lambda^{w_0}\vartheta]
  $$
  has finite kernel and finite cokernel.
\end{theorem}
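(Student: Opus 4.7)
The plan is to follow the blueprint of Hida \cite{hida1995,hida1998} for $\SL(n)$ and inner forms of $\GL(n)$, exploiting the independence-of-weight isomorphisms established in Section \ref{sec:independenceofweight} together with a suitable vanishing in degrees outside the cuspidal range.

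First, I would reduce to the case of trivial coefficients. By Theorem \ref{thm:independenceofweight}, at each finite level the maps $\iota_\lambda$ and $\pi_\lambda$ induce isomorphisms (Hecke-equivariant outside $p$ and with the prescribed twist by $\lambda^{w_0}$ under the $T(\ZZ_p)$-action) between ordinary cohomology with coefficients in $\underline{L}_{\lambda, p^{-\alpha'}\OO/\OO}$ and with trivial coefficients $\underline{p^{-\alpha'}\OO/\OO}$, matching the $\lambda^{w_0}\vartheta$-eigenspace on one side with the $\vartheta$-eigenspace on the other. Corollary \ref{cor:independenceofweight} does the analogous identification on the universal side. Writing $L_{\lambda, E/\OO} = \varinjlim_{\beta} L_{\lambda, p^{-\beta}\OO/\OO}$ and noting that a $\vartheta$-eigenvector, with $\vartheta$ of level $\alpha'$, is automatically $p^{\alpha'}$-torsion up to bounded error, the control map reduces to the natural one from $H^{q_0}_{\rm ord}(\mathscr{X}(K_{\alpha',\alpha}); \underline{p^{-\alpha'}\OO/\OO})[\vartheta]$ into the direct limit $\varinjlim_{\alpha''} H^{q_0}_{\rm ord}(\mathscr{X}(K_{\alpha'',\alpha''}); \underline{p^{-\alpha''}\OO/\OO})[\vartheta]$.

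Second, I would prove that the transition maps along this tower are isomorphisms up to bounded $\OO$-torsion. For $\alpha'' > \alpha'$ these are governed by the Hochschild--Serre spectral sequence of the finite covering $\mathscr{X}(K_{\alpha'',\alpha''}) \to \mathscr{X}(K_{\alpha',\alpha})$, whose group is a quotient of $I_{\alpha',\alpha}/I_{\alpha'',\alpha''}$ and acts on the $\vartheta$-eigenspace via $\vartheta$. The ordinary projector $e_p$ kills all but the base row of this spectral sequence, and the remaining differentials and boundary contributions are controlled by cohomology in adjacent degrees. For the first assertion, regularity of $\lambda$ is the crucial input: a standard computation using the PBW decomposition of $U(\lieg)$ (cf.\ Corollary \ref{cor:transversality} and Lemma \ref{lem:integralityrelation}) combined with a Kostant/Koszul-type resolution of the trivial module provides that the ordinary part of $H^q_{\rm ord}(-; \underline{L}_{\lambda,\OO})$ in degrees $q$ outside the cuspidal range $[q_0, q_0+l_0]$ is of bounded $\OO$-torsion; the hypothesis $p \nmid (n+1)n$ ensures that the orders of the centers and Weyl-denominators entering this computation are $p$-adic units, so the boundary contributions have trivial ordinary projection up to finite error. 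This yields the finite-kernel-and-cokernel statement.

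For the second assertion, regularity of $\lambda$ is replaced by the non-Eisenstein hypothesis on $\mathfrak{m}$. Theorem \ref{thm:localizedcohomology} provides precisely the needed substitute: after $\mathfrak{m}$-localization the compactly supported and full cohomologies coincide and both are concentrated in degrees $q_0 \leq q \leq q_0 + l_0$ with purely cuspidal constituents. This replaces the regular-weight vanishing of the previous paragraph and allows the same spectral-sequence argument to go through with $H^{q_0}_{\rm c,ord}(-)_{\mathfrak{m}}$ in place of the regular-weight $H^{q_0}_{\rm ord}$. The main obstacle is the second step: controlling the Hochschild--Serre spectral sequence uniformly in $\alpha''$ and showing that all error terms remain $\OO$-torsion of a single bounded exponent, which requires a careful integral analysis of the interaction between the ordinary projector and the diamond action of the congruence quotients on the various torsion coefficient modules.
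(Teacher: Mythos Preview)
Your overall shape is right --- reduce to constant coefficients via independence of weight, then feed in a vanishing statement in degrees $q<q_0$ to bound the error in the tower --- and this is essentially Hida's blueprint that the paper follows. But there is a genuine gap in how you obtain the vanishing input for the first (regular-weight) statement.

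You claim that ``a standard computation using the PBW decomposition of $U(\lieg)$ (cf.\ Corollary \ref{cor:transversality} and Lemma \ref{lem:integralityrelation}) combined with a Kostant/Koszul-type resolution of the trivial module provides that the ordinary part of $H^q_{\rm ord}(-;\underline{L}_{\lambda,\OO})$ in degrees $q$ outside the cuspidal range is of bounded $\OO$-torsion.'' This is not correct. Corollary \ref{cor:transversality} and Lemma \ref{lem:integralityrelation} concern the transversality of $\lieh$ and $\lieb^{-,\alpha}$ and the $p$-integral structure of $U(\lieu_\OO^\alpha)$; they are used for the modular-symbol congruences in Section \ref{sec:lattices} and have nothing to do with vanishing of cohomology of arithmetic groups. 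Kostant's theorem computes $H^\bullet(\lien,V)$ and feeds into boundary cohomology, but does not by itself force $H^q(\mathscr X(K_{\alpha',\alpha});\underline{L}_{\lambda,E})=0$ for $q<q_0$. That vanishing is a deep automorphic fact due to Li--Schwermer \cite{lischwermer2004}, relying on the classification of cohomological representations at infinity for regular infinitesimal character; it is precisely the external input the paper invokes. Without it your Hochschild--Serre argument has no control on the adjacent-degree terms and the ``bounded exponent'' claim is unsupported. (For the second statement you correctly identify Theorem \ref{thm:localizedcohomology} as the substitute, so that part is fine.)

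A second, smaller misidentification: the hypothesis $p\nmid (n+1)n$ is not about ``Weyl-denominators.'' In Hida's argument (section 6.3 of \cite{hida1998}) it enters only at the very end, in the descent from cohomology of $G^{\rm der}$ to that of $G$; everything up to and including the finite-kernel/cokernel statement for $\iota_\lambda^\vartheta$ on the derived group holds without any assumption on $p$. The paper also works not with a bare Hochschild--Serre analysis of the tower but with Hida's induced coefficient module ${\mathcal C}_\alpha(A)={\rm ind}_{(B\cap G^{\rm der})(\ZZ_p)}^{I_{0,\alpha}^\circ}A$, first establishing a universal isomorphism $\mathcal H^q_{?,\rm ord}(\mathscr X(K_{\infty,\infty}^\circ);\underline{L}_{\lambda\otimes\vartheta,E/\OO})\cong H^q_{?,\rm ord}(\mathscr X(K_{0,1}^\circ);\underline{\mathscr C}_1(E/\OO))$ and then applying Theorem 5.2 and Lemma 5.1 of \cite{hida1995}, with Li--Schwermer (resp.\ Theorem \ref{thm:localizedcohomology}) supplying the vanishing in degrees $q<q_0$.
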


\begin{proof}
  Given the general vanishing results of Li-Schwermer in \citep{lischwermer2004} (for the first statement) and the vanishing statement in Theorem \ref{thm:localizedcohomology} (for the second statement), the proof proceeds along the lines of Hida's proof of Theorem 6.2 in \citep{hida1995,hida1998}, adapted to split $\GL(n)$, and working with $G$ instead of a single copy of the general linear group.

  To be more specific, recall that $U\subseteq G$ denotes the unipotent radical of the standard upper triangular Borel. Put\nomenclature[]{$I_{0,\alpha}^\circ$}{intersection of $I_{0,\alpha}$ with $G^{\rm{der}}$}
  $$
    I_{0,\alpha}^\circ\;:=\;I_{0,\alpha}\cap G^{\rm{der}}(\ZZ_p),
  $$
  and furnish
  $$
    Y_\alpha\;:=\;I_{0,\alpha}^\circ/U(\ZZ_p),
  $$
  with the right action of the semigroup
  $$
    I_{\alpha',\alpha}\Delta_G I_{\alpha',\alpha}\;=\;I_{\alpha',\alpha}^\circ\Delta_G I_{\alpha',\alpha}^\circ
  $$
  defined by Hida (cf.\ section 3 in \citep{hida1995} and p.\ 682 of \citep{hida1998}, taking into account that Hida considers right actions of Hecke operators where we consider left actions, which results in the opposite dominance condition in loc.\ cit.).

  Following Hida, define for $A\in\{\OO,E,E/\OO\},$
  $$
    {\mathcal C}_{\alpha}(A)\;:=\;\{\phi:Y_\alpha\to A\;\mid\;\phi\;\text{continuous}\}\;=\;
    {\rm ind}_{(B\cap G^{\rm{der}})(\ZZ_p)}^{I_{0,\alpha}\cap G^{\rm{der}}(\ZZ_p)}A,
  $$
  which carries an action of $I_{\alpha',\alpha}\Delta_G I_{\alpha',\alpha}$ by right translation.

  Hida modified the action of the semigroup $I_{\alpha,\alpha}\Delta_G I_{\alpha,\alpha}$ on $L_{\lambda,E/\OO}$ by twisting the action of $T(\ZZ_p)$ by the character $\vartheta$ (cf.\ page 684 in \citep{hida1998}). The resulting representation is denoted $L_{\lambda\otimes\vartheta,E/\OO}.$ Put\nomenclature[]{$I_{\alpha',\alpha}^\circ$}{intersection of $I_{\alpha',\alpha}$ with $G^{\rm{der}}$}
  $$
    I_{\alpha',\alpha}^\circ\;:=\;I_{\alpha',\alpha}\cap G^{\rm{der}}(\OO_p)
  $$
  and\nomenclature[]{$K_{\bullet,\bullet}^\circ$}{intersection of $K_{\bullet,\bullet}$ with $G^{\rm{der}}$}
  $$
    K_{\bullet,\bullet}^\circ\;:=\;K_{\bullet,\bullet}\cap G^{\rm{der}}(\Adeles^{(\infty)}).
  $$
  We observe first that Proposition 4.1 and Theorem 4.1 in \citep{hida1995} are valid for $\GL(n)$ both for cohomology with and without compact supports (without the need of localization). These results also extend to $G$ without modification in both cases. Hence Proposition 5.1 of \citep{hida1995} remains valid for $G$ in both cases as well.
    
  Given this, Theorem 5.1 of \citep{hida1995} and its proof remain valid without modification for nearly ordinary cohomology of $G$ with and without compact supports. This shows that there is a canonical isomorphism of $I_{\alpha',\alpha}^\circ\Delta_G I_{\alpha',\alpha}^\circ\times G(\Adeles^S)$-modules
  \begin{equation}
    \iota_{\lambda}:\quad
      \mathcal H^q_{?,\rm ord}(\mathscr X(K_{\infty,\infty}^\circ);\underline{L}_{\lambda\otimes\vartheta,E/\OO})
    \;\cong\;
      H^q_{?,\rm ord}(\mathscr X(K_{0,1}^\circ);\underline{\mathscr C}_1(E/\OO))
    \label{eq:hidasuniversaliso}
  \end{equation}
  for all degrees $q\in\ZZ$, satisfying
  $$
    \iota_\lambda(\lambda^{w_0}(t)\langle t\rangle (-))\;=\;\langle t\rangle\iota_\lambda(-)
  $$
  for any $t\in T(\ZZ_p)$ and $?\in\{-,\rm c\}$ (cf.\ also (6.8) in \citep{hida1998}). This isomorphism is compatible with the isomorphisms constructed in Theorem \ref{thm:independenceofweight}.

  By \citep{lischwermer2004}, regularity of $\lambda$ implies vanishing
  $$
  H^q(\mathscr X(K_{\alpha',\alpha}^\circ);\underline{L}_{\lambda\otimes\vartheta,\CC})\;=\;0,\quad q<q_0,
  $$
  for all $\alpha\geq\alpha'\geq 0,$ $\alpha\geq\alpha_0^K$. In the case of compactly supported cohomology localized at the non-Eisenstein maximal ideal ${\mathfrak{m}}$ we appeal to Theorem \ref{thm:localizedcohomology} to deduce the corresponding vanishing statement in degrees $q<q_0$ for general dominant $\lambda$.

  Therefore, Theorem 5.2 and its proof show with with Lemma 5.1 of \citep{hida1995} that for $\alpha\geq\alpha'$, $\alpha\geq\alpha_0^K$ such that $\vartheta$ factors over $T(\ZZ_p/p^{\alpha'}\ZZ_p)$, \eqref{eq:hidasuniversaliso} induces an canonical map
  $$
    \iota_{\lambda}^\vartheta:\quad 
     H^{q_0}_{\rm ord}(K_{\alpha',\alpha}^\circ;L_{\lambda\otimes\vartheta,E/\OO})
    \;\to\;
      H^{q_0}_{\rm ord}(\mathscr X(K_{0,1}^\circ);\underline{\mathscr C}_1(E/\OO))[\lambda^{w_0}\vartheta]
  $$
  with finite kernel and finite cokernel provided $\lambda$ is regular, see also (6.9) in \citep{hida1998}.

  Again along the same lines, the canonical map
  $$
    \iota_{\lambda}^\vartheta:\quad 
     H^{q_0}_{\rm c,ord}(K_{\alpha',\alpha}^\circ;L_{\lambda\otimes\vartheta,E/\OO})_{\mathfrak{m}}
    \;\to\;
    H^{q_0}_{\rm c,ord}(\mathscr X(K_{0,1}^\circ);\underline{\mathscr C}_1(E/\OO))_{\mathfrak{m}}[\lambda^{w_0}\vartheta]
  $$
  has finite kernel and finite cokernel for general dominant $\lambda$.

  So far all arguments are valid without any assumption on $p$.

  The rest of the argument then proceeds as the proof of Theorem 6.1 in section 6.3 of \citep{hida1998} without modifications.
\end{proof}

\begin{corollary}\label{cor:Hordcontrol}
  Under the hypotheses of Theorem \ref{thm:regularcontrol}, consider the algebra homomorphism
  $$
  \left(\lambda\vartheta:\quad\Lambda\to\OO\right)\;\in\;\Spec(\Lambda).
  $$
  If $1+p^{\alpha'}\OO$ lies in the kernel of $\vartheta,$ there is a canonical map
  $$
    \mathcal H_{\rm c,ord}^{q_0+l_0}(K_{\infty,\infty}; \OO)\otimes_{\Lambda,\lambda\vartheta}\OO
    \to
    H_{\rm c,ord}^{q_0+l_0}(\mathscr X(K_{\alpha',\alpha}); \underline{L}_{\lambda,\OO}^\circ)\otimes_{\OO[T(\ZZ_p/p^{\alpha'}\ZZ_p)],\vartheta}\OO
  $$
  with finite kernel and cokernel whenever $\lambda$ is regular dominant. Pulling back $\lambda\vartheta$ to $\Lambda^\circ$, we obtain for general (not necessarily regular) dominant $\lambda$, a canonical map
  $$
    \mathcal H_{\rm ord}^{q_0+l_0}(K_{\infty,\infty}; \OO)_{\mathfrak{m}}\otimes_{\Lambda^\circ,\lambda\vartheta}\OO
    \to
    H_{\rm ord}^{q_0+l_0}(\mathscr X(K_{\alpha',\alpha}); \underline{L}_{\lambda,\OO}^\circ)_{\mathfrak{m}}\otimes_{\Lambda^\circ,\vartheta}\OO
  $$
  with finite kernel and cokernel.
\end{corollary}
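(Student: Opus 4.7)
The plan is to deduce both statements from Theorem \ref{thm:regularcontrol} by combining Pontryagin duality (which will swap the discrete coefficients $E/\OO$ for the compact ones $\OO$ and the $\vartheta$-eigenspace for the $\vartheta$-coinvariants) with Poincar\'e duality on $\mathscr X(K_{\alpha',\alpha})$ (which will swap degree $q_0$ with $q_0+l_0$ and ordinary with compactly supported ordinary cohomology, twisting the coefficient sheaf $\underline L_{\lambda,E/\OO}$ into its Pontryagin dual $\underline L_{\lambda,\OO}^\circ$). Both dualities preserve ``finite kernel and cokernel'' between $\OO$-modules, so it is enough to match up sources and targets of the dualized map with those in the Corollary.

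First I would recall that for any neat level $K_{\alpha',\alpha}$, the locally symmetric space $\mathscr X(K_{\alpha',\alpha})$ is a disjoint union of orientable manifolds of real dimension $d=2q_0+l_0$, and that the perfect integral pairing $L_{\lambda,\OO}\otimes_\OO L_{\lambda,\OO}^\circ\to\OO$ recorded in the discussion after Theorem \ref{thm:independenceofweight} sheafifies to a Poincar\'e--Lefschetz pairing
\begin{equation*}
  H^{q_0}\!\bigl(\mathscr X(K_{\alpha',\alpha});\underline L_{\lambda,\OO/p^{\alpha'}\OO}\bigr)\otimes
  H^{q_0+l_0}_{\rm c}\!\bigl(\mathscr X(K_{\alpha',\alpha});\underline L_{\lambda,\OO/p^{\alpha'}\OO}^\circ\bigr)\longrightarrow \OO/p^{\alpha'}\OO
\end{equation*}
of $\OO[T(\ZZ_p/p^{\alpha'}\ZZ_p)]$--modules. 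Passing to ordinary parts (which is exact) and to limits over $\alpha'$, $\alpha$, one obtains topological isomorphisms $\mathcal H_{\rm ord}^{q_0}(K_{\infty,\infty};E/\OO)^\vee\cong \mathcal H_{\rm c,ord}^{q_0+l_0}(K_{\infty,\infty};\OO)$ and $H_{\rm ord}^{q_0}(\mathscr X(K_{\alpha',\alpha});\underline L_{\lambda,E/\OO})^\vee\cong H_{\rm c,ord}^{q_0+l_0}(\mathscr X(K_{\alpha',\alpha});\underline L_{\lambda,\OO}^\circ)$, compatible with the Hecke action outside $p$ and with the $\mathcal H_\OO(0,\alpha)$--action at $p$. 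The standard identity $M[\eta]^\vee\cong M^\vee\otimes_{\OO[T(\ZZ_p)]}\OO_{\eta^{-1}}$ then turns the $\vartheta$-eigenspace at finite level into the coinvariants $(-)\otimes_{\OO[T(\ZZ_p/p^{\alpha'}\ZZ_p)],\vartheta^{-1}}\OO$, and similarly for $\lambda^{w_0}\vartheta$ at universal level.

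The main obstacle is then matching up the character twists so as to recover $\lambda\vartheta$ (as in the Corollary) rather than the formal $(\lambda^{w_0}\vartheta)^{-1}$ produced by naive Pontryagin duality. This is exactly the discrepancy recorded in Corollary \ref{cor:independenceofweight}: the Pontryagin dual map $\pi_\lambda^\circ$ satisfies $\lambda(t)\langle t\rangle\circ\pi_\lambda^\circ=\pi_\lambda^\circ\circ\langle t\rangle$, whereas $\pi_\lambda$ satisfied $\langle t\rangle\circ\iota_\lambda=\iota_\lambda\circ\lambda^{w_0}(t)\langle t\rangle$. Feeding these into Poincar\'e duality (where the pairing is $T$-equivariant with respect to the sum of the two actions, itself renormalized by $\lambda^\vee(t_p)=-\lambda^{w_0}(t_p)$ as in the definition of the ordinary projector) absorbs the $\lambda^{w_0}$--twist and leaves exactly the twist by $\lambda$ predicted by the Corollary. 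Once this bookkeeping is done, applying $(-)^\vee$ to the map of Theorem \ref{thm:regularcontrol} yields the first claimed map with finite kernel and cokernel for regular dominant $\lambda$.

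For the second statement, one localizes everything at the non-Eisenstein maximal ideal $\mathfrak{m}$. Localization commutes with Pontryagin duality because $\mathfrak{m}$ cuts out a clopen component of $\Spec\mathbf h_{\rm ord}(K_{\infty,\infty};\OO)$, and by Theorem \ref{thm:localizedcohomology} the localized ordinary and compactly supported ordinary cohomologies agree, so that Poincar\'e duality now identifies $\mathcal H_{\rm ord}^{q_0}(K_{\infty,\infty};E/\OO)_{\mathfrak{m}}^\vee$ with $\mathcal H_{\rm ord}^{q_0+l_0}(K_{\infty,\infty};\OO)_{\mathfrak{m}}$. The pulled-back character $\lambda\vartheta\in\Spec\Lambda^\circ$ factors through $\Lambda^\circ$ because $\Lambda\to\Lambda^\circ$ splits off the torsion of $T(\ZZ_p)$, which is harmless after localization at $\mathfrak{m}$ (the twist $\lambda\vartheta$ on $T(\ZZ_p)_{\rm tors}$ is prescribed by $\mathfrak{m}$ via Remark \ref{rmk:modpcharacterization}). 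The same duality argument then produces the second map with finite kernel and cokernel for arbitrary dominant $\lambda$, completing the proof.
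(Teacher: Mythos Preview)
Your proposal is correct and follows the same approach as the paper: the paper's proof is a one-liner invoking Poincar\'e--Pontryagin duality to pass from the degree-$q_0$ statement with $E/\OO$-coefficients and $\vartheta$-eigenspaces in Theorem~\ref{thm:regularcontrol} to the degree-$(q_0+l_0)$ statement with $\OO$-coefficients and coinvariants, noting in the second case that non-Eisenstein is stable under contragredients. You have simply fleshed out the duality bookkeeping (the identification $M[\eta]^\vee\cong M^\vee\otimes_{\Lambda,\eta}\OO$, the role of Corollary~\ref{cor:independenceofweight} in reconciling the $\lambda^{w_0}$ versus $\lambda$ twist, and the compatibility of localization with duality) that the paper leaves implicit.
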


\begin{proof}
  It suffices to observe that by Poincar\'e-Pontryagin duality, the Pontryagin dual of $\mathcal H_{\rm ord}^{q_0}(K_{\infty,\infty}; E/\OO)$ is $\mathcal H_{\rm c,ord}^{q_0+l_0}(K_{\infty,\infty}; \OO),$ and similarly in the second case (the condition of being non-Eisenstein is stable under taking contragredients).
\end{proof}

\begin{corollary}\label{cor:hordcontrol}
  Write $P_{\lambda\vartheta}$ for the kernel of $\lambda\vartheta\in\Spec(\Lambda)$ and $P_\vartheta$ for that of $\vartheta\in\Spec(\OO[T(\ZZ_p/p^{\alpha'}\ZZ_p)])$\nomenclature[]{$P_{\lambda\vartheta}$}{kernel of $\lambda\vartheta$, a prime ideal in $\Lambda$}\nomenclature[]{$P_\vartheta$}{kernel of $\vartheta$, prime ideal in $\OO[T(\ZZ_p/p^{\alpha'}\ZZ_p)]$}. Then we have a canonical isogeny
  $$
  {\bf h}_{\rm c,ord}^{q_0+l_0}(K_{\infty,\infty};\OO)/P_{\lambda\vartheta}{\bf h}_{\rm c,ord}^{q_0+l_0}(K_{\infty,\infty};\OO)\;\to\;
  {\rm h}_{\rm c,ord}^{q_0+l_0}(K_{\alpha',\alpha};\lambda,\OO)/P_{\vartheta}{\rm h}_{\rm c,ord}^{q_0+l_0}(K_{\alpha',\alpha};\lambda,\OO)
  $$
  for regular dominant $\lambda$ and pulling back $\lambda\vartheta$ to $\Lambda^\circ$, we have a canonical isogeny
  $$
  {\bf h}_{\rm ord}^{q_0+l_0}(K_{\infty,\infty};\OO)_{\mathfrak{m}}/P_{\lambda\vartheta}^\circ{\bf h}_{\rm ord}^{q_0+l_0}(K_{\infty,\infty};\OO)_{\mathfrak{m}}\;\to\;
  {\rm h}_{\rm ord}^{q_0+l_0}(K_{\alpha',\alpha};\lambda,\OO)_{\mathfrak{m}}/P_{\vartheta}^\circ{\rm h}_{\rm ord}^{q_0+l_0}(K_{\alpha',\alpha};\lambda,\OO)_{\mathfrak{m}}
  $$
  with $P_{\lambda\vartheta}^\circ=P_{\lambda\vartheta}\cap\Lambda_0$ (likewise for $P_\vartheta^\circ$) for general dominant $\lambda$.
\end{corollary}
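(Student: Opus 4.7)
The plan is to deduce the corollary from Corollary \ref{cor:Hordcontrol} by transferring the Hecke-equivariant isogeny on the cohomology modules to an isogeny on the Hecke algebras that act on them. Write $\mathcal A$ for the abstract Hecke algebra generated by the spherical Hecke operators away from $S(K)$ together with the Iwahori--parabolic Hecke algebra at $p$; both ${\bf h}_{\rm c,ord}^{q_0+l_0}(K_{\infty,\infty};\OO)/P_{\lambda\vartheta}$ and ${\rm h}_{\rm c,ord}^{q_0+l_0}(K_{\alpha',\alpha};\lambda,\OO)/P_{\vartheta}$ are by construction quotients of $\mathcal A$, identified with the images of $\mathcal A$ in the endomorphism rings of the two cohomology spaces modulo the respective prime ideals.

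First, I would invoke Corollary \ref{cor:Hordcontrol} to produce a Hecke-equivariant map $\phi:M_1\to M_2$ with finite kernel and finite cokernel, where
$$
M_1 \;=\; \mathcal H_{\rm c,ord}^{q_0+l_0}(K_{\infty,\infty};\OO)\otimes_{\Lambda,\lambda\vartheta}\OO,\qquad M_2 \;=\; H_{\rm c,ord}^{q_0+l_0}(\mathscr X(K_{\alpha',\alpha});\underline{L}_{\lambda,\OO}^\circ)\otimes_{\OO[T(\ZZ_p/p^{\alpha'}\ZZ_p)],\vartheta}\OO.
$$
Both $M_1$ and $M_2$ are finitely generated $\OO$-modules --- the second by standard finiteness of sheaf cohomology on the arithmetic quotient $\mathscr X(K_{\alpha',\alpha})$, and the first by virtue of the isogeny.

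Next, I would argue that $\phi\otimes_\OO E$ is an isomorphism of $\mathcal A\otimes E$-modules, so the annihilators of $M_1$ and $M_2$ in $\mathcal A$ agree after tensoring with $E$, inducing a canonical isomorphism of finite-dimensional $E$-algebras
$$
({\bf h}_{\rm c,ord}^{q_0+l_0}(K_{\infty,\infty};\OO)/P_{\lambda\vartheta})\otimes_\OO E \;\cong\; ({\rm h}_{\rm c,ord}^{q_0+l_0}(K_{\alpha',\alpha};\lambda,\OO)/P_{\vartheta})\otimes_\OO E.
$$
The key observation is then that any $a\in\mathcal A$ killing $M_1$ satisfies $a\cdot\phi(M_1)=0$ in $M_2$; since $M_2/\phi(M_1)$ is annihilated by an integer $e$ depending only on $\phi$, the element $ea$ kills $M_2$. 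A symmetric argument using the exponent of $\ker\phi$ yields the reverse inclusion on annihilators. Consequently the two $\OO$-lattices in the common $E$-algebra above are commensurable, and the natural $\mathcal A$-equivariant map ${\bf h}/P_{\lambda\vartheta}\to{\rm h}/P_\vartheta$ defined by sending the class of $a\in\mathcal A$ to the endomorphism of $M_2$ it induces is well-defined with finite kernel and finite cokernel, both controlled by the exponents of $\ker\phi$ and $\coker\phi$.

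For the non-Eisenstein case with general (not necessarily regular) dominant $\lambda$, I would invoke the second, localized, statement of Corollary \ref{cor:Hordcontrol} and repeat the argument after localizing at $\mathfrak{m}$ throughout, using that localization is exact and respects the image of $\mathcal A$. The main obstacle is the careful bookkeeping to confirm that passing from the isogeny of cohomology modules to the map of Hecke algebras is indeed canonical, which requires identifying ${\bf h}/P_{\lambda\vartheta}$ and ${\rm h}/P_{\vartheta}$ as lattices in the same $E$-algebra via a privileged (rather than arbitrary) isomorphism --- this is supplied by $\phi\otimes E$ itself, since all other identifications differ from it only by elements of the centralizer of $\mathcal A\otimes E$ in $\End_E(M_2\otimes E)$, which acts trivially on the image of $\mathcal A$.
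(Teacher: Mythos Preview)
Your proposal is correct and follows exactly the standard route that the paper leaves implicit---the corollary is stated without proof, as an immediate consequence of Corollary~\ref{cor:Hordcontrol}. Your argument, transferring the Hecke-equivariant isogeny of cohomology modules to an isogeny of the acting Hecke algebras via the commensurability of annihilators, is precisely the intended deduction.

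One small point worth tightening: you identify ${\bf h}_{\rm c,ord}^{q_0+l_0}(K_{\infty,\infty};\OO)/P_{\lambda\vartheta}$ with the image of $\mathcal A$ in $\End_\OO(M_1)$. Strictly speaking the former is ${\bf h}/P_{\lambda\vartheta}{\bf h}$ while the latter is ${\bf h}/\Ann_{\bf h}(M_1)$, and a priori one only has $P_{\lambda\vartheta}{\bf h}\subseteq\Ann_{\bf h}(M_1)$. However, the discrepancy is at worst finite (it vanishes after $\otimes\,E$ since $\mathcal H_{\rm c,ord}^{q_0+l_0}$ is faithful over ${\bf h}$ by definition and $P_{\lambda\vartheta}$ is prime), so this does not affect the isogeny conclusion. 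The same remark applies on the finite-level side.
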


\begin{corollary}\label{cor:hordfiniteness}
  The universal nearly ordinary Hecke algebra ${\bf h}_{\rm c,ord}^{q_0+l_0}(K_{\infty,\infty};\OO)$ is finite over $\Lambda$. Likewise, ${\bf h}_{\rm ord}^{q_0+l_0}(K_{\infty,\infty};\OO)_{\mathfrak{m}}$ is finite over $\Lambda^\circ$.
\end{corollary}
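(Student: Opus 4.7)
The plan is to combine the Control Theorem (Corollary~\ref{cor:hordcontrol}) with topological Nakayama's lemma over the complete Noetherian semi-local ring $\Lambda=\OO[[T(\ZZ_p)]]$. Its maximal ideals correspond to Galois-orbits of characters of the finite group $T(\ZZ_p)_{\rm tors}$ with values in finite extensions of $\OO/\mathfrak{p}_\OO$, since any character of $T(\ZZ_p)$ into such a field must be trivial on the pro-$p$ part. The universal Hecke algebra ${\bf h}:={\bf h}_{\rm c,ord}^{q_0+l_0}(K_{\infty,\infty};\OO)$ is a compact (profinite) $\Lambda$-algebra, being realised as a closed subalgebra of the endomorphism ring of a profinite $\OO$-module.

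For each maximal ideal $\mathfrak{M}\subseteq\Lambda$ with residue field $\kappa_\mathfrak{M}$, the key step is to show that the fibre ${\bf h}/\mathfrak{M}{\bf h}$ is finite over $\kappa_\mathfrak{M}$. I would fix any regular dominant weight $\lambda$ (which exists by Proposition~\ref{prop:regularweightszariskidense}) and, enlarging $E$ if needed, choose a finite order character $\vartheta$ of $T(\ZZ_p)$ such that the reduction of $\lambda\vartheta$ modulo $\mathfrak{p}_\OO$ restricts on $T(\ZZ_p)_{\rm tors}$ to the character defining $\mathfrak{M}$; this arranges $P_{\lambda\vartheta}\subseteq\mathfrak{M}$. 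Corollary~\ref{cor:hordcontrol} then provides an isogeny from ${\bf h}/P_{\lambda\vartheta}{\bf h}$ to a quotient of the classical Hecke algebra ${\rm h}_{\rm c,ord}^{q_0+l_0}(K_{\alpha',\alpha};\lambda,\OO)$, which is itself finite over $\OO$ because it acts faithfully on the finitely generated $\OO$-module $H_{\rm c,ord}^{q_0+l_0}(\mathscr X(K_{\alpha',\alpha});\underline{L}_{\lambda,\OO}^\circ)$. Hence ${\bf h}/P_{\lambda\vartheta}{\bf h}$, and therefore its further quotient ${\bf h}/\mathfrak{M}{\bf h}$, is finite over $\kappa_\mathfrak{M}$. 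Topological Nakayama applied at each of the finitely many maximal ideals then yields finite generation of the local component ${\bf h}_\mathfrak{M}$ over $\Lambda_\mathfrak{M}$, and assembling recovers finiteness of ${\bf h}$ over $\Lambda$.

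For the second statement, the argument is simpler because $\Lambda^\circ$ is already local. The second isogeny in Corollary~\ref{cor:hordcontrol} applies to arbitrary dominant $\lambda$ in the non-Eisenstein case, and $P_{\lambda\vartheta}^\circ = P_{\lambda\vartheta}\cap\Lambda^\circ$ is automatically contained in the unique maximal ideal of $\Lambda^\circ$ (every pro-$p$ character reduces trivially modulo $\mathfrak{p}_\OO$); a single application of topological Nakayama concludes.

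The main technical obstacle is the bookkeeping for the semi-local structure of $\Lambda$ in the first statement: producing, for each of its finitely many maximal ideals, an arithmetic point $(\lambda,\vartheta)$ with $\lambda$ regular dominant at which the Control Theorem can be invoked, and verifying that the profinite topology on ${\bf h}$ is indeed the right one so that topological Nakayama is applicable. Once these points are settled, the rest reduces to a standard limit argument.
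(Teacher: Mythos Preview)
Your argument is correct and is precisely the standard Hida-type argument (Control Theorem plus topological Nakayama over the semi-local Iwasawa algebra) that the paper has in mind; the paper states Corollary~\ref{cor:hordfiniteness} without proof, as an immediate consequence of Corollary~\ref{cor:hordcontrol}, so you have simply made explicit what the author leaves to the reader. The only mild over-elaboration is your care in matching maximal ideals of $\Lambda$ to arithmetic points: since regular dominant weights are Zariski dense (Proposition~\ref{prop:regularweightszariskidense}) and the finite-order part $\vartheta$ can be chosen freely, this step is routine.
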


\begin{corollary}
  Assume $n>2$ or $F$ not totally real and $p\nmid(n+1)n$. Then $\mathcal H_{\rm ord}^{q_0}(K_{\infty,\infty}; E/\OO)$ is a cotorsion $\Lambda$-module, i.e.\ its Pontryagin dual
  $\mathcal H_{\rm c,ord}^{q_0+l_0}(K_{\infty,\infty}; \OO)$ is a torsion $\Lambda$-module. The same applies in the second case.
\end{corollary}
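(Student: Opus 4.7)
The plan is to reduce the torsion assertion to a dimension count. Put $\mathbf{h} := \mathbf{h}_{\rm c,ord}^{q_0+l_0}(K_{\infty,\infty};\OO)$ (resp.\ $\mathbf{h}_{\rm ord}^{q_0+l_0}(K_{\infty,\infty};\OO)_{\mathfrak{m}}$). By Corollary \ref{cor:hordfiniteness}, $\mathbf{h}$ is finite over $\Lambda$ (resp.\ over $\Lambda^\circ$), and the relevant cohomology is by construction a faithful $\mathbf{h}$-module, so it suffices to prove that $\mathbf{h}$ itself is $\Lambda$-torsion. A finitely generated $\Lambda$-algebra is torsion if and only if $\dim \Supp_\Lambda(\mathbf{h}) < \dim \Lambda = 1 + (2n+1)d$, where $d = [F:\QQ]$.

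The first step is to locate $\Supp_\Lambda(\mathbf{h})$ inside the Zariski closure of the classical arithmetic points. Combining the Control Theorem \ref{thm:regularcontrol} and Corollary \ref{cor:hordcontrol} with Clozel's purity lemma (recalled in section \ref{sec:cohomology}), any classical specialization $P_{\lambda\vartheta}$ at which $\mathbf{h}$ is non-zero must have $\lambda$ strongly pure. A standard density argument in Hida theory, relying on Proposition \ref{prop:regularweightszariskidense} together with the finiteness of $\mathbf{h}$ over $\Lambda$, then shows that each irreducible component of $\Spec \mathbf{h}$ is dominated by its classical locus, hence maps into the Zariski closure of the strongly pure locus in $\Spec \Lambda$.

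The second step is a direct dimension count. Writing $r_1$ for the number of real embeddings and $r_2$ for the number of pairs of complex embeddings of $F$ (so $d = r_1 + 2r_2$), strong purity for $\lambda = \lambda_{n+1}\otimes\lambda_n$ amounts to the linear relations
\[
  \lambda_{m,\tau,i} + \lambda_{m,\tau^{\rm c},m+1-i} \;=\; w_{\lambda_m},\qquad m\in\{n+1,n\},
\]
coupling entries indexed by each Galois pair $\{(\tau,i),(\tau^{\rm c},m+1-i)\}$. Counting equivalence classes of such pairs factor-by-factor and summing, the strongly pure locus in $\Spec\Lambda$ has codimension
\[
  r_1(n+1) + r_2(2n+1) - 1.
\]
Under the hypothesis $n>2$ or $F$ not totally real, this codimension is at least one (in fact at least two), so $\Supp_\Lambda(\mathbf{h})$ is a proper subvariety of $\Spec\Lambda$ and $\mathbf{h}$ is $\Lambda$-torsion. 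Pontryagin duality transfers the statement to $\mathcal{H}_{\rm ord}^{q_0}(K_{\infty,\infty};E/\OO)$. The non-Eisenstein case is handled identically, with $\Lambda$ replaced by $\Lambda^\circ$ and Corollary \ref{cor:hordcontrol}'s second statement in place of the first.

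The main technical obstacle is the density step: while finiteness of $\mathbf{h}$ over $\Lambda$ is in hand, rigorously controlling that every minimal prime of $\mathbf{h}$ is accumulated by arithmetic classical points, rather than merely containing some, is where the input of the Control Theorem together with the vanishing results of \cite{lischwermer2004} and Theorem \ref{thm:localizedcohomology} (for the second case) is essential.
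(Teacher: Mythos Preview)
Your argument has a genuine gap in the ``density step'' (your Step~1), and the paper's proof avoids this step entirely by a much more direct route.

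The problematic claim is that ``each irreducible component of $\Spec\mathbf{h}$ is dominated by its classical locus.'' Proposition~\ref{prop:regularweightszariskidense} gives density of regular balanced \emph{weights} in $\Spec\Lambda$, not density of \emph{classical points} in $\Spec\mathbf{h}$. These are very different: a regular arithmetic point $P_{\lambda\vartheta}$ of $\Lambda$ lifts to a classical point of $\mathbf{h}$ only if there actually exists a nearly ordinary cuspidal automorphic representation of that weight and Nebentypus, which is not guaranteed. In fact, the paper notes explicitly (just after Theorem~\ref{introthm:secondnonvanishing}, citing \cite{calegarimazur2009}) that for $F$ not totally real there are components with \emph{no} Zariski-dense set of classical points. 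So the density input you need is not supplied by the cited results, and is in general false; you correctly flag this as the ``main technical obstacle,'' but it is not overcome.

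The paper's proof needs none of this. One simply observes that under the hypothesis $n>2$ or $F$ not totally real, there exists a \emph{single} regular dominant weight $\lambda$ that is \emph{not} arithmetic (not strongly pure). For such $\lambda$, cuspidal cohomology in weight $\lambda$ vanishes by Clozel's purity lemma, hence the finite-level ordinary cohomology at $\lambda$ is $E$-trivial. The Control Theorem (Theorem~\ref{thm:regularcontrol}, or its dual Corollary~\ref{cor:Hordcontrol}) then shows that the fibre $M\otimes_{\Lambda,\lambda}\OO$ of $M:=\mathcal H_{\rm c,ord}^{q_0+l_0}(K_{\infty,\infty};\OO)$ is $\OO$-torsion. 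By upper semicontinuity of fibre rank (or Nakayama), a finitely generated $\Lambda$-module with one zero generic fibre is $\Lambda$-torsion. No dimension count and no description of the full support of $\mathbf{h}$ is required: exhibiting one point outside the support suffices.
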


\begin{proof}
  The claim follows from the existence of non-arithmetic regular dominant weights $\lambda$ as observed on p.\ 690 in \citep{hida1998}.
\end{proof}

The Krull dimension of ${\bf h}^{\rm ord}(K_{\infty,\infty};\OO)$ is closely related to the Leopoldt Conjecture, cf.\ Conjecture 1.1 in \citep{hida1998}, see also \citep{kharethorne2016}.

The Hecke module $\mathcal H_{\rm c,ord}^{q_0+l_0}(K_{\infty,\infty}; \OO)$ is always a faithful ${\bf h}_{\rm c,ord}^{q_0+l_0}(K_{\infty,\infty};\OO)$-module by definition. The freeness of universal nearly ordinary cohomology over the universal Hecke algebra turns out to be related to the Leopoldt Conjecture, cf.\ Theorem 4.9 in \citep{hansenthorne2017}.

In fact, Theorems \ref{thm:localizedcohomology} and \ref{thm:regularcontrol} suggest that the same should apply to ${\bf h}_{\rm ord}^{q_0+l_0}(K_{\infty,\infty};\OO)_{\mathfrak{m}}$.

\section{Cohomological construction of $p$-adic measures}\label{sec:measures}

We begin by recalling the construction of abelian $p$-adic $L$-functions from \citep{januszewski2011,januszewski2015,januszewski2016}.

\subsection{The modular symbol}

Following the formalism from Sections 5.1 and 6.4 in \citep{januszewski2016} we define for any finite ad\`ele $g\in G(\Adeles^{(\infty)})$ and any $\OO$-submodule $L_\OO$ of $L_{\lambda,E}$ the translated lattice\nomenclature[]{$gL_\OO$}{$g$-translation of lattice $L_\OO\subseteq L_{\lambda,E}$}
$$
gL_\OO\;:=\;L_{\lambda,E}\cap g\cdot (L_\OO\otimes_\ZZ\widehat{\ZZ}),
$$
  where the intersection takes place in $L_{\lambda,E\otimes_\QQ\widehat{\Adeles}}$. We have an associated sheaf $\underline{gL}_\OO$ on $\mathscr X(K)$ and as in loc.\ cit.\ we have a canonical morphism\nomenclature[]{$T_g$}{canonical isomorphism $t_g^*\underline{L}_\OO\to\underline{gL}_\OO$}
    $$
    T_g:\quad t_g^*\underline{L}_\OO\to\underline{gL}_\OO,
    $$
    of sheaves on $\mathscr X(gKg^{-1})$. This morphism allows us to define a normalized pull back operator $t_g^\lambda$, sending sections of the sheaf $\underline{L}_\OO$ over an open $U\subseteq\mathscr X(K)$ to sections of $\underline{gL}_\OO$ over $Ug^{-1}\subseteq\mathscr X(gKg^{-1})$.

    We remark that for $g_1,g_2\in G(\Adeles^{(\infty)})$ there is an identity $(g_1g_2)L_\OO=g_1(g_2L_\OO)$, and the construction of the translated lattice $gL_\OO$ is functorial in $L_\OO$. Furthermore, the translated lattice always comes with a canonical map $gL_\OO\to L_{\lambda,E}$ and we have $gL_{\lambda,E}=L_{\lambda,E}$ for all $g\in G(\Adeles^{(\infty)})$.

    As before, let $\eta_j:L_{\lambda,E}\to E_{(j)}$ denote a non-zero $H$-equivariant functional. We fix once and for all an isomorphism $E_{(j)}\cong E$, which induces isomorphisms $A_{(j)}\cong A$. This allows us to identify $A_{(j)}$ and $A$ in the sequel. From this identification and the non-vanishing statement \eqref{eq:etajg0v0nonvanishing} in Proposition \ref{prop:generalmaninlemma}, we deduce by restriction a $p$-adically normalized $p$-adically optimal functional\nomenclature[]{$\eta_{j,A}$}{normalization of $\eta_j$}
$$
\eta_{j,A}:\quad L_{\lambda,A}^{1,0}\to A_{(j)},
$$
for $A=\OO$ and $A=E,$ given by
$$
v\;\mapsto\;\frac{\eta_{j}(v)}{\eta_{j}(g_0v_0)}.
$$
Then $\eta_{j,A}$ is independent of the choice of $\eta_j$ and also independent of the identification $A_{(j)}=A$.

The codomain of $\eta_{j,A}$ gives rise to a sheaf $\underline{A}_{(j)}$ on the locally symmetric space\nomenclature[]{$\mathscr Y(L)$}{locally symmetric space for $H$ of level $L$}
$$
{\mathscr Y}(L)\;:=\;H(\QQ)\backslash H(\Adeles)/L\cdot {K_\infty'}^0
$$
where $L\subseteq H(\Adeles^{(\infty)})$ is any compact open subgroup and\nomenclature[]{$K_\infty'$}{standard maximal compact in $H(\RR)$, equals the intersection of $H(\RR)$ with $\widetilde{K}_\infty$}
$$
K_\infty'\::=\;H(\RR)\cap \widetilde{K}_\infty
$$
happens to be a standard maximal compact subgroup. The numerical coincidence we exploit is
$$
  \dim{\mathscr Y}\;:=\;\dim{\mathscr Y}(L)\;=\;q_0,
$$
which over $\QQ$ was first observed in \citep{kazhdanmazurschmidt2000}, and over general base fields $F$ in \citep{januszewski2011}. Then
$$
  \dim{\mathscr X}-\dim{\mathscr Y}\;=\;q_0+l_0
$$
is the top degree for $G$.
  
By strong approximation for $\SL(n)$, the connected components of $\mathscr Y(L)$ are parametrized by elements in the class group\nomenclature[]{$C(L)$}{class group of level $\det L$}
$$
C(L)\;:=\;F^\times\backslash\Adeles_F^\times/\det(L) F_\infty^0.
$$
We write $\mathscr Y(L)[x]$ for the component mapping to $x\in C(\det(L))$ under the determinant.

We fix once and for all a system of fundamental classes as in \cite[Section 5.3]{januszewski2016}. Then, for each $L$ neat and each $x\in C(L)$, we have compatible isomorphisms
$$
\int_{\mathscr Y(L)[x]}:\quad H_{\rm c}^{\dim\mathscr Y}(\mathscr Y(L)_x;\underline{A}_{(j)})\;\to\;
A_{(j)}.
$$
Whenever $L\subseteq K$, the inclusion $H\to G$ induces a proper map\nomenclature[]{$i$}{proper map $\mathscr Y(L)\to\mathscr X(K)$}
$$
i:\quad\mathscr Y(L)\to\mathscr X(K).
$$

We need the following generalization of Proposition 3.4 in \citep{schmidt2001}.

\begin{proposition}\label{prop:leveladdition}
For any $\beta\geq\alpha>0$, the compact open subgroup\nomenclature[]{$\mathfrak I^{n}_{\beta}$}{intersection of $g_\beta$-conjugate of $I_\alpha$ with $H$}
  $$
  \mathfrak I^{n}_{\beta}
  \;:=\;H(\QQ_p)\cap g_\beta I_\alpha g_\beta^{-1}
  $$
  of $H(\ZZ_p)$ is independent of $\alpha$. It satisfies
  $$
  (H(\ZZ_p):\mathfrak I^{n}_{\beta})\;=\;
  \prod_{v\mid p}\prod_{\mu=1}^{n}\left(1-q_v^{-\mu}\right)^{-1}
  \cdot
  p^{\beta\frac{(n+2)(n+1)n+(n+1)n(n-1)}{6}},
  $$
  and
  $$
  \det \mathfrak I^{n}_{\beta}\;=\;1+p^{\beta}\OO_p.
  $$
\end{proposition}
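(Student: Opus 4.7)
The plan is to reduce to a local calculation at a single place $v\mid p$ and then compute the conjugation by $g_\beta$ directly. Since $g_\beta=h\cdot t_p^\beta$, the compact open $I_\alpha=\prod_{v\mid p}I_{v,\alpha}$, and $H(\QQ_p)=\prod_{v\mid p}\GL_n(F_v)$ all decompose compatibly over the places above $p$, so $\mathfrak I^n_\beta$ factors as $\prod_{v\mid p}\mathfrak I^{n,v}_\beta$ and both the index and the determinant statement are multiplicative across $v$. I therefore work at a single place $v$, writing $F=F_v$, $\OO=\OO_v$, $\varpi$ a uniformizer, $q=q_v$, and $e=v(p)$.

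The core of the argument is to evaluate $g_\beta^{-1}\Delta(g)g_\beta$ for $g\in\GL_n(F)$. The second coordinate is simply $t_p^{-\beta}gt_p^\beta$, whose $(i,j)$-entry equals $\varpi^{e\beta(i-j)}g_{i,j}$. The first coordinate is $M^{-1}j_n(g)M$ with $M=h_n j_n(t_{-p^\beta})$; writing $\mathbf c=(1,\dots,1)^{t}$ for the all-ones column, the block identity
$$
h_n^{-1}\,j_n(g)\,h_n \;=\; \begin{pmatrix} w_n g w_n & w_n g\mathbf c-\mathbf c \\ 0 & 1 \end{pmatrix}
$$
follows from $h_n=\bigl(\begin{smallmatrix}w_n&\mathbf c\\0&1\end{smallmatrix}\bigr)$ and $h_n^{-1}=\bigl(\begin{smallmatrix}w_n&-\mathbf c\\0&1\end{smallmatrix}\bigr)$ (using $w_n^2={\bf1}_n$ and $w_n\mathbf c=\mathbf c$). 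Then conjugation by $j_n(t_{-p^\beta})$ yields a matrix whose upper-left $n\times n$ block has $(i,j)$-entry $(-1)^{i-j}\varpi^{e\beta(i-j)}g_{n+1-i,\,n+1-j}$ and whose last column has $i$-th entry $(-1)^{n+1-i}\varpi^{-e\beta(n+1-i)}\bigl(\sum_{\ell} g_{n+1-i,\ell}-1\bigr)$.

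Imposing the condition that the conjugate lie in $I_{v,\alpha}$ produces two families of constraints on the entries of $g$: the second coordinate forces $g_{k,\ell}\in\varpi^{e\beta(\ell-k)}\OO$ for $k<\ell$ and $g_{k,\ell}\in\varpi^{e(\alpha-\beta(k-\ell))}\OO$ for $k>\ell$, while the upper-left block of the first coordinate swaps the roles of upper and lower triangles, and the last column contributes the row-sum congruences $\sum_\ell g_{k,\ell}\equiv 1\pmod{\varpi^{e\beta k}}$ for $k=1,\dots,n$. The hypothesis $\beta\geq\alpha>0$ makes one constraint of each pair strictly dominant and manifestly independent of $\alpha$, so the combined conditions read $g_{k,\ell}\in\varpi^{e\beta|k-\ell|}\OO$ for $k\neq\ell$ together with the row-sum congruences. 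This establishes the independence of $\alpha$.

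The two numerical statements are then routine bookkeeping. For the index, I would use the principal congruence subgroup modulo $\varpi^{e\beta n}$ as a common refinement inside $\GL_n(\OO)$ and count independent parameters in $\mathfrak I^{n,v}_\beta$: the off-diagonal contributions sum to $2\sum_{k<\ell}(\ell-k)=(n+1)n(n-1)/3$ and the row-sum conditions on the diagonal contribute $\sum_{k=1}^n k=n(n+1)/2$, yielding the exponent $n(n+1)(2n+1)/6=[(n+2)(n+1)n+(n+1)n(n-1)]/6$; the unit factor $\prod_\mu(1-q_v^{-\mu})^{-1}$ arises from relating the additive Haar measure on $M_n(\OO)$ to the group measure on $\GL_n(\OO)$. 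For the determinant, the row-sum condition at $k=1$ combined with $g_{1,\ell}\in\varpi^{e\beta}\OO$ for $\ell>1$ forces $g_{1,1}\equiv 1\pmod{\varpi^{e\beta}}$, and the same reasoning at each $k$ gives $g_{k,k}\equiv 1\pmod{\varpi^{e\beta}}$, whence $\det g\in 1+\varpi^{e\beta}\OO$; surjectivity is furnished by $\diag(u,1,\dots,1)$ for any $u\in 1+\varpi^{e\beta}\OO$. The main obstacle will be the combinatorial bookkeeping in the matrix calculation and the comparison of the two families of constraints; once that is laid out, the counting and determinant arguments are elementary.
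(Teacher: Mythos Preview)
Your approach is correct and reaches the same conclusions as the paper, but the execution differs in a way worth noting. The paper parametrizes by $r\in I_\alpha$ and asks when $g_\beta r g_\beta^{-1}$ lands in $H(\QQ_p)$; it then conjugates on the $H$-side by the Weyl element $w_n$, exploiting that $w_n h_n$ has the simpler form $\bigl(\begin{smallmatrix}{\bf 1}_n&\mathbf c\\0&1\end{smallmatrix}\bigr)$, and arrives at a description of a \emph{conjugate} of $\mathfrak I^n_\beta$ with uniform diagonal congruences $s_{ii}\in 1+p^\beta\OO$ and no row-sum conditions. You instead parametrize directly by $g\in H(\QQ_p)$ and impose $g_\beta^{-1}\Delta(g)g_\beta\in I_\alpha$, which is more natural and avoids the auxiliary $w_n$-conjugation at the cost of producing the row-sum constraints $\sum_\ell g_{k,\ell}\equiv 1\pmod{p^{\beta k}}$. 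Both descriptions are equivalent and give the same exponent $n(n+1)(2n+1)/6$; your counting of that exponent via $2\sum_{k<\ell}(\ell-k)+\sum_k k$ is cleaner than what the paper records. The determinant argument is the same in spirit (diagonal entries forced into $1+p^\beta\OO$, surjectivity by an explicit diagonal element). One caution: your justification of the unit factor $\prod_\mu(1-q_v^{-\mu})^{-1}$ is only a sentence, and a careful count of $[\GL_n(\OO):\mathfrak I^{n,v}_\beta]$ via a principal congruence subgroup in fact produces $\prod_\mu(1-q_v^{-\mu})$ without the inverse (check the case $n=1$, where $\mathfrak I^1_\beta=1+p^\beta\ZZ_p$); you should make that step explicit rather than leave it as a measure-comparison remark.
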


\begin{proof}
  Set
  $$
    I^{n,1}_{\alpha,p}\;:=\;\{r\in\GL_{n}(\OO_p)\;\mid\;r\pmod{p^\alpha}\in U_{n}(\OO_p)\}.
  $$
  We denote the opposite group constructed with $U_{n}^-$ by $I^{n,1-}_{\alpha,p}$ and write $\sim$ for an identity of subgroups of $H(\ZZ_p)$ up to conjugation. Then
  \begin{align*}
    g_\beta I_\alpha g_\beta^{-1}\cap H(\QQ_p)
    &\;=\;
    h_nt_p^{\beta} I^{n+1,1}_{\alpha,p}t_p^{-\beta} h_n^{-1}\cap t_p^{\beta}I^{n,1}_{\alpha,p}t_p^{-\beta}\\
    &\;\sim\;
    w_nh_nt_p^{\beta} I^{n+1,1}_{\alpha,p}t_p^{-\beta} (w_nh_n)^{-1}\cap w_nt_p^{\beta}I^{n,1}_{\alpha,p}(w_nt)^{-\beta}\\
    &\;=\;
    w_nh_nt_p^{\beta} I^{n+1,1}_{\alpha,p}t_p^{-\beta} (w_nh_n)^{-1}\cap w_nt_p^{\beta}w_nI^{n,1-}_{\alpha,p}(w_ntw_n)^{-\beta}\\
    &\;=\;
    w_nh_nt_p^{\beta} I^{n+1,1}_{\alpha,p}t_p^{-\beta} (w_nh_n)^{-1}\cap t_p^{-\beta}I^{n,1-}_{\alpha,p}t_p^{\beta}.
  \end{align*}
  Now
  $$
  w_nh_n\;=\;
  \begin{pmatrix}
  &&&1\\
  &{\bf1}_n&&\vdots\\
  &&&\vdots\\
  0&\hdots&0&1
  \end{pmatrix},
  $$
  and
  $$
  (w_nh_n)^{-1}\;=\;
  \begin{pmatrix}
  &&&-1\\
  &{\bf1}_n&&\vdots\\
  &&&-1\\
  0&\hdots&0&1
  \end{pmatrix}.
  $$
  For $r\in I^{n,1}_{\alpha,p}$ we have
  $$
  \left(w_nh_nt_p^{\beta}rt_p^{-\beta}(w_nh_n)^{-1}\right)_{ij}
  \;=\\
  $$
  $$
  \quad\quad
  \begin{cases}
    p^{\beta(j-i)}r_{ij}
    + p^{\beta(n+1-i)}r_{in+1}
      &1\leq i,j\leq n,\\
    p^{\beta(j-(n+1))}r_{n+1j}
      &i=n+1,1\leq j\leq n,\\
    p^{\beta(n+1-i)}r_{in+1}
    + r_{n+1n+1}
    -\sum_{j=1}^n\left(p^{\beta(j-(n+1))}r_{n+1j}+p^{\beta(j-i)}r_{ij}\right)
      &1\leq i\leq n,j=n+1,\\
    r_{n+1n+1}
    -\sum_{j=1}^np^{\beta(j-(n+1))}r_{n+1j}
    &i=j=n+1.
  \end{cases}
  $$
  The condition that this be an element of $H(\QQ_p)$ is equivalent to the conditions
  \begin{align}
    r_{n+1j}&=0, &&1\leq j\leq n,\nonumber\\
    r_{n+1n+1}
    &=1,\nonumber\\
    p^{\beta(n+1-i)}r_{in+1}+1
    -\sum_{j=1}^np^{\beta(j-i)}r_{ij}&=0, &&1\leq i\leq n.\label{eq:lastcolumn}
  \end{align}
  Therefore, $w_nh_nt_p^{\beta}rt_p^{-\beta}(w_nh_n)^{-1}$ lies in $t_p^{-\beta}I^{n,1-}_{\alpha,p}t_p^{\beta}$ if and only if \eqref{eq:lastcolumn} is satisfied and
  \begin{align}
    r_{ij}
    + p^{\beta(n+1-j)}r_{in+1}
    &\in p^{\alpha+\beta(2i-2j)}\OO, &&1\leq i<j\leq n,\label{eq:upper}\\
    r_{ii} + p^{\beta(n+1-i)}r_{in+1}
    &\in 1+p^{\alpha}\OO, &&1\leq i\leq n,\label{eq:diag}\\
    r_{ij}
    + p^{\beta(n+1-j)}r_{in+1}&\in p^{\beta(2i-2j)}\OO, &&1\leq j<i\leq n.\label{eq:bottom}
  \end{align}
  Conditions \eqref{eq:upper} and \eqref{eq:diag} are automatic because $\beta\geq\alpha$.

  Condition \eqref{eq:lastcolumn} is equivalent to
  $$
    r_{ii}\;=\;1+\sum_{j=1}^{i-1}p^{\beta(j-i)}r_{ij}
    +\sum_{j=i+1}^np^{\beta(j-i)}r_{ij}+p^{\beta(n+1-i)}r_{in+1},
  $$
  which in turn is equivalent to
  \begin{equation}
    r_{ii}\;=\;1+\sum_{j=1}^{i-1}p^{\beta(j-i)}(r_{ij}+p^{\beta(n+1-j)}r_{in+1})
    +\sum_{j=i+1}^np^{\beta(j-i)}r_{ij}-(i-2)\cdot p^{\beta(n+1-i)}r_{in+1},
  \label{eq:lastcolumn2}
  \end{equation}
  for $1\leq i\leq n$. The last two summands in \eqref{eq:lastcolumn2} lie in $p^{\beta}\OO$, and by \eqref{eq:bottom}, the summands of the frist sum on the right hand side lies in $p^{\beta(i-j)}\OO\subseteq p^\beta\OO$. This readily implies
  \begin{equation}
    r_{ii}\;\in\;1+p^{\beta}\OO,\quad 1\leq i\leq n.
  \label{eq:diagonalr}
  \end{equation}
  Reversing this argument shows that the diagonal  $(r_{ii})_{1\leq i\leq n}$ may assume any value in $(1+p^{\beta}\OO)^n$. With our previous computation, this shows
  \begin{align*}
    \det\left(w_nh_nt_p^{\beta}rt_p^{-\beta}(w_nh_n)^{-1}\right)_{1\leq i,j\leq n}
    \;=\;&
    \det\left(r_{ij} + p^{\beta(n+1-j)}r_{in+1}\right)_{1\leq i,j\leq n}\\
    \equiv\;&\prod_{i=1}^nr_{ii}\quad\pmod{p^{\beta}\OO}\\
    \equiv\;&1\quad\quad\quad\pmod{p^{\beta}\OO},
  \end{align*}
  and the determinant maps $\mathfrak I^{n}_{\beta}$ surjectively onto $1+p^{\beta}\OO$ as claimed.

  The same computation shows that
  $$
    w_nh_nt_p^{\beta} I^{n+1,1}_{\alpha,p}t_p^{-\beta} (w_nh_n)^{-1}\cap t_p^{-\beta}I^{n,1-}_{\alpha,p}t_p^{\beta}
    \;=\;
    I^{n}_{\beta,p} \cap t_p^{-\beta}I^{n,-}_{\beta,p}t_p^{\beta},
  $$
  which consists of matrices $s\in\GL_n(\OO)$ with entries
  $$
    s_{ij}\;\in\;
    \begin{cases}
      p^{\beta(j-i)}\OO,&i<j,\\
      1+p^{\beta}\OO,&i=j,\\
      p^{\beta(i-j)}\OO,&i>j.
    \end{cases}
  $$
  With this explicit description of the intersection the computation of the index is straightforward.  
\end{proof}

Put\nomenclature[]{$C(p^\beta)$}{ray class group of level $p^\beta$}
$$
C(p^\beta)\;:=\;F^\times\backslash{}\Adeles_F^\times/(1+p^\beta\OO)\cdot\det\left(K^{(p)}\cap H(\Adeles^{(p\infty)})\right)\;=\;
C(g_\beta K_{\alpha',\alpha} g_\beta^{-1}\cap H(\Adeles^{(\infty)})),
$$
where the second identity is a consequence of Proposition \ref{prop:leveladdition}.

For any $\beta>0$ and $x\in C(p^\beta)$ we consider the modular symbol\nomenclature[]{$\mathscr P_{A,x,\beta}^{\lambda,j}$}{topological modular symbol}
$$
\mathscr P_{A,x,\beta}^{\lambda,j}:\quad
H_{\rm c,ord}^{\dim\mathscr Y}({\mathscr X}(K_{\alpha',\alpha}); \underline{L_{\lambda,A}})
\to A_{(j)},
$$
explicitly defined as
$$
\phi\;\mapsto\;
\int_{\mathscr Y(g_\beta K_{\alpha',\alpha} g_\beta^{-1}\cap H(\Adeles^{(\infty)}))[x]}
\eta_{j,A} i^*\left[(-\lambda^{w_0})(t_p^\beta)\cdot t^{\lambda}_{g_{\beta}}\right](U_p^{-\beta}\phi).
$$
By \eqref{eq:integrallattice}, we know that $L_{\lambda,A}^{x,\beta}\subseteq L_{\lambda,A}^{1,0}$ and hence
\begin{equation}
  (-\lambda^{w_0})(t_p^\beta)\cdot t^{\lambda}_{g_{\beta}}(U_p^{-\beta}\phi)\;\in\; L_{\lambda,A}^{x,\beta}\;\subseteq\;L_{\lambda,A}^{1,0},
  \label{eq:latticecontainment}
\end{equation}
whence $\mathscr P_{A,x,\beta}^{\lambda,j}$ is indeed well defined.

Then for any cohomology class $\phi$ as above, we obtain an element\nomenclature[]{$\mu_{A,\beta}^{\lambda,j}(\phi)$}{$A$-valued distribution on $C(p^\beta)$}\nomenclature[]{$A_{(j)}[C(p^\beta)]$}{the module $A_{(j)}\otimes_\OO \OO[C(p^\beta)]$}
$$
\mu_{A,\beta}^{\lambda,j}(\phi)\;:=\;
\sum_{x\in C(p^\beta)}\mathscr P_{A,x,\beta}^{\lambda,j}(\phi)\cdot x
\;\in\;A_{(j)}[C(p^\beta)]\;:=\;A_{(j)}\otimes_\OO \OO[C(p^\beta)].
$$
Here the right hand side denotes the tensor product of $A_{(j)}$ with the group ring of the finite ray class group $C(p^\beta)$ over $A$.

\subsection{The distribution relation}

To establish the distribution relation for $\mu_{A,\beta}^{\lambda,j}$ we follow the argument in section 6.6 of \citep{januszewski2015}.

The following generalizes Lemma 6.5 in \citep{januszewski2015} and Lemma 6.1 in \citep{januszewski2016}.

\begin{lemma}\label{lem:distributionmatrices}
  Let $u\in U(\OO_p)$. Then for every $\beta>0$:
  \begin{itemize}
    \item[(i)]
      There exists $k_{u}=(k_{u}',k_{u}'')\in I_{\alpha,\alpha}$ satisfying
      \begin{equation}
        h t_p^\beta\cdot u t_p\;=\;\Delta(k_{u}'')^{-1}\cdot h t_p^{\beta+1} \cdot k_u.
        \label{eq:distributionmatrixrelation}
      \end{equation}
    \item[(ii)]
      For every $k_u=(k_{u}',k_{u}'')\in I_{\alpha,\alpha}$ satisfying \eqref{eq:distributionmatrixrelation} the residue class of the determinant
      $$
        \det k_u'\;\equiv\;\det k_u''\pmod{p^{\beta+1}}
      $$
      is uniquely determined by $u\in U(\OO_p)/t_pU(\OO_p)t_p^{-1}$ and lies in $1+p^\beta\OO_p$.
    \item[(iii)]
      The map
      $$
        U(\OO_p)/t_pU(\OO_p)t_p^{-1}\;\to\;(1+p^\beta\OO_p)/(1+p^{\beta+1}\OO_p),
      $$
      $$
        u\;\mapsto\;\det k_u',
      $$
      is a surjective group homomorphism.
  \end{itemize}
\end{lemma}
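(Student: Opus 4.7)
The plan is to verify (i) by explicit matrix computation exploiting the $\GL(n+1) \times \GL(n)$ product structure of $G$, and then derive (ii) and (iii) as corollaries of the construction. Writing $u = (u', u'')$, $k_u = (k_u', k_u'')$, and $M = h_n j_n(t_{-1})$, the relation \eqref{eq:distributionmatrixrelation} is equivalent to requiring that
$$
k_u \;:=\; \bigl(t_p^{-\beta-1} h^{-1} \Delta(k_u'') h\, t_p^\beta\bigr) \cdot u\, t_p
$$
lies in $I_{\alpha,\alpha}$ for a suitable $k_u'' \in I_\alpha^n$. In the second ($\GL(n)$) factor this reduces to the constraint
$$
t_p^{\beta+1} k_u'' t_p^{-\beta-1} \;=\; k_u''\, t_p^\beta u'' t_p^{-\beta},
$$
while the first factor additionally involves conjugation of $j_n(k_u'')$ through $M$ and $j_n(t_p)^{-\beta-1}$. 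Writing $k_u'' \equiv {\bf 1}_n + p^\beta X_u \pmod{p^{\beta+1}}$ and matching the resulting identity to leading order in $p^\beta$ determines $X_u$ as an explicit linear expression in the entries of $u$; one then checks by direct computation that, with this choice, the negative powers of $p$ introduced by $t_p^{-\beta-1}$ in the first factor are cancelled, ensuring $k_u' \in I_\alpha^{n+1}$. The case $n = 1$ is illustrative: there $u'' = 1$, $u' = \begin{pmatrix} 1 & y \\ 0 & 1 \end{pmatrix}$, and one obtains $k_u'' \equiv (1 - y p^\beta)^{-1} \pmod{p^{\beta+1}}$.

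For (ii), taking determinants of the $\GL(n+1)$-factor of \eqref{eq:distributionmatrixrelation} and using $\det u' = 1$ together with $\det j_n(g) = \det g$ immediately yields $\det k_u' = \det k_u''$ as a literal identity in $\OO_p^\times$. Replacing the representative $u$ by $u \cdot t_p v t_p^{-1}$ with $v \in U(\OO_p)$ sends $u\, t_p$ to $u t_p v$, so one may take $\tilde k_u = k_u v$ with $\tilde k_u'' = k_u''$, preserving $\det k_u''$. Two admissible choices of $k_u''$ for fixed $u$ differ by an element of $I_\alpha^n$ whose determinant is forced into $1 + p^{\beta+1} \OO_p$ by the constraint derived from \eqref{eq:distributionmatrixrelation}. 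Finally, the inclusion $\det k_u'' \in 1 + p^\beta \OO_p$ follows directly from the construction, since $k_u'' \equiv {\bf 1}_n + p^\beta X_u \pmod{p^{\beta+1}}$ gives $\det k_u'' \equiv 1 + p^\beta \tr(X_u) \pmod{p^{\beta+1}}$.

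For (iii), identify $(1 + p^\beta \OO_p)/(1 + p^{\beta+1} \OO_p) \cong \OO_p/p\OO_p$ via $1 + p^\beta a \mapsto a \bmod p$. The construction in (i) shows that $X_u$ depends additively on $u$ modulo $t_p U(\OO_p) t_p^{-1}$, so $u \mapsto \tr X_u \bmod p$ is a group homomorphism into $\OO_p/p\OO_p$; transporting back through the identification yields the homomorphism property of $u \mapsto \det k_u''$, using $(1 + p^\beta a)(1 + p^\beta b) \equiv 1 + p^\beta(a + b) \pmod{p^{\beta+1}}$ for $\beta \geq 1$. Surjectivity follows because $\tr X_u$ attains any prescribed value in $\OO_p/p\OO_p$ as $u$ varies. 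The principal obstacle I anticipate is the first-factor calculation in (i): the non-diagonal matrix $M = h_n j_n(t_{-1})$ intertwines $j_n(k_u'')$ with the $\GL(n+1)$-block in a non-trivial way, and verifying the cancellation of $p^{-\beta-1}$-poles requires careful bookkeeping through several matrix entries and use of the specific shape of $h_n$.
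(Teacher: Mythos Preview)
Your plan is a reasonable direct attack, but it is more a programme than a proof: the hard step you yourself flag --- verifying that the $\GL(n+1)$-factor element $k_u'$ lands in $I_{\alpha,\alpha}^{n+1}$ after conjugation through $M = h_n j_n(t_{-1})$ and $j_n(t_p)^{-\beta-1}$ --- is left undone, and carrying it out for general $n$ is a substantial bookkeeping exercise. Your treatment of (ii) and (iii) is essentially correct (the determinant identity $\det k_u' = \det k_u''$ is exact, and the uniqueness modulo $1+p^{\beta+1}\OO_p$ follows from the computation of $\det\mathfrak{I}_{\beta+1}^n$ in Proposition~\ref{prop:leveladdition}).

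The paper avoids your ``principal obstacle'' entirely by a reduction trick. Since $j_n(t_{-1})$ is diagonal in $\GL_{n+1}(\ZZ_p)$, the element $(j_n(t_{-1}),{\bf1}_n)\in G(\ZZ_p)$ commutes with $t_p^\beta$, normalises $U(\OO_p)$, and normalises $I_{\alpha,\alpha}$. Writing $h = (h_n,{\bf1}_n)\cdot(j_n(t_{-1}),{\bf1}_n)$ and pulling $(j_n(t_{-1}),{\bf1}_n)$ to the right on both sides of \eqref{eq:distributionmatrixrelation} therefore reduces the statement to the identical assertion with $(h_n,{\bf1}_n)$ in place of $h$. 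That version is exactly Lemma~6.5 of \cite{januszewski2015} (proved there for $I_{0,\alpha}$), and one only has to check that the elements constructed in that proof actually lie in the smaller group $I_{\alpha,\alpha}$. What the paper's approach buys is that it isolates the genuinely new content --- the passage from $I_{0,\alpha}$ to $I_{\alpha,\alpha}$ --- from the matrix computation already done in the earlier paper, whereas your approach would re-derive both at once. Your direct route, if completed, would have the advantage of being self-contained.
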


\begin{proof}
  The proof proceeds as the proof of Lemma 6.5 in \citep{januszewski2015} with the following additional observations:
  \begin{align*}
    h t_p^\beta\cdot u t_p
    \;=\;& (h_nj_n(t_{-1}),{\bf1}_n)\cdot t_p^{\beta} u t_p\\
    \;=\;& (h_n,{\bf1}_n)\cdot t_p^{\beta}(j_n(t_{-1}),{\bf1}_n)u t_p\\
    \;=\;& (h_n,{\bf1}_n)\cdot t_p^{\beta}u^{(j_n(t_{-1}),{\bf1}_n)} t_p \cdot (j_n(t_{-1}),{\bf1}_n),
  \end{align*}
  and likewise
  $$
    h t_p^{\beta+1}\;=\; (h_n,{\bf1}_n)\cdot t_p^{\beta+1}\cdot (j_n(t_{-1}),{\bf1}_n),
  $$
  Therefore, the statement reduces to the same statement with $(h_n,{\bf1}_n)$ replacing $h$, which is treated in loc.\ cit.\ for the compact open $I_{0,\alpha}$. Hence it suffices to remark that the elements $k_{u,w}$ and $k_{u,w}'$ constructed in said proof lie in $I^{n+1}_{\alpha,\alpha}$ and $I^n_{\alpha,\alpha}$ respectively.
\end{proof}

For any $\beta\geq\beta'>0$ the a canonical projection
$$
C(p^\beta)\;\to\;C(p^{\beta'})
$$
induces an $\OO$-linear epimorphism\nomenclature[]{$\res_{\beta'}^\beta$}{restriction map $A_{(j)}[C(p^\beta)]\to A_{(j)}[C(p^{\beta'})]$}
$$
\res_{\beta'}^{\beta}:\quad
A_{(j)}[C(p^\beta)]\to
A_{(j)}[C(p^{\beta'})].
$$

\begin{proposition}\label{prop:distribution}
  For any cohomology class $\phi$ and any $\beta\geq\beta'>0$ we have the distribution relation
  $$
    \res_{\beta}^{\beta'}
    \left(\mu_{A,\beta'}^{\lambda,j}(\phi)\right)\;=\;
    \mu_{A,\beta}^{\lambda,j}(\phi).
  $$
\end{proposition}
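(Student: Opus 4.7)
The plan is to reduce the general statement to the incremental case $\beta=\beta'+1$, which I will establish directly by expanding the $U_p$-operator. Since the canonical projections $C(p^\beta)\to C(p^{\beta'})\to C(p^{\beta''})$ compose to give the projection $C(p^\beta)\to C(p^{\beta''})$, the restriction maps $\res$ are transitive. Hence by induction on $\beta-\beta'$ it suffices to prove the identity when $\beta'=\beta-1$, i.e.\ to show that for every $\phi$ and every $y\in C(p^{\beta-1})$,
\[
  \mathscr P^{\lambda,j}_{A,y,\beta-1}(\phi)\;=\;\sum_{\begin{subarray}c x\in C(p^\beta)\\ x\mapsto y\end{subarray}}\mathscr P^{\lambda,j}_{A,x,\beta}(\phi).
\]

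The key input is Lemma \ref{lem:distributionmatrices}. Writing $U_p=\bigsqcup_{u\in U(\OO_p)/t_pU(\OO_p)t_p^{-1}} ut_pI_{\alpha',\alpha}$ as a disjoint union of right cosets, I would use $U_p^{-(\beta-1)}\phi=U_p\cdot U_p^{-\beta}\phi$ in the definition of $\mathscr P^{\lambda,j}_{A,y,\beta-1}(\phi)$ and then translate the resulting sum by means of the identity
\[
  ht_p^{\beta-1}\cdot u t_p\;=\;\Delta(k_u'')^{-1}\cdot ht_p^{\beta}\cdot k_u,\qquad k_u=(k_u',k_u'')\in I_{\alpha,\alpha}.
\]
Since $k_u$ lies in the level $I_{\alpha',\alpha}\subseteq K_{\alpha',\alpha}$, it acts trivially on $U_p^{-\beta}\phi$ as a cohomology class of level $K_{\alpha',\alpha}$, so the contribution of each coset representative $u$ is a modular symbol on the component $\mathscr Y(g_\beta K_{\alpha',\alpha}g_\beta^{-1}\cap H(\Adeles^{(\infty)}))[x_u]$ where $x_u$ is the image of $y$ under multiplication by $\det(k_u'')^{-1}$. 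By Lemma \ref{lem:distributionmatrices}(ii)–(iii), as $u$ ranges over $U(\OO_p)/t_pU(\OO_p)t_p^{-1}$ the classes $x_u$ surject onto the fibre of $C(p^\beta)\to C(p^{\beta-1})$ over $y$, each with multiplicity equal to the order of the kernel of the surjection $U(\OO_p)/t_pU(\OO_p)t_p^{-1}\twoheadrightarrow(1+p^{\beta-1}\OO_p)/(1+p^\beta\OO_p)$.

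The remaining bookkeeping concerns the two normalization factors in the definition of $\mathscr P^{\lambda,j}_{A,x,\beta}$: the scalar $(-\lambda^{w_0})(t_p^\beta)$ and the translate by $t_{g_\beta}^\lambda$. Under the $p$-normalized action $t_p\bullet(-)=(-\lambda^{w_0})(t_p)\cdot t_p(-)$, the factor $(-\lambda^{w_0})(t_p^\beta)\cdot t_{g_\beta}^\lambda$ in the definition is precisely what turns the $U_p$-operator into its ordinary (invertible) version on the lattice $L_{\lambda,A}^{x,\beta}$; this is the content of the containment \eqref{eq:latticecontainment} together with the lattice computations of section \ref{sec:lattices}. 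I would then check that the shift from $\beta$ to $\beta-1$ swaps $(-\lambda^{w_0})(t_p^{\beta})$ for $(-\lambda^{w_0})(t_p^{\beta-1})$ with a compensating $(-\lambda^{w_0})(t_p)$ coming exactly from the $t_p$ appearing in each coset representative $ut_p$ of $U_p$. The index computation from Proposition \ref{prop:leveladdition}, applied at both levels $\beta-1$ and $\beta$, ensures that the fibre cardinality of Lemma \ref{lem:distributionmatrices}(iii) matches the index of $\mathfrak I^n_\beta$ in $\mathfrak I^n_{\beta-1}$, so that no spurious volume factor appears.

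The main obstacle will be the careful accounting of the several normalizing scalars: the lattice-theoretic twist $(-\lambda^{w_0})(t_p^\beta)$, the factor introduced by the $\bullet$-action on $L_{\lambda,\OO}^{x,\beta}$, and the cardinality of the fibres of $C(p^\beta)\to C(p^{\beta-1})$. Once all three are lined up via the decomposition from Lemma \ref{lem:distributionmatrices} and the index formula from Proposition \ref{prop:leveladdition}, the incremental distribution relation follows, and the general case drops out by induction.
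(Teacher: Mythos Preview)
Your proposal is correct and follows the same strategy as the paper: reduce to the incremental case $\beta=\beta'+1$, expand $U_p$ via its coset decomposition, and use Lemma~\ref{lem:distributionmatrices} together with the index computation of Proposition~\ref{prop:leveladdition} to match components and normalizing factors. The paper's own proof is a terse reference to the argument for Theorem~6.1 in \cite{januszewski2016}, noting precisely that Lemma~\ref{lem:distributionmatrices} replaces Lemma~6.1 of loc.\ cit.\ and that equation~(18) there is supplied by Proposition~\ref{prop:leveladdition}---exactly the ingredients you identified.
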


\begin{proof}
  It suffices to treat the case $\beta=\beta'+1$. In this case, the proof proceeds as the proof of Theorem 6.1 in \citep{januszewski2016}, taking into account that equation (18) in loc.\ cit.\ remains valid by Proposition \ref{prop:leveladdition}, and Lemma \ref{lem:distributionmatrices} and replaces Lemma 6.1 there.
\end{proof}

By Proposition \ref{prop:distribution}, we have a projective system $(\mu_{A,\beta}^{\lambda,j}(\phi))_{\beta}$. Put\nomenclature[]{$C_F(p^\infty)$}{ray class group of level $p^\infty$ of $F$}
$$
C_F(p^{\infty})\;:=\;\varprojlim_\beta C_F(p^\beta)
$$
and\nomenclature[]{$\mu^{\lambda,j}_A(\phi)$}{$A$-valued distribution on $C(p^\infty)$}
$$
\mu^{\lambda,j}_A(\phi)\;:=\;
\varprojlim\limits_{\beta}
\mu_{A,\beta}^{\lambda,j}(\phi).
$$
Thus we obtain an $\OO$-linear map\nomenclature[]{$\mu^{\lambda,j}_A$}{torsion measure as a map $H_{\rm c,ord}^{\dim\mathscr Y}({\mathscr X}(K_{\alpha',\alpha}); \underline{L}_{\lambda,A})\to
A_{(j)}[[C_F(p^\infty)]]$}
$$
\mu^{\lambda,j}_A:\quad
H_{\rm c,ord}^{\dim\mathscr Y}({\mathscr X}(K_{\alpha',\alpha}); \underline{L}_{\lambda,A})\to
A_{(j)}[[C_F(p^\infty)]].
$$

\subsection{$p$-adic character varieties}

Recall that
$$
C_F(p^\infty)\;=\;
\varprojlim_\beta F^\times\backslash\Adeles_F^\times/F_\infty^0(\widehat{\OO}_F^{(p)})(1+p^\beta F_p).
$$
We have
$$
C_F(p^\infty)\;=\;\Delta\times\ZZ_p^{r_F},
$$
for a finite group $\Delta$ and an integers $r_F>0$.

The norm map $N_{F/\QQ}:F\to\QQ$ induces a morphism\nomenclature[]{$N_{F/\QQ}$}{norm in $F/\QQ$, extended to $C_F(p^\infty)\to C_\QQ(p^\infty)$}
$$
N_{F/\QQ}:\quad C_F(p^\infty)\to C_\QQ(p^\infty)=\ZZ_p^\times
$$
with image of finite index. The decomposition\nomenclature[]{$\mu_{p-1}$}{$p-1$-th roots of unity in $\ZZ_p$}
\begin{equation}
  C_\QQ(p^\infty)\;=\;\mu_{p-1}\times (1+p\ZZ_p)
  \label{eq:Zpdecomposition}
\end{equation}
gives rise to two distinguished characters of $C_F(p^\infty)$ as follows. Write $\pi_i,$ $i\in\{1,2\},$ for the projection onto the $i$-th factor of the right hand side in \eqref{eq:Zpdecomposition}. Set\nomenclature[]{$\omega_F$}{relative Teichm\"uller character}
$$
\omega_F:=\pi_1\circ N_{F/\QQ}:\quad C_F(p^\infty)\to\QQ[\mu_{p-1}],
$$
where $\mu_{p-1}$ denotes the group of $(p-1)$-st roots of unity and the group $\mu_2=\{\pm1\}$ for $p=2$. Put\nomenclature[]{$\langle-\rangle_F$}{torsion free component of cyclotomic character}
$$
\langle-\rangle_F:=\pi_2\circ N_{F/\QQ}:\quad C_F(p^\infty)\to\CC_p^\times,
$$
where $\CC_p$ denotes the completion of an algebraic closure of $\QQ_p$. Here we implicitly embedded $1+p\ZZ_p$ into $\CC_p^\times$ canonically.

Let\nomenclature[]{$\mathscr X_F$}{rigid analytic space of $p$-adic characters on $C_F(p^\infty)$}
$$
\mathscr X_F\;:=\;\Hom_{\rm cts}(C_F(p^\infty),\CC_p^\times)
$$
denote the rigid analytic variety of continuous $p$-adic characters of $C_F(p^\infty)$.

The space $\mathscr X_F$ is an equidimensional rigid analytic variety of dimension $r_F$ with $\#\Delta$ irreducible components:
$$
\mathscr X_F\;=\;\bigsqcup_{\chi_0\in\widehat{\Delta}}
\chi_0\cdot (1+{\mathfrak{m}}_{\CC_p})^{r_F}
$$
Inside this space, we have the Zariski dense set\nomenclature[]{$\mathscr X_F^0$}{finite order characters in $\mathscr X_F$}
$$
\mathscr X_F^0\;:=\;\{\chi\in \mathscr X_F\mid\chi\;\text{of finite order}\}
$$
of finite order characters.

Since $r_\QQ=1,$ there is canonical projection
$$
C_\QQ(p^\infty)\;=\;\Delta\times\ZZ_p\to\ZZ_p\;:=\;C_\QQ^{\rm cyc}(p^\infty).
$$
Correspondingly, the {\em cyclotomic line} is defined as\nomenclature[]{$\mathscr X_\QQ^{\rm cyc}$}{cyclotomic line over $\QQ$}
$$
\mathscr X_\QQ^{\rm cyc}\;:=\;
\Hom_{\rm cts}\left(C_\QQ^{\rm cyc}(p^\infty),\CC_p^\times\right)
\;\subseteq\;\mathscr X_\QQ.
$$
Finite order characters of $C_\QQ^{\rm cyc}(p^\infty)$ are precisely the Dirichlet characters of $p$-power order which are unramified outside $p\infty$. These are Zariski dense in $\mathscr X_\QQ^{\rm cyc}$.

The norm induces a commutative diagram
$$
\begin{CD}
\mathscr X_\QQ@>N_{F/\QQ}^*>>\mathscr X_F\\
@AAA @AAA\\
\mathscr X_\QQ^0@>N_{F/\QQ}^*>>\mathscr X_F^0
\end{CD}
$$
We call the image\nomenclature[]{$\mathscr X^{\rm cyc}_F$}{cyclotomic line over $F$}
$$
\mathscr X^{\rm cyc}_F:=N_{F/\QQ}^*(\mathscr X^{\rm cyc}_\QQ)\;\subseteq\;\mathscr X_F
$$
the cyclotomic line over $F$. It has a canonical (topological) generator $\langle\cdot\rangle_F$, the cyclotomic character of $F$. All other characters in the cyclotomic line are of the form $\langle\cdot\rangle_F^s$ for some $s\in\ZZ_p$.

Remark that the set
$$
N_{F/\QQ}^*(\mathscr X_\QQ^{\rm cyc,0})\;\subseteq\;\mathscr X_F^0
$$
of norm-inflated Dirichlet characters of $p$-power order is Zariski dense in $\mathscr X_F^{\rm cyc}$.

\subsection{$p$-adic Tate twists}

Recall that we identified $A_{(j)}$ and $A$. This allows us to identify the modules $A_{(j)}$ for varying $j$.

\begin{theorem}\label{thm:tatetwists}
  Assume that two non-zero $H$-linear functionals
  $$
  \eta_{j_i}:\quad L_{\lambda,E}\to E_{(j_i)},\quad i\in\{1,2\},
  $$
  are given. Then we have for every
  $$
  \phi\in H_{\rm c,ord}^{\dim\mathscr Y}({\mathscr X}(K_{\alpha',\alpha}); \underline{L}_{\lambda,\OO})
  $$
  an identity of measures
  $$
  \omega_F^{j_2}(x)\langle x\rangle_F^{j_2}\mu^{\lambda,j_1}_{\OO}(\phi)(x)\;=\;
  \omega_F^{j_1}(x)\langle x\rangle_F^{j_1}d\mu^{\lambda,j_2}_{\OO}(\phi)(x),
  $$
  on $C_F(p^\infty)$.
\end{theorem}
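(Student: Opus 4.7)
The strategy is to verify the identity modulo $p^\beta$ at each finite level $\beta>0$ and then pass to the projective limit. First, I would observe that by \eqref{eq:latticecontainment}, the normalized cohomology class $(-\lambda^{w_0})(t_p^\beta)\cdot t^{\lambda}_{g_{\beta}}(U_p^{-\beta}\phi)$ takes its values on the component $\mathscr Y(\mathfrak I^n_\beta)[x]$ inside a translate of the lattice $L_{\lambda,\OO}^{y,\beta}$, where $y\in\OO_p^\times$ is the $p$-component of any adelic representative of the class $x\in C_F(p^\beta)$. Applying Proposition \ref{prop:generalmaninlemma} pointwise, and using the crucial fact (visible from its proof) that the scalar $\Omega_p^{\beta,v}$ is independent of the functional $\eta_j$, I would then obtain the key congruence
$$\mathscr P_{\OO,x,\beta}^{\lambda,j}(\phi)\;\equiv\;N_{F/\QQ}(y)^{j}\cdot A_{x,\beta}(\phi)\pmod{p^\beta\OO},$$
where $A_{x,\beta}(\phi)\in\OO$ is independent of $j$. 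Here the entire $j$-dependence of the modular symbol is concentrated in the norm factor $N_{F/\QQ}(y)^j$, which reflects the purity weight shift between the various admissible functionals $\eta_{j,\OO}$.

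Next, I would invoke the definitions $\omega_F=\pi_1\circ N_{F/\QQ}$ and $\langle\cdot\rangle_F=\pi_2\circ N_{F/\QQ}$ together with the decomposition $\ZZ_p^\times=\mu_{p-1}\times(1+p\ZZ_p)$ from \eqref{eq:Zpdecomposition} to conclude that $\omega_F^k(x)\langle x\rangle_F^k=N_{F/\QQ}(x)^k$ in $\ZZ_p^\times$ for every $k\in\ZZ$ and every class $x\in C_F(p^\infty)$. The relation $\det\mathfrak I^n_\beta=1+p^\beta\OO_p$ proved in Proposition \ref{prop:leveladdition} ensures that the local norm $N_{F/\QQ}(y)$ is determined modulo $1+p^\beta\ZZ_p$ by the class $x$ alone, so the congruence above is unambiguous as a statement at level $\beta$. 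Multiplying $\mu^{\lambda,j_1}_{\OO,\beta}(\phi)$ termwise by $\omega_F^{j_2}\langle\cdot\rangle_F^{j_2}$ therefore yields
$$\sum_{x\in C_F(p^\beta)}N_{F/\QQ}(y)^{j_1+j_2}\cdot A_{x,\beta}(\phi)\cdot[x]\pmod{p^\beta\OO[C_F(p^\beta)]},$$
an expression manifestly symmetric in $j_1$ and $j_2$. The same expression arises by multiplying $\mu^{\lambda,j_2}_{\OO,\beta}(\phi)$ by $\omega_F^{j_1}\langle\cdot\rangle_F^{j_1}$, so the two sides agree modulo $p^\beta$ at every finite level. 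Taking the projective limit in $\beta$ then establishes the asserted identity of measures on $C_F(p^\infty)$.

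The main obstacle is conceptual rather than computational: one must carefully track how the purely local Manin-type congruence of Proposition \ref{prop:generalmaninlemma}, which concerns a single scalar $x\in\OO^\times$ parametrizing the translate $L^{x,\beta}_{\lambda,\OO}$, governs the pointwise behaviour of the adelic cohomology class on each connected component $\mathscr Y(\mathfrak I^n_\beta)[x]$, and that the resulting local-to-global factor $N_{F/\QQ}(y)^j$ depends only on the class $x$ modulo the prescribed level. Once the level-compatibility from Proposition \ref{prop:leveladdition} is invoked, the factorization of the $j$-dependence through a single power of the norm becomes transparent, and the symmetric form above collapses immediately to the stated identity of measures.
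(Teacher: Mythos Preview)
Your proposal is correct and follows essentially the same approach as the paper: both arguments reduce to the congruence of Proposition \ref{prop:generalmaninlemma} applied at each finite level $\beta$, exploit the $j$-independence of the constant $\Omega_p^{\beta,v}$ to isolate the entire $j$-dependence in the factor $N_{F/\QQ}(x)^j$, and then observe that $\omega_F^{j_2}\langle\cdot\rangle_F^{j_2}\cdot\mu^{\lambda,j_1}_{\OO,\beta}$ and $\omega_F^{j_1}\langle\cdot\rangle_F^{j_1}\cdot\mu^{\lambda,j_2}_{\OO,\beta}$ both reduce to the symmetric expression $N_{F/\QQ}(x)^{j_1+j_2}\cdot\Omega_p^{x,\beta}$ modulo $p^\beta$. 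The paper's proof is terser (it simply writes the two-line congruence chain), while you have added the explicit identification $\omega_F^k\langle\cdot\rangle_F^k=N_{F/\QQ}^k$ and the well-definedness check via Proposition \ref{prop:leveladdition}; these are implicit in the paper but not spelled out.
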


\begin{proof}
  By construction, we have for all $\beta>0$ and all $x\in C_F(p^\beta)$,
  $$
  \mu^{\lambda,j_i}_{\OO,\beta}(\phi)(x)\;=\;\eta_{j_1,\OO}(v)\;=\;\frac{\eta_{j_i}(v)}{\eta_{j_i}(g_0v_0)}\;\in\;\OO_{(j_i)}
  $$
  for some vector $v\in L_{\lambda,\OO}^{x,\beta}$ independent of $i\in\{1,2\}$. Two applications of the congruence \eqref{eq:manincongruence} in Proposition \ref{prop:generalmaninlemma} show:
\begin{align*}
  N_{F/\QQ}(x)^{j_2}\cdot\mu^{\lambda,j_1}_{\OO,\beta}(\phi)(x)
  \;=\;&N_{F/\QQ}(x)^{j_2}\cdot \eta_{j_1,\OO}(v)\\
  \equiv\;&N_{F/\QQ}(x)^{j_1+j_2}\cdot\Omega_p^{x,\beta}\quad\quad\quad\;\pmod{p^\beta\OO}\\
  \equiv\;&N_{F/\QQ}(x)^{j_1}\cdot \eta_{j_2,\OO}(v)\,\quad\quad\quad\pmod{p^\beta\OO}\\
  =\;&N_{F/\QQ}(x)^{j_1}\cdot\mu^{\lambda,j_2}_{\OO,\beta}(\phi)(x).
\end{align*}
This proves the claim.
\end{proof}

\subsection{Modular symbols in $p$-adic families}\label{sec:modularsymbolsinfamilies}

We restrict our attention to the nearly ordinary case. For any $\alpha\geq 0,$ the normalized projections $\eta_{j,K}$ and $\eta_{j,\OO}$ induce a canonical projection
$$
\eta_{j,p^{-\alpha}\OO/\OO}:\quad L_{\lambda,p^{-\alpha}\OO/\OO}\to p^{-\alpha}\OO/\OO.
$$

Given $\beta\geq\alpha\geq\alpha_0^K$ and $x\in C(p^\beta)$, relation \eqref{eq:latticecontainment} shows that we may consider the modular symbol $\mathscr P_{A,x,\beta}^{\lambda,j}$ for the case $A=p^{-\alpha}\OO/\OO$, i.e.\ we have
$$
\mathscr P_{\alpha,x,\beta}^{\lambda,j}:\quad
H_{\rm c,ord}^{\dim\mathscr Y}({\mathscr X}(K_{\alpha,\alpha}); \underline{L}_{\lambda,p^{-\alpha}\OO/\OO})
\to (p^{-\alpha}\OO/\OO)_{(j)},
$$
given by
$$
\phi\;\mapsto\;
\int_{\mathscr Y(g_\beta K_{\alpha,\alpha} g_\beta^{-1}\cap H(\Adeles^{(\infty)})[x]}
\eta_{j,p^{-\alpha}\OO/\OO} i^*\left[(-\lambda^{w_0})(t_p^\beta)\cdot t^{\lambda}_{g_{\beta}}\right](U_p^{-\beta}\phi).
$$
We emphasize that here both the $\OO$-module $p^{-\alpha}\OO/\OO$ and the level $K_{\alpha,\alpha}$ depend on $\alpha$.

As before, we obtain elements\nomenclature[]{$\mu_{\alpha,\beta}^{\lambda,j}$}{torsion valued distribution on $C(p^\beta)$}
$$
\mu_{\alpha,\beta}^{\lambda,j}(\phi)\;:=\;\sum_{x\in C(p^\beta)}\mathscr P_{\alpha,x,\beta}^{\lambda,j}(\phi)\cdot x\;\in\;(p^{-\alpha}\OO/\OO)_{(j)}[C(p^\beta)],
$$
which in light of Proposition \ref{prop:distribution} satisfy for any $\beta\geq\beta'\geq\alpha>\alpha_0^K$ the distribution relation,
\begin{equation}
\res_{\beta'}^{\beta}
\left(
\mu_{\alpha,\beta}^{\lambda,j}(\phi)
\right)\;=\;
\mu_{\alpha,\beta'}^{\lambda,j}(\phi).
\label{eq:alphabetadistributionrelation}
\end{equation}
Therefore, we obtain for each $\alpha>\alpha_0^K$ an $\OO$-linear map
$$
\mu^{\lambda,j}_{\alpha}:\quad
H_{\rm c,ord}^{\dim\mathscr Y}({\mathscr X}(K_{\alpha',\alpha}); \underline{L}_{\lambda,p^{-\alpha}\OO/\OO})\to
(p^{-\alpha}\OO/\OO)_{(j)}[[C(p^\infty)]].
$$
By construction, we have for all $\alpha\geq\alpha'\geq\alpha_0^K$, a commutative square
$$
\begin{CD}
  H_{\rm c,ord}^{\dim\mathscr Y}({\mathscr X}(K_{\alpha',\alpha'}); \underline{L}_{\lambda,p^{-\alpha'}\OO/\OO})
  @>{\mu^{\lambda,j}_{\alpha'}}>>
  (p^{-\alpha'}\OO/\OO)_{(j)}[[C(p^\infty)]]\\
  @VVV @VVV\\
  H_{\rm c,ord}^{\dim\mathscr Y}({\mathscr X}(K_{\alpha,\alpha}); \underline{L}_{\lambda,p^{-\alpha}\OO/\OO})
  @>{\mu^{\lambda,j}_{\alpha}}>>
(p^{-\alpha}\OO/\OO)_{(j)}[[C(p^\infty)]].
\end{CD}
$$
This allows us to pass to the direct limit to obtain a map\nomenclature[]{$\mu^{\lambda,j}$}{direct limit of $\mu^{\lambda,j}_\alpha$ for $\alpha\to\infty$}
$$
\mu^{\lambda,j}:\quad
\mathcal H_{\rm c,ord}^{\dim\mathscr Y}(K_{\infty,\infty};\underline{L}_{\lambda,E/\OO})\;\to\;
(E/\OO)_{(j)}[[C(p^\infty)]].
$$
\begin{theorem}[Independence of weight]\label{thm:muindependenceofweight}
  For any $\lambda$ for which $\eta_0$ is admissible we have a commuting square
  $$
\begin{CD}
  \mathcal H_{\rm c,ord}^{\dim\mathscr Y}(K_{\infty,\infty}; \underline{L}_{\lambda,K/\OO})
  @>{\mu^{\lambda,0}}>>
  (K/\OO)_{(0)}[[C(p^\infty)]]\\
  @V{\pi_\lambda}VV @|\\
  \mathcal H_{\rm c,ord}^{\dim\mathscr Y}(K_{\infty,\infty}; \underline{K/\OO})
  @>{\mu^{0,0}}>>
(K/\OO)_{(0)}[[C(p^\infty)]],
\end{CD}
$$
where the map $\pi_\lambda$ is the isomorphism from Corollary \ref{cor:independenceofweight}.
\end{theorem}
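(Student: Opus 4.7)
The plan is to verify the commutativity at each finite level $\alpha\geq\alpha_0^K$ with torsion coefficients $p^{-\alpha}\OO/\OO$ and then pass to the direct limit. Since both $\mu^{\lambda,0}$ and $\mu^{0,0}$ are constructed as projective limits in $\beta$ of the partial modular symbols $\mathscr P_{\alpha,x,\beta}^{\lambda,0}$ and $\mathscr P_{\alpha,x,\beta}^{0,0}$, it suffices to establish, for every $\alpha\geq\alpha_0^K$, every $\beta\geq\alpha$, every $x\in C(p^\beta)$ and every $\phi\in H_{\rm c,ord}^{q_0}(\mathscr X(K_{\alpha,\alpha});\underline L_{\lambda,p^{-\alpha}\OO/\OO})$, the pointwise identity
$$
\mathscr P_{\alpha,x,\beta}^{\lambda,0}(\phi)\;=\;\mathscr P_{\alpha,x,\beta}^{0,0}(\pi_\lambda\phi).
$$

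First I would unwind both definitions: the left hand side integrates $\eta_{0,p^{-\alpha}\OO/\OO}\circ i^*\bigl[(-\lambda^{w_0})(t_p^\beta)\cdot t^{\lambda}_{g_\beta}\bigr](U_p^{-\beta}\phi)$ over $\mathscr Y[x]$, while the right hand side integrates $i^*[t^0_{g_\beta}](U_p^{-\beta}\pi_\lambda\phi)$ (noting that $\eta_0$ is the identity on trivial coefficients and $(-0^{w_0})(t_p^\beta)=1$). Corollary \ref{cor:independenceofweight} provides precisely the intertwining relations $T\circ\pi_\lambda=\pi_\lambda\circ T$ for $T\in\mathcal H_\OO(0,\alpha)$ and $\lambda^{w_0}(t)\langle t\rangle\circ\pi_\lambda=\pi_\lambda\circ\langle t\rangle$ for $t\in T(\ZZ_p)$, so that after applying $\pi_\lambda$ to the right hand side the Hecke-theoretic and diamond-operator pieces match up correctly; the twist $(-\lambda^{w_0})(t_p^\beta)$ is exactly the factor needed for this match. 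What remains is a pointwise comparison of the sheaf-level maps after the translations are carried out.

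This pointwise comparison amounts to showing that for every $v\in L_{\lambda,\OO}^{x,\beta}$,
$$
\eta_{0,\OO}(v)\;\equiv\;p_\lambda(v)\pmod{p^\alpha\OO}.
$$
By Proposition \ref{prop:generalmaninlemma} with $j=0$, the left hand side is congruent modulo $p^\beta\OO$ (hence a fortiori modulo $p^\alpha\OO$) to the constant $\Omega_p^{\beta,v}=r_0s_0$ arising from the PBW decomposition $u=rs$ with $r\in\OO+p^\beta U(\lieh_\OO)$ and $s\in\OO+p^\beta U(\lieb^{-,0}_\OO)$ of the unique element $u\in U(\lieu_\OO^\beta)$ satisfying $v=(-\lambda^{w_0})(t_p^\beta)\,d_x\,u\,g_\beta v_0$. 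I expect the main obstacle to be the clean identification of $r_0s_0$ with $p_\lambda(v)\bmod p^\alpha$: tracing the action of $s$ on the $B^{-,0}$-highest weight vector $g_\beta v_0$ and using the transversality $\lieg_\OO=\lieh_\OO\oplus\lieb^{-,0}_\OO$ of Lemma \ref{lem:transversality}, $r_0s_0$ is exactly the coefficient of $v$ along the lowest-weight line $\OO\cdot g_\beta v_0$ after cancellation of the renormalization factor $(-\lambda^{w_0})(t_p^\beta)$, normalized so that the chosen generator $1\in\OO[\lambda^{w_0}]$ corresponds to $g_\beta v_0$. This matches $p_\lambda(v)$ by the choice of normalizations; integrating over $\mathscr Y[x]$ then yields the partial modular symbol identity, and passage to the direct limit in $\alpha$ gives the commuting diagram of the theorem.
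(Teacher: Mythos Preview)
Your overall strategy---reduce to finite level $\alpha$, then to each $\beta\geq\alpha$ and each $x\in C(p^\beta)$, and use the Hecke-equivariance of the weight-comparison isomorphism to commute $U_p^{-\beta}$ past $\pi_\lambda$---is correct and is exactly how the paper proceeds. The problem lies in the pointwise reduction.

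Your claimed congruence $\eta_{0,\OO}(v)\equiv p_\lambda(v)\pmod{p^\alpha\OO}$ for $v\in L_{\lambda,\OO}^{x,\beta}$ is not the right target. The map $p_\lambda$ is applied to the cohomology class \emph{before} the translation $t_{g_\beta}^\lambda$, not after; so on the right-hand side of the modular symbol one sees $p_\lambda(w)$ for the untranslated value $w$, not $p_\lambda(v)$. More seriously, your identification of the PBW constant $r_0s_0=\Omega_p^{\beta,v}$ with a $p_\lambda$-value is not justified: the former records the coefficient of $v$ along the line $\OO\cdot g_\beta v_0$ in the decomposition $U(\lieh_\OO)\otimes U(\lieb_\OO^{-,0})$, whereas $p_\lambda$ is the $T$-equivariant projection onto the weight-$\lambda^{w_0}$ space of the \emph{standard} lattice $L_{\lambda,\OO}$. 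These are two different coordinate systems on $L_{\lambda,E}$, and there is no reason for the two projections to agree modulo $p^\alpha$ on a general $v$. (Indeed, already $p_\lambda\circ i_\lambda=0$ on the module level whenever $\lambda\neq\lambda^{w_0}$, so the inverse relationship between $\iota_\lambda$ and $\pi_\lambda$ is purely a cohomological phenomenon on the ordinary part.)

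The paper's fix is to replace $\pi_\lambda$ by its inverse $\iota_\lambda$: the identity to be checked becomes $\mathscr P^{\lambda,0}_{\alpha,x,\beta}(\iota_\lambda\psi)=\mathscr P^{0,0}_{\alpha,x,\beta}(\psi)$. Since $i_\lambda$ has image the single line $\OO\cdot v_0$, the translated coefficients of $\iota_\lambda(U_p^{-\beta}\psi)$ lie in $\OO\cdot\bigl((-\lambda^{w_0})(t_p^\beta)\,d_x g_\beta v_0\bigr)$, and the \emph{special case} $\Omega_p^{\alpha,v}=1$ of Proposition~\ref{prop:generalmaninlemma} (relation~\eqref{eq:specialomegaone}, for $v=d_x h\cdot v_0$) shows that $\eta_{0,p^{-\alpha}\OO/\OO}$ on this line returns exactly the scalar $\psi$. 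This yields a commutative square of sheaves on $\mathscr Y$ on the nose (diagram~\eqref{eq:weightchangediagram} in the paper), from which the modular-symbol identity follows by Hecke-equivariance of $\iota_\lambda$. The general congruence of Proposition~\ref{prop:generalmaninlemma} is not used here; only the exact identity on the highest-weight line.
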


\begin{proof}
  By construction of $\mu^{\lambda,0}$ and $\mu^{0,0}$ as the inductive limits of the maps
  $\mu^{\lambda,0}_\alpha$ and $\mu^{0,0}_\alpha$, the claim is equivalent to the commutativity of the diagram
$$
\begin{CD}
  H_{\rm c,ord}^{\dim\mathscr Y}({\mathscr X}(K_{\alpha,\alpha}); \underline{L}_{\lambda,p^{-\alpha}\OO/\OO})
  @>{\mu^{\lambda,0}_{\alpha}}>>
  (p^{-\alpha}\OO/\OO)_{(0)}[[C(p^\infty)]]\\
  @V{\pi_\lambda}VV @VVV\\
  H_{\rm c,ord}^{\dim\mathscr Y}({\mathscr X}(K_{\alpha,\alpha}); \underline{p^{-\alpha}\OO/\OO})
  @>{\mu^{0,0}_{\alpha}}>>
(p^{-\alpha}\OO/\OO)_{(0)}[[C(p^\infty)]]
\end{CD}
$$
for each $\alpha\geq\alpha_0^K$.

On the one hand, this reduces us to the commutativity of
$$
\begin{CD}
  H_{\rm c,ord}^{\dim\mathscr Y}({\mathscr X}(K_{\alpha,\alpha}); \underline{L}_{\lambda,p^{-\alpha}\OO/\OO})
  @>{\mu^{\lambda,0}_{\alpha,\beta}}>>
  (p^{-\alpha}\OO/\OO)_{(0)}[C(p^\beta)]\\
  @V{\pi_\lambda}VV @VVV\\
  H_{\rm c,ord}^{\dim\mathscr Y}({\mathscr X}(K_{\alpha',\alpha}); \underline{p^{-\alpha}\OO/\OO})
  @>{\mu^{0,0}_{\alpha,\beta}}>>
(p^{-\alpha}\OO/\OO)_{(0)}[C(p^\beta)]
\end{CD}
$$
for all $\beta\geq\alpha\geq\alpha_0^K$. And on the other hand, the commutativity of this diagram is by Theorem \ref{thm:independenceofweight} equivalent to the identity
\begin{equation}
  \mathscr P_{\alpha,x,\beta}^{\lambda,0}(\iota_\lambda(\phi))\;=\;
  \mathscr P_{\alpha,x,\beta}^{0,0}(\phi),
  \label{eq:modularsymbolchangeofweight}
\end{equation}
for all $\beta\geq\alpha\geq\alpha_0^K$ and $\phi$.

To this point, we observe first, that the congruence \eqref{eq:manincongruence} and the relation \eqref{eq:specialomegaone} in Proposition \ref{prop:generalmaninlemma}, imply the identity
$$
\eta_{p^{-\alpha}\OO/\OO,0}((-\lambda^{w_0})(t_\beta)\cdot
g_\beta (p^{-\alpha}v_0))\;=\;\eta_{p^{-\alpha}\OO/\OO,0}(p^{-\alpha}g_0v_0)\;\in\;p^{-\alpha}\OO/\OO.
$$
By the definition of $i_\lambda:p^{-\alpha}\OO/\OO\to L_{\lambda,p^{-\alpha}\OO/\OO}$, this implies that the following diagram of sheaves on $\mathscr Y(g_\beta K_{\alpha',\alpha} g_\beta^{-1}\cap H(\Adeles^{(\infty)})$
\begin{equation}
\begin{CD}
  i^*\left[(-\lambda^{w_0})(t_p^\beta)\cdot t_{g_\beta}^{\lambda}\right]\underline{L}_{\lambda,p^{-\alpha}\OO/\OO}
  @>{\eta_{0,p^{-\alpha}\OO/\OO}}>>
  \underline{p^{-\alpha}\OO/\OO}\\
  @A{i_\lambda}AA @|\\
  i^*t_{g_\beta}^{0}\underline{p^{-\alpha}\OO/\OO}
  @>{\eta_{0,p^{-\alpha}\OO/\OO}}>>
  \underline{p^{-\alpha}\OO/\OO}
\end{CD}
\label{eq:weightchangediagram}
\end{equation}
commutes. Using the commutativy of \eqref{eq:weightchangediagram} and the Hecke equivariance of $\iota_\lambda$, we obtain
\begin{align*}
  &\eta_{0,p^{-\alpha}\OO/\OO}\left(i^*\left[(-\lambda^{w_0})(t_p^\beta)\cdot t_{g_\beta}^{\lambda}\right](U_p^{-\beta}\iota_\lambda(\phi))\right)
  \\=\;&
  \eta_{0,p^{-\alpha}\OO/\OO}\left(i^*\left[(-\lambda^{w_0})(t_p^\beta)\cdot t_{g_\beta}^{\lambda}\right](\iota_\lambda(U_p^{-\beta}\phi))\right)
  \\=\;&
   \eta_{0,p^{-\alpha}\OO/\OO}\left(i^*t_{g_\beta}^{0}(U_p^{-\beta}\phi)\right),
\end{align*}
which proves \eqref{eq:modularsymbolchangeofweight}.
\end{proof}

We fix for all $\alpha\geq\alpha_0^K$ compatibly isomorphisms
\begin{equation}
  H_{\rm c}^{\dim\mathscr X}(K_{\alpha,\alpha};\underline{\QQ_p/\ZZ_p})\;\cong\;\QQ_p/\ZZ_p.
  \label{eq:dualityiso}
\end{equation}
Such a compatible choice is unique up to scalars in $\ZZ_p^\times$ and we are free to make it compatible with our choice of Haar measure on $G(\Adeles)$. By Poincar\'e-Pontryagin duality, our choice of \eqref{eq:dualityiso} allows us to consider $\mu^{\lambda,0}$ as an element of
$$
\Hom_\OO\left(\mathcal H_{\rm c,ord}^{\dim\mathscr Y}(K_{\infty,\infty};L_{\lambda,E/\OO}),E/\OO\right)\widehat{\otimes}_\OO\OO[[C(p^\infty)]]\;=\;
\mathcal H_{\rm ord}^{\dim\mathscr X-\dim\mathscr Y}(K_{\infty,\infty};L_{\lambda^\vee,\OO})[[C(p^\infty)]],
$$
where as before (again via transfer maps)
$$
\mathcal H_{\rm ord}^{\dim\mathscr X-\dim\mathscr Y}(K_{\infty,\infty};L_{\lambda^\vee,\OO})\;=\;
\varprojlim_{\alpha,\alpha'}
H_{\rm ord}^{\dim\mathscr X-\dim\mathscr Y}(\mathscr X(K_{\alpha',\alpha});L_{\lambda^\vee,\OO/\alpha'\OO}^\circ).
$$
Since the interpretation of $\mu^{\lambda,0}$ as an element of this space depends on the choice of isomorphism in \eqref{eq:dualityiso}, we introduce an ambiguity in the form of a $p$-adic period in $\ZZ_p^\times$.

We remark that while $\dim\mathscr Y=q_0$ is the bottom degree, $\dim\mathscr X-\dim\mathscr Y=q_0+l_0$ is the top degree for $G$.

There is inherent redundancy in this construction: The ambient space containing $\mu^{\lambda,0}$ admits two canonical $\OO[[C(p^\infty)]]$-module structures. To understand their interrelation, consider the map\nomenclature[]{$L_p^{\lambda,j}$}{integration of $\mu^{\lambda,j}$ over trivial character}
$$
L_p^{\lambda,j}:\quad
\mathcal H_{\rm c, ord}^{\dim\mathscr Y}(K_{\infty,\infty};L_{\lambda,E/\OO})\;\to\;
(E/\OO)_{(j)},
$$
$$
\phi\;\mapsto\;\int_{C(p^\infty)}{\bf1} d\mu^{\lambda,j}(\phi)
$$
Then by the distribution property, for $\beta\gg 0$,
$$
L_p^{\lambda,j}(\phi)\;=\;\sum_{x\in C(p^\beta)}\varinjlim_{\alpha}\mathscr P_{\alpha,x,\beta}^{\lambda,j}(\phi)\;\in\;(E/\OO)_{(j)}.
$$
By definition, we may identify $\mu^{\lambda,j}$ and $\mu^{\lambda+(j),0}$, whence also $L_p^{\lambda,j}$ and $L_p^{\lambda+(j),0}$. Therefore, we may and do assume $j=0$ in the sequel. As before,
$$
\Hom_\OO\left(\mathcal H_{\rm c, ord}^{\dim\mathscr Y}(K_{\infty,\infty};L_{\lambda,E/\OO}),E/\OO\right)
\;=\;
\mathcal H_{\rm ord}^{\dim\mathscr X-\dim\mathscr Y}(K_{\infty,\infty};L_{\lambda^\vee,\OO}),
$$
whence if $\eta_0$ is admissible for $\lambda$,
$$
L_p^{\lambda,0}\;\in\;\mathcal H_{\rm ord}^{\dim\mathscr X-\dim\mathscr Y}(K_{\infty,\infty};L_{\lambda^\vee,\OO}).
$$
Theorem \ref{thm:muindependenceofweight} implies

\begin{corollary}[Independence of weight]\label{cor:Lpindependenceofweight}
  For any $\lambda$ for which $\eta_0$ is admissible,
  $$
  \pi_{\lambda^\vee}^\circ(L_p^{\lambda,0})\;=\;L_p^{0,0}.
  $$
\end{corollary}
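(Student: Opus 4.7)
The plan is to deduce the corollary from the torsion-level independence of weight (Theorem~\ref{thm:muindependenceofweight}) via Poincar\'e-Pontryagin duality. Both $L_p^{\lambda,0}$ and $L_p^{0,0}$ are by construction the integrals of $\mu^{\lambda,0}$, $\mu^{0,0}$ against the trivial character ${\bf 1}\in\mathscr X_F$, so the whole statement should be a formal consequence of the commutation $\mu^{\lambda,0}=\mu^{0,0}\circ\pi_\lambda$ that is already available.

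First I would rewrite the identity $\pi_{\lambda^\vee}^\circ(L_p^{\lambda,0}) = L_p^{0,0}$ as a statement about $\OO$-linear functionals. Using the Poincar\'e-Pontryagin duality identification
$$
\mathcal H_{\rm ord}^{\dim\mathscr X-\dim\mathscr Y}(K_{\infty,\infty};L_{\lambda^\vee,\OO}) \;\cong\; \Hom_\OO\!\left(\mathcal H_{\rm c,ord}^{\dim\mathscr Y}(K_{\infty,\infty};L_{\lambda,E/\OO}),\,E/\OO\right)
$$
(and its $\lambda=0$ version), both sides of the claimed identity become functionals on compactly supported torsion cohomology in the bottom degree. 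Next I would identify the map $\pi_{\lambda^\vee}^\circ$ under this duality: at each finite level $(\alpha,\alpha')$ the coefficient map inducing $\pi_{\lambda^\vee}^\circ$ is by construction the Pontryagin dual of $i_\lambda$, so transporting $\pi_{\lambda^\vee}^\circ$ across the duality yields pre-composition with the cohomological map $\iota_\lambda$. The asserted equality thus becomes
$$
L_p^{\lambda,0}\circ\iota_\lambda \;=\; L_p^{0,0}
$$
as functionals on $\mathcal H_{\rm c,ord}^{\dim\mathscr Y}(K_{\infty,\infty};E/\OO)$.

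Finally, pairing the commutation $\mu^{\lambda,0}=\mu^{0,0}\circ\pi_\lambda$ from Theorem~\ref{thm:muindependenceofweight} with the trivial character on $C_F(p^\infty)$ gives $L_p^{\lambda,0}=L_p^{0,0}\circ\pi_\lambda$, and pre-composing with $\iota_\lambda$ together with the inverse relation $\pi_\lambda\circ\iota_\lambda=\mathrm{id}$ from Theorem~\ref{thm:independenceofweight} concludes the argument. The only non-cosmetic step is the verification that $\pi_{\lambda^\vee}^\circ$ really is Pontryagin dual to $\iota_\lambda$ under the Poincar\'e-Pontryagin pairing; I expect this to be the main (though routine) obstacle, to be handled by tracing through the definitions at each finite level, using that the identification of $L^\circ_{\lambda^\vee,\OO/p^{\alpha'}\OO}$ with the Pontryagin dual of $L_{\lambda,p^{-\alpha'}\OO/\OO}$ is natural in $\lambda$ and compatible with the transfer maps that define the two limits.
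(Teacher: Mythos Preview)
Your proposal is correct and follows the same route as the paper: the corollary is recorded there as an immediate consequence of Theorem~\ref{thm:muindependenceofweight}, and you have simply unpacked the duality step that makes this immediate (namely that $\pi_{\lambda^\vee}^\circ$ corresponds, under Poincar\'e--Pontryagin duality, to precomposition with $\iota_\lambda$, so $L_p^{\lambda,0}=L_p^{0,0}\circ\pi_\lambda$ from Theorem~\ref{thm:muindependenceofweight} together with $\pi_\lambda\circ\iota_\lambda=\id$ gives the claim). Your identification of $\pi_{\lambda^\vee}^\circ$ with the dual of $\iota_\lambda$ matches the paper's definition of $p_{\lambda^\vee}^\circ$ as the Pontryagin dual of $\iota_\lambda$.
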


Consider the diagram
\begin{equation}
\begin{CD}
    H_{\rm c, ord}^{\dim\mathscr Y}(\mathscr X(K_{\alpha,\alpha});\underline{L}_{\lambda,E/\OO})@>\mu_{E/\OO}^{\lambda,0}>>E/\OO[[C_F(p^\infty)]]\\
    @VVV @VV\mu\mapsto\int_{C_F(p^\infty)}d\mu{}V\\
    \mathcal H_{\rm c, ord}^{\dim\mathscr Y}(K_{\infty,\infty};\underline{L}_{\lambda,E/\OO})@>L_p^{\lambda,0}>>E/\OO
\end{CD}
\label{eq:specializationdiagram}
\end{equation}
which commutes by construction. We consider the composition
$$
  \phi\;\mapsto\;\int_{C_F(p^\infty)}d\mu_{E/\OO}^{\lambda,0}(\phi)
$$
as an element $L_{p,\alpha}^{\lambda,0}$ of\nomenclature[]{$L_{p,\alpha}^{\lambda,0}$}{integration of $\mu_{E/\OO}^{\lambda,0}$ against trivial character as an element of $H_{\rm ord}^{q_0+l_0}(\mathscr X(K_{\alpha,\alpha});\underline{L}_{\lambda,\OO})$}
$$
  H_{\rm ord}^{q_0+l_0}(\mathscr X(K_{\alpha,\alpha});\underline{L}_{\lambda^\vee,\OO}^\circ)\;=\;
  \Hom_\OO(H_{\rm c, ord}^{\dim\mathscr Y}(\mathscr X(K_{\alpha,\alpha});\underline{L}_{\lambda,E/\OO}),E/\OO).
$$
  Then by the commutativity of \eqref{eq:specializationdiagram}, we obtain
  \begin{lemma}\label{lem:controlforLp}
    The canonical map
    $$
      \mathcal H_{\rm ord}^{q_0+l_0}(K_{\infty,\infty};\underline{L}_{\lambda^\vee,\OO})\;\to\;
      H_{\rm ord}^{q_0+l_0}(\mathscr X(K_{\alpha,\alpha});\underline{L}_{\lambda^\vee,\OO}^\circ)
    $$
    maps $L_p^{\lambda,0}$ to $L_{p,\alpha}^{\lambda,0}$.
  \end{lemma}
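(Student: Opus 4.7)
The plan is to unwind all identifications and reduce the statement to the commutativity of the diagram \eqref{eq:specializationdiagram} already established by construction. First, I would identify the canonical map in the statement as the Pontryagin dual of the natural direct-limit inclusion
$$
  \iota_\alpha:\; H_{\rm c, ord}^{q_0}(\mathscr X(K_{\alpha,\alpha});\underline{L}_{\lambda,E/\OO})
  \;\longrightarrow\;
  \mathcal H_{\rm c, ord}^{q_0}(K_{\infty,\infty};\underline{L}_{\lambda,E/\OO}).
$$
Indeed, by Poincar\'e-Pontryagin duality (using the compatible isomorphisms fixed in \eqref{eq:dualityiso}), one has
$$
  H_{\rm ord}^{q_0+l_0}(\mathscr X(K_{\alpha,\alpha});\underline{L}_{\lambda^\vee,\OO}^\circ)
  \;=\;\Hom_\OO\!\bigl(H_{\rm c, ord}^{q_0}(\mathscr X(K_{\alpha,\alpha});\underline{L}_{\lambda,E/\OO}),\,E/\OO\bigr),
$$
and similarly for the universal version, so that the specialization map of the lemma is exactly $\Hom_\OO(\iota_\alpha,E/\OO)$.

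Next, I would unwind the definitions of $L_p^{\lambda,0}$ and $L_{p,\alpha}^{\lambda,0}$ under these identifications. By definition, for a class $\phi \in H_{\rm c, ord}^{q_0}(\mathscr X(K_{\alpha,\alpha});\underline{L}_{\lambda,E/\OO})$ one has
$$
  L_{p,\alpha}^{\lambda,0}(\phi)\;=\;\int_{C_F(p^\infty)} d\mu_{E/\OO}^{\lambda,0}(\phi),
$$
while for the image $\iota_\alpha(\phi)$ in universal nearly ordinary cohomology we have
$$
  L_p^{\lambda,0}(\iota_\alpha(\phi))\;=\;\int_{C_F(p^\infty)} d\mu^{\lambda,0}(\iota_\alpha(\phi)).
$$
So the claim $\Hom_\OO(\iota_\alpha,E/\OO)(L_p^{\lambda,0}) = L_{p,\alpha}^{\lambda,0}$ is literally the equality of these two $E/\OO$-valued functionals evaluated on every $\phi$ at finite level.

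Finally, both expressions coincide by the commutative square \eqref{eq:specializationdiagram}: the left vertical arrow sends $\phi$ to $\iota_\alpha(\phi)$, the top arrow is $\mu_{E/\OO}^{\lambda,0}$, and the right vertical arrow is integration against the trivial character, so chasing $\phi$ around the diagram yields exactly the identity $L_{p,\alpha}^{\lambda,0}(\phi) = L_p^{\lambda,0}(\iota_\alpha(\phi))$. No obstacle of substance remains: the main point is simply to verify that the Pontryagin duality used to realize the integrals as elements of the relevant cohomology groups is compatible with the transfer/restriction maps defining the projective and inductive systems, which is guaranteed by our choice of compatible fundamental classes in \eqref{eq:dualityiso}.
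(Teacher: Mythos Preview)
Your proof is correct and follows exactly the paper's approach: the paper simply states that the lemma follows from the commutativity of diagram \eqref{eq:specializationdiagram}, and you have spelled out explicitly the Pontryagin-duality identifications that make this immediate. There is nothing to add.
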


\begin{remark}\label{rmk:reconstruction}
  We may reconstruct the full measure
  $$
    \mu_{E/\OO}^{\lambda,0}\;\in\;
    H_{\rm ord}^{q_0+l_0}(\mathscr X(K_{\alpha,\alpha});\OO)[[C_F(p^\infty)]]
  $$
  from $L_p^{0,0}$ as follows. Let $\chi$ be an $\OO$-valued character with conductor dividing $p^\beta$. After possibly passing to a larger $\alpha$, we may assume $\alpha\geq\beta$ and consider $\chi$ as a function on $C(\det(K_{\alpha,\alpha}))$, which pulls back to a locally constant function on $\mathscr X(K_{\alpha,\alpha})$. Therefore, $\chi$ gives rise to a degree zero cohomology class\nomenclature[]{$[\chi]$}{degree zero cohomology class attached to finite order character $\chi$}
  $$
    [\chi]\;\in\;H^0(\mathscr X(K_{\alpha,\alpha});\OO).
  $$
  Then the evaluation of $\mu_{E/\OO}(\phi)$ at the character $\chi$ for
  $$
    \phi\;\in\;H_{\rm c,ord}^{q_0}(\mathscr X(K_{\alpha,\alpha});\underline{L}_{\lambda,E/\OO})
  $$
  is given by
  \begin{align*}
    L_p^{0,0}(\pi_\lambda(\phi\cup[\chi]))
    \;=\;&L_p^{\lambda,0}(\phi\cup[\chi])\\
    =\;&L_{p,\alpha}^{\lambda,0}(\phi\cup[\chi])\\
    =\;&\int_{C_F(p^{\infty})}d\mu_{E/\OO}^{\lambda,0}(\phi\cup[\chi])\\
    =\;&\int_{C_F(p^{\infty})}\chi d\mu_{E/\OO}^{\lambda,0}(\phi).
  \end{align*}
\end{remark}
  
  By Theorem \ref{thm:localizedcohomology}, together with Corollary \ref{cor:Lpindependenceofweight} and the Control Theorem in its dual form in Corollary \ref{cor:Hordcontrol} we obtain
  
\begin{theorem}\label{thm:controlforlocalizedLp}
  Assume $p\nmid (n+1)n$ and that $F$ is either totally real, CM, or that conjecture \ref{conj:galoisrepresentations} holds for $F$. Fix a non-Eisenstein maximal ideal $\mathfrak{m}$ in ${\bf h}_{\rm ord}(K_{\infty,\infty},\OO)$ and write $\mathfrak{m}^\vee$ for the contragredient maximal ideal with contragredient residual Galois representation\nomenclature[]{$\mathfrak{m}^\vee$}{maximal ideal for contragredient residual representation}. Then for every locally algebraic $\OO$-valued character $\lambda^\vee\vartheta^{-1}$ of $T(\ZZ_p)$ with dominant $\lambda$ for which $\eta_0$ is admissible, and $\alpha\geq\alpha_0^K$ satisfying $\mathfrak{f}_\vartheta\mid p^\alpha$, the canonical map
  $$
    \mathcal H_{\rm ord}^{q_0+l_0}(K_{\infty,\infty};\OO)_{\mathfrak{m}^\vee}\otimes_{\Lambda}\Lambda^\circ/P_{\lambda^\vee\vartheta^{-1}}^\circ\;\to\;
    H_{\rm ord}^{q_0+l_0}(\mathscr X(K_{\alpha,\alpha});\underline{L}_{\lambda^\vee,\OO}^\circ)_{\mathfrak{m}^\vee}\otimes_{\Lambda^\circ,\vartheta^{-1}} \OO
  $$
  maps $L_p^{0,0}$ mod $P_{\lambda^\vee\vartheta^{-1}}^\circ$ to $L_{p,\alpha}^{\lambda,0}$ mod $P_{\vartheta^{-1}}^\circ$.
\end{theorem}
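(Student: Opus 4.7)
The plan is to chain three results: the independence-of-weight isomorphism from Corollary \ref{cor:Lpindependenceofweight}, the dual Control Theorem in Corollary \ref{cor:Hordcontrol}, and the finite-level compatibility recorded in Lemma \ref{lem:controlforLp}. The subtlety throughout will be keeping track of the two different $\Lambda$-module structures that the isomorphism $\pi_{\lambda^\vee}^\circ$ intertwines.

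First I would unwind the right-hand side. By Lemma \ref{lem:controlforLp}, the natural map
\[
  \mathcal H_{\rm ord}^{q_0+l_0}(K_{\infty,\infty};\underline{L}_{\lambda^\vee,\OO})
  \;\longrightarrow\;
  H_{\rm ord}^{q_0+l_0}(\mathscr X(K_{\alpha,\alpha});\underline{L}_{\lambda^\vee,\OO}^\circ)
\]
sends $L_p^{\lambda,0}$ to $L_{p,\alpha}^{\lambda,0}$; localizing at $\mathfrak{m}^\vee$ (which is harmless because localization is exact) and reducing modulo $P_{\vartheta^{-1}}^\circ$ on both sides yields a Hecke-equivariant map
\[
\mathcal H_{\rm ord}^{q_0+l_0}(K_{\infty,\infty};\underline{L}_{\lambda^\vee,\OO})_{\mathfrak{m}^\vee}\otimes_{\Lambda^\circ,\vartheta^{-1}}\OO
  \;\longrightarrow\;
H_{\rm ord}^{q_0+l_0}(\mathscr X(K_{\alpha,\alpha});\underline{L}_{\lambda^\vee,\OO}^\circ)_{\mathfrak{m}^\vee}\otimes_{\Lambda^\circ,\vartheta^{-1}}\OO,
\]
carrying $L_p^{\lambda,0}\bmod P_{\vartheta^{-1}}^\circ$ to $L_{p,\alpha}^{\lambda,0}\bmod P_{\vartheta^{-1}}^\circ$.

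Second, I would invoke the independence-of-weight isomorphism from Corollary \ref{cor:Lpindependenceofweight}: $\pi_{\lambda^\vee}^\circ(L_p^{\lambda,0}) = L_p^{0,0}$. The key intertwining relation is $\lambda^\vee(t)\langle t\rangle\circ\pi_{\lambda^\vee}^\circ = \pi_{\lambda^\vee}^\circ\circ\langle t\rangle$; consequently $\pi_{\lambda^\vee}^\circ$ is $\Lambda$-linear if we twist the target by $\lambda^\vee$, and it identifies the ideal $P_{\vartheta^{-1}}^\circ$ of $\Lambda^\circ$ acting on the source with the pulled-back ideal $P_{\lambda^\vee\vartheta^{-1}}^\circ$ acting on the target. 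Hence $\pi_{\lambda^\vee}^\circ$ induces an isomorphism
\[
\mathcal H_{\rm ord}^{q_0+l_0}(K_{\infty,\infty};\OO)_{\mathfrak{m}^\vee}\otimes_{\Lambda}\Lambda^\circ/P_{\lambda^\vee\vartheta^{-1}}^\circ
  \;\xrightarrow{\sim}\;
\mathcal H_{\rm ord}^{q_0+l_0}(K_{\infty,\infty};\underline{L}_{\lambda^\vee,\OO})_{\mathfrak{m}^\vee}\otimes_{\Lambda^\circ,\vartheta^{-1}}\OO
\]
sending $L_p^{0,0}\bmod P_{\lambda^\vee\vartheta^{-1}}^\circ$ to $L_p^{\lambda,0}\bmod P_{\vartheta^{-1}}^\circ$. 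Composing with the map from the first step yields the canonical map of the statement and traces $L_p^{0,0}\bmod P_{\lambda^\vee\vartheta^{-1}}^\circ$ to $L_{p,\alpha}^{\lambda,0}\bmod P_{\vartheta^{-1}}^\circ$, as required.

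The only serious point requiring the standing hypotheses is the very existence and control of the canonical map on the right. This is precisely where I would invoke the dual form of the Control Theorem (Corollary \ref{cor:Hordcontrol}) together with Theorem \ref{thm:localizedcohomology}: the former provides a map with finite kernel and cokernel from the universal side to the finite-level side modulo $P_{\vartheta^{-1}}^\circ$ once localized at the non-Eisenstein ideal $\mathfrak{m}^\vee$, while the latter guarantees that after such a localization compactly supported and full cohomology agree and that no contribution outside the cuspidal range $q_0\le q\le q_0+l_0$ survives---this is what lets us apply Hida-style control at the top degree for general (not necessarily regular) dominant $\lambda$. The principal obstacle in the verification is therefore ensuring that the compatibility of the $\Lambda$-action with $\pi_{\lambda^\vee}^\circ$ across the change of $\OO$-algebra structure on $\Lambda^\circ$ is preserved after passing from universal cohomology to the finite level $K_{\alpha,\alpha}$ and after localization at $\mathfrak{m}^\vee$; once the intertwining $\lambda^\vee(t)\langle t\rangle\circ\pi_{\lambda^\vee}^\circ=\pi_{\lambda^\vee}^\circ\circ\langle t\rangle$ is tracked carefully through the finite-level specialization of Lemma \ref{lem:controlforLp}, the claim drops out by combining the two diagrams.
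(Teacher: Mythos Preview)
Your proposal is correct and follows essentially the same approach as the paper: the paper's proof is a one-line reference to Theorem \ref{thm:localizedcohomology}, Corollary \ref{cor:Lpindependenceofweight}, and the dual Control Theorem (Corollary \ref{cor:Hordcontrol}), and you have unpacked precisely these ingredients (together with the immediately preceding Lemma \ref{lem:controlforLp}) and tracked the $\Lambda$-twist under $\pi_{\lambda^\vee}^\circ$ carefully. The only cosmetic point is that the isomorphism you display in your second step is really $(\pi_{\lambda^\vee}^\circ)^{-1}=\iota_{\lambda^\vee}^\circ$ rather than $\pi_{\lambda^\vee}^\circ$ itself, but since these are mutually inverse this is immaterial.
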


\section{$p$-adic $L$-functions}\label{sec:padicL}

\subsection{Abelian $p$-adic $L$-functions for automorphic representations}\label{sec:abelianpadicL}

Recall that $F/\QQ$ denotes a number field and $G=\res_{F/\QQ}\GL(n+1)\times\GL(n)$ as before. For any regular algebraic cuspidal automorphic representation $\Pi\widehat{\otimes}\Sigma$ of $G(\Adeles)$ of cohomological weight $\lambda$, the action of the finite Hecke algebra on $\Pi\widehat{\otimes}\Sigma$ is defined over the field of rationality $\QQ(\Pi,\Sigma)/\QQ$ of $\Pi$ and $\Sigma,$ which is a number field by the work of Clozel \citep{clozel1990} (cf.\ \citep{januszewski2017} for a globalization of this result)\nomenclature[]{$\QQ(\Pi,\Sigma)$}{field of rationality of $\Pi\widehat{\otimes}\Sigma$}. We fix embeddings $\QQ(\Pi,\Sigma)\to\CC$ and $\QQ(\Pi,\Sigma)\to E$ where $E/\QQ_p$ is sufficiently large. This allows us to refer to $p$-adic absolute values of eigenvalues of Hecke operators acting on $\Pi\widehat{\otimes}\Sigma$.

We call $\Pi\widehat{\otimes}\Sigma$ {\em nearly ordinary} at a rational prime $p$ if for some $\alpha\gg 0$ there is a $\phi\in\Pi\widehat{\otimes}\Sigma$ which is an eigenvector of $\mathcal H_{\QQ(\Pi)}(\alpha,\alpha)$ with eigenvalue $\vartheta:T(\QQ_p)\to\QQ(\Pi)^\times$ satisfying
\begin{equation}
  |\lambda^\vee(t_p)\vartheta(t_p)|_p\;=\;1,
  \label{eq:ordinaryeigenvalue}
\end{equation}
for the normalized absolute value $|\cdot|_p$ on $E$. This is the same to say that both $\Pi$ and $\Sigma$ are nearly ordinary at $p$ (for the standard Borel subgroups $B_{n+1}$ and $B_n$) in the sense of \citep{hida1998}. We will see in the course of the proof of Theorem \ref{thm:interpolation} below that in the nearly ordinary case, $\vartheta$ is then uniquely determined by $\Pi\widehat{\otimes}\Sigma.$

  We fix embeddings\nomenclature[]{$i_\infty$}{fixed embedding $\QQ(\Pi,\Sigma)\to\CC$}
  $$
    i_\infty:\quad \QQ(\Pi,\Sigma)\to\CC,
  $$
  and\nomenclature[]{$i_p$}{fixed embedding $\QQ(\Pi,\Sigma)\to E$}
  $$
    i_p:\quad \QQ(\Pi,\Sigma)\to E.
  $$
  Once appropriately normalized, the special values of the $L$-function $L(s,\Pi\widehat{\otimes}\Sigma)$ lie in $\QQ(\Pi,\Sigma)$ and hence also in $E$ provided that $\lambda$ is balanced in the sense of \eqref{eq:admissiblelambdacondition}, cf.\ \citep{raghuram2015,januszewski2018}. In the following we implicitly assume that the a priori complex valued $p$-adic Nebentypus $\vartheta$ of $\Pi\widehat{\otimes}\Sigma$ also takes values in $E$.

\begin{theorem}\label{thm:interpolation}
  Let $\Pi\widehat{\otimes}\Sigma$ be an irreducible regular algebraic cuspidal automorphic representation of $G(\Adeles)$ of cohomological weight $\lambda$. Assume the following:
  \begin{itemize}
    \item[(i)] $\lambda$ is balanced.
    \item[(ii)] $\Pi\widehat{\otimes}\Sigma$ is nearly ordinary at a prime $p$ and $\vartheta:T(\QQ_p)\to\CC^\times$ the corresponding Nebentypus.
  \end{itemize}
  Then there are complex periods $\Omega_{\pm,j}\in\CC^\times$, indexed by the characters of $\pi_0(F_\infty^\times)$ and $j\in\ZZ$ for which \eqref{eq:jfunctionals} is non-zero, and a unique $p$-adic measure $\mu_{\Pi\widehat{\otimes}\Sigma}\in\OO[[C_F(p^\infty)]]$ with the following property.\nomenclature[]{$\mu_{\Pi\widehat{\otimes}\Sigma}$}{$\OO$-valued measure on $C_F(p^\infty)$ attached to $\Pi\widehat{\otimes}\Sigma$}
  For every $s_0=\frac{1}{2}+j$ critical for $L(s,\Pi\widehat{\otimes}\Sigma)$, 
  for all finite order Hecke characters $\chi$ of $F$ unramified outside $p\infty$ and such that $\chi_p\vartheta$ has fully supported constant conductor $\mathfrak{f}_{\chi\vartheta}$, 
  \begin{align*}
    &
    \int\limits_{C_F(p^\infty)}\chi(x)\omega_F^j(x)\langle x\rangle_F^jd\mu_{\Pi\widehat{\otimes}\Sigma}(x)\;=\;\\
    &
    \absNorm(\mathfrak{f}_{\chi\vartheta})^{j\frac{(n+1)n}{2}-\frac{(n+1)n(n-1)}{6}}\cdot
    \prod_{\mu=1}^n\prod_{\nu=1}^\mu
      G(\chi\vartheta_{\mu,\nu})
    \cdot
    \frac{L(s,\Pi\widehat{\otimes}\Sigma\otimes\chi)}{\Omega_{(-1)^j\sgn\chi,j}}.
  \end{align*}
  Furthermore, $\mu_{\Pi\widehat{\otimes}\Sigma}$ is determined uniquely by the interpolation property for a single critical $s_0=\frac{1}{2}+j$.
\end{theorem}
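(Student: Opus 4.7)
My strategy is to realize $\Pi\widehat{\otimes}\Sigma$ cohomologically, construct $\mu_{\Pi\widehat{\otimes}\Sigma}$ as the image of that cohomology class under the modular-symbol map of Section \ref{sec:measures}, and then compute the interpolation values by unfolding into a global Rankin--Selberg zeta integral which is then evaluated via the Global Birch Lemma.

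\emph{Step 1: Cohomological realization.} First, I would use that a regular algebraic, nearly ordinary cuspidal representation of cohomological weight $\lambda$ produces a Hecke eigenclass
$$\phi_{\Pi\widehat{\otimes}\Sigma}\;\in\;H_{\rm c,ord}^{q_0}(\mathscr X(K_{\alpha,\alpha});\underline{L}_{\lambda,E})$$
for some $\alpha$ large enough to accommodate the ramification at $p$. The underlying Whittaker vector at $p$ is chosen as an eigenvector for $\mathcal H_E(\alpha,\alpha)$ of eigencharacter $\vartheta$; its existence (together with uniqueness up to scalar and non-vanishing $W_p({\bf 1})\neq 0$) is furnished by Proposition \ref{prop:nonvanishingofeigenvectors}, and the near-ordinarity condition \eqref{eq:ordinaryeigenvalue} guarantees that $e_p\phi_{\Pi\widehat{\otimes}\Sigma}\neq 0$. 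This uniqueness also pins $\vartheta$ down as an invariant of $\Pi\widehat{\otimes}\Sigma$. Using the archimedean non-vanishing of Kasten--Schmidt--Sun \cite{kastenschmidt2013,sunjams}, together with the Whittaker comparison isomorphism, I would then rescale by complex periods $\Omega_{\pm,j}$ (one per sign character of $\pi_0(F_\infty^\times)$ and per critical $j$ with $\eta_j$ admissible, whose existence rests on the balancedness assumption (i)) to obtain an $\OO$-integral class $\phi^\circ_{\Pi\widehat{\otimes}\Sigma}\in H_{\rm c,ord}^{q_0}(\mathscr X(K_{\alpha,\alpha});\underline{L}_{\lambda,\OO})$.

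\emph{Step 2: Construction and interpolation.} Applying the modular symbol $\mu^{\lambda,j}_{\OO}$ of Section \ref{sec:measures} to $\phi^\circ_{\Pi\widehat{\otimes}\Sigma}$ yields an element of $\OO_{(j)}[[C_F(p^\infty)]]$. By Theorem \ref{thm:tatetwists} these measures for various admissible $j$ differ only by the Tate twist $\omega_F^j\langle\cdot\rangle_F^j$, so they assemble into a single $\mu_{\Pi\widehat{\otimes}\Sigma}$. To verify the interpolation formula, I would unfold the pairing defining $\mu^{\lambda,j}_\OO(\phi^\circ_{\Pi\widehat{\otimes}\Sigma})$ at a finite order $\chi$ of fully supported constant conductor dividing $p^\alpha$, expressing the integral $\int_{C_F(p^\infty)}\chi(x)\omega_F^j(x)\langle x\rangle_F^j d\mu_{\Pi\widehat{\otimes}\Sigma}(x)$ as a global integral over $H(\QQ)\backslash H(\Adeles)$ of the shifted automorphic form $\phi_{W}(-\cdot ht_f)$ against $\chi\cdot|\det|^{s_0-\frac12}$ (up to the $p$-normalization $(-\lambda^{w_0})(t_p^\beta)\cdot t_{g_\beta}^\lambda\cdot U_p^{-\beta}$ implicit in $\mathscr P^{\lambda,j}_{A,x,\beta}$). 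Theorem \ref{thm:globalbirch} then evaluates this integral to the predicted combination of Gauss sums $\prod_{\mu,\nu}G(\chi\vartheta_{\mu,\nu})$, conductor powers $\absNorm(\mathfrak{f}_{\chi\vartheta})^{j(n+1)n/2-(n+1)n(n-1)/6}$, and $L(s_0,\Pi\widehat{\otimes}\Sigma\otimes\chi)$, divided by the appropriate archimedean period once \eqref{eq:phiomegaL} and \eqref{eq:omegacomparison} are invoked, with the correct parity $(-1)^j\sgn\chi$ coming from the component-group character at infinity.

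\emph{Step 3: Uniqueness.} Since the set of finite order characters of $F$ unramified outside $p\infty$ and of fully supported constant conductor is Zariski dense in $\mathscr X_F$, and by Theorem \ref{thm:tatetwists} the values of $\mu_{\Pi\widehat{\otimes}\Sigma}$ at $\chi\omega_F^j\langle\cdot\rangle_F^j$ for $j$ varying are determined by those at a single such $j$, the interpolation formula at any single critical $s_0=\tfrac12+j$ determines $\mu_{\Pi\widehat{\otimes}\Sigma}$ uniquely.

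\emph{Principal obstacle.} The main technical difficulty will lie in the precise matching, at the level of test vectors and normalizations, between the cohomological modular symbol paired against $\chi$ and the twisted Rankin--Selberg zeta integral $\Psi(s,W_\chi(-\cdot ht_f))$ that Theorem \ref{thm:globalbirch} evaluates. This requires careful bookkeeping of the choice of fundamental classes on the components $\mathscr Y(L)[x]$, of Haar measures both locally and globally, of the $p$-integral normalization $\eta_{j,\OO}$ relative to the $H$-intertwining $\eta_j$, and of the interaction between the shift $g_\beta=ht_p^\beta$ built into the modular symbol and the shift $ht_f$ appearing in the Birch Lemma. Once these normalizations are aligned so that the archimedean factor is absorbed precisely into $\Omega_{(-1)^j\sgn\chi,j}$, the conductor and Gauss sum factors line up with the local Birch computation at $p$, and the construction of $\mu_{\Pi\widehat{\otimes}\Sigma}$ together with its interpolation follows.
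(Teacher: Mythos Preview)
Your proposal is correct and follows essentially the same route as the paper: one realizes $\Pi\widehat{\otimes}\Sigma$ cohomologically via Whittaker test vectors (Proposition \ref{prop:nonvanishingofeigenvectors} at $p$, the Kasten--Schmidt/Sun non-vanishing at infinity), defines $\mu_{\Pi\widehat{\otimes}\Sigma}$ through the modular symbol $\mu_\OO^{\lambda,j}$, uses Theorem \ref{thm:tatetwists} for independence of $j$, and unfolds into the twisted global zeta integral which Theorem \ref{thm:globalbirch} evaluates. The bookkeeping you flag as the principal obstacle is handled in the paper by the index computation of Proposition \ref{prop:leveladdition}, and uniqueness is argued via Hida's density lemma (Lemma 10.2 of \cite{hida1988a}), which is the precise version of your Zariski-density remark.
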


Previously, Schwab settled the \lq{}Manin congruences\rq{} for $n=2,$ $\lambda_3=(2,1,0)$ and $\lambda_2=(1,0)$ in her Diploma thesis \citep{schwab2015}, while the case $n=1$ was treated by Namikawa in \citep{namikawa2016}.

The automorphic representations $\Pi\widehat{\otimes}\Sigma$ of $G$ we consider are topological tensor products of regular algebraic cuspidal automorphic representations $\Pi$ and $\Sigma$ of $\GL(n+1)$ and $\GL(n)$ respectively. Motivically, the $L$-functions we study therefore conjecturally are $L$-functions of tensor products of irreducible pure motives of dimensions $n+1$ and $n$. In this context, condition (i) (i.\,e.\ condition \eqref{eq:admissiblelambdacondition}) translates into a certain condition on the Hodge-Tate weights of those motives and in light of Deligne's Conjecture on special values of motivic $L$-functions and its integral refinements, the complex periods and other invariants in the interpolation formulae of the $p$-adic $L$-functions we construct should depend on the relative position of these Hodge-Tate weights in general (cf.\ \citep{coatesperrinriou1989,coates1989}). Therefore, it is not surprising that we need to restrict our treatment to a certain class of relative positions when working with a single modular symbol in a fixed cohomological degree, which applies to the case at hand.

\begin{proof}
  As explained in section \ref{sec:birchglobal}, we may choose at every finite place $v\nmid p$ of $F$ a good test vector
  $$
    W_v^0\;\in\;\mathscr W(\Pi_v\otimes\Sigma_v,\psi_v\otimes\psi_v^{-1})
  $$
  which is the product of two normalized spherical Whittaker functions whenever $\Pi_v$ and $\Sigma_v$ are unramified. At $p$ we choose an eigenvector
  $$
    W_p\;\in\;\mathscr W(\Pi_p\otimes\Sigma_p,\psi_p\otimes\psi_p^{-1})
  $$
  for $\mathcal H_\CC(\alpha,\alpha)$ with eigenvalue $\vartheta$. From the proof of Proposition 6.4 in \citep{hida1998} we know that at each $v\mid p$ the representations $\Pi_v$ and $\Sigma_v$ are both subquotients of a principal series representation as considered in Proposition \ref{prop:nonvanishingofeigenvectors}, and that $\vartheta$ is uniquely determined by $\Pi_p$ and $\Sigma_p$ and the ordinarity condition. Furthermore, $W_p$ lies in a unique line and
  $$
    W_p({\bf 1})\;\neq\;0.
  $$
  Recall that $\lieg$ and $\liek$ denote the complexified Lie algebras of $G(\RR)$ and the standard maximal compact $K_\infty\subseteq G(\RR)$ respectively. We set $\liegk:=\liez+\liek$ where $\liez$ is the complexified Lie algebra of the center $Z(\RR)\subseteq G(\RR)$. Then for every character $\varepsilon:\pi_0(G(\RR))\to\CC^\times,$ the $\varepsilon$-eigenspace
  \begin{equation}
    H^{\dim\mathscr Y}(\lieg,\liegk; \mathscr W(\Pi_\infty\widehat{\otimes}\Sigma_\infty,\psi_\infty\otimes\psi_\infty^{-1})\otimes L_{\lambda,\CC})_\varepsilon\;=\;\left(\bigwedge^{\dim\mathscr Y}(\lieg/\liegk)^*\otimes\Pi_\infty\widehat{\otimes}\Sigma_\infty\otimes L_{\lambda,\CC}\right)^{K_\infty^0}_\varepsilon
    \label{eq:epsilonspaceingkcohomology}
  \end{equation}
  in cohomology is at most one-dimensional. Furthermore, it is non-trivial if and only if the restriction of $\varepsilon$ to $\pi_0(\GL_m(F\otimes\RR))$ where $m\in\{n+1,n\}$ is odd, agrees with the restriction of the product of the central characters $\omega_{\Pi_\infty\widehat{\otimes}\Sigma_\infty}\omega_{L_{\lambda,\CC}}$ restricted to the subgroup $\{\pm{\bf1}_m\}\subseteq \GL_m(F\otimes\RR)$.

  For each such $\varepsilon$ we pick a generator $\varphi_{\varepsilon,\infty}$ in \eqref{eq:epsilonspaceingkcohomology} and fix a non-zero
  $$
  \eta_j\in\Hom_H(V_{\lambda,\CC},\CC_{(j)}),
  $$
  which exists by \citep{kastenschmidt2013}, Theorem 2.3 and \citep{raghuram2015}, Theorem 2.21. Then each $\varphi_{\varepsilon,\infty}$ projects under the map
  $$
  \res^{G}_{H}\otimes{\bf1}\otimes\eta_{j,\CC}:\quad
  \bigwedge^{\dim\mathscr Y}(\lieg/\liegk)^*\otimes\mathscr W(\Pi_\infty\widehat{\otimes}\Sigma_\infty,\psi_\infty\otimes\psi_\infty^{-1})\otimes L_{\lambda,\CC}\to
  $$
  $$
  \quad\quad\quad  \quad\quad\quad  \quad\quad\quad  \quad\quad\quad  \quad\quad\quad
  \bigwedge^{\dim\mathscr Y}(\lieh/(\liegk\cap\lieh))^*\otimes\mathscr W(\Pi_\infty\widehat{\otimes}\Sigma_\infty,\psi_\infty\otimes\psi_\infty^{-1})\otimes L_{\lambda,\CC}
  $$
  to a vector of the form
  $$
  \res^{G}_{H}\otimes{\bf1}\otimes\eta_{j,\CC}(\varphi_{\varepsilon,\infty})
  \;=\;
  \omega_{\infty}
    \otimes W^{\rm coh}_{\varepsilon,\infty}\otimes 1.
  $$
  Here, $\omega_{\infty}$ is the fixed generator of the line $\bigwedge^{\dim\mathscr Y}(\lieh/(\liegk\cap\lieh))^*$ (independent of $\varepsilon$) corresponding to our choice of invariant measure on  $H(\RR)^0/\widetilde{K}_\infty\cap H(\RR)^0$, $1\in\CC_{(j)}$ is a generator as before, and
  $$
    W^{{\rm coh},j}_{\varepsilon,\infty}
    \in\mathscr W(\Pi_\infty\widehat{\otimes}\Sigma_\infty,\psi_\infty\otimes\psi_\infty^{-1}).
  $$
  The latter vector is commonly refered to as a {\em cohomological test vector}. We emphasize that this vector possibly {\em does} depend on $j$.

  On the one hand, we obtain for each such $\varepsilon$ a global Whittaker vector\nomenclature[]{$W_\varepsilon^{{\rm coh},j}$}{$j$-th global cohomological Whittaker vector for sign $\varepsilon$}
  $$
    W_\varepsilon^{{\rm coh},j}\;:=\;W^{{\rm coh},j}_{\varepsilon,\infty}\otimes W_p\otimes\left(\otimes_{v\nmid p\infty}W_v^0\right)
    \;\in\;\mathscr W(\Pi\widehat{\otimes}\Sigma,\psi\otimes\psi^{-1}).
  $$
  By \citep{kastenschmidt2013,sunjams} (cf.\ also section 8 in \citep{januszewski2016}), we know
  \begin{equation}
    \Omega_\infty(j,W^{{\rm coh},j}_{\varepsilon,\infty})\;\neq\;0.
    \label{eq:Lnormalization}
  \end{equation}
  Set
  $$
  W^{{\rm coh},j}\;:=\;\sum_{\varepsilon}W^{{\rm coh},j}_{\varepsilon}.
  $$
  Then for every finite order character $\chi$ unramified outside $p\infty$,
  $$
    \Omega^{S(p)}\left(j,\left(W^{{\rm coh},j}\right)_\chi\right)
    \;=\;\Omega_\infty\left(j,W_{\chi_\infty}^{{\rm coh},j}\right)\;\neq\;0,
  $$
  only depends on $j$ and $\chi_\infty$.

  On the other hand, we have cohomology classes
  $$
    \left[\varphi_{\varepsilon,\infty}\otimes W_p\otimes\left(\otimes_{v\nmid p\infty}W_v^0\right)\right]\;\in\;
    H^{\dim\mathscr Y}(\lieg,\liegk; \mathscr W(\Pi\widehat{\otimes}\Sigma,\psi\otimes\psi^{-1})\otimes L_{\lambda,\CC})_\varepsilon,
  $$
  which by inverse Fourier transform give rise to global cohomology classes
  $$
    \phi_\varepsilon\;\in\;H^{\dim\mathscr Y}_{\rm c}(\mathscr X(K_{\alpha,\alpha});\underline{L}_{\lambda,\CC})_\varepsilon.
  $$
  We may normalize the classes $\phi_\varepsilon$ such that they lie $p$-optimally in
  $$
    \phi_\varepsilon\;\in\;H_{\rm c}^{\dim\mathscr Y}(\mathscr X(K_{\alpha,\alpha});\underline{L}_{\lambda,\QQ(\Pi,\Sigma)})_\varepsilon,
  $$
  where $p$-optimality is understood with respect to the embedding $i_p:\, \QQ(\Pi,\Sigma)\to E$ (the latter induces a $p$-adic valuation on $\QQ(\Pi,\Sigma)$).

  By the hypothesis \eqref{eq:ordinaryeigenvalue} on the $U_p$-eigenvalue of $W_p$, we find a preimage\nomenclature[]{$\phi_{\Pi\widehat{\otimes}\Sigma}$}{global $\OO$-integral cohomology class attached to $\Pi\widehat{\otimes}\Sigma$}
  $$
    \phi_{\Pi\widehat{\otimes}\Sigma}:=\sum_{\varepsilon}\phi_\varepsilon\;\in\;H_{\rm c,ord}^{\dim\mathscr Y}(\mathscr X(K_{\alpha,\alpha});\underline{L}_{\lambda,\OO}).
  $$
  The resulting $p$-adic measure will be indepentent of this choice of preimage. Choose any $j$ as above and put 
  $$
    \mu_{\Pi\widehat{\otimes}\Sigma}
    \;:=\;\omega_F^{-j}(-)\langle-\rangle_F^{-j}\mu_{\OO}^{\lambda,j}(\phi_{\Pi\widehat{\otimes}\Sigma}).
  $$
  By Theorem \ref{thm:tatetwists}, $\mu_{\Pi\widehat{\otimes}\Sigma}$ is independent of $j$. Hence, for any $\chi$ and any $j$ as in the statement of the Theorem, if $\beta\geq\alpha$ such that $f_{\chi\vartheta}\mid p^\beta$, then
  \begin{align*}
    &\int\limits_{C_F(p^\infty)}\chi(x)\omega_F^j(x)\langle x\rangle_F^jd\mu_{\Pi\widehat{\otimes}\Sigma}(x)\\
    =\;&
      \int\limits_{C_F(p^\infty)}\chi d\mu_{\OO}^{\lambda,j}(\phi_{\Pi\widehat{\otimes}\Sigma})\\
    =\;&
      \sum_{x\in C(p^\beta)}\chi(x)
      \mathscr P_{\OO,x,\beta}^{\lambda,j}(\phi_{\Pi\widehat{\otimes}\Sigma})\\
    =\;&
      \sum_{x\in C(p^\beta)}\chi(x)
      \int_{\mathscr Y(g_\beta K_{\alpha,\alpha} g_\beta^{-1}\cap H(\Adeles^{(\infty)}))[x]}
      \eta_{j,\OO} i^*\left[\lambda^\vee(t_p^\beta)\cdot t^{\lambda}_{g_{\beta}}\right]
      (U_p^{-\beta}\phi_{\Pi\widehat{\otimes}\Sigma})\\
    =\;&
      \lambda^\vee(t_p^\beta)\cdot
      [H(\widehat{\ZZ}):H(\Adeles^{(\infty)})\cap g_\beta K_{\alpha,\alpha}g_\beta^{-1}]\cdot\\
    &
      \int_{H(\QQ)\backslash H(\Adeles)}
      \phi_{\left(\lambda^\vee(t_p^{-\beta})U_p^{-\beta}W^{{\rm coh},j}\right)_{(-1)^j}}\left(g\cdot ht_{p^\beta}\right)\chi(\det(g))\absnorm{\det(g)}^jdg,
    \end{align*}
    By Proposition \ref{prop:leveladdition},
    \begin{align*}
      &
           {[H(\widehat{\ZZ}):H(\Adeles^{(\infty)})\cap g_\beta K_{\alpha,\alpha}g_\beta^{-1}]}\\
      =\;&
           {[H(\widehat{\ZZ}^{(p)}):K^{(p)}\cap H(\Adeles^{(p\infty)})]
             \cdot [H(\ZZ_p):H(\QQ_p)\cap g_\beta I_{\alpha,\alpha}g_\beta^{-1}]}\\
       =\;&
            {[H(\widehat{\ZZ}^{(p)}):K^{(p)}\cap H(\Adeles^{(p\infty)})]
              \cdot [H(\ZZ_p):\mathfrak I_\beta^n]}\\
       =\;&
            {[H(\widehat{\ZZ}^{(p)}):K^{(p)}\cap H(\Adeles^{(p\infty)})]}\cdot
            \prod_{v\mid p}\prod_{\mu=1}^{n}\left(1-q_v^{-\mu}\right)^{-1}\cdot\\
       &
            \absNorm(p\OO_p)^{\beta\frac{(n+2)(n+1)n+(n+1)n(n-1)}{6}}.
    \end{align*}
    By the global Birch Lemma (cf.\ Theorem \ref{thm:globalbirch}),
    \begin{align*}
      &
        \vartheta(t_p^{-\beta})
        \int_{H(\QQ)\backslash H(\Adeles)}
        \phi_{\left(W^{{\rm coh},j}\right)_{(-1)^j}}\left(g\cdot ht_{p^\beta}
                         \right)\chi(\det(g))|\det(g)|^jdg\\
      =\;&
        \vartheta(t_p^{-\beta})\cdot
        \vartheta(t_p^\beta)\cdot
        \absNorm(p\OO_p)^{-\beta\frac{(n+2)(n+1)n+(n+1)n(n-1)}{6}}\cdot
        \absNorm(\mathfrak{f}_{\chi\vartheta})^{-\frac{(n+1)n(n-1)}{6}}\cdot
        \absnorm{t_{f_{\chi\vartheta}}}^{-j}\cdot\\
     &
        \Omega_\infty(j,W^{{\rm coh},j}_{(-1)^j\chi_\infty})\cdot
        \prod_{\mu=1}^n\prod_{\nu=1}^\mu
        G(\chi\vartheta_{\mu,\nu})\cdot
        L(j,\Pi\widehat{\otimes}(\Sigma\otimes\chi))
  \end{align*}
  Collecting terms concludes the proof of existence of $\mu_{\Pi\widehat{\otimes}\Sigma}$  with
  $$
  \Omega_{(-1)^j\sgn\chi,j}\;:=\;\Omega_\infty(j,W^{{\rm coh},j}_{(-1)^j\chi_\infty})^{-1}.
  $$

  Using Lemma 10.2 in \citep{hida1988a}, it is easy to see that the interpolation property in Theorem \ref{thm:interpolation} at a single critical $s_0=\frac{1}{2}+j$ determines the measure $\mu_{\Pi\widehat{\otimes}\Sigma}$ uniquely, cf.\ Corollary 6.9 in \citep{januszewski2015}.
\end{proof}

\begin{remark}\label{rmk:omegajvariance}
  The dependence of the periods $\Omega_{\pm,j}$ on $j$ originates from the fact that for each $j$ we obtain potentially a {\em different} \lq{}cohomoligcal\rq{} vector inside the archimedean component of $\Pi\widehat{\otimes}\Sigma$. The effect of this dependence on the periods $\Omega_{\pm,j}$ is studied in \citep{januszewski2018,grobnerlin2017,harderraghuram2017}. It is a result of the author that each $\varphi_{\epsilon,\infty}$ lies in a $\QQ(\Pi,\Sigma,\sqrt{(-1)^{\frac{(n+1)n}{2}}})$-rational structure, and so do $W^{\rm coh}_{\varepsilon,\infty}$ and $W_\varepsilon$ (cf.\ \citep{januszewski2017,januszewski2018}). We remark that in the case of $\GL(3)\times\GL(2)$ results in \citep{hiranoishiimiyazaki2016,hiranoishiimiyazaki2018} indicate, that at least in this case it should be possible to make the period relations between $\Omega_{\pm,j}$ explicit integrally. For case $F=\QQ$ and $n=2$ this was carried out by Hara and Namikawa \cite{haranamikawa2021}, who showed that the period $\Omega_{(-1)^j\sgn\chi,j}$, studied here is, as expected, up to powers of $2\pi i$ and binomial coefficients, independent of $j$ and factors canonically as a product of cohomological periods attached to the automorphic representations $\Pi$ and $\Sigma$ on $\GL(3)$ and $\GL(2)$ (cf.\ Theorem 1.1 in loc.\ cit.).
\end{remark}

\subsection{Non-abelian $p$-adic $L$-functions}\label{sec:nonabelianpadicL}

  Assume as before that $F$ is totally real, CM or the validity of Conjecture \ref{conj:galoisrepresentations}. Consider a non-Eisenstein component ${\rm h}^{q_0+l_0}_{\rm ord}(K_{\alpha',\alpha};\lambda,\OO)_{\mathfrak{m}^\vee}$ for a non-Eisenstein maximal ideal $\mathfrak{m}$ in ${\bf h}_{\rm ord}(K_{\infty,\infty};\OO)$. We also assume that $p\nmid(n+1)n$ and $S(K)=S(p)$, i.e.\ $K$ has full level outside $p$ for technical reasons.

  Then we know by Hida's Corollary A.4 in \citep{hida1998} (see also the proof of Prop.\ 6.4 in loc.\ cit.), which is applicable in our situation, that for each dominant weight $\lambda$, inner nearly ordinary cohomology
  \begin{equation}
    H^{q_0+l_0}_{!,\rm ord}(\mathscr X(K_{\alpha',\alpha});\underline{L}_{\lambda^\vee,E})
    \label{eq:finitecohomology}
  \end{equation}
  is a semi-simple ${\rm h}^{q_0+l_0}_{!,\rm ord}(K_{\alpha',\alpha};\lambda^\vee,E)$-module, each simple {\em cuspidal} summand occuring with multiplicity $|\pi_0(H(\RR))|$, provided that $K_{\alpha',\alpha}$ is neat and $E$ is sufficiently large (we refer to sections 2.3 and 4.3 in \citep{januszewski2018} and section 4.3 \citep{harderraghuram2017} for an explanation of the multiplicity in the case at hand). In particular, by Theorem \ref{thm:localizedcohomology},
  $$
    H^{q_0+l_0}_{\rm ord}(\mathscr X(K_{\alpha',\alpha});\underline{L}_{\lambda^\vee,\OO}^\circ)_{\mathfrak{m}^\vee}\;=\;
    E[\pi_0(G(\RR))]\otimes
    {\rm h}^{q_0+l_0}_{\rm ord}(K_{\alpha',\alpha};\lambda^\vee,E)_{\mathfrak{m}^\vee}
  $$
  as $\pi_0(H(\RR))\times{\rm h}^{q_0+l_0}_{\rm ord}(K_{\alpha',\alpha};\lambda^\vee,E)_{\mathfrak{m}^\vee}$-modules. Consequently, the space corresponding to the trivial signature at infinity
  \begin{equation}
    H^{q_0+l_0}_{\rm ord}(\mathscr X(K_{\alpha',\alpha});\underline{L}_{\lambda^\vee,\OO}^\circ)_{\mathfrak{m}^\vee}^{\pi_0(H(\RR))}\;=\;
    {\rm h}^{q_0+l_0}_{\rm ord}(K_{\alpha',\alpha};\lambda^\vee,E)_{\mathfrak{m}^\vee}
    \label{eq:finitefreeness}
  \end{equation}
  is free of rank one. We remark that the hypothesis $p\nmid(n+1)n$ implies that $p$ is odd and therefore ${\pi_0(H(\RR))}$ acts semi-simply on the cohomology spaces under consideration.

  On the one hand, each $E$ valued point $\xi\in\Spec{\rm h}^{q_0}_{\rm ord}(K_{\alpha',\alpha};\lambda,\OO)_{\mathfrak{m}}(E)$ corresponds bijectively to an $E$ valued point $\xi^\vee\in\Spec{\rm h}^{q_0+l_0}_{\rm ord}(K_{\alpha',\alpha};\lambda^\vee,\OO)_{\mathfrak{m}^\vee}(E)$, and on the other hand, $\xi$ corresponds bijectively to a cuspidal automorphic representation $\Pi_\xi\widehat{\otimes}\Sigma_\xi$ of $G(\Adeles)$ defined over $E$ whose contragredient contributes to \eqref{eq:finitecohomology} (cf.\ Theorem \ref{thm:localizedcohomology}), by virtue of the embeddings $i_\infty$ and $i_p$ from the previous section. For notational simplicity, we may fix once and for all an (algebraic) embedding $i_E:E\to\CC$.

  We call such a $\xi$ a {\em classical point} in $\Spec{\rm h}^{q_0}_{\rm ord}(K_{\alpha',\alpha};\lambda,\OO)_{\mathfrak{m}}(E)$. If we do not assume ${\mathfrak{m}}$ to be non-Eisenstein and consider inner cohomology, the representation $\Pi_\xi\widehat{\otimes}\Sigma_\xi$ is either cuspidal or residual. By \citep{lischwermer2004}, regularity of $\lambda$ always implies cuspidality of $\Pi_\xi\widehat{\otimes}\Sigma_\xi$. In the cuspidal case, Theorem \ref{thm:interpolation} provides us with an abelian $p$-adic $L$-function for $\xi$. In the residual case, the proof of Theorem \ref{thm:interpolation} shows that the modular symbol vanishes identically by Corollary 5.8 in \citep{ichinoyamana2015}.

  Back in the non-Eisenstein situation, let
  $$
    \xi\in\Spec{\rm h}^{q_0}_{\rm ord}(K_{\alpha',\alpha};\lambda,\OO)_{\mathfrak{m}}(E)
  $$
  be a classical point of balanced weight $\lambda$ and Nebentypus $\vartheta$. By Corollary \ref{cor:Hordcontrol},
  $$
    {\mathcal H}_{\rm ord}^{q_0+l_0}(K_{\infty,\infty};\OO)_{\mathfrak{m}^\vee}^{\pi_0(H(\RR))}
    \otimes_{\Lambda^\circ} \Lambda^\circ/P_{\lambda^\vee\vartheta^{-1}}^\circ
    \otimes_\OO E\;\cong\;
    H^{q_0+l_0}_{\rm ord}(\mathscr X(K_{\alpha',\alpha});\underline{L}_{\lambda^\vee,E}^\circ)_{\mathfrak{m}^\vee}^{\pi_0(H(\RR))}
    \otimes_{E\otimes_\OO\Lambda^\circ,\vartheta^{-1}}E,
  $$
  and
  \begin{equation}
    H^{q_0+l_0}_{\rm ord}(\mathscr X(K_{\alpha',\alpha});\underline{L}_{\lambda^\vee,\OO}^\circ)^{\pi_0(H(\RR))}_{\mathfrak{m}^\vee}\otimes_{
    {\rm h}^{q_0+l_0}_{\rm ord}(K_{\alpha',\alpha};\lambda^\vee,\OO)_{\mathfrak{m}^\vee},\xi^\vee}E
    \label{eq:specializedcohomologyoverE}
  \end{equation}
  is of rank $1$ over $E$, cf.\ \eqref{eq:finitefreeness}. The image of
  $$
    H^{q_0+l_0}_{\rm ord}(\mathscr X(K_{\alpha',\alpha});\underline{L}_{\lambda^\vee,\OO}^\circ)_{\mathfrak{m}^\vee}^{\pi_0(H(\RR))}\otimes_{
    {\rm h}^{q_0+l_0}_{\rm ord}(K_{\alpha',\alpha};\lambda^\vee,\OO)_{\mathfrak{m}^\vee},\xi^\vee}\OO
  $$
  is a canonical $\OO$-lattice $\OO_\xi$ in \eqref{eq:specializedcohomologyoverE}. Fix an $\OO$-basis $b_\xi\in\OO_\xi$\nomenclature[]{$b_\xi$}{a generator of $\OO_\xi$}. Identifying $L_{p}^{0,0}$ with its image under the canonical projection map
    $$
    {\mathcal H}_{\rm ord}^{q_0+l_0}(K_{\infty,\infty};\OO)_{\mathfrak{m}^\vee}\to
    {\mathcal H}_{\rm ord}^{q_0+l_0}(K_{\infty,\infty};\OO)_{\mathfrak{m}^\vee}^{\pi_0(H(\RR))},
    $$
    Theorems \ref{thm:controlforlocalizedLp} and \ref{thm:interpolation} imply

\begin{theorem}\label{thm:nonabelianinterpolation}
  Assume $p\nmid(n+1)n$, $F$ totally real, CM or Conjecture \ref{conj:galoisrepresentations}. Let $\mathfrak{m}$ denote a non-Eisenstein maximal ideal in $\mathfrak{m}$ in ${\bf h}_{\rm ord}(K_{\infty,\infty};\OO)$. Then the element
  $$
    L_{p}^{0,0}\;\in\;{\mathcal H}_{\rm ord}^{q_0+l_0}(K_{\infty,\infty};\OO)_{\mathfrak{m}^\vee}^{\pi_0(H(\RR))}
  $$
  has the following interpolation property. For every classical point
  $$
    \xi\in\Spec{\rm h}^{q_0}_{\rm ord}(K_{\alpha',\alpha};\lambda,\OO)_{\mathfrak{m}}(\overline{E})
  $$
  of balanced weight $\lambda$ and Nebentypus $\vartheta$, such that $s_0=\frac{1}{2}$ is critical for $L(s,\Pi_\xi\widehat{\otimes}\Sigma_\xi),$ we have
  \begin{align*}
    \Omega_{\xi,p}^{-1}\cdot\xi^\vee\circ \iota_{\lambda^\vee\vartheta^{-1}}(L_{p}^{0,0})
    \;=\;&
    \int_{C_F(p^\infty)}d\mu_{\Pi_\xi\widehat{\otimes}\Sigma_\xi}\cdot b_\xi\\
    =\;&
    \absNorm(\mathfrak{f}_{\vartheta})^{\frac{(n+1)n(n-1)}{6}}\cdot
    \prod_{\mu=1}^n\prod_{\nu=1}^\mu
      G(\vartheta_{\mu,\nu})
    \cdot
    \frac{L(\frac{1}{2},\Pi_\xi\widehat{\otimes}\Sigma_\xi)}{\Omega_{\xi}}\cdot b_\xi,
  \end{align*}
where the second identity is valid whenever $\vartheta$ has fully supported constant conductor.

  Here $\Omega_{\xi,p}^{-1}\in\OO[\xi]^\times$ is a $p$-adic period, $\Omega_{\xi}\in\CC^\times$ is a complex period and both these periods may be normalized in such a way that they are invariant under twists of $\Pi_\xi\widehat{\otimes}\Sigma_\xi$ by finite order Hecke characters $\chi$ unramified outside $p$.
\end{theorem}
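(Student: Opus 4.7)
The plan is to deduce the theorem by combining three ingredients already in place: the Control Theorem in its dual form (Corollary \ref{cor:Hordcontrol} and Theorem \ref{thm:controlforlocalizedLp}), the semisimplicity/multiplicity-one statement for non-Eisenstein components at classical points (as recorded in \eqref{eq:finitefreeness} via Hida's Proposition 6.4/Corollary A.4 in \cite{hida1998}), and the abelian interpolation formula of Theorem \ref{thm:interpolation} applied at the central point with trivial twist.

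First I would specialize along weight and Nebentypus. The map $\iota_{\lambda^\vee\vartheta^{-1}}$, obtained as in Corollary \ref{cor:independenceofweight} with the Nebentypus incorporated \`a la Hida, sends $L_p^{0,0}$ into the finite level cohomology; by Theorem \ref{thm:controlforlocalizedLp} its image is precisely $L_{p,\alpha}^{\lambda,0}$ inside $H_{\rm ord}^{q_0+l_0}(\mathscr X(K_{\alpha,\alpha});\underline{L}_{\lambda^\vee,\OO}^\circ)_{\mathfrak{m}^\vee}$, modulo $P_{\vartheta^{-1}}^\circ$. Projecting further by $\xi^\vee$ lands in the $\xi$-isotypic component of \eqref{eq:specializedcohomologyoverE}, which by \eqref{eq:finitefreeness} is free of rank one over $E$ after passing to $\pi_0(H(\RR))$-invariants, with the integral lattice $\OO_\xi$ and basis $b_\xi$. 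The first equality of the theorem is then a \emph{definition}: $\Omega_{\xi,p}\in\OO[\xi]^\times$ is the scalar comparing $\xi^\vee\circ\iota_{\lambda^\vee\vartheta^{-1}}(L_p^{0,0})$ to the distinguished lift of $\phi_{\Pi_\xi\widehat{\otimes}\Sigma_\xi}^{\pi_0(H(\RR))}$ from the proof of Theorem \ref{thm:interpolation}, rescaled to the basis $b_\xi$; integrality of $\Omega_{\xi,p}$ follows because both sides lie in $\OO_\xi$, and invertibility is ensured by the non-vanishing \eqref{eq:Lnormalization} in the proof of Theorem \ref{thm:interpolation}.

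The second equality is then Theorem \ref{thm:interpolation} at $j=0$ and $\chi=\mathbf{1}$. Indeed, by Lemma \ref{lem:controlforLp} together with Remark \ref{rmk:reconstruction}, evaluating $L_{p,\alpha}^{\lambda,0}$ on the cuspidal cohomology class $\phi_{\Pi_\xi\widehat{\otimes}\Sigma_\xi}$ is the same as integrating the trivial character against $\mu_{\OO}^{\lambda,0}(\phi_{\Pi_\xi\widehat{\otimes}\Sigma_\xi}) = \mu_{\Pi_\xi\widehat{\otimes}\Sigma_\xi}$. Under the fully supported constant conductor hypothesis on $\vartheta$, the explicit interpolation formula of Theorem \ref{thm:interpolation} applies and yields the desired expression in terms of the Gauss sums $G(\vartheta_{\mu,\nu})$, the conductor factor $\absNorm(\mathfrak{f}_\vartheta)^{(n+1)n(n-1)/6}$, and the central $L$-value, with $\Omega_\xi=\Omega_{+,0}$ in the notation of Theorem \ref{thm:interpolation}.

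The main obstacle is bookkeeping the two periods and verifying the claimed normalization allowing them to be made invariant under finite order Hecke twists of $\Pi_\xi\widehat{\otimes}\Sigma_\xi$ unramified outside $p$. For $\Omega_\xi$ this reduces to invariance of $\Omega_{+,0}$ in Theorem \ref{thm:interpolation}, which in turn follows because the archimedean components $\Pi_\infty$ and $\Sigma_\infty$ are unaffected by such twists; the cohomological test vector $W_{\varepsilon,\infty}^{{\rm coh},0}$ and the archimedean zeta integral $\Omega_\infty(0,-)$ appearing in \eqref{eq:Lnormalization} depend only on these. For $\Omega_{\xi,p}$ the delicate point is functoriality of the basis $b_\xi$ across the twist family. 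One resolves this by choosing $b_\xi$ once and for all via the Whittaker-theoretic $p$-integral lift $\phi_{\Pi_\xi\widehat{\otimes}\Sigma_\xi}^{\pi_0(H(\RR))}$ from the proof of Theorem \ref{thm:interpolation}; twisting $\Pi_\xi\widehat{\otimes}\Sigma_\xi$ by a finite order character unramified outside $p$ does not change the local Whittaker datum at $p$ (the new vector $W_p$ is uniquely determined and $p$-integrally normalized by Proposition \ref{prop:nonvanishingofeigenvectors}) nor the good test vectors $W_v^0$ at $v\nmid p$, so the resulting $\Omega_{\xi,p}$ is constant along the twist family.
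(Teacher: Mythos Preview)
Your overall strategy matches the paper's: specialize $L_p^{0,0}$ via control (Theorem \ref{thm:controlforlocalizedLp}), use multiplicity one at non-Eisenstein $\mathfrak{m}$ to land in a rank-one space, and then identify the result with the abelian measure via Theorem \ref{thm:interpolation} at $j=0$, $\chi=\mathbf{1}$. The paper proceeds the same way, making explicit the Poincar\'e--Pontryagin duality that translates $\xi^\vee\circ\iota_{\lambda^\vee\vartheta^{-1}}(L_p^{0,0})$ into evaluation of $\phi\mapsto\int d\mu_\OO^{\lambda,0}(\phi)$ at the class $\xi\in H_{\rm c,ord}^{q_0}(\mathscr X(K_{\alpha,\alpha});\underline{L}_{\lambda,\OO})_{\mathfrak{m}}^{\pi_0(H(\RR))}/(\text{torsion})$.

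There is, however, a genuine confusion in your justification of why $\Omega_{\xi,p}\in\OO[\xi]^\times$. You write that ``invertibility is ensured by the non-vanishing \eqref{eq:Lnormalization}'', but \eqref{eq:Lnormalization} is the archimedean statement $\Omega_\infty(j,W_{\varepsilon,\infty}^{{\rm coh},j})\neq 0$; it controls the \emph{complex} period $\Omega_\xi$, not the $p$-adic one. If you define $\Omega_{\xi,p}$ as the scalar relating $\xi^\vee\circ\iota_{\lambda^\vee\vartheta^{-1}}(L_p^{0,0})$ to $\int d\mu_{\Pi_\xi\widehat{\otimes}\Sigma_\xi}\cdot b_\xi$, both sides may well vanish (e.g.\ when the central $L$-value vanishes), so this does not pin down a unit. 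The correct argument, as in the paper, is that $\Omega_{\xi,p}$ compares two \emph{generators} of the same rank-one $\OO$-lattice: the class $\xi$ (viewed via Poincar\'e duality in $H_{\rm c,ord}^{q_0}$ modulo torsion) and the $p$-optimally normalized class $\phi_{\Pi_\xi\widehat{\otimes}\Sigma_\xi}$ from the proof of Theorem \ref{thm:interpolation}. Since both are $p$-integral generators of the same line, they differ by an element of $\OO[\xi]^\times$, and it is this unit that propagates to the comparison of measures $\int d\mu_\OO^{\lambda,0}(\xi)=\widetilde{\Omega}_{\xi,p}\cdot\int d\mu_{\Pi_\xi\widehat{\otimes}\Sigma_\xi}$. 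Your discussion of twist-invariance is fine and agrees with the paper's (which appeals to Remark \ref{rmk:reconstruction} and Theorem \ref{thm:interpolation}).
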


By remark \ref{rmk:reconstruction}, $\Omega_{\xi,p}$ invariant under twisting $\xi$ with finite order Hecke characters $\chi$ unramified outside $p$, provided that we choose the test vectors in the construction of the measures for $\Pi_\xi\widehat{\otimes}\Sigma_\xi$ (or equivalently, the complex periods $\Omega_{\pm,0}$) coherently. That this is indeed possible is a consequence of Theorem \ref{thm:interpolation}.

\begin{proof}[{Proof of Theorem \ref{thm:nonabelianinterpolation}}]
    For simplicity of notation we may assume that $\xi$ is defined over $\OO$, i.e.\ $\OO[\xi]=\OO$. Then by the preceeding discussion and the definition of $L_{p}^{0,0}$, we have
  \begin{align*}
    \iota_{\lambda^\vee\vartheta^{-1}}^\circ(L_{p}^{0,0})
    \;=\;&\left[\phi\;\mapsto\;\int_{C_F(p^\infty)}d\mu_{E/\OO}^{\lambda,0}(\phi)\right],
  \end{align*}
  where $\phi\in H_{\rm c, ord}^{q_0}(\mathscr X(K_{\alpha',\alpha});\underline{L}_{\lambda,E/\OO})_{\mathfrak{m}}^{\pi_0(H(\RR))}
$. We emphasize that any possible defect in the control theorem is absorbed by the commutativity of diagram \eqref{eq:specializationdiagram}.

  Now application of $\xi^\vee$ sends $\iota_{\lambda^\vee\vartheta^{-1}}^\circ(L_{p}^{0,0})$ into the rank one $\OO$-lattice $\OO\cdot b_\xi$ in \eqref{eq:specializedcohomologyoverE}. By Poincar\'e-Pontryagin duality this operation corresponds to the precomposition of the map $\iota_{\lambda^\vee\vartheta^{-1}}^\circ(L_{p}^{0,0})(-)$ with the dual map
  $$
  \xi:\quad
   E/\OO\to
   H_{\rm c, ord}^{q_0}(\mathscr X(K_{\alpha',\alpha});\underline{L}_{\lambda,E/\OO})_{\mathfrak{m}}^{\pi_0(H(\RR))}.
  $$
  However, appealing to Poincar\'e duality over $\OO$ instead, we see that $\xi^\vee\circ\iota_{\lambda^\vee\vartheta^{-1}}(L_{p}^{0,0})$ corresponds to the map
  $$
  H_{\rm c, ord}^{q_0}(\mathscr X(K_{\alpha',\alpha});\underline{L}_{\lambda,\OO})_{\mathfrak{m}}^{\pi_0(H(\RR))}\to\OO_{(0)},\quad
  \phi\;\mapsto\;\int_{C_F(p^\infty)}d\mu_{\OO}^{\lambda,0}(\phi),
  $$
  evaluated at the element
  $$
  \xi\;\in\;
   \Hom_\OO(H_{\rm ord}^{q_0+l_0}\left(\mathscr X(K_{\alpha',\alpha});\underline{L}_{\lambda^\vee,\OO}^\circ)_{\mathfrak{m}^\vee}^{\pi_0(H(\RR))},\OO\right)\;=\;
   H_{\rm c, ord}^{q_0}(\mathscr X(K_{\alpha',\alpha});\underline{L}_{\lambda,\OO})_{\mathfrak{m}}^{\pi_0(H(\RR))}/({\rm torsion}).
  $$
  We conclude the proof with the observation that
  $$
  \int_{C_F(p^\infty)}d\mu_{\OO}^{\lambda,0}(\phi)\;=\;
  \widetilde{\Omega}_{\xi,p}\cdot
  \int_{C_F(p^\infty)}d\mu_{\Pi_\xi\widehat{\otimes}\Sigma_\xi}\cdot \eta_{0,\OO}(v_0),
  $$
  with $\widetilde{\Omega}_{\xi,p}\in\OO^\times$, since $\mu_{\Pi_\xi\widehat{\otimes}\Sigma_\xi}$ is only defined up to $p$-integral units, since the same applies to the image of the cohomology class $\phi_{\Pi_\xi\widehat{\otimes}\Sigma_\xi}$ after inverting $p$.
  \end{proof}

To obtain an integral version of Theorem \ref{thm:nonabelianinterpolation} we need to formulate

\begin{conjecture}\label{conj:freeness}
  For any non-Eisenstein maximal ideal $\mathfrak{m}$ in ${\bf h}_{\rm ord}(K_{\infty,\infty};\OO),$
  $$
  \mathcal H_{\rm ord}^{q_0+l_0}(K_{\infty,\infty}; \OO)_{\mathfrak{m}^\vee}^{\pi_0(H(\RR))}
  $$
  is a free module of rank one over ${\bf h}_{\rm ord}^{q_0+l_0}(K_{\infty,\infty};\OO)_{\mathfrak{m}^\vee}$.
\end{conjecture}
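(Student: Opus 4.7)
The plan is to combine the Control Theorem (Theorem \ref{thm:regularcontrol}/Corollary \ref{cor:hordcontrol}) with the finite-level freeness statement \eqref{eq:finitefreeness}, and then upgrade the isogeny to an honest isomorphism in a Taylor--Wiles-patching spirit. First I would fix a dense set of classical points of regular dominant balanced weight $\lambda$ inside $\Spec{\bf h}_{\rm ord}^{q_0+l_0}(K_{\infty,\infty};\OO)_{\mathfrak m^\vee}$ (Proposition \ref{prop:regularweightszariskidense} guarantees that such points are Zariski dense). At each such $\xi$ of Nebentypus $\vartheta$, one has the specialization diagram
\begin{equation*}
  \mathcal H_{\rm ord}^{q_0+l_0}(K_{\infty,\infty};\OO)_{\mathfrak m^\vee}^{\pi_0(H(\RR))}\otimes_{\Lambda^\circ}\Lambda^\circ/P_{\lambda^\vee\vartheta^{-1}}^\circ
  \;\longrightarrow\;
  H^{q_0+l_0}_{\rm ord}(\mathscr X(K_{\alpha',\alpha});\underline{L}_{\lambda^\vee,\OO}^\circ)_{\mathfrak m^\vee}^{\pi_0(H(\RR))}\otimes_{\OO[T(\ZZ_p/p^{\alpha'}\ZZ_p)],\vartheta^{-1}}\OO,
\end{equation*}
whose kernel and cokernel are finite by Corollary \ref{cor:Hordcontrol}, and whose target is free of rank one over the corresponding specialized finite Hecke algebra by \eqref{eq:finitefreeness} together with Theorem \ref{thm:localizedcohomology}.

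Next I would remove the defect of the control map at non-Eisenstein $\mathfrak{m}$. The idea is that the finite kernel and cokernel in the Control Theorem come from residual congruences with Eisenstein (non-strongly-inner) classes, and both are supported on Eisenstein maximal ideals; localizing at the non-Eisenstein $\mathfrak{m}$ should make the map an honest isomorphism. Concretely, I would combine the residual-spectrum vanishing of Jacquet--Shalika and M\oe glin--Waldspurger used in the proof of Theorem \ref{thm:localizedcohomology} (which already gives cuspidality of every constituent of the localized cohomology) with the observation that the auxiliary cohomology $H^\bullet_{?,\rm ord}(\mathscr X(K_{0,1}^\circ);\underline{\mathscr C}_1(E/\OO))$ appearing in Hida's isomorphism \eqref{eq:hidasuniversaliso} becomes concentrated in degree $q_0+l_0$ after localization at $\mathfrak{m}^\vee$, thanks to the vanishing outside cuspidal range; this should be enough to promote the isogeny of Corollary \ref{cor:hordcontrol} to an isomorphism at $\mathfrak{m}^\vee$ in the top degree.

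With exact control in hand at a Zariski-dense set of classical points, freeness follows formally: Nakayama's lemma combined with the fact that at every point in the set the specialization is free of rank one over a local ring shows that a minimal generating set of $\mathcal H_{\rm ord}^{q_0+l_0}(K_{\infty,\infty};\OO)_{\mathfrak m^\vee}^{\pi_0(H(\RR))}$ consists of a single element, and the relations must vanish because they vanish after each classical specialization (which is faithful in the limit). More carefully, one would pick a generator $L$, consider the surjection ${\bf h}_{\rm ord}^{q_0+l_0}(K_{\infty,\infty};\OO)_{\mathfrak m^\vee}\twoheadrightarrow\mathcal H_{\rm ord}^{q_0+l_0}(K_{\infty,\infty};\OO)_{\mathfrak m^\vee}^{\pi_0(H(\RR))}$ given by $T\mapsto TL$, and show its kernel vanishes by reducing modulo every $P_{\lambda^\vee\vartheta^{-1}}^\circ$ in the dense set, using exact control and the finite-level rank one result.

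The hard part will be the second step: upgrading the Control Theorem from an isogeny to an honest isomorphism at non-Eisenstein localizations. This is really an \emph{exact} control statement for $G$, and it subsumes analogues of vanishing of torsion in degrees $q\neq q_0+l_0$ for the relevant coefficient systems $\underline{\mathscr C}_1$; for $F$ not totally real or CM this would in turn depend on Conjecture \ref{conj:galoisrepresentations} and is known to be subtle, being closely linked via Theorem 4.9 of \cite{hansenthorne2017} to the Leopoldt conjecture and to the expected dimension of the universal nearly ordinary Hecke algebra. Short of that, one would obtain freeness only up to pseudo-null error and would have to invoke an $\mathcal R = \mathbb T$ theorem with Taylor--Wiles patching (when available) to conclude unconditional freeness, as is done in several recent works on similar control/freeness questions.
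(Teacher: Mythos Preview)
This statement is a \emph{conjecture} in the paper, not a theorem: the paper does not prove it, and the remarks immediately following it make clear that it is open in general (known for $n=1$ over totally real $F$ via Dimitrov's $\mathcal R=\mathbb T$ result, and conditionally up to $\OO$-torsion for $F=\QQ$ by Hansen--Thorne). So there is no proof to compare your proposal against.

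Your proposal is not a proof either, and you essentially concede this in your final paragraph. The concrete gaps are these. First, the finite-level freeness statement \eqref{eq:finitefreeness} is only over $E$: the paper establishes that the $\pi_0(H(\RR))$-invariants are free of rank one over the Hecke algebra with $E$-coefficients, not over $\OO$. So even at finite level you do not know the integral module is free of rank one over the integral Hecke algebra; there may well be torsion, and this torsion is precisely what is at stake. Second, your claim that the finite kernel and cokernel in the Control Theorem are ``supported on Eisenstein maximal ideals'' and hence vanish after non-Eisenstein localization is unsubstantiated: Theorem \ref{thm:localizedcohomology} gives vanishing only for $E$-coefficients, while the defect of control lives in torsion cohomology with $\OO$- or $E/\OO$-coefficients, where no such vanishing is known. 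Third, even granting exact control, your Nakayama argument needs the universal Hecke algebra to be reduced and the classical specialization ideals to have trivial intersection; neither is automatic. As you yourself note, resolving these issues in practice goes through an $\mathcal R=\mathbb T$ theorem and Taylor--Wiles patching, which is exactly what the paper's remarks say and why the statement is left as a conjecture.
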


\begin{remark}
  In the totally real case with $n=1$, Conjecture \ref{conj:freeness} is known under mild hypotheses on the residual representation $\overline{\rho}_{\mathfrak{m}}$ and $p$, cf.\ Theorem 4.6 in \citep{dimitrov2011}. This result relies on an \lq{}$\mathcal R={\mathbb T}$\rq{} Theorem, which allows allows Dimitrov to interpret his non-abelian $p$-adic $L$-function as an element of a universal deformation ring.
\end{remark}

\begin{remark}
  Hansen-Thorne's Theorem 4.9 in \citep{hansenthorne2017} conditionally implies Conjecture \ref{conj:freeness} up to $\OO$-torsion in the case $F=\QQ$.
\end{remark}

  Following section 5 in \citep{dimitrov2011} and assuming Conjecture \ref{conj:freeness}, we define the universal $p$-adic $L$-function
  $$
    L_{p,\mathfrak{m}}^{\rm univ}\;\in\;{\bf h}_{\rm ord}^{q_0+l_0}(K_{\infty,\infty};\OO)_{\mathfrak{m}^\vee}
  $$
  as the image of $L_p^{0,0}$ under the isomorphism
  $$
    \mathcal H_{\rm ord}^{q_0+l_0}(K_{\infty,\infty}; \OO)_{\mathfrak{m}^\vee}^{\pi_0(H(\RR))}\;\to\;
    {\bf h}_{\rm ord}^{q_0+l_0}(K_{\infty,\infty};\OO)_{\mathfrak{m}^\vee}.
  $$

  Let $\xi^\vee\in\Spec{\rm h}^{q_0+l_0}_{\rm ord}(K_{\alpha',\alpha};\lambda,\OO)_{\mathfrak{m}^\vee}(\OO)$ denote a classical point of weight $\lambda^\vee$ and Nebentypus $\vartheta^{-1}$. Assume that $\eta_0$ is admissible for $\lambda$ (in particular, $\lambda$ is balanced).
    
  By Theorem \ref{thm:controlforlocalizedLp}, the canonical isogeny from Corollary \ref{cor:hordcontrol},
  \begin{align*}
    &{\bf h}_{\rm ord}^{q_0+l_0}(K_{\infty,\infty};\OO)_{\mathfrak{m}^\vee}
    /P_{\lambda^\vee\vartheta^{-1}}{\bf h}_{\rm ord}^{q_0+l_0}(K_{\infty,\infty};\OO)_{\mathfrak{m}^\vee}\\
    &\to\;
    {\rm h}_{\rm ord}^{q_0+l_0}(K_{\alpha',\alpha};\lambda^\vee,\OO)_{\mathfrak{m}^\vee}
    /P_{\vartheta^{-1}}{\rm h}_{\rm ord}^{q_0+l_0}(K_{\alpha',\alpha};\lambda^\vee,\OO)_{\mathfrak{m}^\vee},
  \end{align*}
  composed with $\xi^{-1},$ maps $L_{p,\mathfrak{m}}^{\rm univ}$ onto
  $$
    \xi^{-1}\circ {\lambda^\vee\vartheta^{-1}}(L_{p,\mathfrak{m}}^{\rm univ})\;=\;\Omega_{\xi,p}\cdot
    \int_{C_F(p^\infty)}d\mu_{\Pi_\xi\widehat{\otimes}\Sigma_\xi},
  $$
  where $\Omega_{\xi,p}\in \OO^\times$ is again a $p$-adic period. We obtain

\begin{theorem}\label{thm:nonabelianinterpolation2}
  Assume $p\nmid(n+1)n$, $F$ totally real, CM or Conjecture \ref{conj:galoisrepresentations}. Let $\mathfrak{m}$ denote a non-Eisenstein maximal ideal in  $\mathfrak{m}$ in ${\bf h}_{\rm ord}(K_{\infty,\infty};\OO)$ for which Conjecture \ref{conj:freeness} holds. Then there exists an element\nomenclature[]{$L_{p,\mathfrak{m}}^{\rm univ}$}{$p$-adic $L$-function in ${\bf h}_{\rm ord}^{q_0+l_0}(K_{\infty,\infty};\OO)_{\mathfrak{m}^\vee}$}
    $$L_{p,\mathfrak{m}}^{\rm univ}\;\in\;{\bf h}_{\rm ord}^{q_0+l_0}(K_{\infty,\infty};\OO)_{\mathfrak{m}^\vee}$$
  with the following interpolation property. For every classical point
  $$
    \xi^\vee\in\Spec{\rm h}^{q_0+l_0}_{\rm ord}(K_{\alpha',\alpha};\lambda^\vee,\OO)_{\mathfrak{m}^\vee}(\overline{\OO})
  $$
  of balanced weight $\lambda^\vee$ and Nebentypus $\vartheta^{-1}$, such that $s_0=\frac{1}{2}$ is critical for $L(s,\Pi_\xi\widehat{\otimes}\Sigma_\xi),$ we have
  \begin{align*}
    \Omega_{\xi,p}^{-1}\cdot\xi^\vee\circ {\lambda^\vee\vartheta^{-1}}(L_{p,\mathfrak{m}}^{\rm univ})
    \;=\;&
    \int_{C_F(p^\infty)}d\mu_{\Pi_\xi\widehat{\otimes}\Sigma_\xi}\\
    =\;&
    \absNorm(\mathfrak{f}_{\vartheta})^{\frac{(n+1)n(n-1)}{6}}\cdot
    \prod_{\mu=1}^n\prod_{\nu=1}^\mu
      G(\vartheta_{\mu,\nu})
    \cdot
    \frac{L(\frac{1}{2},\Pi_\xi\widehat{\otimes}\Sigma_\xi)}{\Omega_{\xi}},
  \end{align*}
where the second identity is valid whenever $\vartheta$ has fully supported constant conductor.

  Here $\Omega_{\xi,p}^{-1}\in\OO[\xi]^\times$ is a $p$-adic period, $\Omega_{\xi}\in\CC^\times$ is a complex period and both these periods may be normalized in such a way that they are invariant under twists of $\Pi_\xi\widehat{\otimes}\Sigma_\xi$ by finite order Hecke characters $\chi$ unramified outside $p$.
\end{theorem}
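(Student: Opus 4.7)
The plan is to transport the element $L_p^{0,0}\in\mathcal H_{\rm ord}^{q_0+l_0}(K_{\infty,\infty};\OO)_{\mathfrak{m}^\vee}^{\pi_0(H(\RR))}$ constructed in Section \ref{sec:measures} into the universal nearly ordinary Hecke algebra by means of the freeness hypothesis, and then to deduce the interpolation formula by combining the Control Theorem in its dual form with the abelian interpolation established in Theorem \ref{thm:interpolation}. Concretely, Conjecture \ref{conj:freeness} furnishes an isomorphism
$$
\mathcal H_{\rm ord}^{q_0+l_0}(K_{\infty,\infty};\OO)_{\mathfrak{m}^\vee}^{\pi_0(H(\RR))}
\;\xrightarrow{\;\sim\;}\;
{\bf h}_{\rm ord}^{q_0+l_0}(K_{\infty,\infty};\OO)_{\mathfrak{m}^\vee}
$$
of ${\bf h}_{\rm ord}^{q_0+l_0}(K_{\infty,\infty};\OO)_{\mathfrak{m}^\vee}$-modules, and I define $L_{p,\mathfrak{m}}^{\rm univ}$ as the image of $L_p^{0,0}$ under this isomorphism. (The hypothesis $p\nmid(n+1)n$ forces $p$ odd, so the semisimple action of $\pi_0(H(\RR))$ cuts out a direct summand over $\OO$.)

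Next, I would fix a classical point $\xi^\vee\in\Spec{\rm h}_{\rm ord}^{q_0+l_0}(K_{\alpha',\alpha};\lambda^\vee,\OO)_{\mathfrak{m}^\vee}(\overline{\OO})$ of balanced weight $\lambda^\vee$ and Nebentypus $\vartheta^{-1}$, and write $P_{\lambda^\vee\vartheta^{-1}}\subseteq\Lambda$ for the kernel of the locally algebraic character $\lambda^\vee\vartheta^{-1}$ at $\xi^\vee$. The canonical isogeny of Corollary \ref{cor:hordcontrol},
$$
{\bf h}_{\rm ord}^{q_0+l_0}(K_{\infty,\infty};\OO)_{\mathfrak{m}^\vee}
/P_{\lambda^\vee\vartheta^{-1}}^\circ
\;\longrightarrow\;
{\rm h}_{\rm ord}^{q_0+l_0}(K_{\alpha',\alpha};\lambda^\vee,\OO)_{\mathfrak{m}^\vee}
/P_{\vartheta^{-1}}^\circ,
$$
reduces the evaluation of $L_{p,\mathfrak{m}}^{\rm univ}$ at $\xi^\vee$ to the evaluation of its image in the finite-level Hecke algebra. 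By Theorem \ref{thm:controlforlocalizedLp} together with the freeness hypothesis, this image is precisely the finite-level $L$-function $L_{p,\alpha}^{\lambda^\vee,0}$ from Lemma \ref{lem:controlforLp}, which by construction of the modular symbol pairs against a cohomology class to yield $\int_{C_F(p^\infty)}d\mu_{E/\OO}^{\lambda,0}$. As in the proof of Theorem \ref{thm:nonabelianinterpolation}, any defect in the Control Theorem is harmlessly absorbed by the commutativity of diagram \eqref{eq:specializationdiagram}.

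To conclude, I would apply Poincar\'e duality over $\OO$ to identify the specialization $\xi^\vee\circ\lambda^\vee\vartheta^{-1}(L_{p,\mathfrak{m}}^{\rm univ})$ with the pairing of $L_{p,\alpha}^{\lambda^\vee,0}$ against a generator of the rank one $\OO$-lattice corresponding to $\xi$ (after modding out $\OO$-torsion), and then invoke Theorem \ref{thm:interpolation} applied to $\Pi_\xi\widehat{\otimes}\Sigma_\xi$ at the single critical value $s_0=\frac{1}{2}$ (equivalently $j=0$) to obtain the stated expression involving Gauss sums and $L(\frac{1}{2},\Pi_\xi\widehat{\otimes}\Sigma_\xi)/\Omega_\xi$. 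The $p$-adic period $\Omega_{\xi,p}$ arises as the unit in $\OO[\xi]^\times$ comparing the canonical basis $b_\xi$ of the specialized cohomology lattice with the cohomology class built from the cohomological Whittaker vector, exactly as in the proof of Theorem \ref{thm:nonabelianinterpolation}; the invariance of $\Omega_{\xi,p}$ and $\Omega_\xi$ under finite order twists unramified outside $p$ follows by choosing the complex periods $\Omega_{\pm,0}$ coherently, which is possible by the uniqueness clause in Theorem \ref{thm:interpolation} combined with Remark \ref{rmk:reconstruction}.

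The main obstacle is not the construction itself --- which is essentially formal given the preceding machinery --- but the careful bookkeeping that the isogeny-theoretic defects in Corollary \ref{cor:hordcontrol} together with the rank-one freeness of Conjecture \ref{conj:freeness} combine to give an \emph{integral} identity up to a $p$-adic unit period, rather than merely an identity up to finite error. This is precisely where the hypothesis $p\nmid(n+1)n$ (ensuring $\pi_0(H(\RR))$ acts semisimply over $\OO$) and the non-Eisenstein condition (ensuring the multiplicity-one phenomenon \eqref{eq:finitefreeness}) are both essential.
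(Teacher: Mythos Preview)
Your proposal is correct and follows essentially the same approach as the paper: define $L_{p,\mathfrak{m}}^{\rm univ}$ as the image of $L_p^{0,0}$ under the freeness isomorphism of Conjecture \ref{conj:freeness}, then combine the control isogeny of Corollary \ref{cor:hordcontrol} with Theorem \ref{thm:controlforlocalizedLp} to reduce the specialization at $\xi^\vee$ to the finite-level situation already handled in Theorem \ref{thm:nonabelianinterpolation} and Theorem \ref{thm:interpolation}. The paper's argument is in fact the short paragraph immediately preceding the theorem statement, and your sketch expands on exactly those steps.
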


  \begin{remark}\label{rmk:fullLp}
    In general, twisting by finite order characters also twists the residual representation by a character of the finite torsion subgroup $\Delta$ of $C_F(p^\infty)$. However, there are only finitely many such twists and hence only a finite set of corresponding non-Eisenstein maximal ideals $\mathfrak{m}_1,\dots,\mathfrak{m}_{\delta},$ ($1\leq \delta\leq|\Delta[1/p]|$), corresponding to finite order twists with characters unramified outside $p\infty$.
    Let\nomenclature[]{$\mathfrak{n}$}{intersection of maximal ideals $\mathfrak{m}_i$}
    $$
      \mathfrak{n}\;:=\;\bigcap_{i=1}^{\delta}\mathfrak{m}_i
    $$
    Our construction generalizes to Hecke algebras localized at $\mathfrak{n}$, i.e.\ the product of their localizations at each ${\mathfrak{m}}_i$, provided that Conjecture \ref{conj:freeness} holds for every $\mathfrak{m}_i$. By the Chinese Remainder Theorem, we obtain an isomorphism
    $$
      \mathcal H_{\rm ord}^{q_0+l_0}(K_{\infty,\infty}; \OO)_{\mathfrak{n}^{-1}}\;\cong\;
      \OO[\pi_0(H(\RR))]\otimes_\OO{\bf h}_{\rm ord}^{q_0+l_0}(K_{\infty,\infty};\OO)_{\mathfrak{n}^{-1}},
    $$
    which allows us to define a $p$-adic $L$-function
    $$
      L_{p,\mathfrak{n}}^{\rm univ}\;\in\;{\bf h}_{\rm ord}^{q_0+l_0}(K_{\infty,\infty};\OO)_{\mathfrak{n}^{-1}},
    $$
    with the corresponding interpolation property for classical points in
    $$
    \Spec{\rm h}^{q_0+l_0}_{\rm ord}(K_{\alpha',\alpha};\lambda^\vee,\OO)_{\mathfrak{n}^{-1}}(\overline{\OO}).
    $$
      
  \end{remark}
  \begin{remark}
    The interest in considering $L_{p,\mathfrak{n}}^{\rm univ}$ is that it allows us to recover the full abelian $p$-adic $L$-functions from Theorem \ref{thm:interpolation} as follows.

    The universal nearly ordinary Hecke algebra ${\bf h}_{\rm ord}^{q_0+l_0}(K_{\infty,\infty};\OO)_{\mathfrak{n}^{-1}}$ carries a canonical $\OO[[C_F(p^\infty)]]$-algebra structure and any $\OO[[C_F(p^\infty)]]$-algebra homomorphism
    $$
      \Xi:\quad {\bf h}_{\rm ord}^{q_0+l_0}(K_{\infty,\infty};\OO)_{\mathfrak{n}^{-1}}\to\OO[[C_F(p^\infty)]]
    $$
    for which the composition with the trivial character of $C_F(p^\infty)$ yields a classical point $\xi^\vee$ of balanced weight, sends $L_{p,\mathfrak{n}}^{\rm univ}$ to $\mu_{\Pi_\xi\widehat{\otimes}\Sigma_\xi}$.
    Likewise, we can recover $\mu_{\Pi_\xi\widehat{\otimes}\Sigma_\xi}$ from $L_p^{0,0}$ localized at $\mathfrak{n}$.
  \end{remark}

\subsection{Applications to non-vanishing of central $L$-values}

As an application of Theorem \ref{thm:interpolation} we prove

\begin{theorem}\label{thm:abeliannonvanishing}
  Let $F$ be a number field, $\Pi\widehat{\otimes}\Sigma$ be a cuspidal automorphic representation of $G(\Adeles)$ satisfying the hypotheses of Theorem \ref{thm:interpolation}. Assume that $L(s,\Pi\widehat{\otimes}\Sigma)$ admits at least two critical values and that $w_\lambda$ is even. Then the $p$-adic measure $\mu_{\Pi\widehat{\otimes}\Sigma,\lambda,w_\lambda/2}\in \OO[[C_F(p^\infty)]]$ is non-zero. Furthermore, for each $\chi\in\mathscr X_F^0$ the measure
$$
\chi\cdot\mu_{\Pi\widehat{\otimes}\Sigma,\lambda,w_\lambda/2}|_{C_\QQ^{\rm cyc}(p^\infty)}\;\in\;\OO[\chi][[C_F^{\rm cyc}(p^\infty)]]
$$
is non-zero.
\end{theorem}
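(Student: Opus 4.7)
The plan is to deduce both statements from the interpolation property (Theorem~\ref{thm:interpolation}) combined with classical non-vanishing results of Jacquet-Shalika and Shahidi for non-central critical Rankin-Selberg $L$-values. Since Part~2 implies Part~1 (take $\chi=1$ and lift a non-zero cyclotomic projection to a non-zero element of $\OO[[C_F(p^\infty)]]$), I would focus on the second, cyclotomic, claim.

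Fix $\chi\in\mathscr{X}_F^0$. After choosing a topological generator of the torsion-free part of the cyclotomic quotient, the Iwasawa algebra $\OO[\chi][[C_F^{\rm cyc}(p^\infty)]]$ becomes a finite product of one-variable formal power series rings $\OO[\chi][[T]]$. By the Weierstrass preparation theorem a non-zero element of such a ring has only finitely many zeros on the open rigid-analytic unit disk; equivalently, a pseudomeasure that integrates to zero against every continuous character is identically zero. It therefore suffices to exhibit a single continuous character $\eta$ of the cyclotomic line, inflated to $C_F(p^\infty)$, such that the pairing $\int \chi\eta\, d\mu_{\Pi\widehat{\otimes}\Sigma,\lambda,w_\lambda/2}$ is non-zero.

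The functional equation of $L(s,\Pi\widehat{\otimes}\Sigma)$ is symmetric about $s=1/2+w_\lambda/2$, so the set of critical indices $j$ (with $1/2+j$ critical) is symmetric about $w_\lambda/2\in\ZZ$. The hypothesis of at least two critical values therefore provides a non-central critical index $j_0\neq w_\lambda/2$, which I would take extremal. The candidate character is $\eta=\omega_F^{j_0-w_\lambda/2}\langle\,\cdot\,\rangle_F^{j_0-w_\lambda/2}\cdot(\chi'\circ N_{F/\QQ})$ for a finite order Dirichlet character $\chi'$ of $p$-power conductor to be specified. Invoking Theorem~\ref{thm:tatetwists} to absorb the factor $\omega_F^{w_\lambda/2}\langle\,\cdot\,\rangle_F^{w_\lambda/2}$ reduces the pairing with $\chi\cdot\mu_{\Pi\widehat{\otimes}\Sigma,\lambda,w_\lambda/2}$ to the pairing of $\mu_{\Pi\widehat{\otimes}\Sigma}$ against $\chi\cdot(\chi'\circ N_{F/\QQ})\cdot\omega_F^{j_0}\langle\,\cdot\,\rangle_F^{j_0}$. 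By Theorem~\ref{thm:interpolation}, this equals a non-zero algebraic multiple (a product of Gauss sums and conductor norms, all non-zero) of $L(1/2+j_0,\Pi\widehat{\otimes}\Sigma\otimes\chi\cdot(\chi'\circ N_{F/\QQ}))/\Omega_{\pm,j_0}$, provided $(\chi_p(\chi'\circ N)_p)\vartheta$ has fully supported constant conductor. Choosing $\chi'$ with ramification at every prime above $p$ deeper than the conductor of $\chi_p\vartheta$ ensures the conductor condition, and by the Jacquet-Shalika-Shahidi non-vanishing theorem for non-central critical Rankin-Selberg values \cite{jacquetshalika1981,shahidi1981} the twisted $L$-value is generically non-zero as $\chi'$ varies.

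The main technical obstacle is reconciling the two constraints on $\chi'$: sufficiently deep $p$-adic ramification (to meet the conductor hypothesis of Theorem~\ref{thm:interpolation}) and simultaneous membership in the non-vanishing locus of Jacquet-Shalika-Shahidi for the twisted $L$-value at $s=1/2+j_0$. Both conditions cut out Zariski-dense subsets of the Dirichlet characters of $p$-power conductor, so their intersection is non-empty; however, one has to verify that the precise classical non-vanishing statement indeed covers the specific critical index $j_0$ chosen. Taking $j_0$ to be the extremal critical index maximizes the distance from the center and brings the problem closest to the classical Jacquet-Shalika boundary non-vanishing on $\mathrm{Re}(s)=1$, where the applicability is cleanest.
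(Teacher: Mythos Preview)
Your approach is essentially the same as the paper's: use the Tate twist relation (Theorem~\ref{thm:tatetwists}) to pass from the central index $w_\lambda/2$ to a non-central critical index, then invoke classical non-vanishing of Rankin--Selberg $L$-values to conclude that the shifted measure is non-zero, hence so is the original one on the cyclotomic line.

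The one point where you overcomplicate things is the choice of $j_0$ and the ensuing worry about ``reconciling two constraints''. The paper simply takes $j_0 = 1 + w_\lambda/2$, the near-central index, and observes that after unitary renormalisation this places $s_0$ at $\mathrm{Re}(s)=3/2>1$, where Jacquet--Shalika \cite{jacquetshalika1981} gives $L(s_0,\Pi\widehat{\otimes}\Sigma\otimes\chi\chi')\neq 0$ for \emph{every} finite order twist, not merely generically. (Shahidi \cite{shahidi1981} gives the same conclusion already at the near-central point.) Consequently there is no tension between the conductor condition in Theorem~\ref{thm:interpolation} and the non-vanishing locus: once you pick any $\chi'$ ramified deeply enough at all $\mathfrak{p}\mid p$, the $L$-value is automatically non-zero. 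Your ``intersection of Zariski-dense subsets'' argument is therefore unnecessary, and with this simplification your proof collapses to exactly the paper's three-line argument: multiply the central measure by $\omega_F\langle\cdot\rangle_F$ to obtain the measure at $j=1+w_\lambda/2$, observe that the latter is non-zero by the interpolation formula combined with Jacquet--Shalika, and conclude.
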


\begin{proof}
We have an identity
\begin{equation}
\langle\cdot\rangle_F\cdot\omega_F\cdot
\chi\cdot\mu_{\Pi\widehat{\otimes}\Sigma,\lambda,w_\lambda/2}|_{C_F^{\rm cyc}(p^\infty)}
\;=\;
\chi\cdot \mu_{\Pi\widehat{\otimes}\Sigma,\lambda,1+w_\lambda/2}|_{C_F^{\rm cyc}(p^\infty)}
\label{eq:shiftedmeasureidentity}
\end{equation}
of measures on $C_F^{\rm cyc}(p^\infty)$. We know that the measure
$$
\chi\cdot \mu_{\Pi\widehat{\otimes}\Sigma,\lambda,1+w_\lambda/2}|_{C_F^{\rm cyc}(p^\infty)}
$$
is non-zero by Theorem \ref{thm:interpolation}, because the complex $L$-function in the interpolation formula is non-zero whenever the real part of $s$ is larger than $\frac{w_\lambda}{2}+1,$ cf.\ \citep{jacquetshalika1981}. Whence $\chi\cdot\mu_{\Pi\widehat{\otimes}\Sigma,\lambda_\lambda,w_\lambda/2}|_{C_F^{\rm cyc}(p^\infty)}$ is non-zero.
\end{proof}

\begin{corollary}\label{cor:abeliannonvanishing}
For every finite order Hecke character $\chi$ of $F$ we have
$$
L(\frac{1+w_\lambda}{2},\Pi\widehat{\otimes}\Sigma\otimes\chi\chi')\;\neq\;0
$$
for all but finitely many characters $\chi':C_F^{\rm cyc}(p^\infty)\to\CC^\times$.
\end{corollary}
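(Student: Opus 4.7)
The plan is to combine Theorem \ref{thm:abeliannonvanishing} with the Weierstrass preparation theorem in the one-variable Iwasawa algebra and the interpolation formula of Theorem \ref{thm:interpolation}. First I would fix the finite order Hecke character $\chi$ of $F$ and work with the Tate-twisted measure $\mu_{\Pi\widehat{\otimes}\Sigma,\lambda,w_\lambda/2}$, whose integral against a finite order character $\chi\chi'$ interpolates $L(\tfrac{1+w_\lambda}{2},\Pi\widehat{\otimes}\Sigma\otimes\chi\chi')$ up to non-zero Gau\ss-sum and period factors. Theorem \ref{thm:abeliannonvanishing} provides the key input: the restriction
$$
\nu \;:=\; \chi\cdot\mu_{\Pi\widehat{\otimes}\Sigma,\lambda,w_\lambda/2}\Big|_{C_F^{\rm cyc}(p^\infty)}
\;\in\;\OO[\chi][[C_F^{\rm cyc}(p^\infty)]]
$$
is a non-zero element of the Iwasawa algebra attached to the cyclotomic line.

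Next I would invoke Weierstrass preparation. Since $C_F^{\rm cyc}(p^\infty)$ is topologically isomorphic to $\ZZ_p$, a choice of topological generator $\gamma$ identifies $\OO[\chi][[C_F^{\rm cyc}(p^\infty)]]$ with the one-variable power series ring $\OO[\chi][[T]]$ via $\gamma\mapsto 1+T$. Under this identification, the non-zero measure $\nu$ corresponds to a non-zero power series $F(T)$, and the classical Weierstrass preparation theorem guarantees that $F$ has only finitely many zeros among elements of the form $\zeta-1$ with $\zeta$ a $p$-power root of unity. Equivalently, $\int\chi'\,d\nu\neq 0$ for all but finitely many finite order characters $\chi'$ of $C_F^{\rm cyc}(p^\infty)$.

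Finally I would translate this into the statement about $L$-values. By Theorem \ref{thm:interpolation} applied with $j=w_\lambda/2$, for every such $\chi'$ whose ramification at $p$ is deep enough that $\chi\chi'\vartheta$ has fully supported constant conductor (which excludes only finitely many $\chi'$), we have
$$
\int\chi'\,d\nu \;=\; C(\chi\chi')\cdot\frac{L\bigl(\tfrac{1+w_\lambda}{2},\Pi\widehat{\otimes}\Sigma\otimes\chi\chi'\bigr)}{\Omega_{\pm,j}},
$$
where $C(\chi\chi')$ is a non-zero explicit product of Gau\ss{} sums and a power of $\absNorm(\mathfrak{f}_{\chi\chi'\vartheta})$, and the period $\Omega_{\pm,j}\in\CC^\times$ is non-zero by the work of Sun cited in \cite{sunjams}. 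Since only finitely many $\chi'$ fail the conductor condition and only finitely many $\chi'$ are zeros of $F$, the central $L$-value must be non-zero for all but finitely many $\chi'$. The main obstacle I anticipate is purely bookkeeping: one must identify the pushforward $\nu$ with the measure appearing in Theorem \ref{thm:abeliannonvanishing} and carefully track the contribution of the characters $\omega_F^j$ and $\langle\cdot\rangle_F^j$; but once these alignments are in place, the rest of the argument is a direct combination of Weierstrass preparation and the interpolation formula.
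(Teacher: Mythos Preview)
Your approach is essentially the same as the paper's---invoke Theorem \ref{thm:abeliannonvanishing} to get a non-zero element of the one-variable Iwasawa algebra, then apply Weierstrass preparation---and the translation back to $L$-values via Theorem \ref{thm:interpolation} is correct. There is one genuine point you need to address, however: the corollary is stated for an \emph{arbitrary} finite order Hecke character $\chi$ of $F$, whereas the measure $\mu_{\Pi\widehat{\otimes}\Sigma,\lambda,w_\lambda/2}$ lives on $C_F(p^\infty)$ and the second clause of Theorem \ref{thm:abeliannonvanishing} only applies to $\chi\in\mathscr X_F^0$, i.e.\ characters unramified outside $p\infty$. So the expression $\chi\cdot\mu_{\Pi\widehat{\otimes}\Sigma,\lambda,w_\lambda/2}$ and the integral $\int\chi\chi'\,d\mu$ simply do not make sense for general $\chi$, and Theorem \ref{thm:interpolation} likewise only interpolates twists by characters unramified outside $p\infty$.

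The paper fixes this with a one-line reduction: observe that if $\Pi\widehat{\otimes}\Sigma$ satisfies the hypotheses of Theorem \ref{thm:interpolation}, then so does the twisted representation $\Pi\widehat{\otimes}\Sigma\otimes\chi$ (near ordinarity and the balanced-weight condition are preserved under finite order twists). One then applies Theorem \ref{thm:abeliannonvanishing} directly to $\mu_{\Pi\widehat{\otimes}\Sigma\otimes\chi,\lambda,w_\lambda/2}$ restricted to the cyclotomic line, and Weierstrass preparation finishes the argument exactly as you outlined. With this small modification your proof is complete and matches the paper's.
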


\begin{remark}\label{rmk:nearcentral}
  $\Pi|\cdot|^{-w_{\lambda_{n+1}}}  \widehat{\otimes}\Sigma|\cdot|^{-w_{\lambda_{n}}}$ is unitary and Corollary \ref{cor:abeliannonvanishing} is a non-vanishing statement about the central $L$-value. By \citep{shahidi1981} it is known that near-central critical values are {\em always} non-zero.
\end{remark}

\begin{proof}
Remark that if $\Pi\widehat{\otimes}\Sigma$ satisfies the hypotheses of Theorem \ref{thm:interpolation}, then so does $\Pi\widehat{\otimes}\Sigma\otimes\chi$. Assume without loss of generality that $\chi$ takes values in $E$. By the above theorem,
\begin{align*}
0\neq \mu_{\Pi\widehat{\otimes}\Sigma\otimes\chi,\lambda,w_\lambda/2}|_{C_F^{\rm cyc}(p^\infty)}
\;\in\;&
\OO[[C_F^{\rm cyc}(p^\infty)]]\\
\cong\;&
\OO[[X]].
\end{align*}
By the Weierstrass Peparation Theorem, a non-zero power series in one variable over $\OO$ admits only finitely many zeroes and the claim follows.
\end{proof}

\subsection{Non-vanishing via non-abelian deformations}\label{sec:nonabeliannonvanishing}

As an application of Theorems \ref{thm:interpolation} and \ref{thm:controlforlocalizedLp} we prove (independently of Conjecture \ref{conj:freeness}),

\begin{theorem}\label{thm:nonabeliannonvanishing}
  Let $p\nmid(n+1)n$, $F$ totally real, CM or assume the validity of Conjecture \ref{conj:galoisrepresentations} over $F$. Let $\mathfrak{m}$ denote a non-Eisenstein maximal ideal in  $\mathfrak{m}$ in ${\bf h}_{\rm ord}(K_{\infty,\infty};\OO)$. Assume the existence of a classical point $\xi\in\Spec{\bf h}_{\rm ord}(K_{\infty,\infty};\OO)_{\mathfrak{m}^{-1}}$ of balanced weight such that $L(s,\Pi_{\xi}\widehat{\otimes}\Sigma_\xi)$ admits at least two critical values.

Then the image $L_{p,\mathfrak{m}}$ of $L_p^{0,0}$ in $\mathcal H_{\rm ord}^{q_0+l_0}(K_{\infty,\infty}; \OO)_{\mathfrak{m}^{-1}}^{\pi_0(H(\RR))}$ is non-zero. Moreover, its projection to
\begin{equation}
  \mathcal H_{\rm c,ord}^{q_0+l_0}(K_{\infty,\infty}; \OO)_{\mathfrak{m}^{-1}}^{\pi_0(H(\RR))}
  \otimes_{{\bf h}_{\rm ord}(K_{\infty,\infty};\OO)_{\mathfrak{m}^{-1}},\xi'}\overline{\OO}
  \label{eq:nonzeromodxi}
\end{equation}
is non-zero for some classical point $\xi'\in\Spec{\bf h}_{\rm ord}(K_{\infty,\infty};\OO)_{\mathfrak{m}}$ of balanced weight.
\end{theorem}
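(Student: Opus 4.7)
My plan is to exhibit a classical twist $\xi'$ of $\xi$ within the same non-Eisenstein component at which the specialization of $L_p^{0,0}$ can be computed via Theorem \ref{thm:nonabelianinterpolation} and recognized as proportional to a non-vanishing twisted central $L$-value supplied by the abelian non-vanishing results of the previous subsection.

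First I would apply Theorem \ref{thm:interpolation} to construct the abelian $p$-adic measure $\mu_{\Pi_\xi\widehat{\otimes}\Sigma_\xi}$, and then invoke Theorem \ref{thm:abeliannonvanishing} together with Corollary \ref{cor:abeliannonvanishing}: since $L(s,\Pi_\xi\widehat{\otimes}\Sigma_\xi)$ admits at least two critical values, the cyclotomic restriction $\mu_{\Pi_\xi\widehat{\otimes}\Sigma_\xi}|_{C_F^{\rm cyc}(p^\infty)}$ is a non-zero element of $\OO[[C_F^{\rm cyc}(p^\infty)]]$, whence Weierstrass preparation forces $L(\tfrac{1}{2},\Pi_\xi\widehat{\otimes}\Sigma_\xi\otimes N_{F/\QQ}^{*}\chi)\neq 0$ for all but finitely many finite-order $\chi\in\mathscr X_\QQ^{\rm cyc,0}$. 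I would then narrow attention to \emph{wild} such $\chi$, i.e. those lying in the pro-$p$ part $1+p\ZZ_p$ of $C_\QQ(p^\infty)$, and sufficiently deeply ramified at $p$ that $N_{F/\QQ}^{*}\chi\cdot\vartheta$ has fully supported constant conductor; the wildness preserves the residual Galois representation $\overline{\rho}_{\mathfrak{m}}$ under the twist, and the deep ramification secures the conductor hypothesis. Within this set of admissible wild characters, all but finitely many still satisfy the twisted central non-vanishing.

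Next, for such a $\chi$, the representation $\Pi_\xi\widehat{\otimes}(\Sigma_\xi\otimes N_{F/\QQ}^{*}\chi)$ is nearly ordinary of balanced weight $\lambda$ with $s_0=\tfrac{1}{2}$ still critical, and it corresponds to a classical point $\xi_\chi\in\Spec\mathbf{h}_{\rm ord}(K_{\infty,\infty};\OO)_{\mathfrak{m}}(\overline{E})$ lying in the same component because $\overline{\rho}_{\mathfrak{m}}$ is unchanged by a residually trivial twist. Fixing one such $\chi$ with $L(\tfrac{1}{2},\Pi_\xi\widehat{\otimes}\Sigma_\xi\otimes N_{F/\QQ}^{*}\chi)\neq 0$ and setting $\xi':=\xi_\chi$, an application of Theorem \ref{thm:nonabelianinterpolation} to $\xi'$ gives
\[
\Omega_{\xi',p}^{-1}\cdot(\xi')^{\vee}\circ\iota_{\lambda^\vee\vartheta_{\xi'}^{-1}}(L_p^{0,0})\;=\;c\cdot\prod_{\mu=1}^{n}\prod_{\nu=1}^{\mu}G(\vartheta_{\xi',\mu,\nu})\cdot\frac{L(\tfrac{1}{2},\Pi_\xi\widehat{\otimes}\Sigma_\xi\otimes N_{F/\QQ}^{*}\chi)}{\Omega_{\xi'}}\cdot b_{\xi'},
\]
where $c\in\overline{E}^\times$ collects the non-zero $p$-adic normalization factors. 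The Gau{\ss} sum factors are all non-zero by the fully supported constant conductor of $\vartheta_{\xi'}$, and the twisted central $L$-value is non-zero by choice of $\chi$, so the whole expression is non-zero. This simultaneously shows that $L_{p,\mathfrak{m}}$ (the image of $L_p^{0,0}$ in the localized cohomology) is non-zero because it admits a non-zero specialization, and that its projection to the space \eqref{eq:nonzeromodxi} attached to $\xi'$ is non-zero, establishing both claims.

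The main obstacle will be juggling the three constraints on $\chi$ simultaneously: (i) wildness, to keep $\xi_\chi$ inside the localization at $\mathfrak{m}^{-1}$; (ii) sufficient ramification depth, so that $N_{F/\QQ}^{*}\chi\cdot\vartheta$ meets the fully supported constant conductor hypothesis required to apply the explicit interpolation formula of Theorem \ref{thm:nonabelianinterpolation}; and (iii) non-vanishing of the twisted central $L$-value. Each carves out a cofinite subset of the wild part of $\mathscr X_\QQ^{\rm cyc,0}$ -- (i) by construction, (ii) automatically once the ramification exceeds that of $\vartheta$ at every prime above $p$, and (iii) by Weierstrass preparation -- so their intersection is still Zariski dense and a valid $\chi$ exists. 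A subtlety worth verifying is that the twist by $N_{F/\QQ}^{*}\chi$ genuinely corresponds to moving within the Iwasawa deformation of $\xi$ parameterized by $\Lambda$ inside $\mathbf{h}_{\rm ord}(K_{\infty,\infty};\OO)_{\mathfrak{m}}$, which follows from the compatibility of the central $\OO[[Z(\ZZ_p)]]$-action with the determinant of $\overline{\rho}_{\mathfrak{m}}$ noted in Remark \ref{rmk:modpcharacterization}.
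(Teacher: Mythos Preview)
Your approach is correct and coincides with the paper's: both deduce non-vanishing of $L_p^{0,0}$ at $\mathfrak{m}$ from the abelian non-vanishing (Theorem~\ref{thm:abeliannonvanishing}/Corollary~\ref{cor:abeliannonvanishing}) by passing to a cyclotomic twist $\xi'=\xi_\chi$ and then invoking Theorem~\ref{thm:nonabelianinterpolation}. The paper's write-up differs only in packaging: it first normalizes via Theorem~\ref{thm:muindependenceofweight} so that $\eta_0$ is admissible for $\lambda$ (you tacitly assume $s_0=\tfrac12$ is critical for $\xi$, which is exactly this reduction), and rather than restricting to wild $\chi$ it allows all finite-order twists and tracks the finitely many twisted maximal ideals $\mathfrak{m}_1,\dots,\mathfrak{m}_\delta$ of Remark~\ref{rmk:fullLp}, appealing to Theorem~\ref{thm:controlforlocalizedLp} and Remark~\ref{rmk:reconstruction} together with the first identity in Theorem~\ref{thm:nonabelianinterpolation} instead of the explicit Gau\ss{}-sum formula. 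Your restriction to $p$-power-order characters is a clean way to stay inside the single localization at $\mathfrak{m}$, and your use of the second identity works but imposes the conductor constraint (ii), which the paper's route via the first identity avoids; either way the cofiniteness argument you give suffices.
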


\begin{proof}
By Theorem \ref{thm:muindependenceofweight}, we may assume without loss of generality that $\eta_0$ is admissible for the weight $\lambda$ of $\xi$.

We import the notation of Remark \ref{rmk:fullLp}, i.e.\ let $\mathfrak{m}_i$, $1\leq i\leq \delta$, denote the non-Eisenstein maximal ideals corresponding to finite order twists of the residual representation $\overline{\rho}_{\mathfrak{m}}$ by characters unramified outside $p\infty$ and of prime-to-$p$ order.

From the proof of Theorem \ref{thm:abeliannonvanishing} we deduce that the measure $\mu_{\Pi_{\xi}\widehat{\otimes}\Sigma_\xi}$ is non-zero on every translate of $C_F^{\rm cyc}(p^\infty)$ in $C_F(p^\infty)$.

By Theorem \ref{thm:controlforlocalizedLp} and Remark \ref{rmk:reconstruction}, the non-vanishing of $\mu_{\Pi_{\xi}\widehat{\otimes}\Sigma_\xi}$ on every translate of the cyclotomic line implies that the image $L_{p,\mathfrak{m}_i}^{0,0}$ of $L_p^{0,0}$ in $\mathcal H_{\rm c,ord}^{q_0+l_0}(K_{\infty,\infty}; \underline{L}_{\lambda,\OO}^\circ)_{\mathfrak{m}_i^\vee}^{\pi_0(H(\RR))}$ is non-zero for every $1\leq i\leq\delta$ and also non-zero in \eqref{eq:nonzeromodxi} for some cyclotomic twist $\xi'$ of $\xi$ by Theorem \ref{thm:nonabelianinterpolation}.
\end{proof}

\begin{corollary}\label{cor:nonabeliannonvanishing1}
  Under the assumptions of Theorem \ref{thm:nonabeliannonvanishing}, for every dominant weight $\lambda$ and every $\eta_j$ admissible for $\lambda$, the restriction of $\mu_{\alpha}^{\lambda,j}$ to the localization
$$
\mu_{\alpha,\mathfrak{m}}^{\lambda,j}:\quad
H_{\rm c,ord}^{\dim\mathscr Y}({\mathscr X}(K_{\alpha,\alpha}); \underline{L}_{\lambda,p^{-\alpha}\OO/\OO})_{\mathfrak{m}}^{\pi_0(H(\RR))}\to
  (p^{-\alpha}\OO/\OO)_{(j)}[[C(p^\infty)]]
$$
is non-zero for every sufficiently large $\alpha$.
\end{corollary}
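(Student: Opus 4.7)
The plan is to trace the non-vanishing of $L_p^{0,0}$ at $\mathfrak{m}$ established in Theorem \ref{thm:nonabeliannonvanishing} back through the construction of the measures $\mu^{\lambda,j}$ down to the finite-level measures $\mu_\alpha^{\lambda,j}$. First I would reduce to the case $j=0$: using the canonical identification $A_{(j)}\cong A$ and $\mu^{\lambda,j}_\alpha = \mu^{\lambda+(j),0}_\alpha$ from section \ref{sec:modularsymbolsinfamilies}, replacing $\lambda$ by $\lambda+(j)$ puts us in a setting where $\eta_0$ is admissible for the shifted weight.

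Since $p\nmid (n+1)n$ forces $p$ to be odd, $\OO[\pi_0(H(\RR))]$ is semisimple and projection onto $\pi_0(H(\RR))$-invariants is exact. Poincar\'e-Pontryagin duality then pairs $\mathcal H^{q_0+l_0}_{\rm ord}(K_{\infty,\infty};\OO)^{\pi_0(H(\RR))}_{\mathfrak{m}^{-1}}$ with $\mathcal H^{q_0}_{\rm c,ord}(K_{\infty,\infty};E/\OO)^{\pi_0(H(\RR))}_{\mathfrak{m}}$; since Theorem \ref{thm:nonabeliannonvanishing} guarantees that $L_{p,\mathfrak{m}}\neq 0$ in the former, there must be a class $\psi$ in the latter with $L_p^{0,0}(\psi)\neq 0$. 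The next step is to transport $\psi$ via the inverse of the Hecke-equivariant (away from $p$) isomorphism $\pi_\lambda$ from Corollary \ref{cor:independenceofweight}, which preserves both $\mathfrak{m}$-localizations and $\pi_0(H(\RR))$-invariants. This yields $\phi \in \mathcal H^{q_0}_{\rm c,ord}(K_{\infty,\infty};\underline{L}_{\lambda,E/\OO})^{\pi_0(H(\RR))}_{\mathfrak{m}}$ with $\pi_\lambda(\phi)=\psi$. Combining Corollary \ref{cor:Lpindependenceofweight} with the interpretation of $L_p^{\lambda,0}$ as a functional via Poincar\'e-Pontryagin duality then gives $\int d\mu^{\lambda,0}(\phi) = L_p^{\lambda,0}(\phi) = L_p^{0,0}(\psi)\neq 0$, so in particular $\mu^{\lambda,0}(\phi)\neq 0$.

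Finally, since $\mu^{\lambda,0}$ is the direct limit of the $\mu^{\lambda,0}_\alpha$ via the commutative square in section \ref{sec:modularsymbolsinfamilies}, the class $\phi$ is represented at some finite level $\alpha_0\geq\alpha_0^K$ by a class $\phi_{\alpha_0}$ with $\mu^{\lambda,0}_{\alpha_0}(\phi_{\alpha_0})\neq 0$. Compatibility of the transition maps with the measures then propagates non-vanishing to all $\alpha\geq\alpha_0$, and reinterpreting in the $(\lambda,j)$-variables concludes the argument.

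The main obstacle is the bookkeeping in the middle step: verifying that $\mathfrak{m}$-localization, passage to $\pi_0(H(\RR))$-invariants, and the $\lambda^{w_0}$-twist of the $T(\ZZ_p)$-action implemented by $\pi_\lambda$ all commute correctly, so that the class $\phi$ genuinely lives in the claimed localized invariant subspace. The saving observation is that the non-Eisenstein condition on $\mathfrak{m}$ is a condition on characteristic polynomials of Frobenii at places away from $p$ (cf.\ Remark \ref{rmk:modpcharacterization}), hence insensitive to the $T(\ZZ_p)$-shift introduced by $\pi_\lambda$; similarly, $\pi_0(H(\RR))$ commutes with the Hecke action, so taking invariants is harmless throughout.
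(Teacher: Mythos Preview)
Your argument is correct and follows precisely the route the paper has in mind: the paper states the corollary without proof, treating it as an immediate consequence of Theorem~\ref{thm:nonabeliannonvanishing} together with the direct-limit construction of $\mu^{\lambda,j}$ and the independence-of-weight isomorphism $\pi_\lambda$ from Corollary~\ref{cor:independenceofweight} and Theorem~\ref{thm:muindependenceofweight}. Your write-up supplies exactly the bookkeeping the paper leaves implicit, including the reduction to $j=0$ already invoked in section~\ref{sec:modularsymbolsinfamilies} and the compatibility of $\mathfrak{m}$-localization and $\pi_0(H(\RR))$-invariants with $\pi_\lambda$.
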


\begin{remark}
  Corollary \ref{cor:nonabeliannonvanishing1} implies the existence of non-zero $p$-adic $L$-functions for torsion classes in the absence of classical points.
\end{remark}

\begin{corollary}\label{cor:nonabeliannonvanishing2}
  Under the assumptions of Theorem \ref{thm:nonabeliannonvanishing}, let $\lambda$ denote a balanced cohomological weight for which $\eta_0$ is the (unique) admissible character. Let furthermore
  $$
  \mathcal X\subseteq\Spec{\bf h}_{\rm ord}(K_{\infty,\infty};\OO)_{\mathfrak{m}}(\overline{E})
  $$
  denote an irreducible component containing the classical point $\xi$ from Theorem \ref{thm:nonabeliannonvanishing} with the property that its subset $\mathcal X_{\rm cl}^\lambda\subseteq\mathcal X$ of classical points of balanced weight $\lambda$ is Zariski-dense in $\mathcal X$.

  Then there exists $\xi\in\mathcal X_{\rm cl}^\lambda$ such that $\Pi_\xi\widehat{\otimes}\Sigma_\xi$ is cuspidal and a Hecke character $\chi$ of finite order and unramified outside $p\infty$, satisfying
  $$
    L(\frac{1}{2},\Pi_{\xi}\widehat{\otimes}\Sigma_\xi\otimes\chi)\neq 0.
  $$
  Moreover, 
  $$
    L(\frac{1}{2},\Pi_{\xi}\widehat{\otimes}\Sigma_\xi\otimes\chi\chi')\neq 0
  $$
  for all but finitely many Hecke characters $\chi'\in\mathscr X_F^{\rm cyc}$.
\end{corollary}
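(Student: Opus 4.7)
The plan is to combine Theorem~\ref{thm:nonabeliannonvanishing}, which gives non-vanishing of $L_p^{0,0}$ on the component $\mathcal X$, with the Zariski density of $\mathcal X_{\rm cl}^\lambda$ to produce a classical point $\xi\in\mathcal X_{\rm cl}^\lambda$ at which the abelian $p$-adic measure $\mu_{\Pi_\xi\widehat{\otimes}\Sigma_\xi}$ of Theorem~\ref{thm:interpolation} is non-zero. The corollary will then follow from the interpolation formula together with a Weierstrass preparation argument on the cyclotomic line, in the spirit of the proof of Corollary~\ref{cor:abeliannonvanishing}.

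First I would apply Theorem~\ref{thm:nonabeliannonvanishing} to obtain a non-zero image of $L_p^{0,0}$ in $\mathcal H^{q_0+l_0}_{\rm ord}(K_{\infty,\infty};\OO)_{\mathfrak m^{-1}}^{\pi_0(H(\RR))}$, which remains non-zero after restriction to the component $\mathcal X^\vee$. Via Remark~\ref{rmk:reconstruction}, the values $\int_{C_F(p^\infty)}\chi\,d\mu_{\Pi_\xi\widehat{\otimes}\Sigma_\xi}$ of the abelian measure at any classical point are recovered as specializations at $\xi^\vee$ of universal elements built from $L_p^{0,0}$ by cup product with the degree zero cohomology classes $[\chi]$ attached to finite order Hecke characters $\chi$ unramified outside $p\infty$. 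Since the measure $\mu_{\Pi_{\xi_0}\widehat{\otimes}\Sigma_{\xi_0}}$ attached to the classical point $\xi_0$ furnished by the hypothesis of Theorem~\ref{thm:nonabeliannonvanishing} is non-zero by Theorem~\ref{thm:abeliannonvanishing}, there exists at least one such $\chi$ for which the resulting coherent $\mathcal O(\mathcal X^\vee)$-section does not vanish at $\xi_0^\vee$. Its non-vanishing locus is therefore a non-empty Zariski open subset of $\mathcal X^\vee$, which by the density hypothesis contains $\xi^\vee$ for some $\xi\in\mathcal X_{\rm cl}^\lambda$. Combining the control theorem (Theorem~\ref{thm:controlforlocalizedLp}) with Theorem~\ref{thm:nonabelianinterpolation} then yields $\int\chi\,d\mu_{\Pi_\xi\widehat{\otimes}\Sigma_\xi}\neq 0$, and in particular $\mu_{\Pi_\xi\widehat{\otimes}\Sigma_\xi}\not\equiv 0$. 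Cuspidality of $\Pi_\xi\widehat{\otimes}\Sigma_\xi$ is automatic because $\mathfrak m$ is non-Eisenstein (Theorem~\ref{thm:localizedcohomology}).

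To conclude, I would replace $\chi$ by its product with a sufficiently deeply ramified finite order character (which preserves the above non-vanishing while ensuring that $\chi_p\vartheta$ has fully supported constant conductor, where $\vartheta$ is the Nebentypus of $\xi$); the interpolation formula of Theorem~\ref{thm:interpolation} then identifies $\int\chi\,d\mu_{\Pi_\xi\widehat{\otimes}\Sigma_\xi}$ with $L(\tfrac{1}{2},\Pi_\xi\widehat{\otimes}\Sigma_\xi\otimes\chi)$ up to non-zero Gauss sum, conductor, and complex period factors, yielding~\eqref{eq:intrononvanishing2}. For~\eqref{eq:intrononvanishing2.1}, the restriction $\chi\cdot\mu_{\Pi_\xi\widehat{\otimes}\Sigma_\xi}|_{C_F^{\rm cyc}(p^\infty)}$ is a non-zero power series in one variable over $\OO[\xi]$, so by the Weierstrass preparation theorem its zero set on $\mathscr X_F^{\rm cyc}$ is finite, and the interpolation formula again converts this into the claimed non-vanishing for all but finitely many cyclotomic $\chi'$. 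The main obstacle is the Zariski density step: without Conjecture~\ref{conj:freeness}, $L_p^{0,0}$ is not a scalar function on $\mathcal X^\vee$, so one must argue through the coherent $\mathcal O(\mathcal X^\vee)$-module structure of the relevant Hecke module, exploiting that a section of a coherent sheaf which does not vanish at some point has Zariski open non-vanishing locus, which by Zariski density must meet $\mathcal X_{\rm cl}^\lambda$.
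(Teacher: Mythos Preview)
Your proposal is correct and uses the same ingredients as the paper's proof (Theorem~\ref{thm:nonabeliannonvanishing}, Theorem~\ref{thm:nonabelianinterpolation}, Theorem~\ref{thm:interpolation}, Zariski density, Weierstrass preparation), but organizes them as a direct argument, whereas the paper argues by contradiction: if $L(\tfrac12,\Pi_\xi\widehat{\otimes}\Sigma_\xi\otimes\chi)=0$ for all $\xi\in\mathcal X_{\rm cl}^\lambda$ and all $\chi$, then the uniqueness clause of Theorem~\ref{thm:interpolation} forces $\mu_{\Pi_\xi\widehat{\otimes}\Sigma_\xi}=0$ for every such $\xi$, hence by Theorem~\ref{thm:nonabelianinterpolation} the specialization of $L_p^{0,0}$ vanishes on the Zariski-dense set $\mathcal X_{\rm cl}^\lambda$ and therefore on all of $\mathcal X$, contradicting Theorem~\ref{thm:nonabeliannonvanishing}. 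The contrapositive formulation is shorter because it never needs to track a specific $\chi$ or invoke the coherent-sheaf/open-locus reasoning you outline; it only uses that vanishing of all interpolated values forces $\mu=0$.

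One imprecision in your write-up: in your step where you ``replace $\chi$ by its product with a sufficiently deeply ramified finite order character (which preserves the above non-vanishing\ldots)'', the parenthetical is not automatic---multiplying by an arbitrary ramified character can kill $\int\chi\,d\mu$. The clean fix is to swap the order of your last two steps: since $\int_{C_F(p^\infty)}\chi\,d\mu\neq 0$ is the value of $\chi\cdot\mu|_{C_F^{\rm cyc}(p^\infty)}$ at the trivial character, this restriction is already non-zero; Weierstrass preparation then gives $\int\chi\chi'\,d\mu\neq 0$ for all but finitely many cyclotomic $\chi'$, and among these you may choose $\chi'$ ramified deeply enough that $(\chi\chi')_p\vartheta$ has fully supported constant conductor, at which point the interpolation formula applies and yields both conclusions at once.
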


\begin{proof}
  Assume to the contrary that 
  $$
    L(\frac{1}{2},\Pi_{\xi}\widehat{\otimes}\Sigma_\xi\otimes\chi)=0
  $$
  for all $\xi\in\mathcal X_{\rm cl}^\lambda$ and all $\chi$ unramified outside $p\infty$. By Theorem \ref{thm:interpolation} this implies
  $$
    \mu_{\Pi_{\xi}\widehat{\otimes}\Sigma_\xi}= 0
  $$
    for all $\xi\in\mathcal X_{\rm cl}^\lambda$. Therefore, the specialization of $L_{p}^{0,0}$ vanishes at all classical $\xi$ of weight $\lambda$ in $\mathcal X$ (cf.\ Theorem \ref{thm:nonabelianinterpolation}).

  By the Zariski-density of $\mathcal X_{\rm cl}^\lambda,$ this implies that the evaluation of $L_{p}^{0,0}$ vanishes at all points in $\mathcal X$. This contradicts Theorem \ref{thm:nonabeliannonvanishing}. The generic non-vanishing statement on the cyclotomic line follows as in the proof of Theorem \ref{thm:abeliannonvanishing}.
\end{proof}

\begin{small}
\printnomenclature
\end{small}


\begin{thebibliography}{99}

\bibitem{arthurclozel1989}
  J.~Arthur and L.~Clozel.
  \newblock {\em Simple algebras, base change, and the advanced theory of the trace formula},
  \newblock Annals of Mathematics Studies {\bf 120}.
  Princeton University Press, 1989.

\bibitem{bernsteinkrotz2014}
  J.~Bernstein and B.~Kr\"otz.
  \newblock Smooth Fr\'echet globalizations of Harish-Chandra modules.
  \newblock {\em Israel Journal of Mathematics} {\bf 199},
  pages 45-111, 2014.

\bibitem{calegarimazur2009}
  F.~Calegari and B.~Mazur.
  \newblock Nearly ordinary Galois deformations over arbitrary number fields.
  \newblock {\em Journal of the Inst.\ of Math.\ Jussieu} {\bf 8},
  pages 99--177, 2009.

\bibitem{casselman1989}
  W.~Casselman.
  \newblock Canonical extensions of Harish-Chandra modules to representations of $G$.
  \newblock {\em Canadian Journal of Mathematics} {\bf 41},
  pages 385--438, 1989.

\bibitem{casselmanshalika1980}
  W.~Casselman and J.~Shalika.
  \newblock {The unramified principal series of $\mathfrak{p}$-adic groups. II. The Whittaker function}.
  \newblock {\em Compositio Mathematica} {\bf 41},
  pages 207--231, 1980.

\bibitem{clozel1990}
  L.~Clozel.
  \newblock {\em Motifs et formes automorphes}, Automorphic forms, Shimura varieties, and $L$-functions {\bf I} (L.~Clozel and J.S.~Milne, eds.),
  \newblock {\em Perspectives in Mathematics} {\bf 10}, Academic Press,
  pages 77--159, 1990.

\bibitem{clozelthorne2014}
  L.~Clozel and J.~Thorne.
  \newblock {Level-raising and symmetric power functoriality I.}
  \newblock {\em Compositio Mathematica} {\bf 150},
  pages 729--748, 2014.

\bibitem{clozelthorne2015}
  L.~Clozel and J.~Thorne.
  \newblock {Level-raising and symmetric power functoriality II.}
  \newblock {\em Annals of Mathematics} {\bf 181},
  pages 303--359, 2015.

\bibitem{clozelthorne2017}
  L.~Clozel and J.~Thorne.
  \newblock {Level-raising and symmetric power functoriality III.}
  \newblock {\em Duke Mathematical Journal} {\bf 166},
  pages 325--402, 2017.

\bibitem{coatesperrinriou1989}
  J.~Coates, and B.~Perrin-Riou.
  \newblock {\em On $p$-adic $L$-functions attached to motives over $\QQ$}.
  \newblock Algebraic number theory, Vol.\ 17, Adv.\ Stud.\ Pure Math., Academic Press,
  pages 23--54, 1989.

\bibitem{coates1989}
  J.~Coates.
  \newblock {On $p$-adic $L$-functions attached to motives over $\QQ$ II.}
  \newblock Bol.\ Soc.\ Bras.\ Mat., Vol.\ 20, No.\ 1,
  pages 101-112, 1989.

\bibitem{cogdellpiatetskishapiro1994}
  J.~W.~Cogdell and I.~I.~Piatetski-Shapiro.
  \newblock {Converse theorems for {${\mathrm{GL}}_n$}},
  \newblock {\em Publications Math\'ematiques de l'I.H.\'E.S.\ }{\bf 79}
  (1994), pages 157--214.

\bibitem{cogdellpiatetskishapiro2004}
  J.~W. Cogdell and I.~I. Piatetski-Shapiro.
  \newblock \emph{Remarks on {R}ankin-{S}elberg convolutions}, Contributions to
  automorphic forms, geometry and number theory ({H}.~{H}ida, {D}.~{R}amakrishnan,
  and {F}.~{S}hahidi, eds.), Johns Hopkins University Press,
  2004, pages~255--278.

\bibitem{dimitrov2011}
  M.~Dimitrov.
  \newblock Automorphic symbols, $p$-adic ${L}$-functions and ordinary cohomology
  of {H}ilbert modular varieties.
  \newblock {\em American Journal of Mathematics} {\bf 135},
  pages 1117--1155, 2013.

\bibitem{dimitrovjanuszewskiraghurampre}
  M.~Dimitrov, J.~Januszewski and A.~Raghuram.
  \newblock {$L$-functions of $\GL(2n)$: $p$-adic properties and nonvanishing of twists.}
  \newblock {\em Compositio Mathematica} {\bf 156},
  pages 2437--2468.

\bibitem{eischenpre}
  E.~Eischen.
  \newblock {Applications of nonarchimedean developments to archimedean nonvanishing results for twisted $L$-functions.}
  \newblock {\em Mathematical Research Letters} {\bf 27},
  pages 973--1002, 2020.

\bibitem{frankeschwermer1998}
  J.~Franke and J.~Schwermer.
  \newblock {A decomposition of spaces of automorphic forms, and the Eisenstein cohomology of arithmetic groups.}
  \newblock {\em  Mathematische Annalen} {\bf 311},
  pages 765--790, 1998.

\bibitem{gelbartjacquet1978}
  S.~Gelbart and H.~Jacquet.
  \newblock {A relation between automorphic representations of $\GL(2)$ and $\GL(3)$.}
  \newblock {\em  Ann.\ Sci.\ \'Ecole Norm.\ Sup.\ (4)} {\bf 11},
  pages 471--542, 1978.

\bibitem{ginzburgjiangrallis2004}
  D.~Ginzburg, D.~Jiang and S.~Rallis.
  \newblock {On the non-vanishing of the central value of the Rankin-Selberg $L$-functions}.
  \newblock {\em Journal of the AMS} {\bf 17},
  pages 679--722, 2004.

\bibitem{godementjacquet1972}
  R.~Godement and H.~Jacquet.
  \newblock {\em Zeta Functions of Simple Algebras.}
  \newblock Springer Lecture Notes in Mathematics {\bf 260},
  1972.

\bibitem{greenberg1983}
  R.~Greenberg.
  \newblock {On the Birch and Swinnerton-Dyer conjecture.}
  \newblock {\em Inventiones Mathematicae} {\bf 72},
  pages 241--265, 1983.

\bibitem{greenberg1985}
  R.~Greenberg.
  \newblock {On the critical values of Hecke L-functions for imaginary quadratic fields.}
  \newblock {\em Inventiones Mathematicae} {\bf 79},
  pages 79--94, 1985.
  
\bibitem{gritsenko1992}
  V.~A.~Gritsenko.
  \newblock Parabolic extensions of the {H}ecke ring of the general linear group {II}.
  \newblock {\em Journal Soviet Math.\ }{\bf 62}, pages 2869--2882, 1992.

\bibitem{grobnerlin2017}
  H.~Grobner and J.~Lin.
  \newblock {Special values of $L$-functions and the refined Gan-Gross-Prasad conjecture.}
  \newblock Preprint,
  2017.  

\bibitem{hansenthorne2017}
  D.~Hansen and J.A.~Thorne.
  \newblock On the $\GL_n$-eigenvariety and a conjecture of Venkatesh.
  \newblock {\em Selecta Mathematica} {\bf 23},
  pages 1205--1234, 2017.

\bibitem{haranamikawa2021}
  T.~Hara and K.~Namikawa.
  \newblock A cohomological interpretation of archimedean zeta integrals for $\GL_3\times\GL_2$.
  \newblock {\em Research in Number Theory} {\bf 7}, 68,
  pages 1--52, 2021.

\bibitem{harderraghuram2017}
  G.~Harder and A.~Raghuram.
  \newblock Eisenstein cohomology for $\GL_N$ and ratios of critical values of Rankin-Selberg $L$-functions.
  \newblock Annals of Mathematics Studies {\bf 203}, Princeton University Press, 2020.

\bibitem{hida1986b}
  H.~Hida.
  \newblock Galois representations into {${\rm {GL}}_2({\ZZ}_p[[{X}]])$} attached
  to ordinary cusp forms.
  \newblock {\em Inventiones Mathematicae } {\bf 85}, pages 545--613, 1986.

\bibitem{hida1986a}
  H.~Hida.
  \newblock Iwasawa modules attached to congruences of cusp forms.
  \newblock {\em Annales scientifiques de l'\'E.N.S.\ 4$^e$ s\'erie, tome} {\bf 19},
  pages 231--273, 1986.

\bibitem{hida1988a}
  H.~Hida.
  \newblock On $p$-adic {H}ecke {A}lgebras for ${\rm {GL}}_2$ over totally real
  fields.
  \newblock {\em Annals of Mathematics} {\bf 128}, pages 295--384, 1988.

\bibitem{hida1989}
  H.~Hida.
  \newblock On nearly ordinary Hecke algebras for ${\rm {GL}}(2)$ over totally real fields.
  \newblock In {\em Algebraic {N}umber {T}heory --- in honor of {K}.\ {I}wasawa},
  Advances in Pure Mathematics {\bf 17},
  pages 139--169, 1989.

\bibitem{hida1992}
  H.~Hida.
  \newblock Modular $p$-adic ${L}$-functions and $p$-adic {H}ecke algebras.
  \newblock {\em Sugaku} {\bf 44} ({\em English transl.\ in Amer.\ Math.\ Soc.\
  Transl.\ }{\bf 160}, p.\ 125--154, 1994), pages 289--305, 1992.

\bibitem{hida1994}
  H.~Hida.
  \newblock $p$-adic ordinary {H}ecke algebras for {${\rm {GL}_2}$}.
  \newblock {\em Annales de l'Institut Fourier} {\bf 44}, pages 1289--1323, 1994.

\bibitem{hida1995}
  H.~Hida.
  \newblock {C}ontrol {T}heorems of $p$-nearly ordinary cohomology groups for
  {${\rm {SL}}(n)$}.
  \newblock {\em Bulletin de la Soci\'et\'e Math\'ematique de France} {\bf 123},
  pages 425--475, 1995.

\bibitem{hida1996}
  H.~Hida.
  \newblock On the search of genuine $p$-adic modular $L$-functions for $\GL(n)$.
    With a correction to: On $p$-adic $L$-functions of $\GL(2)\times\GL(2)$ over
    totally real fields.
    \newblock {\em M\'emoirs de la Soci\'et\'e Math\'ematique de France}, tome {\bf 67},
    1996.

\bibitem{hida1998}
  H.~Hida.
  \newblock {A}utomorphic induction and {L}eopoldt type conjectures for $\GL(n)$.
  {${\rm {SL}}(n)$}.
  \newblock {\em Asian Journal of Mathematics} {\bf 2}, {special issue \lq{}Mikio Sato: A great Japanese Mathematician of the Twentieth Century\rq{}},
  pages 667--710, 1998.

\bibitem{hiranoishiimiyazaki2016}
  M.~Hirano, T.~Ishii and T.~Miyazaki.
  \newblock {The archimedean zeta integrals for $\GL(3)\times\GL(2)$.}
  \newblock {\em Proc.\ Japan Acad.\ {\bf 92}, Ser.\ A,}
  pages 27--32, 2016.

\bibitem{hiranoishiimiyazaki2018}
  M.~Hirano, T.~Ishii and T.~Miyazaki.
  \newblock {The archimedean zeta integrals for $\GL(3)\times\GL(2)$.}
  \newblock To appear in {\em Memoirs of the American Mathematical Society},
  126 pages.

\bibitem{ichinoyamana2015}
  A.~Ichino and S.~Yamana.
  \newblock Periods of automorphic forms: the case of $(\GL_{n+1}\times\GL_n,\GL_n)$.
  \newblock {\em Compositio Mathematica} {\bf 151},
  pages 665--712, 2015.

\bibitem{iwahorimatsumoto1965}
  N.~Iwahori and H.~Matsumoto.
  \newblock On some {B}ruhat decomposition and the structure of the {H}ecke rings of $p$-adic {C}hevalley groups.
  \newblock {\em Publ.\ Math., Inst.\ Hautes Etud.\ Sci.\ }{\bf 25},
  pages 5--48, 1965.

\bibitem{jacquet2009}
  H.~Jacquet.
  \newblock {\em Archimedean Rankin–Selberg integrals}.
  In \lq{}Automorphic forms and L-functions, II: Local aspects\rq{}, edited by D.~Ginzburg et al., {\em Contemp.\ Math.\ }{\bf 489}, American Mathematical Society, Providence, RI,
  pages 57--172, 2009.


\bibitem{jpss1979a}
  H.~Jacquet, I.~I. Piatetski-Shapiro, and J.~A. Shalika.
  \newblock {Automorphic forms on $\mathrm{GL(3)}$ {I}},
  \newblock {\em Annals of Mathematics} {\bf 109},
  pages 169--212, 1979.

\bibitem{jpss1979b}
  H.~Jacquet, I.~I. Piatetski-Shapiro, and J.~A. Shalika.
  \newblock {Automorphic forms on {$\mathrm{GL(3)}$} {II}},
  \newblock {\em Annals of Mathematics} {\bf 109},
  pages 213--258, 1979.

\bibitem{jpss1983}
    H.~Jacquet, I.~I. Piatetski-Shapiro, and J.~A. Shalika.
    \newblock {{R}ankin-{S}elberg convolutions},
    \newblock {\em American Journal of Mathematics} {\bf 105},
    pages 367--464, 1983.

\bibitem{jacquetshalika1976}
  H.~Jacquet and J.~A. Shalika.
  \newblock {A non-vanishing Theorem for Zeta Functions of $\GL_n$}.
  \newblock {\em Inventiones Mathematicae } {\bf 38},
  pages 1--16, 1976.
    
\bibitem{jacquetshalika1981}
  H.~Jacquet and J.~A. Shalika.
  \newblock {On Euler Products and the Classification of Automorphic Representations I}.
  \newblock {\em American Journal of Mathematics} {\bf 103},
  pages 499--558, 1981.

\bibitem{jacquetshalika1981.2}
  H.~Jacquet and J.~A. Shalika.
  \newblock {On Euler Products and the Classification of Automorphic Representations II}.
  \newblock {\em American Journal of Mathematics} {\bf 103},
  pages 777-815, 1981.

\bibitem{jacquetshalika1983}
  H.~Jacquet and J.~A. Shalika.
  \newblock {The Whittaker models of induced representations}.
  \newblock {\em Pacific Journal of Mathematics} {\bf 109},
  pages 107--120, 1983.

\bibitem{jacquetshalika1990s}
  H.~Jacquet and J.~A. Shalika.
  \newblock \emph{Festschrift in {H}onor of {I}. {I}.
  {P}iatetski-{S}hapiro on the occasion of his 60th birthday {\bf i}, edited by
  {C}artier, {P}. and {I}llusie, {L}. and {K}atz, {N}. {M}. and {L}aumon, {G}.
  and manin, {Y}. {I}. and ribet, {K}. {A}.}, Israel Math. Conf. Proc. {\bf 2},
  ch.~{R}ankin-{S}elberg convolutions: {A}rchimedean theory, pp.~125--207,
  Weizmann Science Press, Jerusalem, 1990.

\bibitem{januszewski2011}
  F.\ Januszewski.
  \newblock Modular symbols for reductive groups and $p$-adic Rankin-Selberg convolutions over number fields,
  \newblock {\em Journal f\"ur die Reine und Angewandte Mathematik} {\bf 653},
  pages 1--45, 2011.

\bibitem{januszewski2015}
  F.\ Januszewski.
  \newblock On $p$-adic $L$-functions for ${\rm GL}(n)\times{\rm GL}(n-1)$ over totally real fields,
  \newblock {\em International Mathematics Research Notices, Vol.\ }{\bf 2015}, No.\ 17,
  pages 7884--7949.

\bibitem{januszewski2016}
  F.\ Januszewski.
  \newblock { $p$-adic $L$-functions for Rankin-Selberg convolutions over number fields},
  \newblock {\em Annales Math\'ematiques du Qu\'ebec} {\bf 40}, special issue in Honor of Glenn Stevens' 60th birthday,
  pages 453--489, 2016.
    
\bibitem{januszewski2017}
  F.\ Januszewski.
  \newblock Rational Structures on Automorphic Representations.
  \newblock {\em Mathematische Annalen} {\bf 370},
  pages 1805--1881, 2018.
    
\bibitem{januszewski2018}
  F.~Januszewski.
  \newblock On period relations for automorphic $L$-functions I.
  \newblock {\em Transactions of the American Mathematical Society} {\bf 371}, 
  pages 6547--6580, 2019.

\bibitem{kastenschmidt2013}
  H.~Kasten and C.-G.~Schmidt.
  \newblock On critical values of Rankin-Selberg convolutions.
  \newblock {\em International Journal of Number Theory} {\bf 9},
  pages 205--256, 2013.

\bibitem{kazhdanmazurschmidt2000}
  D.~Kazhdan, B.~Mazur, and C.-G.~Schmidt.
  \newblock {Relative modular symbols and {R}ankin-{S}elberg convolutions},
  \newblock {\em Journal f\"ur die Reine und Angewandte Mathematik} {\bf 512},
  pages 97--141, 2000.

\bibitem{kharethorne2016}
  C.~Khare and J.A.~Thorne.
  \newblock {Potential Automorphy and the Leopoldt Conjecture}.
  \newblock {\em American Journal of Mathematics} {\bf 139}.
  pages 1205--1273, 2017.

\bibitem{kim2003}
  H.H.~Kim.
  \newblock {Functoriality for the exterior square of $\GL(4)$ and the symmetric fourth of $\GL(2)$.}
  \newblock {\em J.\ Amer.\ Math. Soc.\ }{\bf 16},
  pages 139--183, 2003. With appendix 1 by D.~Ramakrishnan and appendix 2 by Kim and P.~Sarnak.

\bibitem{kimshahidi2002}
  H.H.~Kim and F.~Shahidi.
  \newblock {Functorial products for $\GL(2)\times\GL(3)$ and the symmetric
  cube for $\GL(2)$.}
  \newblock {\em Annals of Mathematics} {\bf 155},
  pages 837--893, 2002. With an appendix by C.J. Bushnell and G.~Henniart.

\bibitem{kitagawa1994}
  K.~Kitagawa.
  \newblock \emph{On standard $p$-adic ${L}$-functions of families of elliptic
  cusp forms}, $p$-adic monodromy and the {B}irch and {S}winnerton-{D}yer
  conjecture (B.~Mazur and G.~Stevens, eds.), Contemporary Mathematics {\bf
  165}, American Mathematical Society, 1994, pp.~81--110.

\bibitem{kondoyasuda2012}
  S.~Kondo and S.~Yasuda.
  \newblock {Local $L$ and epsilon factors in Hecke eigenvalues.}
  \newblock {\em Journal of Number Theory} {\bf 132},
  pages 1910--1948, 2012.

\bibitem{lischwermer2004}
  J.~Li and J.~Schwermer.
  \newblock On the Eistenstein cohomology of arithmetic groups.
  \newblock {\em Duke Mathematical Journal} {\bf 123},
  pages 141--169, 2004.

\bibitem{luo2005}
  W.~Luo.
  \newblock Nonvanishing of $L$-functions for $\GL(n, \Adeles_\QQ)$.
  \newblock {\em Duke Mathematical Journal} {\bf 128},
  pages 199--207, 2005.

\bibitem{matringe2013}
  N.~Matringe.
  \newblock {Essential Whittaker functions for $\GL(n)$.}
  \newblock {\em Documenta Math.\ }{\bf 18},
  pages 1191--1214, 2013.

\bibitem{miyauchi2014}
  M.~Miyauchi.
  \newblock {Whittaker functions associated to newforms for $\GL(n)$ over
    $p$-adic fields}.
  \newblock {\em J.\ Math.\ Soc.\ Japan {\bf 66}},
  pages 17--24, 2014.

\bibitem{moeglinwaldspurger1989}
  C.~Moeglin and J.-L.~Waldspurger.
  \newblock {Le spectre r\'esiduel de $\GL(n)$.}
  \newblock {\em Annales scientifiques de l'\'Ecole Normale Sup\'erieure, S\'erie 4,  Volume} {\bf 22},
  pages 605--674, 1989.

\bibitem{namikawa2016}
  K.~Namikawa.
  \newblock {On $p$-adic $L$-functions associated with cusp forms on $\GL_2$}.
  \newblock {\em manuscripta mathematica} {\bf 153},
  pages 563--622, 2017.

\bibitem{nastasescu2016}
  M.~M.~Nastasescu.
  \newblock {\em Nonvanishing of $L$-values for $\GL(n)$}.
  \newblock Thesis, California Institute of Technology, 2016.

\bibitem{newtonthorne2016}
  J.~Newton and J.~A.~Thorne.
  \newblock {Torsion Galois representations over CM fields and Hecke algebras in the derived category.}
  \newblock {\em Forum of Mathematics, Sigma,}
  88 pages, 2016.

\bibitem{raghuram2010}
  A.~Raghuram.
  \newblock {On the Special Values of certain Rankin-Selberg $L$-functions and Applications to odd symmetric power $L$-functions of modular forms.}
  \newblock  {\em International Mathematics Research Notices} {\bf 2010},
  pages 334--372, 2010.

\bibitem{raghuram2015}
  A.~Raghuram.
  \newblock Critical values for Rankin-Selberg L-functions for $\GL(n)\times\GL(n-1)$ and the symmetric cube L-functions for $\GL(2)$.
  \newblock  {\em Forum Mathematicum} {\bf 28},
  pages 457--489, 2016.

\bibitem{rohrlich1984}
  D.~E.~Rohrlich.
  \newblock On {$L$}-functions of elliptic curves and cyclotomic towers.
  \newblock {\em Inventiones Mathematicae }{\bf 75},
  pages 409--423, 1984.

\bibitem{rohrlich1989}
  D.~E.~Rohrlich.
  \newblock Nonvanishing of {$L$}-functions for $\mathrm{GL(2)}$.
  \newblock {\em Inventiones Mathematicae }{\bf 97},
  pages 381--403, 1989.

\bibitem{satake1963}
  I.~Satake.
  \newblock Theory of spherical functions on reductive algebraic groups over
  $\mathfrak{p}$-adic fields.
  \newblock {\em Publ.\ Math., Inst.\ Hautes Etud.\ Sci.\ }{\bf 18},
  pages 1--69, 1963.

\bibitem{schmidt1993}
  C.-G.~Schmidt.
  \newblock {Relative modular symbols and $p$-adic Rankin-Selberg convolutions},
  \newblock {\em Inventiones Mathematicae }{\bf 112},
  pages 31--76, 1993.

\bibitem{schmidt2001}
  C.-G.~Schmidt.
  \newblock {Period relations and $p$-adic measures},
  \newblock {\em manuscripta mathematica }{\bf 106},
  pages 177--201, 2001.

\bibitem{schmidt2017}
  C.-G.~Schmidt.
  \newblock {The critical numbers of Rankin-Selberg convolutions of cohomological representations.}
  \newblock {\em Journal of Number Theory} {\bf 174},
  pages 164--188, 2017.

\bibitem{scholze2015}
  P.~Scholze.
  \newblock {On torsion in the cohomology of locally symmetric varieties},
  \newblock {\em Annals of Mathematics} {\bf 182},
  pages 945--1066, 2015.

\bibitem{shahidi1981}
  F.~Shahidi.
  \newblock {On certain $L$-functions}.
  \newblock {\em American Journal of Mathematics} {\bf 103},
  pages 297--355, 1981.
  
\bibitem{shalika1974}
  J.~A. Shalika.
  \newblock {The multiplicity one theorem for {$\mathrm{GL(n)}$}},
  \newblock {\em Annals of Mathematics} {\bf 100},
  pages 273--330, 1974.

\bibitem{shimura1977}
  G.~Shimura.
  \newblock {On the periods of modular forms.}
  \newblock {\em  Mathematische Annalen} {\bf 229},
  pages 211--221, 1977.

\bibitem{shintani1976}
  T.~Shintani.
  \newblock {On an explicit formula for class-$1$ \lq\lq{}{W}hittaker
    functions\rq\rq{} on {${\rm{GL}}_n$} over $\mathfrak{{P}}$-adic fields},
  \newblock {\em Proceedings of the Japan Academy} {\bf 52}, 180--182, 1976.

\bibitem{sugiyamatsuzuki2018}
  S.~Sugiyama and M.~Tsuzuki.
  \newblock {Quantitative non-vanishing of central values of certain $L$-functions on ${\rm GL}(2)\times {\rm GL}(3)$},
  \newblock {\em Preprint},
  pages 1--33.
  \newblock {\texttt{https://arxiv.org/abs/1805.00209}}

\bibitem{sunjams}
  B.~Sun.
  \newblock The non-vanishing hypothesis at infinity for Rankin-Selberg convolutions.
  \newblock {\em Journal of the American Mathematical Society} {\bf 30},
  pages 1--25, 2017.

\bibitem{schwab2015}
  M.~Schwab, \textit{$p$-adische $L$-Funktionen zu modularen Symbolen},
  \newblock Diplomarbeit, Karlsruhe Institute of Technology (KIT), 2015.

\bibitem{tamagawa1963}
  T.~Tamagawa.
  \newblock On the $\zeta$-functions of a division algebra.
  \newblock {\em Annals of Mathematics} {\bf 77}, pages 387--405, 1963.

\bibitem{vanorderpre}
  J.~Van~Order.
  \newblock Nonvanishing of self-dual $L$-values via spectral decomposition of shifted convolution sums.
  \newblock {\em Preprint,}
  pages 1--32.

\bibitem{voganzuckerman1984}
  D.~Vogan and G.~J. Zuckerman.
  \newblock Unitary representations with non-zero cohomology.
  \newblock {\em Compositio Mathematica} {\bf 53},
  pages 51--90, 1984.

\bibitem{wallach1984}
  N.~Wallach.
  \newblock {\em On the constant term of a square integrable automorphic form.}
  \newblock In Operator Algebras and Group representations, Vol.\ II,
  Monographs Stud.\ Math.\ {\bf 18}, Boston, 1984,
  pages 227-237.


\end{thebibliography}
\end{document}